\newtheorem{theorem}{Theorem}[section]
\newtheorem{definition}[theorem]{Definition}
\newtheorem{example}[theorem]{Example}
\newtheorem{proposition}[theorem]{Proposition}
\newtheorem{lemma}[theorem]{Lemma}
\newtheorem{remark}[theorem]{Remark}
\newtheorem{corollary}[theorem]{Corollary}
\renewcommand{\AA}{\mathcal{A}}
\newcommand{\Aff}{\mathbb{A}}
\newcommand{\BB}{\mathcal{B}}
\newcommand{\CC}{\mathbb{C}}
\newcommand{\EE}{\mathcal{E}}
\newcommand{\FF}{\mathcal{F}}
\newcommand{\kk}{\Bbbk}
\newcommand{\KK}{\mathbb{K}}
\newcommand{\LL}{\mathbb{L}}
\newcommand{\NN}{\mathbb{N}}
\newcommand{\PP}{\mathbb{P}}
\newcommand{\QQ}{\mathbb{Q}}
\newcommand{\TT}{\mathbb{T}}
\newcommand{\Ta}{T}
\newcommand{\ZZ}{\mathbb{Z}}
\newcommand{\Mst}{\mathfrak{M}}
\newcommand{\Msp}{\mathcal{M}}
\newcommand{\unit}{1}
\newcommand{\ICS}{\mathcal{IC}}
\newcommand{\DTS}{\mathcal{DT}}
\newcommand{\Part}{\mathcal{P}}
\newcommand{\OO}{\mathcal{O}}
\newcommand{\Lin}{\mathcal{L}}
\newcommand{\Xst}{\mathfrak{X}}
\DeclareMathOperator{\Hom}{Hom}
\DeclareMathOperator{\End}{End}
\DeclareMathOperator{\Ext}{Ext}
\DeclareMathOperator{\dgV}{dg-Vect}
\DeclareMathOperator{\Vect}{Vect}
\DeclareMathOperator{\rep}{Rep}
\DeclareMathOperator{\Ka}{K}
\DeclareMathOperator{\Aut}{Aut}
\DeclareMathOperator{\codim}{codim}
\DeclareMathOperator{\MHM}{MHM}
\DeclareMathOperator{\rat}{rat}
\DeclareMathOperator{\IC}{IC}
\DeclareMathOperator{\DT}{DT}
\DeclareMathOperator{\Sym}{Sym}
\DeclareMathOperator{\Alt}{Alt}
\DeclareMathOperator{\Var}{Var}
\DeclareMathOperator{\Spec}{Spec}
\DeclareMathOperator{\Gl}{GL}
\DeclareMathOperator{\Ho}{H}
\DeclareMathOperator{\rk}{rk}
\DeclareMathOperator{\im}{im}
\DeclareMathOperator{\pr}{pr}
\DeclareMathOperator{\id}{id}
\DeclareMathOperator{\Isom}{\mathcal{I}som}
\DeclareMathOperator{\Lie}{Lie}
\DeclareMathOperator{\Stab}{Stab}
\DeclareMathOperator{\Proj}{Proj}
\DeclareMathOperator{\ext}{ext}
\DeclareMathOperator{\cl}{cl}
\DeclareMathOperator{\Specbf}{\bf Spec}
\DeclareMathOperator{\Gr}{Gr}
\DeclareMathOperator{\rad}{rad}
\DeclareMathOperator{\coker}{coker}
\DeclareMathOperator{\Iso}{Iso}
\DeclareMathOperator{\Coh}{Coh}
\DeclareMathOperator{\Char}{char}
\DeclareMathOperator{\gr}{gr}
\DeclareMathOperator{\HHom}{\mathcal{H}om}
\DeclareMathOperator{\Quot}{Quot}
\title[DT invariants for categories of homological dimension one]{Donaldson--Thomas invariants vs.\ intersection cohomology for categories of homological dimension one}
\author{Sven Meinhardt}
\begin{document}

\begin{abstract}
The present paper is an extension of a previous paper written in collaboration with Markus Reineke dealing with quiver representations. The  aim of the paper is to generalize the theory and to provide a comprehensive theory of Donaldson--Thomas invariants for abelian categories of homological dimension one (without potential) satisfying some technical conditions. The theory will apply for instance to representations of quivers, coherent sheaves on smooth projective curves, and some coherent sheaves on smooth projective surfaces. We show that the (motivic) Donaldson--Thomas invariants satisfy the Integrality conjecture and identify the Hodge theoretic version with the (compactly supported) intersection cohomology of the corresponding moduli spaces of objects. In fact, we deal with a refined  version of Donaldson--Thomas invariants which can be interpreted as classes in the Grothendieck group of some ``sheaf'' on the moduli space. In particular, we reproduce the intersection complex of 
moduli spaces using Donaldson--Thomas theory. 
\end{abstract}

\maketitle

\tableofcontents

\section{Introduction}

The theory of Donaldson--Thomas invariants  started around 2000 with the seminal work of R.\ Thomas \cite{Thomas1}; associating numerical invariants, that is, numbers, to moduli spaces in the absence of strictly semistable objects. Six years later D. Joyce \cite{JoyceI},\cite{JoyceCF},\cite{JoyceII},\cite{JoyceIII},\cite{JoyceMF},\cite{JoyceIV} and Y.\ Song \cite{JoyceDT} extended the theory, producing numbers even in the presence of semistable objects which is the generic situation. Around the same time, M.\ Kontsevich and Y.\ Soibelman \cite{KS1},\cite{KS2},\cite{KS3} independently proposed a theory producing motives instead of simple numbers, also in the presence of semistable objects. The technical difficulties occurring in their approach disappear in the special situation of representations of quivers (with zero potential). This case has been  intensively studied by Markus Reineke in a series of papers \cite{Reineke2},\cite{Reineke3},\cite{Reineke4}. \\
Despite some computations of motivic or even numerical Donaldson--Thomas invariants for quivers with or without potential (see \cite{BBS},\cite{DaMe1},\cite{DaMe2},\cite{MMNS}), the true nature of Donaldson--Thomas invariants still remains mysterious.\\[1ex]
This paper is a second step to disclose the secret by showing that the Donaldson--Thomas invariants for abelian categories of homological one satisfying some technical condition compute the compactly supported intersection cohomology of the closure of the simple locus inside the associated coarse moduli space of (semisimple) objects. The first step has already been done by the author in collaboration with Markus Reineke in \cite{MeinhardtReineke} which contains all the results presented here in the case of quiver without potential.  \\ 
We will actually prove an even stronger version by defining a Donaldson--Thomas ``sheaf'' on the coarse moduli space $\Msp$. Strictly speaking, this sheaf is not a (perverse) sheaf but a class in a suitably extended Grothendieck group of mixed Hodge modules. The cohomology with compact support of that sheaf is the usual Hodge theoretic Donaldson--Thomas invariant - a class in the Grothendieck group of mixed Hodge structures. Our main result is the following (we refer to the following sections for precise notation):
\begin{theorem}
 Let $\AA_\kk$ be an abelian $\kk$-linear category satisfying some technical condition. Then the Donaldson--Thomas sheaf and the intersection complex of the closure of the simple locus $\Msp^{s}$ inside the coarse moduli space $\Msp$ agree in the Grothendieck group of mixed Hodge modules. In particular, by taking cohomology with compact support, we obtain for every ``dimension vector'' $d$ 
 \[ \DT_d=\begin{cases} \IC_c(\Msp_d,\QQ)= \IC(\Msp_d,\QQ)^\vee &\mbox{ if } \Msp^{s}_d\neq \emptyset, \\
          0 & \mbox{ otherwise}
         \end{cases} \]
in the Grothendieck ring of (polarizable) mixed Hodge structures.
\end{theorem}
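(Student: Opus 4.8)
The plan is to characterize the Donaldson--Thomas sheaf through the pushforward of the constant sheaf along the morphism $p_d\colon \Mst_d \to \Msp_d$ sending an object to the point represented by its semisimplification, and then to match this characterization with the behaviour of the intersection complex. The crucial structural input is that $\AA_\kk$ has homological dimension one, which forces the moduli stack $\Mst_d$ to be smooth; hence the shifted constant sheaf on $\Mst_d$ is pure and the decomposition theorem of Beilinson--Bernstein--Deligne--Gabber applies to $p_d$ in its good-moduli-space incarnation. Writing $\oplus\colon \Msp_{d'}\times\Msp_{d''}\to\Msp_{d'+d''}$ for the direct-sum maps, the group $\bigoplus_d \Ka_0(\MHM(\Msp_d))$ carries a convolution product, and the refined invariants are pinned down by a single plethystic identity of the schematic shape
\[ \sum_d [\,p_{d,!}\,\underline{\QQ}_{\Mst_d}\,] \;=\; \Sym\!\Big( \bigoplus_d \frac{\DTS_d}{\LL^{1/2}-\LL^{-1/2}} \Big), \]
where $\Sym$ denotes the plethystic exponential for the convolution product. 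I would first establish that $\DTS$ is the unique solution of this identity, so that the theorem reduces to verifying that the family $\{\ICS(\Msp_d)\}_d$ satisfies the same equation.

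The core of the argument is local on $\Msp_d$. Around a closed point corresponding to a polystable object $x=\bigoplus_i S_i^{\oplus m_i}$ with the $S_i$ pairwise non-isomorphic simples, Luna's étale slice theorem identifies an étale neighbourhood of $x$ in $\Msp_d$, together with its preimage in $\Mst_d$ and the map between them, with the analogous data for representations of the Ext-quiver of the collection $(S_i)_i$, the automorphism group $\Aut(x)=\prod_i\Gl_{m_i}$ playing the role of the gauge group. This reduces the entire local computation to the quiver situation already treated in \cite{MeinhardtReineke}. I would therefore transport the quiver result along the slice: étale-locally the decomposition of $Rp_{d,*}$ produces $\ICS(\Msp_d)$ as its top summand, together with shifted intersection complexes supported on the strata of strictly smaller polystable type, and these lower summands are precisely those predicted by the convolution powers of the $\DTS$-invariants attached to smaller dimension vectors.

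I would run the comparison by induction on the dimension vector $d$, ordered by total dimension $|d|$. The base case is the simple locus $\Msp^{s}_d$, over which $p_d$ is a $\mathbb{G}_m$-gerbe; there the pushforward of the constant sheaf is, up to the expected $(\LL^{1/2}-\LL^{-1/2})$ factor, the constant sheaf $\underline{\QQ}_{\Msp^{s}_d}$, which equals $\ICS$ on the smooth dense open $\Msp^{s}_d$, giving $\DTS_d=\ICS(\Msp_d)$ there. For the inductive step, one subtracts the convolution contributions of the already-identified lower invariants $\DTS_{d'}=\ICS(\Msp_{d'})$ with $|d'|<|d|$ from both sides of the plethystic identity; by the étale-local computation the remaining term on the stack side is supported on $\overline{\Msp^{s}_d}$ and restricts to $\underline{\QQ}$ on $\Msp^{s}_d$, which forces it to be $\ICS(\Msp_d)$ by the defining property of the intersection complex as the unique pure extension with no sub- or quotient object supported on the boundary. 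When $\Msp^{s}_d=\emptyset$ the same bookkeeping leaves no top-dimensional term, yielding $\DTS_d=0$.

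The main obstacle I anticipate is the support control in the decomposition theorem: one must rule out spurious summands of $Rp_{d,*}$ that are supported on $\overline{\Msp^{s}_d}$ but carry a nontrivial local system, and one must check that the strictly lower summands assemble exactly into the convolution expression dictated by $\Sym$ rather than merely lying in the correct support. Both points hinge on a purity and semismallness analysis of $p_d$ over the simple locus together with the compatibility of the Luna slices with the global convolution structure; making the étale-local identifications glue into the genuinely global plethystic identity is the delicate step. The final numerical statement then follows by applying compactly supported hypercohomology, which sends $\ICS(\Msp_d)$ to $\IC_c(\Msp_d,\QQ)=\IC(\Msp_d,\QQ)^\vee$ by Poincaré--Verdier duality, recovering the displayed formula for $\DT_d$.
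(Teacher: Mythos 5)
You propose to run the decomposition theorem on $p_d\colon \Mst_d\to\Msp_d$, but this morphism admits no such application, and this is the central gap. The pushforward $p_{d,!}\,\ICS_{\Mst_d}(\QQ)$ is not an object of $D^b(\MHM(\Msp_d))$ at all: the fibers of $p_d$ are quotient stacks by the reductive groups $G_d$ (over $\Msp^{s}_d$ it is a $\Gl(1)$-gerbe), so the pushforward exists only as a class in the completed Grothendieck group after inverting the elements $\LL^n-1$ — this is exactly why $\hat{\Ka}_0(\MHM(\Msp))[\LL^{-1/2},(\LL^n-1)^{-1}\,:\,n\in\NN]$ appears in the definition of $\DTS$. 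There is consequently no complex whose perverse summands, supports, local systems or weights one could analyze; purity of $\ICS_{\Mst_d}$ does not help, since the theorem of Beilinson--Bernstein--Deligne--Gabber--Saito requires a proper morphism of varieties. The paper is engineered precisely around this obstruction: it introduces the framed moduli spaces $\Msp_{f,d}$ (smooth varieties), proves that the Hilbert--Chow morphism $\pi_{f,d}\colon\Msp_{f,d}\to\Msp_d$ is projective and virtually small (Theorem \ref{virtsmall}), establishes the PT--DT correspondence $\pi_{f\,!}\,\ICS_{\Msp_{f}}=\Sym\big(\sum_{d\neq 0}[\PP^{f\cdot d-1}]_{vir}\DTS_d\big)$ (Corollary \ref{alternative_form}) via the Ringel--Hall integration map, and proves the sheafified integrality theorem (Theorem \ref{intconj}) so that $\DTS_d$ is an honest $\ZZ[\LL^{\pm 1/2}]$-linear combination of intersection complexes. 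Only then is the decomposition theorem applied — to $\pi_{f,d}$, not to $p_d$ — and combined with relative Hard Lefschetz and a comparison of weight-degrees to force $\DTS_d=\ICS_{\overline{\Msp^{s}_d}}(\QQ)$. Every one of these ingredients is absent from your outline.

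Your fallback — Luna slices plus the quiver case of \cite{MeinhardtReineke} — cannot close the gap either, because étale-local or pointwise identifications only control the fibers of $\DTS_d$ (this is how the paper proves integrality, Proposition \ref{localDT}), and fiberwise equality does not determine a class in $\Ka_0(\MHM(\Msp_d))$: for a nontrivial unitary rank-one local system $L$ of finite order on a stratum $Z$, the class $[\ICS_Z(L)]-[\ICS_Z(\QQ)]$ has the same fibers as $0$ in the Grothendieck group of mixed Hodge structures, yet is nonzero, since $\Ka_0(\MHM)$ is freely generated by intersection complexes. This is exactly the ``spurious local system'' issue you name in your final paragraph and then defer to ``a purity and semismallness analysis of $p_d$'' — an analysis which, as explained above, does not exist at the level of localized Grothendieck classes. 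The paper resolves it on the framed side: over $\Msp^{s}_d$ the map $\pi_{f,d}$ is an étale-locally trivial $\PP^{f\cdot d-1}$-bundle whose fiberwise cohomology is generated by the relative ample class and hence carries trivial monodromy, while all other candidate summands are excluded by the virtual-smallness degree bound. Note also that $\Sym$ is a convolution coupling different points of $\Msp$ through $\oplus$, so local identifications do not glue into the global plethystic identity for free, and that the symmetry assumption (8), which your proposal never invokes, is what makes the local Ext-quivers symmetric and hence makes both virtual smallness and the Efimov-type integrality available in the first place.
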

The conditions mentioned in the Theorem are the following:
\begin{enumerate}
\item[(1)] Existence of a moduli theory $\AA$,
\item[(2)] Existence of a good moduli stack $\Mst$,
\item[(3)] Existence of a fiber functor $\omega:\AA\to \Vect^I$,
\item[(4)] Existence of good GIT-quotients $\Msp$,
\item[(5)] Representability and properness of the universal Grassmannian,
\item[(6)] Existence of a good deformation theory,
\item[(7)] The number $(E,F):=\dim_\KK \Hom_{\AA_\KK}(E,F)-\dim_\KK\Ext^1_{\AA_\KK}(E,F)$ is locally constant on $\Mst\times\Mst$,
\item[(8)] The pairing $(-,-)$ is symmetric.
\end{enumerate}
The first six properties ensure that the objects, we are talking about, exist and have the ``usual'' properties\footnote{One can show that Representability in (5) is a consequence of (6). Properness of the Grassmannian is not used in this paper but will become important in future projects.}. Coherent sheaves on smooth projective varieties or representations of quivers with relation fall in the class of examples. Condition (7) is our form of saying that $\AA_\KK$ is of homological dimension one for every field extension $\KK\supset \kk$. Note that $\AA_\KK$ might not have enough projectives or injectives and the definition of higher $\Ext$-groups needs some care which we bypass by  assumption (7). Property (8) is the most important one in Donaldson--Thomas theory and cannot be overestimated. For the usual categories of homological dimension one, (8) boils down to a genericity assumption on stability conditions. \\
It is good to keep in mind that $\AA_\kk$ is usually  not the category of all objects of interest but only the subcategory of semistable objects of a fixed ``slope''. The simple objects are the stable ones and the semisimple objects correspond the polystable ones. It is this restriction which guaranties properties (1)--(8) even if they are not satisfied for the ``big'' category of all objects. An even more special situation occurs for surfaces of Kodaira dimension $-\infty$. Here, $\AA_\kk$ can be exhausted by full abelian subcategories  which satisfy our assumptions even though $\AA_\kk$ might not.\\
A technical tool which turns out the be of central interest is the moduli space $\Msp_{f,d}$ of semistable framed objects. Framed objects are objects $E$ in $\AA_\kk$ together with a bunch of vectors in some $I$-graded finite dimensional vector space $\omega_\kk(E)$ of graded dimension $d\in \NN^{\oplus I}$ associated to $E$. Such an object is called semistable, if the vectors are not contained in the subvector space $\omega_\kk(E')$ for some subobject $E'\subset E$. By forgetting the vectors, we get the so-called Hilbert--Chow morphism $\pi_{f,d}:\Msp_{f,d}\longrightarrow \Msp_d$ which has the following remarkable property.  
\begin{theorem} Under the assumptions (1)--(8), the morphism $\pi_{f,d}:\Msp_{f,d} \longrightarrow \Msp_d$ forgetting the framing is projective and virtually small, that is, there is a finite stratification $\Msp_d=\sqcup_\xi S_\xi$ with empty or dense stratum $S_0=\Msp^{s}_d$ such that $\pi_{f,d}^{-1}(S_\xi) \longrightarrow S_\xi$ is \'etale locally trivial and 
 \[ \dim \pi_{f,d}^{-1}(x_\xi) - \dim \PP^{f\cdot d-1} \le \frac{1}{2} \codim S_\xi\] 
for every $x_\xi\in S_\xi$ with equality only for $S_\xi=S_0\not=\emptyset$ with fiber $\pi_{f,d}^{-1}(x_0)\cong \PP^{f\cdot d-1}$.
\end{theorem}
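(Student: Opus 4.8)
The plan is to realize $\Msp_{f,d}$ as a GIT quotient over $\Msp_d$. By assumption (4), $\Msp_d$ is the good quotient $\Spec$ of the invariant ring of the parameter scheme $R_d$ of objects (equipped with a chosen $\omega$-frame), under a reductive group $G$. I would then describe $\Msp_{f,d}$ as the quotient of the locus $R_{f,d}\subset R_d\times\Hom(\kk^f,\omega(-))$ of \emph{generated} framed objects by the same $G$, linearized by the character detecting the framing. Semistability in the generate-sense coincides with GIT-semistability for this linearization, so $\Msp_{f,d}=\Proj$ of the ring of semi-invariants over $\Spec$ of the invariants $=\Msp_d$, and $\pi_{f,d}$ is projective. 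The generate-condition forces trivial stabilizers, so $\Msp_{f,d}$ is in fact a fine moduli space, which I will use below.

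\textbf{Stratification and local triviality.} I would stratify $\Msp_d$ by the decomposition type of the polystable object: a point is $[E]$ with $E\cong\bigoplus_k E_k^{\oplus m_k}$, the $E_k$ pairwise non-isomorphic stable of class $d_k$ and $\sum_k m_k d_k=d$, and $\xi=(d_k,m_k)_k$ records the type. The open stratum $S_0$ (type $n=1,\,m_1=1,\,d_1=d$) is $\Msp^{s}_d$. To obtain local triviality I would construct an \'etale slice. By the good deformation theory (6) together with homological dimension one (7), the stack $\Mst$ is smooth and its \'etale local model near $[E]$ is $[\Ext^1(E,E)/\Aut(E)]$, i.e.\ the quotient stack of $\Rep(Q_\xi,(m_k))$ by $G_\xi=\prod_k\Gl_{m_k}$, where $Q_\xi$ is the Ext-quiver (vertices the $E_k$, with $\dim\Ext^1(E_k,E_l)$ arrows $k\to l$). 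The framing $\Hom(\kk^f,\omega(E))=\bigoplus_k\Hom(\kk^f,\omega(E_k))\otimes\KK^{m_k}$ of total dimension $f\cdot d$ induces the framing of $Q_\xi$ with $f\cdot d_k$ frame-arrows at vertex $k$. Hence, \'etale-locally over $S_\xi$, $\pi_{f,d}$ is identified with the Hilbert--Chow map of this framed quiver moduli space, which depends only on $\xi$; this gives the \'etale-local triviality of $\pi_{f,d}^{-1}(S_\xi)\to S_\xi$.

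\textbf{Dimension estimate.} Assumption (8) is decisive here. Since $\AA_\KK$ has homological dimension one and $\Hom(E_k,E_l)=0$ for $k\neq l$, one has $(d_k,d_l)=\delta_{kl}-\dim\Ext^1(E_k,E_l)$, so symmetry of $(-,-)$ forces $Q_\xi$ to be a symmetric quiver. Using $\dim\Msp^{s}_{d_k}=1-(d_k,d_k)=\dim\Ext^1(E_k,E_k)$ and $\dim S_\xi=\sum_k(1-(d_k,d_k))$, I would compute
\[ \codim S_\xi = 1-\sum_k m_k^2+\dim\Rep(Q_\xi,(m_k))-\sum_k\dim\Ext^1(E_k,E_k). \]
On the fibre side, $\pi_{f,d}^{-1}(x_\xi)$ is the quotient by $G_\xi$ of the generated framings lying over the nullcone $\mathcal{N}_\xi\subset\Rep(Q_\xi,(m_k))$ (the fibre over the semisimple representation), whence $\dim\pi_{f,d}^{-1}(x_\xi)\le\dim\mathcal{N}_\xi+f\cdot d-\sum_k m_k^2$, while $\dim\PP^{f\cdot d-1}=f\cdot d-1$. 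The asserted inequality thus reduces to the purely quiver-theoretic bound $\dim\mathcal{N}_\xi\le\tfrac12\bigl(\dim\Rep(Q_\xi,(m_k))+\sum_k m_k^2-\sum_k\dim\Ext^1(E_k,E_k)-1\bigr)$, which is exactly the smallness estimate for symmetric quivers from the predecessor \cite{MeinhardtReineke}; I would deduce it from the Hesselink stratification of $\mathcal{N}_\xi$ into attracting sets for one-parameter subgroups, where the symmetry $a_{kl}=a_{lk}$ is used. For $S_0$ ($m_1=1$) one has $\mathcal{N}_\xi=\{0\}$ and every nonzero framing generates, so the fibre is precisely $\PP^{f\cdot d-1}$ and the estimate is an equality; strictness for all other $\xi$ follows from the strictness of the quiver bound away from $m_k\equiv 1$.

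\textbf{Main obstacle.} I expect the crux to be two-fold. First, establishing the \'etale slice / local model in this abstract categorical setting with enough precision to identify $\pi_{f,d}$ locally with the framed quiver map: this is where the good deformation theory must be used carefully to ensure the model is the honest (unobstructed, by homological dimension one) representation space of $Q_\xi$ rather than a thickening. Second, the nullcone estimate itself, and in particular the analysis of the equality case: controlling $\dim\mathcal{N}_\xi$ via the Kempf--Hesselink stratification and proving strictness except on $S_0$ is the combinatorial heart of the argument, and is precisely the point at which the symmetry assumption (8) is indispensable.
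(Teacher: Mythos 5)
Your overall architecture coincides with the paper's: projectivity by comparing invariant rings (the $\Gl(1)$-scaling on the framing kills the framing coordinates, so the framed and unframed GIT quotients sit projectively over the same affine base); stratification by polystable type $\xi=(\gamma^\bullet,d^\bullet,m_\bullet)$; Luna \'etale slices identifying $\pi_{f,d}$ over $S_\xi$ with the Hilbert--Chow map of the framed $\Ext^1$-quiver $Q_\xi$ (for the dimension bound the paper only needs, and only proves, the inclusion $\pi_{f,d}^{-1}(E)\subset \Msp^{nilp}_{f_\xi,d_\xi}(Q_\xi)$); and reduction to a nullcone estimate for the symmetric quiver $Q_\xi$. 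Two caveats. First, by assumption (4), $\Msp_d$ is in general only projective over $\Spec A_d$, not itself a $\Spec$ of an invariant ring (that is the quiver case); your projectivity argument must be run relatively over $\Spec A_d$, which is exactly what the paper does. Second, for the key nullcone lemma you invoke the Kempf--Hesselink stratification, whereas the paper (following \cite{MeinhardtReineke}) argues far more elementarily: every nilpotent representation admits a composition series by vertex simples, so the nullcone $N_{d_\xi}$ is covered by the collapsings $R_{d^*}$ over \emph{thin} decompositions $d^*$, whose dimensions are bounded by a direct homogeneous-bundle count, symmetry entering only through the identity $\sum_{k<l}(d^l,d^k)=\frac{1}{2}(d,d)-\frac{1}{2}\sum_k(d^k,d^k)$. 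The paper deliberately sidesteps Hesselink (it cites Le Bruyn for that finer analysis), so your route is heavier machinery but not wrong in principle.

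The genuine gap is in your equality-case analysis. You claim strictness for $\xi\neq((d,1))$ ``follows from the strictness of the quiver bound away from $m_k\equiv 1$''. This fails precisely on the strata with $s\ge 2$ pairwise non-isomorphic stable summands all of multiplicity one: there the nullcone bound is an \emph{equality}, not strict. Concretely, for $s=2$ and $m=(1,1)$ the nullcone of $Q_\xi$ consists of representations with all loops zero and all $2$-cycles zero, i.e.\ the union of the two subspaces where all arrows $1\to 2$ (respectively all arrows $2\to 1$) vanish; its dimension is exactly the number of arrows from $1$ to $2$, which is exactly the value of the thin-decomposition bound. Strictness of the theorem's inequality on such strata comes instead from a half-integer combinatorial slack: after your (correct) reduction, the desired inequality is equivalent to
\[ \frac{1}{2}\sum_{k=1}^s\bigl((\gamma^k,\gamma^k)-2\bigr)\bigl(m_k-1\bigr)\;\le\;\frac{1}{2}(s-1), \]
whose left-hand side is $\le 0$ (since $(\gamma^k,\gamma^k)\le 1$ whenever $\Msp^{s}_{\gamma^k,d^k}\neq\emptyset$) and whose right-hand side is $\ge 0$; equality forces first $s=1$ and then, since $(\gamma^1,\gamma^1)-2<0$, also $m_1=1$. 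That is the paper's closing argument, and it is what handles the all-multiplicity-one strata that your stated mechanism leaves uncovered.
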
 
Notice that Donaldson--Thomas invariants can be computed quite efficiently. 
Thus, our  theorem provides a quick algorithm to determine intersection Hodge numbers. There is already an algorithm  to compute intersection numbers going  back to extensive work of F.\ Kirwan around 1985 (see \cite{Kirwan1},\cite{Kirwan4},\cite{Kirwan2},\cite{Kirwan3}). However, writing $\Msp_d$ as a GIT-quotient $X_d/\!\!/G_d$, this algorithm is very complicated as  one has to understand to action of $G_d$ on $X_d$ in a very precise way. In Donaldson--Thomas theory the precise action can be dropped, and one only needs to know the intersection cohomology of $X_d$. Moreover, using wall-crossing formulas, we are now able to understand the change of intersection Hodge numbers under variations of stability conditions. \\
Let us give a couple of applications of the results proven here. 
\begin{corollary}[Positivity]
 For representations of quivers the (motivic) Donaldson--Thomas invariant is a polynomial in the Lefschetz motive with positive coefficients. 
\end{corollary}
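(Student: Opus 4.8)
The plan is to deduce positivity directly from the main theorem together with the virtual smallness result. By the first theorem, for a quiver without potential (and a generic stability condition ensuring condition (8)) the Donaldson--Thomas invariant $\DT_d$ equals the compactly supported intersection cohomology $\IC_c(\Msp_d,\QQ)$ as a class in the Grothendieck ring of polarizable mixed Hodge structures, whenever the stable locus $\Msp^{s}_d$ is nonempty (and is zero otherwise, which is trivially positive). The motivic Donaldson--Thomas invariant is, by construction, the image of this class under the map sending a pure Hodge structure of weight $k$ to a power of the Lefschetz motive $\LL$; concretely, the claim reduces to showing that the intersection cohomology of $\Msp_d$ is pure of Hodge--Tate type with cohomology concentrated so that the associated Poincar\'e polynomial in $\LL$ has nonnegative coefficients.

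First I would invoke the virtual smallness of the Hilbert--Chow morphism $\pi_{f,d}\colon\Msp_{f,d}\to\Msp_d$ from the second theorem. Since $\pi_{f,d}$ is projective and small over the dense stratum $S_0=\Msp^{s}_d$, the decomposition theorem of Beilinson--Bernstein--Deligne--Gabber identifies the intersection complex $\IC(\Msp_d,\QQ)$ as a direct summand of the pushforward $(\pi_{f,d})_* \QQ[\dim\Msp_{f,d}]$. The key quantitative input is the strict inequality $\dim\pi_{f,d}^{-1}(x_\xi)-\dim\PP^{f\cdot d-1}\le \tfrac12\codim S_\xi$ with equality only on the open stratum: this is exactly the smallness condition guaranteeing that no other summands of the pushforward are supported on the smaller strata in the relevant degree, so $\IC(\Msp_d,\QQ)$ appears as a clean summand twisted by the cohomology of the generic fiber $\PP^{f\cdot d-1}$.

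Next I would exploit the fact that $\Msp_{f,d}$, the moduli space of semistable framed quiver representations, admits an explicit description as a smooth quasi-projective variety with a torus action (or, in Reineke's framework, a geometric quotient that is smooth and carries a cell decomposition). Its cohomology is therefore pure and of Hodge--Tate type, so the class of $\Msp_{f,d}$ in the Grothendieck ring is a polynomial in $\LL$ with nonnegative coefficients. Purity and the Hodge--Tate property pass to the direct summand $\IC(\Msp_d,\QQ)$ under the decomposition theorem, and by hard Lefschetz the summand corresponding to the fiber $\PP^{f\cdot d-1}$ contributes only a factor $[\PP^{f\cdot d-1}]=1+\LL+\cdots+\LL^{f\cdot d-1}$, which has positive coefficients. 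Dividing out this factor from the (positive-coefficient) class of $\Msp_{f,d}$ and matching with $\DT_d$ via the main theorem yields a polynomial in $\LL$ with nonnegative coefficients.

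The main obstacle I anticipate is controlling the contributions of the \emph{non-open} strata $S_\xi$ precisely enough to conclude that $\IC(\Msp_d,\QQ)$ is the \emph{only} Hodge--Tate summand carrying weight, rather than merely one summand among several whose combination could a priori cancel. The strict inequality in virtual smallness rules out supports on deeper strata in the top degree, but one must still verify that the framed moduli space $\Msp_{f,d}$ has pure Hodge--Tate cohomology with positive Betti numbers; for quivers this follows from the existence of a Bia\l ynicki-Birula or Harder--Narasimhan type cell decomposition, so I expect the argument to go through once the framed space is identified with the smooth GIT quotient studied by Reineke. The delicate bookkeeping lies in extracting $\DT_d$ as an honest polynomial quotient rather than a formal difference of classes, which is precisely where purity and the explicit $\PP^{f\cdot d-1}$-fibration structure are indispensable.
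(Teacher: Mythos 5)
Your core reduction coincides with the paper's entire proof, which is a one-liner: by the main theorem $\DT_d=\IC_c(\Msp_d,\QQ)$, and the intersection cohomology groups carry pure Hodge structures (of Tate type, since for quivers everything lives in $\ZZ[\LL^{\pm 1/2}]$), so the coefficients are dimensions and hence nonnegative. One small imprecision in your setup: the realization map goes from motivic classes to Hodge-theoretic ones, not the other way; the motivic and Hodge DT invariants are identified because, by the integrality theorem, both lie in the Tate subring $\ZZ[\LL^{\pm 1/2}]$, on which realization is injective. The real substance of your proposal is the attempt to \emph{prove} the purity that the paper merely asserts, and there are two genuine gaps in that part.

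First, the step of ``dividing out'' the factor $[\PP^{f\cdot d-1}]_{vir}$ from the positive class of $\Msp_{f,d}$ is not a valid operation: positivity of coefficients is not preserved under polynomial division (e.g.\ $(1+\LL)(1-\LL+\LL^2)=1+\LL^3$ has nonnegative coefficients while the second factor does not), and knowing that the remaining summands in the decomposition theorem are also positive only gives an inequality of classes, not control of the quotient. The correct argument avoids division entirely: the Decomposition Theorem is an isomorphism of complexes, so taking compactly supported hypercohomology degree by degree exhibits each group $\IC_c^k(\Msp_d,\QQ)$, up to Tate twist, as a direct summand of some $H_c^\ast(\Msp_{f,d})$; a direct summand of a pure Hodge structure of Tate type is again pure of Tate type, which forces the odd groups to vanish and makes the class a nonnegative combination of powers of $\LL$ with no division needed. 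Second, your purity input for $\Msp_{f,d}$ --- ``a cell decomposition in Reineke's framework'' --- is only established for trivial stability (noncommutative Hilbert schemes); for a general generic stability, which is exactly what assumption (8) requires, no such cell decomposition is available, and polynomiality of Betti numbers together with smoothness does not imply purity (compare $\mathbb{G}_m$). What does work is semiprojectivity: $\Msp_{f,d}$ is smooth, and by Theorem \ref{virtsmall} it is projective over $\Msp_d$, hence proper over the affine quotient $\Spec\kk[X_d]^{G_d}$, which carries a contracting scaling $\mathbb{G}_m$-action lifting to $\Msp_{f,d}$; properness gives existence of all limits, hence a Bia{\l}ynicki-Birula decomposition over proper fixed loci and purity of $H_c^\ast(\Msp_{f,d})$, while Tate-ness then follows from the fact that its motivic class lies in $\ZZ[\LL]$. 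With these two repairs your argument becomes a correct, more self-contained substitute for the purity assertion that the paper leaves implicit.
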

Indeed, the coefficients are the dimensions of the intersection cohomology groups and the latter carry a pure Hodge structure.\\
Another corollary is obtained by using the fact that the moduli space of semistable quiver representations admits a proper map to the affine, connected moduli space of semisimple representations of the same dimension vector. If the quiver is acyclic, there is only one, thus the moduli space must be compact and we can apply the Hard Lefschetz theorem to intersection cohomology. 
\begin{corollary}[Unimodularity]
 If $Q$ is acyclic, the Donaldson--Thomas invariant for a generic stability condition is a unimodular polynomial in the Lefschetz motive. The unimodularity remains true for the Hodge polynomial of the  Donaldson--Thomas invariants for Gieseker semistable sheaves on smooth compact curves or smooth projective surfaces of Kodaira dimension $-\infty$.
\end{corollary}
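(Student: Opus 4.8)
The plan is to reduce both assertions to the Hard Lefschetz theorem for the intersection cohomology of the relevant moduli space, once projectivity of that space is in hand. By the main theorem, whenever $\Msp_d^{s}\neq\emptyset$ one has $\DT_d=\IC_c(\Msp_d,\QQ)=\IC(\Msp_d,\QQ)^\vee$ in the Grothendieck ring of polarizable mixed Hodge structures, and by the Positivity corollary the underlying classes are represented by \emph{pure} Hodge structures whose dimensions are the intersection Betti numbers. Thus, up to a normalising weight shift, the Donaldson--Thomas invariant is the intersection Poincaré--Hodge polynomial $\sum_i \dim IH^i_c(\Msp_d,\QQ)\,t^i$ refined by the Hodge grading, and ``unimodular'' means precisely that its coefficient sequence is symmetric about the centre and nondecreasing up to the middle. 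I would therefore aim to prove palindromicity and unimodality of these intersection numbers. Throughout, genericity of the stability condition is what guarantees hypotheses (7)--(8) and hence the applicability of the main theorem.

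First I would establish projectivity of $\Msp_d$. For an acyclic quiver the invariant ring $\kk[R_d]^{G_d}$ is generated by traces along oriented cycles; as there are none, it consists of constants, so the affine GIT quotient $\Spec\kk[R_d]^{G_d}$ is a single reduced point, the image of the unique closed $G_d$-orbit, namely that of the semisimple representation $\bigoplus_i S_i^{d_i}$. This confirms the remark preceding the corollary, and the morphism $\Msp_d\to\Spec\kk[R_d]^{G_d}=\{\mathrm{pt}\}$ forgetting stability, being projective, forces $\Msp_d$ to be projective. For Gieseker-semistable sheaves on a smooth projective curve or surface the moduli space is projective directly from its GIT construction, so the same input is available in that setting.

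Next I would invoke the Hodge-theoretic Hard Lefschetz theorem in Saito's mixed Hodge module form. Since $\Msp_d$ is projective, $IH^\bullet(\Msp_d,\QQ)$ is pure and polarizable, and cup product with an ample class gives a Lefschetz operator $L$ of Hodge type $(1,1)$ for which $L^k\colon IH^{n-k}(\Msp_d,\QQ)\xrightarrow{\ \sim\ }IH^{n+k}(\Msp_d,\QQ)(k)$ is an isomorphism for all $k$, where $n=\dim\Msp_d$. Palindromicity $\dim IH^{n-k}=\dim IH^{n+k}$ is then immediate, while injectivity of $L$ in degrees below the middle yields $\dim IH^i\le\dim IH^{i+2}$ for $i<n$, hence unimodality; Poincaré--Verdier duality transports these statements to $IH^\bullet_c$. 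For quiver moduli the intersection cohomology is of Tate type, so after collecting weights $\DT_d$ becomes an honest polynomial in the Lefschetz motive $\LL$ with palindromic and unimodal coefficients, i.e.\ unimodular. For sheaves the intersection cohomology need not be Tate, so I would descend only to the level of the Hodge polynomial: because $L$ has type $(1,1)$, the isomorphisms and injections above hold in each Hodge bigrading, giving palindromicity and unimodality of the Hodge polynomial.

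The main obstacle is not the Hard Lefschetz mechanism but securing its hypotheses for sheaves. One must check that assumptions (1)--(8) genuinely hold so that the main theorem applies; for smooth projective surfaces of Kodaira dimension $-\infty$ this requires exhausting the category of semistable sheaves of fixed slope by full abelian subcategories satisfying (1)--(8), as indicated in the introduction, together with verifying that the identification $\DT_d=\IC_c(\Msp_d,\QQ)$ is compatible with passing to the limit along this exhaustion. A secondary subtlety is bookkeeping: one has to match the Tate twists appearing in the Hard Lefschetz isomorphisms with the weight normalisation under which the Donaldson--Thomas generating series is written, so that the symmetry produced is the palindromicity actually claimed rather than one offset by a spurious power of $\LL$.
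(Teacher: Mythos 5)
Your proposal is correct and follows essentially the same route as the paper: the paper's (very brief) argument likewise observes that for an acyclic quiver the affine quotient of semisimple representations is a single point (no oriented cycles, hence constant invariants), so the properness of $\Msp_d$ over it forces compactness, and then applies Hard Lefschetz to intersection cohomology, with the main theorem identifying $\DT_d$ with $\IC_c(\Msp_d,\QQ)$; the sheaf cases are handled identically via projectivity of the GIT moduli spaces. Your additional bookkeeping (purity from the Positivity corollary, Tate type for quivers versus Hodge polynomials for sheaves) fleshes out details the paper leaves implicit but does not change the argument.
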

The next result is a direct consequence of our main theorem and Proposition \ref{localDT} and Corollary \ref{localDT2}.
\begin{corollary}[Locality]
 Fix a  point  in the moduli space of objects in $\AA_\kk$, that is, a semisimple object $E=\bigoplus_{k\in K} E_k^{m_k}$ with simple pairwise non-isomorphic summands $E_k$. If the component of the moduli space containing $E$ also contains stable representations, then the fiber at $E$ of the intersection complex of the moduli space $\Msp$ is given by the intersection cohomology of a moduli space associated to the $\Ext^1$-quiver of the collection $(E_k)_{k\in K}$.  
\end{corollary}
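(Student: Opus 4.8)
The plan is to combine the main theorem with the locality results in Proposition~\ref{localDT} and Corollary~\ref{localDT2}. First I would use the main theorem to replace the intersection complex of $\Msp$ by the Donaldson--Thomas sheaf: since the two classes agree in the Grothendieck group of mixed Hodge modules, and since the stalk (fiber) of the intersection complex at the point $[E]$ is an \'etale-local invariant, it suffices to understand the fiber of the Donaldson--Thomas sheaf at $[E]$. The crucial point I would exploit is that the whole construction of $\DT$ is \'etale-local on $\Msp$, so that only an \'etale (or formal) neighborhood of $[E]$ needs to be described.

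Next I would produce such a local model. Writing $\Msp_d$ as a GIT-quotient $X_d/\!\!/G_d$ and invoking the deformation theory of assumption (6), the formal neighborhood of the polystable object $E=\bigoplus_{k\in K}E_k^{m_k}$ in the moduli stack $\Mst$ is controlled by $\Ext^1_{\AA_\kk}(E,E)$ together with the obstruction space $\Ext^2_{\AA_\kk}(E,E)$. Assumption (7) makes the category of homological dimension one, so the obstructions vanish and the hull of the deformation functor is smooth; concretely the local model is the representation space of the $\Ext^1$-quiver $Q$ of $(E_k)_{k\in K}$, whose vertices are indexed by $K$ and whose number of arrows from $k$ to $l$ equals $\dim_\kk\Ext^1_{\AA_\kk}(E_k,E_l)$, taken at the semisimple point $\bigoplus_k S_k^{m_k}$. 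This is exactly the content I would extract from Proposition~\ref{localDT}: an \'etale-local identification of $(\Msp,[E])$ with the quiver moduli space $\Msp^Q$ of dimension vector $(m_k)_{k\in K}$ near its most degenerate polystable point, compatible with the Donaldson--Thomas sheaves on both sides.

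Finally I would apply Corollary~\ref{localDT2}, which computes the fiber of the Donaldson--Thomas sheaf of the quiver at that point as the intersection cohomology of $\Msp^Q$ (here the contracting torus action on the affine cone identifies the stalk at the cone point with the global intersection cohomology). Chaining the three steps shows that the fiber at $[E]$ of the intersection complex of $\Msp$ equals the intersection cohomology of the moduli space attached to the $\Ext^1$-quiver, as claimed. The hypothesis that the component of $\Msp$ containing $E$ also contains stable objects enters twice: on the original side it guarantees $\Msp^s_d\neq\emptyset$, so the main theorem lands in the nontrivial branch and the relevant intersection complex is that of the closure of the simple locus; on the quiver side it guarantees that $\Msp^Q$ carries stable representations, so that its intersection cohomology is the correct local answer rather than zero.

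I expect the main obstacle to be the \'etale-local comparison of the second step. One must verify not only that the two moduli problems agree formally near $[E]$ --- which is precisely where homological dimension one is indispensable, since it kills the obstructions that would otherwise deform $Q$ into a quiver with relations or potential --- but also that the Donaldson--Thomas sheaf, a priori only a class in an extended Grothendieck group, admits a well-defined restriction to such a neighborhood that matches the corresponding class for the quiver. This compatibility is exactly the work carried out in Proposition~\ref{localDT}, so in the present argument the difficulty reduces to checking that the hypotheses of that proposition and of Corollary~\ref{localDT2} are satisfied in the situation at hand.
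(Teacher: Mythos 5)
Your skeleton---main theorem, then Proposition \ref{localDT}, then Corollary \ref{localDT2}---is exactly the chain the paper intends, and your account of where the hypothesis ``the component of $[E]$ contains stable objects'' enters is right. But the mechanism you propose for the middle step has a genuine gap. You assert that ``the whole construction of $\DT$ is \'etale-local on $\Msp$'' and read Proposition \ref{localDT} as an \'etale-local identification of $(\Msp,[E])$ with $(\Msp_{d_\xi}(Q_\xi),0)$ compatible with the Donaldson--Thomas sheaves on both sides. Neither claim is established in the paper, and the first does not follow from deformation theory: $\DTS$ is not a complex of sheaves but a class defined by the global relation $p_!\ICS_{\Mst}=\Sym\bigl(\DTS/(\LL^{1/2}-\LL^{-1/2})\bigr)$, where $\Sym$ is built from the convolution $\oplus_\ast$; an \'etale or formal neighbourhood of $[E]$ is not closed under $\oplus$, so the defining equation cannot be restricted to it, and there is no a priori sense in which $\DTS$ near $[E]$ depends only on such a neighbourhood. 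What Proposition \ref{localDT} actually does is restrict along the \emph{sub-monoid} $\iota_E\colon\NN^s\times\Spec\KK\hookrightarrow\Msp$, $n\mapsto\bigoplus_k E_k^{n_k}$ (a disjoint union of points, not an \'etale neighbourhood), which is closed under $\oplus$ and satisfies the cartesian hypothesis of Lemma \ref{lambda_pull_back}, so that pullback commutes with $\Sym$; the identification of $\iota_E^\ast\DTS^{mot}$ with the quiver series is then a Ringel--Hall computation (the socle/Grassmannian vanishing identity $f\ast g=1$, via the integration map of Proposition \ref{integration_map}) combined with Efimov's theorem. Luna slices and the smoothness coming from assumptions (6)--(7) are used only for Theorems \ref{virtsmall} and \ref{locally_trivial_fibration}; they are not the engine behind Proposition \ref{localDT}, so a proof that ran on \'etale-locality of $\DTS$ plus a deformation-theoretic local model would be missing a theorem the paper never supplies.

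There is a second, smaller misattribution at the end: Corollary \ref{localDT2} only yields that the fiber of $\DTS$ at $[E]$ equals the fiber of the DT sheaf of $Q_\xi$ at the origin of $\Msp_{d_\xi}(Q_\xi)$; it says nothing about intersection cohomology. To finish, one must invoke the main theorem a \emph{second} time, now for representations of the symmetric quiver $Q_\xi$ with trivial stability (which satisfies assumptions (1)--(8)), replacing that fiber by the stalk of $\ICS_{\Msp_{d_\xi}(Q_\xi)}(\QQ)$ at the cone point; only then does your (correct) remark about the contracting $\Gl(1)$-action identify the stalk with the global intersection cohomology of the quiver moduli space. With these two repairs---replace \'etale-locality of $\DT$ by the sub-monoid restriction of Proposition \ref{localDT}, and add the second application of the main theorem on the quiver side---your argument coincides with the paper's.
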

The paper is organized as follows. Section 2 provides some background on quivers an their representations. The main purpose is to fix notation used later and to provide an easy example of the theory. \\
\\
Section 3 generalizes section 2 to more general abelian categories of homological dimension one. We discuss all the assumptions (1)--(8) in detail and proof some results which basically say that moduli spaces and stacks behave as they should do. The most important result of section 3 is Theorem \ref{virtsmall},  stating that the so-called Hilbert--Chow morphism is virtually  small. Parts of the proof of this important technical result are postponed to section 7.\\
\\
Section 4 is devoted to intersection complexes and the Schur functor formalism. As we need the notion of a weight filtration, restricting to perverse sheaves is not sufficient. Hence, we have to consider mixed Hodge modules, but there is no reason to be worried about that. We only need that the Grothendieck group is freely generated (as a group) by some sort of intersection complexes, and also the Decomposition Theorem of Beilinson, Bernstein, Deligne, Gabber and Saito will be used sometimes. \\
Taking direct sums of representations induces a symmetric monoidal tensor product on the category of mixed Hodge modules by convolution. Using some general machinery (see \cite{Deligne1}), one can introduce Schur (endo)functors. Among them the symmetric and alternating powers are the most famous ones, and we finally obtain a $\lambda$-ring structure on the Grothendieck group of mixed Hodge structures. \\

The $\lambda$-ring structure is used in Section 5 to define the Donaldson--Thomas ``sheaves''.  Using the virtual smallness of the Hilbert--Chow morphism and the Decomposition Theorem, we finally deliver the proof of our main theorem by comparing degrees of the weight filtration.\\ 

While proving our main result in section 5, we will observe that a certain finiteness condition is crucial. It turns out that this condition is a sheaf version of the famous integrality conjecture in Donaldson--Thomas theory. We provide a proof  of this ``sheafified integrality conjecture'' reducing the problem to a result of Efimov (see \cite{Efimov}, Theorem 1.1).  Here is the main result of section 6.

\begin{theorem}[Integrality Conjecture, sheaf version]  
For an abelian category $\AA_\kk$ satisfying the conditions (1)--(8) the motivic Donaldson--Thomas sheaf $\DTS^{mot}$ is in the image of the natural map 
\[ \hat{\Ka}(\Var/\Msp)[\LL^{-1/2}] \longrightarrow \hat{\Ka}(\Var/\Msp)[\LL^{-1/2}, (\LL^n-1)^{-1}\, :\, n\in \NN].\] 
\end{theorem}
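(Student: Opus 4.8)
The plan is to reduce the statement to the integrality of motivic Donaldson--Thomas invariants for \emph{symmetric} quivers, which is exactly the content of Efimov's Theorem 1.1 in \cite{Efimov}. Recall that $\DTS^{mot}$ is defined as a plethystic logarithm of the motivic generating series of the (framed) objects, and it is precisely this logarithm that a priori forces the denominators $(\LL^n-1)^{-1}$ into the picture. The goal is to show that these denominators cancel, and I would establish this by a local-to-global argument on $\Msp$, treating the symmetric-quiver integrality of \cite{Efimov} as a black box.

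First I would reduce to a local computation. Using the virtually small stratification $\Msp_d=\sqcup_\xi S_\xi$ of Theorem \ref{virtsmall}, along which the framed moduli fibres are étale-locally trivial and hence the motivic generating series is constructible, it suffices to test membership in the image of the localization map in an étale neighbourhood of an arbitrary closed point $x\in\Msp_d$. Such a point corresponds to a polystable object $E=\bigoplus_{k\in K}E_k^{m_k}$, and by the good deformation theory of assumption (6) the stack $\Mst$ is étale-locally isomorphic to the moduli stack of representations of the $\Ext^1$-quiver $Q_E$ of the collection $(E_k)_{k\in K}$ --- the local structure already exploited in Proposition \ref{localDT} and Corollary \ref{localDT2}. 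Since $\Hom_{\AA_\kk}(E_k,E_l)=0$ for $k\neq l$ by Schur's lemma, the symmetry of the Euler pairing (assumption (8)) forces $\dim_\kk\Ext^1(E_k,E_l)=\dim_\kk\Ext^1(E_l,E_k)$, so that $Q_E$ is a \emph{symmetric} quiver and the local contribution to $\DTS^{mot}$ is the motivic Donaldson--Thomas invariant of $Q_E$.

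Efimov's theorem then asserts precisely that this local invariant is integral, i.e.\ lies in $\hat{\Ka}(\Var)[\LL^{-1/2}]$ with no factor $(\LL^n-1)^{-1}$ surviving. Transporting this back through the étale-local equivalence and assembling the contributions over the finitely many strata yields the global claim for $\DTS^{mot}$. The main obstacle I expect is not the input from \cite{Efimov} but the passage from a fibrewise statement to one genuinely in the relative Grothendieck group $\hat{\Ka}(\Var/\Msp)$: one must verify that the étale-local identification of $\Mst$ with the $\Ext^1$-quiver stack is compatible with the plethystic operations defining $\DTS^{mot}$, and that the absence of poles can be certified stratum by stratum rather than merely after specialising to individual geometric points. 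Once this compatibility is in place, the symmetric-quiver integrality of Efimov closes the argument.
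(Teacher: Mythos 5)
Your high-level plan (reduce to the symmetric $\Ext^1$-quiver of the simple summands and quote Efimov) is indeed the paper's strategy, and your remark that assumption (8) forces $Q_E$ to be symmetric is correct. But the local-to-global mechanism you propose does not work, and the "main obstacle" you flag at the end is not a technicality to be checked -- it is precisely the mathematical content of the proof, and your route cannot supply it. First, testing integrality at \emph{closed} points is insufficient: an element of $\hat{\Ka}(\Var/\Msp)[\LL^{-1/2},(\LL^n-1)^{-1}:n\in\NN]$ is not controlled by its pullbacks to closed points (the stalk at a closed point says nothing about any neighbourhood of it). The paper's reduction is to arbitrary scheme-theoretic points -- crucially the \emph{generic} points of irreducible subsets, whose residue fields are function fields $\Quot(B)$ -- combined with a spreading-out argument based on the presentation of $\Ka(\Var/\Spec B)$ in Remark \ref{reduction_to_affine_case}: if the stalk at the generic point lies in the image of the localization map, then so does the restriction to some principal open $\Spec B_b$, and one concludes by noetherian induction on the closed complement. (This need to restrict to points with arbitrary residue fields is exactly why the theorem is proved motivically rather than for mixed Hodge modules.) Second, your étale-local reduction faces a descent problem: Luna's theorem produces étale maps, not Zariski open immersions, and $\Ka(\Var/-)$ admits no étale descent or transfer, so "no poles after pullback to an étale neighbourhood" does not descend to a Zariski neighbourhood.

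Third -- the gap you name but do not close -- $\DTS^{mot}$ is defined by a plethystic logarithm taken with respect to the \emph{global} monoid $(\Msp,\oplus)$, and the $\lambda$-operations do not commute with pullback along étale maps or even open immersions, so there is no "local DT sheaf" on a slice to compare with the quiver one. The paper's substitute is entirely different: it pulls back along the \emph{monoid homomorphism} $\iota_E:\NN^s\times\Spec\KK\hookrightarrow\Msp$, $n\mapsto\bigoplus_k E_k^{n_k}$, for which the relevant square is cartesian, so that $\iota_E^\ast$ is a $\lambda$-ring homomorphism (Lemma \ref{lambda_pull_back}); the class $\iota_E^\ast\DTS^{mot}$ is then computed not by any local structure theorem but by a Ringel--Hall identity, namely $f\ast g=1$ for $f$ an alternating sum of classifying stacks $[\Spec\KK/G_n]$ and $g=[\Mst_E\to\Mst_\KK]$, proved via vanishing of alternating sums of $\LL$-binomial coefficients and pushed down to $\Msp$ with the integration map of Proposition \ref{integration_map} (which is where assumptions (7)--(8) enter). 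The result, Proposition \ref{localDT}, reads $\iota_E^\ast\DTS^{mot}=\DT^{mot}(Q_\xi)|_{\LL^{1/2}\mapsto\LL^{-1/2}}$; note the substitution, which your identification "local contribution $=$ DT invariant of $Q_E$" misses -- the local model sees only the nilpotent (nullcone) representations of $Q_\xi$, and their generating series differs from the full one exactly by this duality. Only after these two steps (generic points plus spreading out, and the Hall-algebra computation along the sub-monoid) does Efimov's theorem close the argument.
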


By ``integrating'' over the moduli space of objects of a given dimension vector $d$, we obtain a proof of the famous integrality conjecture.
\begin{corollary}[Integrality Conjecture] 
For $\AA_\kk$ as above the motivic Donaldson--Thomas invariant $\DT^{mot}_d$ is in the image of the natural map 
\[ \hat{\Ka}(\Var/\kk)[\LL^{-1/2}] \longrightarrow \hat{\Ka}(\Var/\kk)[\LL^{-1/2}, (\LL^n-1)^{-1}\, :\, n\in \NN].\]
\end{corollary}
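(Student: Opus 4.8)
The plan is to deduce the Integrality Conjecture for the invariant $\DT^{mot}_d$ from the sheaf-theoretic version (Theorem, Integrality Conjecture, sheaf version) that has just been stated. The key observation is that the ordinary motivic Donaldson--Thomas invariant is obtained from the Donaldson--Thomas sheaf $\DTS^{mot}$ by pushing forward along the structure morphism $p\colon \Msp \to \Spec\kk$, that is, by taking the (compactly supported) cohomology or equivalently by integrating the relative class over the moduli space. First I would make this relationship precise: for each dimension vector $d$ the invariant $\DT^{mot}_d$ is the image of the component $\DTS^{mot}_d \in \hat{\Ka}(\Var/\Msp_d)$ under the proper pushforward $p_{d,!}\colon \hat{\Ka}(\Var/\Msp_d)\to \hat{\Ka}(\Var/\kk)$ induced by $p_d\colon \Msp_d\to\Spec\kk$.

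Next I would check that this pushforward is compatible with the localizations appearing in the two statements, so that it descends to a commuting square of ring (or at least group) homomorphisms. Concretely, $p_{d,!}$ is $\hat{\Ka}(\Var/\kk)$-linear with respect to the module structure given by external product, hence it commutes with inverting $\LL^{1/2}$ and with inverting the elements $\LL^n-1$. This yields a commutative diagram whose top horizontal arrow is the map $\hat{\Ka}(\Var/\Msp)[\LL^{-1/2}]\to \hat{\Ka}(\Var/\Msp)[\LL^{-1/2},(\LL^n-1)^{-1}]$ of the sheaf version, whose bottom horizontal arrow is the analogous map over $\kk$ in the Corollary, and whose vertical arrows are the two instances of $p_{d,!}$. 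Chasing $\DTS^{mot}_d$ around this square immediately gives the claim: since $\DTS^{mot}$ lies in the image of the top arrow by the Theorem, its pushforward $\DT^{mot}_d$ lies in the image of the bottom arrow.

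I expect the only real subtlety — and hence the main obstacle — to be the verification that $p_{d,!}$ is well defined on the \emph{completed} and \emph{localized} Grothendieck groups, rather than the formal diagram chase. One must confirm that proper (or compactly supported) pushforward respects the dimensional completion $\hat{\Ka}$ and the inversion of $\LL^{1/2}$, which requires knowing that $\Msp_d$ is of finite type over $\kk$ so that pushforward does not destroy the filtration defining the completion; this is guaranteed by assumptions (1)--(4) and (6). I would also record that the identification of $\DT^{mot}_d$ with $p_{d,!}\DTS^{mot}_d$ is exactly the definition of the Donaldson--Thomas invariant as the compactly supported cohomology of the Donaldson--Thomas sheaf, as set up in Section 5, so no new input is needed there. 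Granting these compatibilities, the Corollary follows formally, which is why it is stated as a direct consequence.
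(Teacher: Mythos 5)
Your proposal is correct and is essentially the paper's own argument: the paper deduces the corollary by applying the proper pushforward $\dim_!$ (integration over $\Msp$) to the sheaf-version Theorem, using that this pushforward is linear over $\ZZ[\LL^{\pm 1/2},(\LL^n-1)^{-1}]$ and hence compatible with the two localizations. The only point worth making explicit is that the identification $\DT^{mot}=\dim_!\DTS^{mot}$ is not purely definitional in the motivic setting (where $\DT^{mot}$ is defined by a $\Sym$-equation) but uses that $\dim_!$ is a $\lambda$-ring homomorphism commuting with $\Sym$, which the paper records immediately after the definition.
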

This result has been obtained by Efimov for representations of symmetric quivers  and trivial stability condition (see  \cite{Efimov}, Theorem 1.1). A very complicated proof of the integrality conjecture even for quivers with potential was given by Kontsevich and Soibelman (see \cite{KS2}, Theorem 10).\\
\\
Let me finally give some outline of other papers occurring in the near future. In \cite{DavisonMeinhardt3} the authors use the results proven here to give a comprehensive treatment of Donaldson--Thomas theory for categories $\AA_\kk$ as above satisfying assumption (1)--(8) which are equipped with a  ``potential'' or even more general structures. In another paper \cite{DavisonMeinhardt4} a categorification of all these results is provided in the context of mixed Hodge modules.
\\

\textbf{Acknowledgments.} The main result of the paper was originally observed and conjectured by J.\ Manschot while doing some computations. The author is very grateful to him for sharing his observations and his conjecture which was the starting point of this paper. The author would also like to thank Jörg Schürmann for answering patiently all  questions about mixed Hodge modules.
Special thanks goes to Markus Reineke for providing a wonderful atmosphere to complete this research. Many of the results proven here took there origin in a previous version \cite{MeinhardtReineke} for quivers only written in collaboration with Markus. His beautiful ideas still provide the background of this expanded version.

\section{Moduli spaces of quiver representations}

\subsection{Quiver representations}

We fix a field $\KK$ which might either be our ground field $\kk$ or, as in section 6, a not necessarily algebraic extension of the latter. Let $Q=(Q_0,Q_1,s,t)$ be a quiver consisting of a not necessarily finite set $Q_0$ of vertices and a  set $Q_1$ of arrows with $s,t:Q_1\to Q_0$ being the source and target maps. We assume that for every pair $(i,j)\in Q_0^2$ the set $Q(i,j)$ of arrows from $i$ to $j$ is finite. To any quiver we associate its $\KK$-linear path category $\KK Q$ with set of objects $Q_0$ and $\Hom_{\KK Q}(i,j)$ being the $\KK$-vector space generated by all paths from $i$ to $j$. Composition is induced by $\KK$-linear extension of concatenation of paths. \\
There is a second (dg-)category associated to $Q$, namely its Ginzburg category $\Gamma_\KK Q$. The underlying $\KK$-linear category  is the path category $\KK Q^{ex}$ associated to the extended quiver $Q^{ex}=(Q_0, Q_1\sqcup Q_1^{op} \sqcup Q_0,s^{ex},t^{ex})$ obtained from $Q$ by adding to every arrow $\alpha:i\to j$ of $Q$ another arrow $\alpha^\ast:j \to i$ with opposite orientation, and a loop $l_i:i\to i$ for every vertex $i\in Q_0$. We make $\Gamma_\KK Q$ into a dg-category by introducing a grading such that $\deg(\alpha)=0, \deg(\alpha^\ast)=-1$, and $\deg(l_i)=-2$. The differential is uniquely determined by putting 
\[ d\alpha=d\alpha^\ast=0 \quad\mbox{ and }\quad dl_i=\sum_{\alpha:i \to j} \alpha^\ast \alpha - \sum_{\alpha:j \to i} \alpha \alpha^\ast. \]      
Apparently, $H^0(\Gamma_\KK Q)\cong \KK Q$ can be interpreted as a dg-category with zero grading and trivial differential. \\
By looking at dg-functors $V:\KK Q \longrightarrow \dgV_\KK$ and $W: \Gamma_\KK Q  \longrightarrow \dgV_\KK$ into the category of dg-vector spaces with finite dimensional total cohomology, we get two dg-categories with model structures and associated triangulated homotopy ($A_\infty$-)categories $D^b(\KK Q-\rep)$ and $D^b(\Gamma_\KK Q - \rep)$. Each has a bounded t-structure with heart $\KK Q-\rep$ being the abelian category of quiver representations, that is, of functors $V:\KK Q \longrightarrow \Vect_\KK$ into the category of finite dimensional $\KK$-vector spaces. In particular, 
\[ \Ka_0( D^b(\KK Q - \rep)) \cong \Ka_0 (D^b(\Gamma_\KK Q -  \rep)) \cong \Ka_0 (\KK Q-\rep). \]
There is a group homomorphism $\dim: \Ka_0(\KK Q -\rep)\longrightarrow \ZZ^{\oplus Q_0}$ associating to every representation $V$ the tuple $(\dim_\KK V_i)_{i\in Q_0}\in \NN^{\oplus Q_0}$ of dimensions of the vector spaces $V_i:=V(i)$. There are two pairings on $\ZZ^{\oplus Q_0}$ defined by \begin{eqnarray*}
(d,e)& := & \sum_{i\in Q_0} d_ie_i \; - \sum_{Q_1\ni \alpha:i\to j} d_ie_j \\
\langle d,e \rangle &:=& (d,e)\;-\;(e,d) 
\end{eqnarray*}
such that the pull-back of these pairings via $\dim$ is just the Euler pairing induced by $D^b(\KK Q-\rep)$ resp.\ $D^b(\Gamma_\KK Q-\rep)$. The skew-symmetry of the latter reflects the fact that $D^b(\Gamma_\KK Q-\rep)$ is a 3-Calabi--Yau category, that is, the triple shift functor $[3]$ is a Serre functor. 

\subsection{Moduli spaces}
The stack of $Q$-representations, that is, of objects in $\KK Q-\rep$, can be described quite easily. For this, fix a dimension vector $d=(d_i)\in \NN^{\oplus Q_0}$ and let $G_d:=\prod_{i\in Q_0} \Aut(\KK^{d_i})$ act on $R_d:=\prod_{\alpha:i\to j} \Hom(\KK^{d_i},\KK^{d_j})$ by simultaneous conjugation. The stack of $\KK Q$-representations of dimension $d$ is just the quotient stack $\Mst_d=R_d/G_d$.  \\
We want to study semistable representations of $Q$. As the radical of the Euler pairing contains the kernel of $\dim:\Ka_0(\KK Q-\rep)\longrightarrow \ZZ^{\oplus Q_0}$, every tuple $\zeta=(\zeta_i)_{i\in Q_0}\in \{r\exp(i\pi\phi)\in \mathbb{C}\mid r>0, 0<\phi\le 1\}^{Q_0}\subset \mathbb{C}^{Q_0}$ provides a numerical Bridgeland stability condition on $D^b(\KK Q-\rep)$ and on $D^b(\Gamma_\KK Q -\rep)$ with central charge $Z(V)=\zeta\cdot \dim V:=\sum_{i\in Q_0}\zeta_i\dim_\KK V_i$ of slope $\mu(V):=- \Re e Z(V)/ \Im m Z(V)$ and standard t-structure. Hence we get an open substack $\Mst^{\zeta-ss}_d=R^{\zeta-ss}_d/G_d$ of semistable $\KK Q$-representations.  For $\mu\in (-\infty,+\infty]$ we call $\zeta$ $\mu$-generic if $\langle d,e\rangle =0$ for all $d,e\in \NN^{\oplus Q_0}$ of slope $\mu$, and generic if that holds for all $\mu$. For finite $Q_0$ the non-generic ``stability conditions'' $\zeta$ lie on a countable but locally finite union of  real codimension one walls in the complex manifold $\{r\exp(i\pi\
phi)\in \mathbb{C}\mid r>0, 0<\phi\le 1\}^{Q_0}$. Obviously every stability for a symmetric quiver is generic. Another important class is given by complete bipartite quivers and the maximally symmetric stabilities used in \cite{RW} to construct a correspondence between the cohomology of quiver moduli and the GW invariants of \cite{GPS}.\\    
As we wish to form moduli schemes, we should restrict ourselves to King stability conditions $\zeta=(-\theta_i + \sqrt{-1})_{i\in Q_0}$ for some $\theta=(\theta_i)\in \ZZ^{Q_0}$, giving rise to a linearization of the $G_d$-action on $R_d$ with semistable points $R^{\zeta-ss}_d$. Let us denote the GIT-quotient by $\Msp^{\zeta-ss}_d= R^{\zeta-ss}_d/\!\!/ G_d$. The points in $\Msp^{\zeta-ss}_d$ correspond to polystable representations $V=\oplus_{\kappa} V_\kappa$, and the obvious morphism $p:\Mst^{\zeta-ss}_d \longrightarrow \Msp^{\zeta-ss}_d$ maps a semistable representation to the direct sum of its stable factors. We also have the open substack $\Mst^{\zeta-st}_d\subset \Mst^{\zeta-ss}_d$ of stable representations mapping to the open subscheme $\Msp^{\zeta-st}_d\subset \Msp^{\zeta-ss}_d$ of stable representations. 
Note that $\Mst_d, \Mst^{\zeta-ss}_d, \Mst^{\zeta-st}_d,$ and $\Msp^{\zeta-st}_d$ are smooth while $\Msp^{\zeta-ss}_d$ is not. Moreover, $\Msp^{\zeta-st}_d$ is either dense in $\Msp^{\zeta-ss}_d$ or empty. We call $\theta$ ($\mu$-)generic if $\zeta=(-\theta_i + \sqrt{-1})_{i\in Q_0}$ is ($\mu$-)generic in the previous sense. \\

For later applications we also need framed $Q$-representations (see \cite{Reineke1}). For this we fix a framing vector $f\in \NN^{Q_0}$ and consider representations of a new quiver $Q_f=(Q_0\sqcup\{\infty\}, Q_1\sqcup \{\beta_{l_i}:\infty \to i \mid i\in Q_0, 1\le l_i\le f_i \})$ with dimension vector $d'$ obtained by extending $d$ via $d_\infty=1$. We also extend $\theta$ appropriately (see \cite{Reineke1}) and get a King stability condition $\theta'$ for $Q_f$. Let $\Msp^{\zeta-ss}_{f,d}$ be the moduli space of $\theta'$-semistable $Q_f$-representations of dimension vector $d'$. It turns out (cf.\ Proposition \ref{framed_reps_2}) that $\Msp^{\zeta-ss}_{f,d}=\Msp^{\zeta-st}_{f,d}$, and thus $\Msp^{\zeta-ss}_{f,d}$ is smooth. There is an obvious morphism $\pi:\Msp^{\zeta-ss}_{f,d} \longrightarrow \Msp^{\zeta-ss}_d$, obtained by restricting a $\theta'$-(semi)stable representation of $Q_f$ to the subquiver $Q$ which turns out to be $\theta$-semistable. \\

A quiver with relations is a quiver $Q$ and a collection $r(i,j)\subset \Hom_{\KK Q}(i,j)$ of finite dimensional subspaces for $(i,j)\in Q_0^2$. Note that $\Hom_{\KK Q}(i,j)$ does not need to be finite dimensional unless $Q_0$ is finite. We say that a representation $V:\KK Q\to \Vect_\KK$ satisfies the relations if $V(f)=0$ for all $f\in r(i,j)$ and all pairs $(i,j)\in Q_0^2$. The substack of representations of $Q$ satisfying $r$ is closed in $\Mst$ and can be described as a quotient $R_d^r/G_d$ for a closed subscheme $R^r_d\subset R_d$. 

\section{Moduli of objects in  abelian categories}

\subsection{Moduli functors of objects in abelian categories} 

As we want to talk about moduli stacks and spaces, we need some  moduli functor $\Mst$  which attaches to every $\kk$-scheme $S$ the groupoid $\Mst(S)$ of ``families of objects'' in some abelian category $\AA_\kk$ together with isomorphisms between them. For later applications it is useful not only to consider isomorphisms  but also general morphisms between families of objects. Therefore, we make the following first assumption.\\
\\
\textbf{(1) Existence of a moduli theory:} There is a contravariant (pseudo)functor $\AA:S\mapsto \AA_S$ from the category of $\kk$-schemes $S$ to the category of essentially small exact categories  using the shorthand $\AA_R:=\AA_{\Spec R}$ for any $\kk$-algebra $R$, satisfying the usual axioms of a stack. Moreover, for every $\kk$-scheme $S$ and every pair $E_1,E_2\in\AA_S$ the groups $\Hom_{\AA_S}(E_1,E_2)$ and $\Ext^1_{\AA_S}(E_1,E_2)$ should carry the structure of a finitely generated $\OO_S(S)$-module such that composition is $\OO_S$-bilinear. We assume that pull-backs and push-outs of short exact sequences exist in $\AA_S$, and the action of $f\in \OO_S(S)$  on $\Ext^1_{\AA_S}(E_1,E_2)$ coincides with the pull-back along  $f\in \End_{\AA_S}(E_1)$ or the push-out of a short exact sequence along $f\in \End_{\AA_S}(E_2)$. If $\KK\supset \kk$ is a field extension, $\AA_\KK$ should be abelian. \\

\begin{example}\rm
Let $(Q,r)$ be a quiver with  relations and let $\AA_S$ be the category of quiver representations on vector bundles on $S$, i.e.\ the category of functors \\ $V:\kk Q \longrightarrow \Vect_S$ vanishing on $r$.  
\end{example}
\begin{example} \rm
Let $X$ be a smooth projective variety and let $\AA_S$ be the category of coherent sheaves on $S\times X$ flat over $S$. 
\end{example}

We denote with $\Mst$ the subfunctor of $\AA$ mapping $S$ to the groupoid $\Isom(\AA_S)$ of isomorphisms in $\AA_S$. It is also a stack. \\
\\
\textbf{(2) Existence of good moduli stacks:} We require that $\Mst$ is isomorphic to $\sqcup_{d\in \NN^{\oplus I}} X_d/G_d$ for some algebraic space  $X_d$ with $G_d=\prod_{i\in I}\Gl(d_i)$. For $E\in \Mst_d$ we use the notation $\dim E=d$ and call $\dim E$ the dimension (vector) of $E$.\\

Let us explain the last condition. Since $\sqcup_{d\in \NN^{\oplus I}}\Spec\kk/G_d$ is the stack of $I$-graded vector spaces of total finite dimension and $X_d/G_d \longrightarrow  \Spec\kk/G_d$ is representable, we obtain a faithful functor $\omega_S:\Isom(\AA_S)\longrightarrow\Isom(\Vect_S^I)$ from the isomorphism groupoid of $\AA_S$ to the isomorphism groupoid of the category of $I$-graded vector bundles on $S$ of finite total rank. Moreover, $S\mapsto\omega_S$ is compatible with pull-backs. Conversely, every such collection of  functors $\omega_S$ compatible with pull-backs provides a  morphism $\omega:\Mst \to \sqcup_{d\in \NN^{\oplus I}}\Spec\kk/G_d$ of stacks, and we just require that $\omega$ is representable. 
By definition of the fiber product, the $S$-points of $X_d$ are given by equivalence classes of pairs $(E,\psi)$ with $E\in \AA_S$ and $\psi:\omega_S(E)\xrightarrow{\sim}\OO^d_S$ an isomorphism of $I$-graded vector bundles on $S$ with $\OO^d_S:=\bigoplus_{i\in I}\OO_S^{\oplus d_i}$. Two pairs $(E,\phi)$ and $(E',\psi')$ are equivalent if there is a (unique) isomorphism $\phi:E\xrightarrow{\sim}E'$ with $\psi'\omega_S(\phi)=\psi$. 
\begin{example} \rm
 The assumption (2) is fulfilled for representation of quivers with or without relations. For sheaves on smooth projective varieties however, this condition is not fulfilled as the stack is in general only a nested union of quotient stacks. This is, where stability conditions come into play. Fixing a suitable stability condition, one ends up with an open substack of semistable objects which might satisfy this assumption. This is for instance the case for Gieseker semistable sheaves and from now on we assume that $\AA_S$ is the category of flat families of Gieseker semistable sheaves of a particular normalized Hilbert polynomial $p\in \QQ[x]$.     
\end{example}

\textbf{(3) Existence of a fiber functor:} As  the values of $\Mst$ and $\sqcup_{d\in \NN^{\oplus I}}\Spec\kk/G_d$ are the  isomorphism groupoids of exact categories, it is natural to require that $\omega$ has a faithful extension to a morphism $\omega:\AA \to \Vect^I$ of functors with values in exact categories. In particular, $\omega_S:\AA_S\to \Vect^I_S$ should be an exact functor. Moreover, as $\OO_S(S)$ acts on $\Hom$- and $\Ext^1$-groups in $\AA_S$ as well as in $\Vect^I_S$, we also require that $\omega_S$ is $\OO_S(S)$ linear.  We also require that every morphism $f:E\to E''$ in $\AA_S$ with $\omega_S(f):\omega_S(E)\to \omega_S(E'')$ fitting into an exact sequence $0 \to V' \to\omega_S(E) \xrightarrow{\omega_S(f)} \omega_S(E'')\to 0$ in $\Vect^I_S$ can be completed to an exact sequence $0\to E' \to E \xrightarrow{f} E''\to 0$ in $\AA_S$ which implies  $V'\cong \omega_S(E')$ as $\omega_S$ is exact.\\ 
\\
As an immediate consequence we observe that $\AA_\KK$ must be of finite length. In particular, every object in $\AA_\KK$ has a Jordan--H\"older filtration with simple subquotients.
\begin{lemma} \label{isomorphism2}
The functor $\omega_S$ is conservative, i.e.\ if $f:E\to E''$ is a morphism in $\AA_S$ such that $\omega_S(f)$ is an isomorphism, then $f$ is already an isomorphism.
\end{lemma}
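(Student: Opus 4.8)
The plan is to reduce the statement to the fact that $\omega_S$ reflects zero objects, and then to invoke faithfulness of the fiber functor. So suppose $f\colon E\to E''$ is a morphism in $\AA_S$ with $\omega_S(f)$ an isomorphism. An isomorphism of $I$-graded vector bundles is in particular an admissible epimorphism in the exact category $\Vect^I_S$, fitting into the short exact sequence $0\to 0\to \omega_S(E)\xrightarrow{\omega_S(f)}\omega_S(E'')\to 0$ with zero kernel. Hence the completion property spelled out in assumption (3) applies to $f$ with $V'=0$, and yields an admissible short exact sequence $0\to E'\to E\xrightarrow{f} E''\to 0$ in $\AA_S$ for which $\omega_S(E')\cong V'=0$.

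It then remains to show that $\omega_S(E')=0$ forces $E'=0$. Here I would use that the fiber functor is faithful: the induced map $\Hom_{\AA_S}(E',E')\to \Hom_{\Vect^I_S}(\omega_S(E'),\omega_S(E'))$ is injective, while its target is $\Hom(0,0)=0$. Therefore $\id_{E'}=0$ in $\End_{\AA_S}(E')$, which means $E'$ is a zero object of the additive category $\AA_S$. Feeding $E'=0$ back into the admissible sequence $0\to 0\to E\xrightarrow{f}E''\to 0$ exhibits $f$ as an admissible epimorphism with trivial kernel, hence as an isomorphism; this completes the argument.

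The only points requiring care, and the place where I expect the main (though modest) obstacle, are two compatibility checks rather than any genuine difficulty: first, that an isomorphism in $\Vect^I_S$ really qualifies as an admissible epimorphism in the chosen exact structure, so that the completion axiom of (3) is genuinely applicable with $V'=0$; and second, that the faithfulness asserted for the morphism of stack-valued functors $\omega\colon\AA\to\Vect^I$ indeed descends to faithfulness of each individual functor $\omega_S$ on objects over a fixed scheme $S$. Neither should be hard, but both are essential to the two steps above, and it is worth recording that no appeal to the finite-length property of $\AA_\KK$ is needed for this lemma.
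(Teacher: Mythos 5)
Your proposal is correct and follows essentially the same route as the paper's own proof: apply the completion property of assumption (3) to the exact sequence $0\to 0\to \omega_S(E)\xrightarrow{\omega_S(f)}\omega_S(E'')\to 0$, deduce $\omega_S(E')=0$ for the kernel object, and then use faithfulness of $\omega_S$ to get $\id_{E'}=0$, hence $E'=0$ and $f$ an isomorphism. The two compatibility checks you flag are indeed the only places where care is needed, and the paper treats them exactly as implicitly as you do.
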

\begin{proof}
As $0\to 0\to \omega_S(E)\xrightarrow{\omega_S(f)}\omega_S(E'')\to 0$ is exact, we find an exact sequence $0\to E'\to E \xrightarrow{f} E''\to 0$ with $\omega_S(E')=0$. Thus,  $\omega_S(\id_{E'})=\omega_S(0)$ which implies $\id_{E'}=0$ by the  faithfulness of $\omega_S$. Hence, $E'=0$ and $f$ is an isomorphism.
\end{proof}

\begin{example}\rm
As $\omega_\kk:\AA_\kk\to \Vect_\kk^I$ is faithful and exact, general Tannakian theory tells us that $\AA=\AA_\kk$ is equivalent to the category of $B$-comodules for some $I$-graded coalgebra $B$ over $\kk$. Conversely, given such a coalgebra $B$, we can define $\AA_S$ to be the category of $I$-graded vector bundles $V$ of finite total rank with a morphism $V\to B\otimes_\kk V$ of $\OO_S$ modules such that $V_s \to B\otimes_\kk V_s$ is a semistable $B\otimes_\kk \KK$-comodule for every closed point $s\in S$ with residue field $\KK$. Semistability can be defined as for quivers using a family $\zeta=(\zeta_i)_{i\in I}$ of complex numbers in the upper half plane. There is certainly a faithful exact functor $\omega$, but it is not clear whether or not $\omega|_{\Mst^{\zeta-ss}}$ is representable unless $B$ satisfies some finiteness conditions.     
\end{example}
\begin{example} \rm
If $\AA_S$ is the category of  quiver representations satisfying some relations on  $Q_0$-graded vector bundles of finite  rank whose restriction to every closed point is $\zeta$-semistable of slope $\mu$, and if $\omega_S$ forgets the quiver representation, then all our assumptions are fulfilled.\\
We could modify the quiver case by forgetting the $I=Q_0$-grading. Then $d$ is just a natural number and $X_d=\sqcup_{\bar{d}} R_{\bar{d}}^{ss}$, where the sum is taken over all tuples $\bar{d}=(d_i)_{i\in Q_0}\in \NN^{\oplus Q_0}$ of slope $\mu$ such that $d=\sum_{i\in Q_0} d_i$. Thus, $X_d$ does not need to be connected or of pure dimension.   
\end{example}
\begin{example} \rm In the case of coherent sheaves on a smooth projective variety $X$ with ample divisor $D$, we could try to define a functor $\omega_S$ by means of $\omega_S(E)=\pr_{S\,\ast}E(mD)$ for some $m\gg 0$. Unfortunately, it is not known to the author whether or not a lower bound for $m$ depends only on the normalized Hilbert polynomial characterizing the category $\AA_S$.  Nevertheless, we can always pretend that such an $\omega_S$ exists by the following reason.  Let $\Mst_\kappa$ be any connected component of the Artin stack $\Mst^{ss}_p$ parametrizing semistable coherent sheaves on $X$ of normalized Hilbert polynomial $p$. Denote with $\AA^\kappa_S$ the  full subcategory in $\AA_S$ consisting flat families  of those semistable sheaves whose stable factors occur as subquotients of objects in $\Mst_\kappa$. This subcategory  is exact and even abelian if $S=\Spec\KK$ is a point.
Moreover, the set of  non-normalized Hilbert polynomials $P(E)$ for stable $E\in \AA^\kappa_\KK$ and arbitrary  field extensions $\KK\supset \kk$ is finite and we can find an $m\gg 0$ such that $R^i\pr_{S\, \ast} E(mD)=0$ for all $i>0$ and all flat families in $\AA^\kappa_S$. (see \cite{HuybLehn}, Theorem 3.3.7) Thus, $\omega_S|_{\AA_S^\kappa}$ is exact. Furthermore, we may assume $\omega_S(E)=0$ implies $E=0$. We encourage the reader to check that all results of the present paper concerning $\Mst_\kappa$ will only make use of the subfunctor $\AA^\kappa\subset \AA$. \\ 
Let us check the last condition in assumption (3) for $\AA^\kappa_S$. Given a morphism $f:E\to E''$ in $\AA^\kappa_S\subset \AA_S$ such that $\omega_S(f)$ tits into a short exact sequence. We form $E\to E''\to \coker(f)\to 0$ with a possibly non-flat family $\coker(f)$ of semistable sheaves in $\AA^\kappa_\KK$. Restricting the sequence to the fiber over $s\in S(\KK)$ makes it into a (right) exact sequence in $\AA^\kappa_\KK$. Now we apply the exact functor $\omega_\KK$ and conclude $\omega_\KK(\coker(f)|_s)=0$ as $\omega_S(f)|_s$ is an isomorphism. Hence, $\coker(f)|_s=0$ for all fibers, and $\coker(f)=0$ follows by Nakayama's lemma. Now we take $E'=\ker(f)$ which must be in $\AA^\kappa_S$ because $f$ is an epimorphism and $E,E''$ were flat families in $\AA^\kappa_S$. \\
With a similar argument one proves faithfulness of $\omega_S:\AA^\kappa_S\to \Vect_S$.
\end{example}
\begin{example}\rm The case of curves as better behaved as we can choose $m\gg 0$ only depending on the slope. Thus, there is no need for introducing the subcategories $\AA^\kappa_S$ as an auxiliary tool. As before we denote with $\AA_S$ the category of coherent sheaves $E$ on $S\times X$ which are flat over $S$ whose restriction to the fiber over any closed point in $S$ is semistable of slope $\mu$. We define $\omega_S(E)=\pr_{S\,\ast}E(mD)$, where the ample divisor  $D$ on $X$ is pulled back to $S\times X$, and $m > (2g-1-\mu)/\deg (D)$ with $g$ being the genus of the curve. The slope of $\OO_{S\times X}(K_X-mD)$ is smaller than $\mu$, and $\Hom_{\OO_X}(E_s,\OO(K_X-mD))=0$ follows by general arguments about morphisms between semistable sheaves. As $R^1\pr_{S\, \ast} E(mD)\cong \pr_{S\, \ast}\HHom_{\OO_{S\times X}}(E,\OO(K_X-mD))^\vee$ by relative Serre duality, the functor $\omega_S$ is exact on $\AA_S$. This example actually shows that we cannot take $\AA_\kk$ to be the category of all semistable sheaves 
even on an elliptic 
curve $X$ as such an $\omega$ might not exist for all slopes. Let us finally check faithfulness. If $E\not=0$ is a semistable vector bundle of slope $\mu$ defined over $\KK$
\begin{eqnarray*} \dim(E)&=&\dim \Ho^0(X,E(mD))\;=\;\deg(E(mD))+\rk(E(mD))(1-g)\\ &=& \deg(E)+m\rk(E)\deg(D)+\rk(E)(1-g)\\ &=&\rk(E)(\mu+m\deg(D) +1-g) \\ &>& \rk(E)g\;\ge \;0.
\end{eqnarray*}
If $E\not=0$ is a torsion sheaf, $\dim(E)>0$ is obvious. Thus $\omega_\KK(E)=0$ implies $E=0$, and $\omega_\KK$ must be faithful. Indeed, if $f:E\to E'$ is a morphism of sheaves, then $\omega_\KK(f)=0$ implies $0=\im(\omega_\KK(f))=\omega_\KK(\im(f))$ as $\omega_\KK$ is exact, and $\im(f)=0$ follows. If $f$ is defined  over $S$, we pull back the sequence $E\to E'\to \coker(f)\to 0$ of coherent sheaves on $X\times S$ to the fibers of the projection $S\times X \to S$ and conclude $\coker(f)=0$ by Nakayama's lemma and the previous discussion for $\Spec\KK$. Now we proceed in the same way with the exact sequence $0\to \ker(f)\to E\to E'\to 0$ of flat coherent sheaves and conclude $\ker(f)=0$. 
\end{example}

\subsection{GIT-theory of moduli of objects}

As we want to make a statement about moduli spaces  instead of moduli stacks, we need another assumption ensuring the existence of good quotients. Before we do this, let us recall the following well-known facts. Given a one-parameter subgroup, or 1-PS for short, $\lambda:\Gl(1)\to G_d$ defined over $\KK$, there is a decomposition of $\KK^d:=\oplus_{i\in I} \KK^{d_i}$ into irreducible $\Gl(1)$-subrepresentations parametrized by an integer $n\in \ZZ$, i.e.\ $\KK^d=\bigoplus_{n\in \ZZ} W_n$ for some $I$-graded vector spaces $W_n$ on which $\Gl(1)$ acts by multiplication with $z^n$. Introduce the descending filtration 
\[ \ldots \subset W^{n+1} \subset W^n \subset W^{n-1}\subset \ldots \subset \KK^d \]
with limits $0=\cap_{n\in \ZZ} W^n$ and $\KK^d=\cup_{n\in \ZZ}W^n$ given by $W^n=\bigoplus_{m\ge n}W_m$. Such a filtration is uniquely determined by $\lambda$, but a different 1-PS corresponding to different complementary subspaces $W_n$ inside $W^n$ will produce the same filtration. Conversely, every such filtration is created by a 1-PS which can be constructed by choosing compliments $W_n$ of $W^{n+1}$ inside $W^n$ and requiring that $\lambda$ acts with character $z\to z^n$ on $W^n$. The 1-PS's associated to different choices are conjugate inside the parabolic subgroup of $G_d$ preserving $W^\bullet$. Note that the filtration is proper, i.e.\ has not only one step, if and only if $\lambda$ does not map to $\Gl(1)$ embedded diagonally into $G_d$. Moreover, given a pair $(W,n)$ consisting of a subvector space $W\subset \KK^d$ and an integer $n$, we produce the descending filtration $W^\bullet$ of $\KK^d$ with $W^{n+1}=0, W^n=W$ and $W^{n-1}=\KK^d$ to which we can associate a 1-PS $\lambda^{W,n}$ defined over 
$\KK$ up to 
conjugation in the parabolic group associated to $W^\bullet$. \\
Given a scheme $Y_d$ with a $G_d$-action and a $G_d$-linearization $\Lin_d$, we consider the limit  $x^\lambda_0:=\lim\limits_{z\to 0}\lambda(z)x$ which might or might not exist. If it does, $\lambda$ induces an irreducible linear representation of $\Gl(1)$ in the fiber of $\Lin_d$ over $x_0^\lambda$ of weight $-\mu^{\Lin_d}(x,\lambda)$. We use the  convention $\mu^{\Lin_d}(x,\lambda)=+\infty$ if the limit does not exist. With these background at hand, we are now ready to state the following assumption.\\
\\
\textbf{(4) Existence of good GIT-quotients:} Assume that  every $X_d$ has an embedding into the open subscheme  $\overline{X}_d^{ss}$ of semistable points inside some $\kk$-scheme $\overline{X}_d$ with $G_d$-action and 
$G_d$-linearization $\Lin_d$. Moreover, there should be a projective morphism $f_d:\overline{X}_d\to \Spec A_d$ to some affine scheme of finite type such that $\Lin_d$ is ample with respect to $f_d$. Let $\tilde{p}_d:\overline{X}^{ss}\to \overline{X}^{ss}/\!\!/G_d$ be the uniform categorical quotient (see \cite{MumfordGIT}, Theorem 1.10). We also require $X_d=\tilde{p}_d^{-1}(\Msp_d)$ for some subscheme $\Msp_d\subset \overline{X}^{ss}_d/\!\!/G_d$ which should be open if $\Char\kk>0$. Furthermore, for every 1-PS in $G_d$ and any $x\in \overline{X}_d$ we require 
\begin{equation} \label{eq7} \mu^{\Lin_d}(x,\lambda)=\sum_{n\in \ZZ} \mu^{\Lin_d}(x,\lambda^{W^n,n}). \end{equation}
Moreover, for $x\in X_d$ represented by $(E,\psi)$ defined over $\KK$ the conditions 
\begin{enumerate}
 \item[(a)] the limit point $x_0^\lambda$ exists and is in $X_d$,
 \item[(b)] there are subobjects $E^n\subset E$ with $\omega_\KK(E^n)=\psi(W^n)$ for all $n\in \ZZ$,
 \item[(c)] the equation $\mu^{\Lin_d}(x,\lambda)=0$ holds,
\end{enumerate}
should be equivalent. In this case $x_0^\lambda$ is given by the associated graded (trivialized) object $(\gr E^\bullet=\oplus_{n\in\ZZ} E^n/E^{n+1},\gr \psi^\bullet)$.\\

\begin{remark}\rm
 By general GIT-theory, affine GIT-quotients $\Spec A/\!\!/G=\Spec A^G$ by reductive groups are uniform categorical and even universal categorical for $\Char\kk=0$. In particular, every open subscheme $Y\subset \Spec A/\!\!/G$ is the (uniform) categorical GIT-quotient of its preimage in $\Spec A$, and for $\Char\kk=0$ we can even drop the assumption to be open. By construction, $\overline{X}^{ss}/\!\!/G_d$ is covered by affine GIT-quotients $\Spec A_i/G_d$ with intersections also being affine GIT-quotients. We can consider $Y_i= \Msp_d\cap \Spec A_i$ to conclude that $\Msp_d$ is the uniform categorical quotient of $X_d$. In particular, $X_d$ must contain every orbit closure taken in $\overline{X}^{ss}_d$. Therefore, (a),(b) and (c) are also equivalent to the condition (d): For every $x\in X_d$ and every 1-PS $\lambda$ the limit point $x_0^\lambda$ exists and is in $\overline{X}^{ss}_d$.
\end{remark}

Let us spend a few words to explain our assumption. We wish to apply GIT-theory to the $G_d$-action on $X_d$. To do that we need some linearization on $X_d$. As seen in the examples, $X_d$ should parametrize semistable objects (with some trivialization) maybe satisfying some addition (open) properties. For practical calculations, we also want  Mumford's numerical criterion at our disposal. For that we need to enlarge $X_d$ by embedding it into some ``closure'' $\overline{X}_d$ with an extended $G_d$-action and $G_d$-linearization. Note that $X_d$ does not need to be dense in $\overline{X}_d$, but we do not loose anything if we replace $\overline{X}_d$ with the closure of $X_d$ inside $\overline{X}_d$.  Having a projective morphism to some affine scheme $\Spec A_d$ ensures Mumford's criterion for every relative ample line bundle on $\overline{X}_d$ due to \cite{GHH}, Theorem 3.3. As already mentioned, the condition $ X_d=\tilde{p}_d^{-1} (\Msp_d)$ implies that $\Msp_d=\tilde{p}_d(X_d)$ is a uniform categorical 
quotient for the $G_d$-action on $X_d$. In particular, it only depends on $X_d$ and not on $\Lin_d$ or the partial compactification $\overline{X}_d$. The final condition on 
limit points has the following interpretation. The filtration $\psi(W^\bullet)\subset \omega_\KK(E)$ is induced by a filtration $E^\bullet$ of $E$ and $\lambda(z)x$ corresponds to a one-parameter deformation of $E$ to the direct sum $\bigoplus_{n\in\ZZ} E^n/E^{n+1}$ of the corresponding subquotients of $E$. However, the subquotient are of different slopes and might not be in $\AA_\KK$. The limit is in $\AA_\KK$ if and only if all the subobjects $E^n$ are in $\AA_\KK$ as the latter should be extension closed. But this is just saying that we can lift $\psi(W^n)$ to some objects in $\AA_\KK$. Notice that $\mu^{\Lin_d}(x,\lambda)\ge 0$ for all 1-PS's as $X_d\subset \overline{X}_d^{ss}$. Moreover, $\Msp_d$ is quasiprojective by assumption on $\overline{X}_d$.  
Let us also mention that by \cite{MumfordGIT}, Proposition 2.7, $\mu^{\Lin_d}(\lambda^{W^n,n},x)$ does not depend on the choice of $\lambda^{W^n,n}$, i.e.\ on the choice of complements. Moreover, it does not even depend on $n$, i.e.\ $\mu^{\Lin_d}(\lambda^{W^n,n},x)=\mu^{\Lin_d}(\lambda^{W^n,m},x)$ for all $m\in \ZZ$, if we assume without loss of generality that $X_d$ is dense in $\overline{X}_d$. We will write $\mu^{\Lin_d}(\lambda^{W^n},x)$ for simplicity. Indeed, changing $m$ multiplies $\lambda^{W^n,m}$ with a power of the diagonal (central) embedding $\lambda_0$. However, $\lambda_0$ acts trivially on $X_d$ and, thus, also on $\overline{X}_d$ by continuity. Hence $\mu^{\Lin_d}(\lambda_0^k,x)=k\mu^{\Lin_d}(\lambda_0,x)\ge 0$ for all $x\in \overline{X}^{ss}_d$ and all $k\in \ZZ$ by Mumford's criterion. We conclude $\mu^{\Lin_d}(\lambda_0,x)=0$ for all $x\in \overline{X}^{ss}_d$. The (fiberwise) action of $\lambda_0$ on $\Lin_d|_{\overline{X}_d}$ is continuous, and we even conclude $\mu^{\Lin_d}(\lambda_0,
x)=0$ for all $x\in \overline{X}_d$. Since $\lambda$ and $\lambda_0^k\lambda$ 
applied to $x$ converge to the same limit in $\overline{X}_d$, $\mu^{\Lin_d}(\lambda_0^k\lambda,x)=k\mu^{\Lin_d}(\lambda_0,x)+\mu^{\Lin_d}(x,\lambda)=\mu^{\Lin_d}(x,\lambda)$ follows for every $x\in \overline{X}_d$ proving the independence of $\mu^{\Lin}(\lambda^{W,n},x)$ on $n$ for every subspace $W\subset \KK^d$.      
\begin{example} \rm The case of a quiver $Q$ with relations $R$ has been analyzed by King in \cite{King}, and it was the blueprint for the affine situation $\overline{X}_d=\Spec A_d=R_d^r$ using the notation of section 2, and $X_d=R_d^{r,\zeta-ss}$ is the open subvariety of semistable points with respect to the trivial line bundle with a possibly non-trivial $G_d$-action provided by a character $\theta\in \ZZ^{Q_0}$ with $\theta\cdot d=0$. The proof of our assumptions can be found in \cite{King}, section 3. The intuitive picture described above is absolutely correct in this situation. Let us modify this example a little bit by choosing a finite set of cycles in $Q$. We denote with $X_d\subset R_d^{r,\zeta-ss}$ the open subset of semistable representations for which the chosen cycles act invertible. This condition is closed under extension and forming subquotients. Thus, $\tilde{p}_d^{-1}(\tilde{p}_d(X_d))=X_d$, and also the equivalence of the  conditions (a),(b) and (c) is fulfilled. 
\end{example}
\begin{example}\rm 
The case of Gieseker semistable coherent sheaves  on smooth projective varieties is extensively studied in \cite{HuybLehn}, section 4. Here $\overline{X}_d=\overline{R}$ is the closure of the open subvariety $X_d=R=\overline{R}^{ss}$ inside some Quot-scheme parametrizing semistable quotients of a fixed normalized Hilbert polynomial for which $\Ho^0$ applied to the quotient map is an isomorphism. Notice that in contrast to the quiver case $\overline{X}_d/G_d$ is not the moduli stack of some bigger abelian category. The reader should be aware of the fact that  Huybrechts and Lehn use the opposite ascending filtration $V_{\le n}=\bigoplus_{m \le n}W_n$, but the arguments remain the same. Equation (\ref{eq7}) is implicitly contained in the proof of Lemma 4.4.5.\ as $\theta(V_{\le n})/d=\mu^{\Lin_d}(x,\lambda^{V_{\le n}})$ using their notation. Notice that our 2-step filtration associated to a vector space $W$ is different from theirs. The picture described above is also true in this case. The equivalence 
required in our assumption can be proven as follows. The limit $x_0^\lambda$ exists in $\overline{R}^{ss}=R$ if and only if all subsheaves $E_{\le n}$ associated to $\psi(V_{\le n})$ are semistable with $p(E^n)(l)=p(E)(l)$ for the normalized Hilbert polynomials and sufficiently large $l\in \NN$. This is the case if and only if the value 
$\theta(V_{\le n})=d\cdot P(E_{\le n},l)-\dim V_{\le n}\cdot P(E,l)$ is zero as every subsheaf $E'\subset E$ satisfying $p(E')(l)=p(E)(l)$ for $l\gg0$ is automatically semistable. This proves  the remaining equivalence as $\mu^{\Lin_d}(x,\lambda)=\sum_{n\in \ZZ}\theta(V_{\le n})/d$. See \cite{HuybLehn} for the details and the notation. Again, we can modify this example by choosing an open property which is closed under extensions and subquotients.  
\end{example}

\begin{lemma}
There is bijection between the points of $\Msp$ defined over $\KK$ and the semisimple objects in $\AA_\KK$. 
\end{lemma}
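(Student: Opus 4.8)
The plan is to reduce the statement to the standard GIT dictionary between the closed points of a good quotient and the closed orbits, and then to identify the closed orbits with the semisimple objects using the precise description of one-parameter limits furnished by assumption (4). Since $\Msp=\sqcup_d\Msp_d$ and $\Msp_d=\tilde{p}_d(X_d)$ is a uniform categorical quotient for the $G_d$-action on $X_d$, a point of $\Msp_d$ defined over $\KK$ corresponds to a closed $G_d$-orbit in $X_d$. On the other hand, the $\Spec\KK$-points of $X_d$ are pairs $(E,\psi)$ with $E\in\AA_\KK$ of dimension $d$ and $\psi\colon\omega_\KK(E)\xrightarrow{\sim}\KK^d$ a trivialization, and two such pairs lie in the same $G_d$-orbit if and only if the underlying objects are isomorphic: given an isomorphism $\phi\colon E\xrightarrow{\sim}E'$, the element $g=\psi'\,\omega_\KK(\phi)\,\psi^{-1}\in G_d$ carries one pair to the other, and conversely. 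Every $E\in\AA_\KK$ of dimension $d$ admits such a $\psi$ because $\omega_\KK(E)\cong\KK^d$ and $\Gl$-torsors over a field are trivial. Thus $G_d$-orbits in $X_d(\KK)$ biject with isomorphism classes of objects of $\AA_\KK$ of dimension $d$, and it remains to prove that the orbit of $(E,\psi)$ is closed if and only if $E$ is semisimple.

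For the first direction, suppose $E$ is not semisimple. By the consequence of assumption (3) the category $\AA_\KK$ is of finite length, so $E$ carries a Jordan--H\"older filtration $E^\bullet$ by subobjects whose associated graded $\gr E^\bullet$ is semisimple and has more than one step. Setting $W^n:=\psi(\omega_\KK(E^n))\subset\KK^d$ and letting $\lambda$ be a 1-PS inducing the descending filtration $W^\bullet$, condition (b) of assumption (4) is satisfied; hence by the implication (b)$\Rightarrow$(a) the limit $x_0^\lambda$ exists in $X_d$ and represents $\gr E^\bullet$. Since $E$ is not semisimple, $E\not\cong\gr E^\bullet$ (an isomorphism would exhibit $E$ as a direct sum of simples), so $x_0^\lambda$ lies in the closure of the orbit of $(E,\psi)$ but in a different orbit; therefore that orbit is not closed.

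For the converse, suppose $E$ is semisimple but, for contradiction, that its orbit is not closed. By the Hilbert--Mumford/Kempf description of non-closed orbits of a reductive group there is a 1-PS $\lambda$ whose limit $x_0^\lambda$ lies in $\overline{G_d\cdot(E,\psi)}$ outside the orbit of $(E,\psi)$; as $X_d$ contains all orbit closures taken in $\overline{X}_d^{ss}$ (cf.\ the Remark following assumption (4)), this limit lies in $X_d$, so condition (a) holds. By the equivalence (a)$\Leftrightarrow$(b) the filtration $W^\bullet$ determined by $\lambda$ is of the form $\psi(\omega_\KK(E^\bullet))$ for a filtration $E^\bullet$ of $E$ by subobjects, and $x_0^\lambda=\gr E^\bullet$. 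But $E$ is semisimple, so this filtration splits and $\gr E^\bullet\cong E$; hence $x_0^\lambda$ lies in the orbit of $(E,\psi)$ after all, a contradiction. Therefore the orbit is closed.

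Combining the two directions gives, over each $\KK$, a bijection between the closed $G_d$-orbits in $X_d$ and the isomorphism classes of semisimple objects of $\AA_\KK$ of dimension $d$; summing over $d$ yields the claim. The step I expect to be the main obstacle is the bookkeeping over a general, not necessarily algebraically closed, field $\KK$. Injectivity is built into the categorical quotient, since $\tilde{p}_d$ separates closed orbits; the delicate point is surjectivity, namely that every point of $\Msp_d$ defined over $\KK$ actually arises from a semisimple object defined over $\KK$. This is a descent statement: a $\KK$-point of $\Msp_d$ determines a Galois-stable isomorphism class of semisimple objects over $\overline{\KK}$, and one must descend the object itself, which uses that $\Mst$ is a stack (assumption (1)) and requires control of the automorphism groups of the simple summands. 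Everything else is a direct application of assumption (4) together with the finite-length property.
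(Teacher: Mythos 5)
Your proof is correct and follows essentially the same route as the paper's: both reduce to the GIT correspondence between $\KK$-points of $\Msp_d$ and closed $G_d$-orbits in $X_d$, and then apply the equivalence (a)$\Leftrightarrow$(b) of assumption (4) — to a Jordan--H\"older filtration in one direction and to a 1-PS reaching the orbit closure in the other — together with the observation that $\gr E^\bullet\cong E$ precisely when $E$ is semisimple. The differences are only cosmetic (contrapositive phrasing, and your closing remark about descent over non-algebraically-closed fields, a point the paper's proof likewise leaves implicit).
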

\begin{proof} Recall that the points in $\Msp$ correspond to the closed orbits in $X_d=\tilde{p}_d^{ss}(\Msp)$.
Let $E\in \AA_\KK$ be semisimple of dimension $d$ and let $x_0\in X_d$ be a point in the closure of $G_d x$ for some lift $x=(E,\psi)\in X_d$ of $E$. There is a 1-PS in $G_d$ defined over $\KK$ with limit point $\lim\limits_{z\to 0}\lambda(z)x=x_0$ which because of (a)$\Leftrightarrow$(b) implies $x_0=(\gr E^\bullet,\gr \psi^\bullet)$ for some filtration $E^\bullet$ on $E$. As $E$ is semisimple, $\gr E^\bullet \cong E$ and $x_0 \in G_dx$ follows. Conversely, assume $G_dx=\overline{G_dx}$, where the closure is taken in $X_d$. Using the equivalence of (a) and (b) again, there is a 1-PS defined over $\KK$ associated to the Jordan--H\"older filtration $E^\bullet$ of $E\in\AA_\KK$. The limit point $x_0^\lambda$ represents $\gr E^\bullet$ and is in the closed orbit $G_dx$. Hence, $E\cong \gr E^\bullet$ is semisimple.  
\end{proof}
By general GIT-theory, the map $\tilde{p}_d:X_d\to \Msp_d$ restricts to a geometric quotient over the image $\Msp_d^s$ of the open subscheme  $X_d^{st}=\overline{X}_d^{st}\cap X_d$ of stable points in $X_d$. If $X_d^{st}$ is even smooth, $\tilde{p}_d:X_d^{st} \to \Msp_d^s$ is a principal $PG_d$-bundle. 

\begin{lemma}
There is a bijection between the points of $\Msp^s$ defined over $\KK$ and the simple objects in $\AA_\KK$.
\end{lemma}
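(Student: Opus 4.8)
The plan is to deduce the statement from the preceding lemma by showing that its bijection between $\KK$-points of $\Msp_d$ and semisimple objects of $\AA_\KK$ restricts to the desired bijection over the open subscheme $\Msp^s_d$. The heart of the matter is the characterization that a $\KK$-point $x=(E,\psi)$ of $X_d$ is stable if and only if $E$ is simple, and I would prove it using Mumford's numerical criterion together with the equivalence of conditions (a), (b) and (c) in assumption (4). Since $x\in X_d\subset \overline{X}_d^{ss}$, one has $\mu^{\Lin_d}(x,\lambda)\ge 0$ for every 1-PS $\lambda$, so $x$ fails to be stable precisely when there is a non-trivial 1-PS $\lambda$, i.e.\ one not mapping into the diagonally embedded $\Gl(1)\subset G_d$ (equivalently, one whose filtration $W^\bullet$ of $\KK^d$ is proper), with $\mu^{\Lin_d}(x,\lambda)=0$. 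By (c)$\Leftrightarrow$(b) this occurs exactly when $\psi(W^\bullet)$ lifts to a filtration $E^\bullet$ of $E$ by genuine subobjects; as $W^\bullet$ is proper, some step $E^n$ is then a proper nonzero subobject, properness and nonvanishing being guaranteed by the faithfulness and conservativity of $\omega_\KK$ (Lemma \ref{isomorphism2}). Conversely, any proper nonzero subobject $E'\subset E$ yields, via the two-step filtration associated to $W:=\psi(\omega_\KK(E'))$, a non-trivial 1-PS with $\mu^{\Lin_d}=0$. Hence $x$ is stable iff $E$ has no proper nonzero subobject, i.e.\ iff $E$ is simple.

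With this equivalence in hand, I would assemble the bijection as follows. First recall that $\Msp^s_d=\tilde{p}_d(X^{st}_d)$ and that, since $\tilde{p}_d$ restricts to a geometric quotient $X^{st}_d\to\Msp^s_d$ with stable orbits closed, the stable locus is saturated, so $\tilde{p}_d^{-1}(\Msp^s_d)=X^{st}_d$. Now let $p\in\Msp_d(\KK)$ correspond under the preceding lemma to a semisimple object $E_p$, and choose a $\KK$-trivialization $\psi$ of $\omega_\KK(E_p)$, giving a $\KK$-lift $x=(E_p,\psi)$ whose $G_d$-orbit is closed. If $p\in\Msp^s_d$, then $x\in\tilde{p}_d^{-1}(\Msp^s_d)=X^{st}_d$ is stable, so $E_p$ is simple by the equivalence above; conversely, if $E_p$ is simple then $x$ is stable and $p=\tilde{p}_d(x)\in\tilde{p}_d(X^{st}_d)=\Msp^s_d$. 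Thus a $\KK$-point of $\Msp_d$ lies in $\Msp^s_d$ exactly when its semisimple representative is simple, and the bijection of the preceding lemma restricts to one between $\Msp^s_d(\KK)$ and the simple objects of $\AA_\KK$ of dimension $d$.

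Injectivity and surjectivity of the restricted map are then automatic: surjectivity because every $\KK$-point of $\Msp^s_d$ is the image of a stable point whose underlying object is simple, and injectivity because over the stable locus $\tilde{p}_d$ is a geometric quotient, so two $\KK$-lifts with the same image lie in a common $G_d$-orbit and hence represent isomorphic simple objects. The step I expect to be the main obstacle is precisely the stability$\Leftrightarrow$simplicity equivalence: one must match the GIT notion of a stable point (strict positivity of $\mu^{\Lin_d}$ for every 1-PS not landing in the trivially acting diagonal $\Gl(1)\subset G_d$) with the combinatorics of proper filtrations, so that ``non-trivial 1-PS'' corresponds exactly to ``proper filtration'' and $\omega_\KK$ genuinely reflects the properness and nonvanishing of the subobjects produced by (b). Once conditions (a)$\Leftrightarrow$(b)$\Leftrightarrow$(c) are invoked the remainder is bookkeeping, and the $\KK$-rationality issue is harmless because every object $E_p\in\AA_\KK$ admits a $\KK$-trivialization and hence a $\KK$-lift to $X_d$.
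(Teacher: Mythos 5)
Your proposal is correct and follows essentially the same route as the paper: the core step in both is the equivalence ``$x=(E,\psi)$ stable $\Leftrightarrow$ $E$ simple,'' proved via Mumford's criterion ($\mu^{\Lin_d}\ge 0$ on $X_d\subset\overline{X}_d^{ss}$, with failure of stability giving a non-trivial 1-PS with $\mu=0$) combined with the equivalence of conditions (b) and (c) in assumption (4), in one direction producing a proper nonzero subobject from a proper filtration and in the other producing a 2-step filtration from a proper subobject. Your additional bookkeeping (saturation $\tilde p_d^{-1}(\Msp^s_d)=X^{st}_d$ and restriction of the previous lemma's bijection) just makes explicit what the paper leaves implicit via the geometric-quotient property of $X^{st}_d\to\Msp^s_d$.
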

\begin{proof}
Assume $E$ is simple and $x\not\in X^{st}_d$ for some lift $x=(E,\psi)$ of $E$. The points of  $X^{st}_d$ can be described as the set of all $y\in X_d$ such that $\mu^{\Lin_d}(y,\lambda)>0$ for all 1-PS's not in $\Gl(1)\subset G_d$. Thus, there is a 1-PS $\lambda$ not mapping to $\Gl(1)\subset G_d$ with $\mu^{\Lin_d}(x,\lambda)=0$ as we always have $\mu^{\Lin_d}(x,\lambda)\ge 0$ by assumption $X_d\subset \overline{X}_d^{ss}$. The associated filtration is proper, and using our equivalent conditions we find some proper subspace $W^n\subset \KK^d$ which is $\omega_\KK(E')$ for some proper subobject $E'$ in $E$, a contradiction.  Conversely, given $x\in X^{st}_d$ and assume that $E$ is not simple. Then there is a proper subobject $E'\subset E$ inducing a proper 2-step filtration $\omega_\KK(E')=W\subset \KK^d$ filtration unique up to shifts. It corresponds to a 1-PS $\lambda^{W}$ with $\mu(x,\lambda^W)=0$ contradicting $x\in X^{st}$.    
\end{proof}

\subsection{The universal Grassmannian}

By the universal property of $\Msp=\sqcup_{d\in \NN^{\oplus I}} \Msp_d$ the map $\oplus:\Mst\times\Mst \to \Mst$ sending two objects to their direct sum descends to $\oplus:\Msp\times\Msp \to \Msp$. We want to show that the latter map has good properties. For that and also for later applications, we need to study extensions of objects in $\AA_\kk$. Let $\mathfrak{E}xact_S$ denote the isomorphism groupoid of all short exact sequences in $\AA_S$. Using the fact that $\AA$ satisfies the gluing axioms of a stack, we can easily see that $\mathfrak{E}xact$ is a stack, too. For fixed dimension vectors $d,d'\in \NN^{\oplus I}$, we denote with $\mathfrak{E}xact_{d,d'}$ the substack of exact sequences $0\to E_1\to E_2\to E_3 \to 0$ with $\dim E_1=d$ and $\dim E_3=d'$. There are obvious morphisms $\pi_i:\mathfrak{E}xact \to \Mst$ mapping a sequence as above to its i-th entry. Note that $\pi_2$ is faithful and the following assumption makes perfectly sense.\\
\\
\textbf{(5) The universal Grassmannian is proper:} The map $\pi_2:\mathfrak{E}xact \to \Mst$ is representable and proper. \\

By definition, the fiber of $\pi_2$ at $E\in \AA_\KK$ is just the set of all subobjects of $E$. The previous assumption ensures that this set is a proper scheme\footnote{Strictly speaking it is an algebraic space, but we will see shortly that  it is actually a scheme.}.
\begin{example}\rm
In the case of sheaves, assumption (5) is fulfilled by  the existence and properness of Grothendieck's  (universal) Quot-scheme. The same holds for quivers (with relations).  
\end{example}
Using our assumption, we get a  description of $\mathfrak{E}xact_{d,d'}$ as a quotient stack $\mathfrak{E}xact_{d,d'}=Y_{d,d'}/G_{d+d'}$ with $Y_{d,d'}=\mathfrak{E}xact_{d,d'} \times_{\Mst_{d+d'}}X_{d+d'}$ representing the functor mapping a $\kk$-scheme $S$ to triples $(E,\psi,E')$ with $E\in \AA_S,\psi:\omega_S\xrightarrow{\sim} \OO_S^{d+d'}$ and $E'\subset E$ a subobject in $\AA_S$ of dimension $d$. Moreover, $Y_{d,d'}\to X_{d+d'}$ is proper. \\
By exactness of $\omega_S$, every exact sequence in $\AA_S$ provides an exact sequence of $I$-graded vector bundles on $S$ of finite total rank. The stack of the latter is just $\sqcup_{d,d'\in \NN^{\oplus I}} \Spec\kk/P_{d,d'}$ with $P_{d,d'}\subset G_{d+d'}$ being the parabolic subgroup of block triangular matrices in $G_{d+d'}$ fixing the subvector space $\kk^d \hookrightarrow \kk^d\oplus \kk^{d'}=\kk^{d+d'}$. Thus, we get the following commutative diagram
\[ \xymatrix { \mathfrak{E}xact_{d,d'} \ar[r]^(0.3)\iota \ar[dr]_{\pi_2} & \Mst_{d+d'}\times_{\Spec\kk/G_{d+d'}} \Spec\kk/P_{d,d'} \ar[rr]^(0.6)\pr \ar[d]^\pr & & \Spec\kk/P_{d,d'} \ar[d] \\ & \Mst_{d+d'} \ar[rr] & & \Spec\kk/G_{d+d'}.  }\]
The fiber product represents the functor of pairs $(E,U)$ with $E\in \AA_S$ of dimension $d+d'$ and $U$ a subbundle of $\omega_S(E)$ of rank $d$. It can be written as the quotient stack $X_{d+d'}\times \Gr_d^{d+d'} / G_{d+d'}$, where $\Gr_d^{d+d'}$ is the Grassmann variety parametrizing $I$-graded subvector spaces in $\kk^{d+d'}$ of dimension $d$.  We claim that $\iota$, mapping $0\to E_1\to E_2 \to E_3\to 0$ to $(E_2,\omega_S(E_1))$ is a closed embedding, and in particular  representable. Thus, the composition $\mathfrak{E}xact_{d,d'} \longrightarrow \Spec\kk/P_{d,d'}$ is representable, too. Therefore, $\mathfrak{E}xact_{d,d'}$ has another description as a quotient stack, namely $X_{d,d'}/P_{d,d'}$ with $X_{d,d'}=\mathfrak{E}xact_{d,d'}\times_{\Spec\kk/P_{d,d'}} \Spec\kk$ representing the functor mapping a $\kk$-scheme $S$ to the isomorphism classes\footnote{We could also take the isomorphism groupoid which is equivalent to its set of isomorphism classes.} of tuples \[ (0\to E_1\to E_2 \to E_3\to 0, \psi_1,\
psi_2,\psi_
3) \]
consisting of a short exact sequence in $\AA_S$ and an isomorphism 
\[ \xymatrix { 0 \ar[r] & \omega_S(E_1) \ar[d]_{\psi_1}^\wr \ar[r] & \omega_S(E_2) \ar[d]_{\psi_2}^\wr \ar[r] &\omega_S(E_3) \ar[d]_{\psi_3}^\wr \ar[r] & 0 \\ 0 \ar[r] & \OO_S^d \ar[r] & \OO_S^{d+d'} \ar[r] & \OO_S^{d'} \ar[r] & 0, }\]
where we used the shorthand $\OO_S^d$ for the trivial $I$-graded vector bundle $\OO_S\otimes_\kk \kk^d=\bigoplus_{i\in I} \OO_S^{\oplus d_i}$ of rank $d$.
To see that $\iota$ is a closed embedding, it is sufficient to check that $\hat{\iota}:Y_{d,d'} \to X_{d+d'}\times \Gr_d^{d+d'}$ is a closed embedding. First of all it is proper as $\hat{\pi}_2:Y_{d,d'} \xrightarrow{\hat{\iota}} X_{d+d'}\times \Gr_d^{d+d'} \xrightarrow{\pr} X_{d+d'}$ and $\pr$ are proper. Furthermore, $\iota$ is injective on $\KK$-points. Indeed, Given $E',E''\subset E$ with $\omega_\KK(E')=\omega_\KK(E'')$, we have $\omega_\KK(E'/E'\cap E'')=0$ by exactness of $\omega_\KK$. Hence, $E'=E'\cap E''$ as $\omega_\KK$ is faithful. By symmetry, $E'=E''$. Due to Zariski's main theorem, every proper morphism between locally noetherian schemes which is injective on points is a closed embedding. As a consequence, $Y_{d,d'}\subset X_{d+d'}\times \Gr_{d}^{d+d'}$ is a scheme and not just an algebraic space. Because of $Y_{d,d'}\cong X_{d,d'}\times_{P_{d,d'}}G_{d,d'}$, the same must be true for $X_{d,d'}=X_{d,d'}\times_{P_{d,d'}}P_{d,d'}$ being a closed subscheme of 
$Y_{d,d'}$.\\ 
\begin{lemma}
The map $\hat{\pi}_2:X_{d,d'}\to X_{d+d'}$ sending a trivialized short exact sequence to the trivialized middle term is a closed embedding.
\end{lemma}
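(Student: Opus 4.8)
The plan is to realize $X_{d,d'}$ as a fiber of the already-constructed closed embedding $\hat{\iota}$ and thereby deduce the lemma with essentially no new work. Recall from the discussion preceding the lemma that we have a closed embedding
\[ \hat{\iota}\colon Y_{d,d'} \hookrightarrow X_{d+d'}\times \Gr_d^{d+d'}, \qquad (E,\psi,E')\mapsto \big((E,\psi),\,\psi(\omega_S(E'))\big), \]
and that $X_{d,d'}$ sits inside $Y_{d,d'}$ as a closed subscheme, namely as the fiber over the base point $[\kk^d]\in \Gr_d^{d+d'}=G_{d+d'}/P_{d,d'}$ of the projection $\pr_2\circ\hat{\iota}\colon Y_{d,d'}\to \Gr_d^{d+d'}$; this is precisely the content of the identification $X_{d,d'}=X_{d,d'}\times_{P_{d,d'}}P_{d,d'}\subset X_{d,d'}\times_{P_{d,d'}}G_{d+d'}\cong Y_{d,d'}$ recorded above. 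Concretely, a point of $X_{d,d'}$ is a triple $(E,\psi,E')$ with $E'\subset E$ of dimension $d$ and $\psi(\omega_S(E'))=\OO_S^d$ the standard subbundle, and $\hat{\pi}_2$ forgets the subobject, sending $(E,\psi,E')$ to $(E,\psi)$.

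First I would introduce the closed subscheme $Z:=X_{d+d'}\times\{[\kk^d]\}\subset X_{d+d'}\times \Gr_d^{d+d'}$, which the first projection identifies with $X_{d+d'}$. By the previous paragraph $\hat{\iota}^{-1}(Z)$ is exactly the locus in $Y_{d,d'}$ where $\psi(\omega_S(E'))$ is the standard subspace, so $\hat{\iota}^{-1}(Z)=X_{d,d'}$, the two scheme structures agreeing since both equal the scheme-theoretic intersection $Y_{d,d'}\cap Z$. As closed embeddings are stable under base change, restricting $\hat{\iota}$ over $Z$ yields a closed embedding $X_{d,d'}=\hat{\iota}^{-1}(Z)\hookrightarrow Z$. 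Composing with the isomorphism $\pr_1\colon Z\xrightarrow{\sim} X_{d+d'}$ and tracing through the definitions, $(E,\psi,E')\mapsto ((E,\psi),[\kk^d])\mapsto (E,\psi)$, shows that this closed embedding is precisely $\hat{\pi}_2$, which proves the lemma. Note that this route avoids reinvoking Zariski's main theorem and needs no noetherian hypotheses.

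The hard part has, in effect, already been done: properness (from assumption (5)) and injectivity on points (from the conservativity of $\omega_S$), fed into Zariski's main theorem, were what established that $\hat{\iota}$ is a closed embedding in the first place. The one conceptual point that deserves emphasis is that the forgetful map $\hat{\pi}_2\colon Y_{d,d'}\to X_{d+d'}$ on the full space is merely proper and in general has positive-dimensional fibers — the Grassmannians, resp.\ Quot-schemes, of all subobjects of a fixed $E$ — so it is emphatically not a closed embedding; what rescues $X_{d,d'}$ is that fixing $\psi(\omega_S(E'))$ to be the standard graded subspace pins down $\omega_\KK(E')$ and hence, by faithfulness and exactness of $\omega_\KK$, pins down $E'$ itself. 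For this reason, should one prefer a proof parallel in style to the surrounding text, an equally valid alternative is to observe that $\hat{\pi}_2\colon X_{d,d'}\to X_{d+d'}$ is proper (being the restriction of the proper map $\hat{\pi}_2\colon Y_{d,d'}\to X_{d+d'}$ to the closed subscheme $X_{d,d'}$) and injective on $\KK$-points by that same conservativity argument, and then to invoke Zariski's main theorem directly; I would, however, present the fiber-of-$\hat{\iota}$ argument as the main line.
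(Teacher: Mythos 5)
Your main argument is correct, but it is genuinely different from the paper's proof, which is in fact the ``alternative'' you sketch at the end: the paper observes that $\hat{\pi}_2$ is the composition $X_{d,d'}\hookrightarrow Y_{d,d'}\longrightarrow X_{d+d'}$ of a closed embedding with a proper map, hence proper, that it is injective on $\KK$-points because a subobject $E_1\subset E_2$ is determined by $\psi(\omega_\KK(E_1))=\KK^d$, and then re-invokes the statement that a proper morphism injective on $\KK$-points for all field extensions is a closed embedding. Your route instead base-changes the already-established closed embedding $\hat{\iota}\colon Y_{d,d'}\hookrightarrow X_{d+d'}\times\Gr_d^{d+d'}$ along the closed subscheme $Z=X_{d+d'}\times\{[\kk^d]\}$, identifies $\hat{\iota}^{-1}(Z)$ with $X_{d,d'}$ via the associated-bundle description $Y_{d,d'}\cong X_{d,d'}\times_{P_{d,d'}}G_{d+d'}$ (so that $\pr_2\circ\hat{\iota}$ is the bundle projection to $G_{d+d'}/P_{d,d'}$ and $X_{d,d'}$ is its fiber over the base point), and concludes by stability of closed immersions under base change. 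What your approach buys: it avoids a second appeal to the Zariski-main-theorem-type statement --- which, as literally phrased in the paper (``proper $+$ injective on points''), needs noetherian hypotheses and some care, since properness plus universal injectivity alone does not give a closed immersion (Frobenius); what really makes both arguments work is that these maps are monomorphisms of functors, by the uniqueness of subobjects with prescribed image under $\omega_S$. What it costs: you must verify the scheme-theoretic (not merely set-theoretic) identification $X_{d,d'}=\hat{\iota}^{-1}(Z)$ and that the restricted map is $\hat{\pi}_2$; you do justify this functorially and via the $P_{d,d'}$-bundle structure, so the argument is complete, whereas the paper's proof sidesteps that identification entirely at the price of repeating the proper-plus-injective argument.
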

\begin{proof}
Being a  composition $X_{d,d'}\hookrightarrow Y_{d,d}\longrightarrow X_{d+d'}$ of a closed embedding and a proper map, $\hat{\pi}_2$ must be proper. It is also injective on $\KK$-points, due to the fact that every subobject $E_1$ of $E_2$ is uniquely determined (up to isomorphism) by the requirement $\psi(\omega_\KK(E_1))= \KK^d \subset \KK^{d+d'}$ as we have seen already. Every proper map which is injective on $\KK$-points for all field extensions $\KK\supset \kk$ is a closed embedding.
\end{proof}

The following lemma is a generalization of a statement used to show that $\iota$ and $\hat{\pi}_2:X_{d,d'}\to X_{d+d'}$ are injective on points. Note that a subobject of $E\in \AA_S$ is an equivalence class of morphisms $E'\to E$ in $\AA_S$ which extend to a short exact sequence $0\to E'\to E\to F\to 0$ in $\AA_S$. As one can check for vector bundles, this  is a much stronger condition than saying that $E'\to E$ is a monomorphism. 

\begin{lemma} \label{lemma3}
If $E',E''$ are subobjects of $E\in \AA_S$ with $\omega_S(E')=\omega_S(E'')$ inside $\omega_S(E)$, then $E'=E''$.
\end{lemma}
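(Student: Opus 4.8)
The plan is to imitate the argument already used over a field $\KK$ --- where one forms $E'\cap E''$ and invokes exactness and faithfulness of $\omega_\KK$ --- but to replace the intersection, which need not exist since $\AA_S$ is merely exact and not abelian, by the universal property of kernels. Recall that in any exact category an admissible monomorphism is a kernel of its cokernel. Since $E''$ is a subobject of $E$, there is a conflation $0\to E''\xrightarrow{i''} E\xrightarrow{p''} F''\to 0$ in $\AA_S$, and $i''$ is a kernel of $p''$. Writing $i'\colon E'\to E$ for the admissible monomorphism defining the subobject $E'$, the whole statement reduces to checking that the composite $p''\circ i'\colon E'\to F''$ vanishes, for then $i'$ factors uniquely through $i''$.

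To prove $p''i'=0$ I would apply $\omega_S$. As $\omega_S$ is exact, it sends the conflation to a short exact sequence of $I$-graded vector bundles, so $\omega_S(p'')$ is the cokernel of the subbundle inclusion $\omega_S(E'')\hookrightarrow\omega_S(E)$, while $\omega_S(i')$ is the inclusion of the subbundle $\omega_S(E')$. By hypothesis these subbundles coincide inside $\omega_S(E)$, whence $\omega_S(p''i')=\omega_S(p'')\,\omega_S(i')$ is the inclusion of $\omega_S(E'')=\omega_S(E')$ followed by the projection onto the cokernel of that very inclusion, hence zero. Faithfulness of $\omega_S$ (assumption (3)) gives $p''i'=0$, so the kernel property of $i''$ yields a unique $h\colon E'\to E''$ with $i''h=i'$. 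Applying $\omega_S$ again, $\omega_S(i'')\,\omega_S(h)=\omega_S(i')$ forces $\omega_S(h)$ to be the canonical isomorphism between $\omega_S(E')$ and $\omega_S(E'')$ identifying their common image in $\omega_S(E)$; in particular $\omega_S(h)$ is an isomorphism. Lemma \ref{isomorphism2} (conservativity of $\omega_S$) then shows that $h$ is an isomorphism, and $i''h=i'$ says precisely that $(E',i')$ and $(E'',i'')$ define the same subobject of $E$.

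The one genuinely delicate point is the very reason the lemma is not immediate, namely the absence of an abelian structure over a general base: one cannot literally intersect $E'$ and $E''$. The device above sidesteps this by using only the exact-category formalism, in which admissible monomorphisms are kernels of their cokernels, so that the pointwise argument survives essentially verbatim. I would also note an alternative, representability-based proof: since the asserted equality of subobjects is local on $S$ and $\AA$ is a stack, one may choose a local trivialization $\psi$ of $\omega_S(E)$, whereupon $(E,\psi,E')$ and $(E,\psi,E'')$ become $S$-points of $Y_{d,d'}$ with the same image under the closed embedding $\hat\iota\colon Y_{d,d'}\to X_{d+d'}\times\Gr_d^{d+d'}$; as closed embeddings are monomorphisms, the two $S$-points agree. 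I prefer the direct argument, which avoids trivializations and base-change bookkeeping.
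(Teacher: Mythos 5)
Your proof is correct, and it takes a genuinely different route from the paper. The paper deduces the lemma from the stack machinery built just before it: writing both subobjects as $S$-points of $\mathfrak{E}xact_{d,d'}$ lying over the same point of $\Xst=\Mst_{d+d'}\times_{\Spec\kk/G_{d+d'}}\Spec\kk/P_{d,d'}$, it observes that they define a morphism $S\to \mathfrak{E}xact_{d,d'}\times_\Xst\mathfrak{E}xact_{d,d'}$, and since $\iota$ is a closed embedding (hence a monomorphism) this fiber product is the diagonal copy of $\mathfrak{E}xact_{d,d'}$, forcing $E'=E''$. That argument is a one-liner given what precedes it, but it silently imports the whole chain used to prove $\iota$ is a closed embedding: properness from assumption (5), Zariski's main theorem, and the field case of this very statement (proved separately via $E'\cap E''$ in the abelian category $\AA_\KK$). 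Your argument instead works entirely inside the exact category $\AA_S$: a subobject by the paper's definition comes with a conflation, conflations are kernel--cokernel pairs, and then exactness and faithfulness of $\omega_S$ plus Lemma \ref{isomorphism2} finish the job. This is logically lighter --- it uses only assumptions (1) and (3) and needs neither assumption (5) nor any representability or properness input --- and it cleanly sidesteps the unavailability of intersections over a general base, which is exactly the subtlety the paper handles geometrically. Your ``alternative, representability-based proof'' is essentially the paper's own argument (the paper phrases it on stacks rather than via a chosen trivialization), so your preference for the direct argument is a reasonable one.
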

\begin{proof}
Let us introduce the notation $\Xst:=\Mst_{d+d'}\times_{\Spec\kk/G_{d+d'}}\Spec\kk/P_{d,d'}$ and assume for simplicity that $S$ is connected. By assumption, $E',E''$ define a morphism from $S$ to the fiber product $\mathfrak{E}xact_{d,d'}\times_\Xst \mathfrak{E}xact_{d,d'}$ for suitable $d,d'\in \NN^I$. Since $\iota$ is a closed embedding, it is in particular a monomorphism, and the projection to each of the factors provides an isomorphism between $\mathfrak{E}xact_{d,d'}\times_\Xst \mathfrak{E}xact_{d,d'}$ and $\mathfrak{E}xact_{d,d'}$. 
\end{proof}

\begin{lemma}
If $X_d=\overline{X}^{ss}_d$ of all $d\in \NN^{\oplus I}$, then the morphism $\oplus:\Msp\times \Msp \longrightarrow \Msp$ is finite. 
\end{lemma}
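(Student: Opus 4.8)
The plan is to fix dimension vectors $d,d'\in\NN^{\oplus I}$ and prove that each restriction $\oplus:\Msp_d\times\Msp_{d'}\to\Msp_{d+d'}$ is finite; since $\Msp=\sqcup_e\Msp_e$ and only finitely many pairs $(d,d')$ sum to a given $e$, finiteness of $\oplus$ on all of $\Msp\times\Msp$ follows by checking it on these finitely many pieces over each $\Msp_e$. By assumption (4) every $\Msp_e$ is quasi-projective of finite type over $\kk$, so all maps in sight are automatically separated and of finite type, and it suffices to show that $\oplus$ is proper and quasi-finite.

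Quasi-finiteness is the easy half, and I would read it off the two Lemmas identifying the $\KK$-points of $\Msp_e$ with the semisimple objects of dimension $e$. Over any field extension $\KK\supset\kk$, a point of $\Msp_{d+d'}$ is a semisimple object $G=\bigoplus_k S_k^{m_k}$ with pairwise non-isomorphic simple summands $S_k$, and its preimage under $\oplus$ consists of the isomorphism classes of pairs $(E,F)$ with $E\oplus F\cong G$ and $\dim E=d$. Each such $E$ is forced to be $\bigoplus_k S_k^{a_k}$ with $0\le a_k\le m_k$, and $F=\bigoplus_k S_k^{m_k-a_k}$, so the fibre is in bijection with the finite set of tuples $(a_k)$ satisfying $\sum_k a_k\dim S_k=d$. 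Hence $\oplus$ has finite fibres over every point of every field.

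Properness is the real content, and this is where the hypothesis $X_d=\overline{X}_d^{ss}$ is used. Under it, $\Msp_e=\overline{X}_e^{ss}/\!\!/G_e$ is the \emph{full} GIT quotient of a scheme projective over the affine $\Spec A_e$ along the ample linearization $\Lin_e$, hence projective over its affinization $S_e:=\Spec\Gamma(\Msp_e,\OO)$; write $q_e:\Msp_e\to S_e$ for the (proper) structure map. Applying global sections to $\oplus$ and using the Künneth identification $\Gamma(\Msp_d\times\Msp_{d'},\OO)=\Gamma(\Msp_d,\OO)\otimes_\kk\Gamma(\Msp_{d'},\OO)$ produces a morphism $\sigma:S_d\times S_{d'}\to S_{d+d'}$ of affine schemes with $q_{d+d'}\circ\oplus=\sigma\circ(q_d\times q_{d'})$. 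I would then factor $\oplus$ as
\[ \Msp_d\times\Msp_{d'}\xrightarrow{\;g\;}\Msp_{d+d'}\times_{S_{d+d'}}(S_d\times S_{d'})\xrightarrow{\;\mathrm{pr}\;}\Msp_{d+d'}, \]
where $g=(\oplus,q_d\times q_{d'})$ is the induced map over $S_d\times S_{d'}$ and $\mathrm{pr}$ is the base change of $\sigma$ along $q_{d+d'}$. The map $g$ is proper, since its source is proper (even projective) over $S_d\times S_{d'}$ via $q_d\times q_{d'}$ while its target is separated over $S_d\times S_{d'}$ (base change of the proper $q_{d+d'}$); and $\mathrm{pr}$ is proper as soon as $\sigma$ is. Granting properness of $\sigma$, the composite $\oplus$ is proper, and together with quasi-finiteness this yields finiteness.

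The main obstacle is therefore the finiteness of the base map $\sigma$ — equivalently, since $S_d\times S_{d'}$ and $S_{d+d'}$ are affine of finite type over $\kk$, the module-finiteness of the ring extension $\Gamma(\Msp_{d+d'},\OO)\to\Gamma(\Msp_d,\OO)\otimes_\kk\Gamma(\Msp_{d'},\OO)$. I expect to prove this by showing that $\sigma$ is again quasi-finite (the closed points of $S_e$ refine to polystable objects, so the same decomposition count as above applies) and integral, the integrality coming from the boundedness built into the GIT data: the simple factors of a direct sum $E\oplus F$ are exactly the simple factors of its image, which bounds the fibres of $\sigma$ and forces integral dependence. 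Alternatively one can bypass $\sigma$ and verify the valuative criterion for $\oplus$ directly, the point being that the GIT limit of a one-parameter degeneration of $E\oplus F$ is the direct sum of the separate limits of $E$ and $F$, because passing to the associated graded of the relevant $\lambda$-filtration commutes with direct sums. Either way the essential content — and the hardest step — is that the \emph{polystable representative of a limit of direct sums is the direct sum of the polystable limits}, which is precisely where the closedness statements of assumption (4) and the Hilbert--Mumford analysis of limit points enter.
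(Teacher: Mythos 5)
Your reduction is fine as far as it goes: quasi-finiteness via uniqueness of the decomposition into simples, and the cancellation argument reducing properness of $\oplus$ to properness of the induced map $\sigma$ on the affinizations $\Spec\kk[X_d]^{G_d}$, both match the first half of the paper's proof. But the proof stops exactly where the real content begins. Your argument for finiteness of $\sigma$ is not a proof: ``quasi-finite plus boundedness forces integral dependence'' is false as a general principle (an open immersion $\Aff^1\setminus\{0\}\hookrightarrow\Aff^1$ is quasi-finite with bounded fibres but not integral), and you give no mechanism specific to this situation that would produce integral equations. Even the quasi-finiteness of $\sigma$ itself is shaky, since points of $\Spec\kk[X_e]^{G_e}$ (functions on the \emph{semistable} locus only) do not in general have the clean modular interpretation as polystable objects that points of $\Msp_e$ do. Your fallback via the valuative criterion has the same problem in different clothing: an $R$-point of $\Msp_{d+d'}$ for a DVR $R$ is a map to the GIT quotient, not a family of objects over $\Spec R$, so the statement ``the limit of $E\oplus F$ is the direct sum of the limits'' about associated graded objects of $\lambda$-filtrations (a Hilbert--Mumford, $\mathbb{G}_m$-limit statement inside a fixed fibre) does not address the actual lifting problem across the closed point of $\Spec R$.

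What is missing is the paper's key input, which you never invoke: assumption (5), the properness of the universal Grassmannian $\pi_2:\mathfrak{E}xact\to\Mst$, whose atlas version $\hat{\pi}_2:Y_{d,d'}\to X_{d+d'}$ is proper. The paper factors the affinization map as $\tilde{\pi}_2\circ\tilde{\sigma}_0$, where $\tilde{\sigma}_0$ is induced by the section $\sigma_0:(E,F)\mapsto(0\to E\to E\oplus F\to F\to 0)$ of $\pi_1\times\pi_3$ (a section of a separated map, hence a closed embedding), and then proves $\tilde{\pi}_2$ is finite by pure algebra: properness of $\hat{\pi}_2$ plus Stein factorization gives that $\kk[X_{d+d'}]\to\kk[Y_{d,d'}]$ is finite, hence integral, and applying the Reynolds operator to an integral equation of an invariant element shows that $\kk[X_{d+d'}]^{G_{d+d'}}\to\kk[Y_{d,d'}]^{G_{d+d'}}$ is integral, hence finite. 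This is the step that converts the categorical fact ``every subobject of $E\oplus F$ is parametrized by a proper scheme'' into the ring-theoretic integrality you need; without it (or a genuine substitute), the lemma is not proved.
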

\begin{proof}
As the isomorphism types and multiplicities of the stable summands of a polystable object are unique, the morphism is certainly quasi-finite. It remains to show that $\oplus$ is proper. 
Recall that 
\[ \overline{X}^{ss}_d/\!\!/G_d=\Proj\big( \bigoplus_{k\in \NN} \Ho^0(\overline{X}^{ss}_d,\Lin^{\otimes k}_d|_{\overline{X}^{ss}_d})^{G_d} \Big) \longrightarrow \Spec \kk[\overline{X}^{ss}_d]^{G_d} \]
obtained by identifying $\kk[\overline{X}^{ss}_d]^{G_d}$ with the degree zero part of the graded ring $\bigoplus_{k\in \NN} \Ho^0(\overline{X}^{ss}_d,\Lin^{\otimes k}_d|_{\overline{X}^{ss}_d})^{G_d} $ is always proper. Under the assumptions of the lemma, we end up with a commutative diagram
\[ \xymatrix @C=2cm { \Msp_d\times \Msp_{d'} \ar[r]^\oplus \ar[d] & \Msp_{d+d'} \ar[d] \\  \Spec[X_d]^{G_d}\times \Spec[X_{d'}]^{G_{d'}} \ar[r]^\oplus & \Spec[X_{d+d'}]^{G_{d+d'}}  }\]
with proper vertical maps. Hence, it suffices to show that 
\[\oplus:\Spec[X_d]^{G_d}\times \Spec[X_{d'}]^{G_{d'}} \longrightarrow \Spec[X_{d+d'}]^{G_{d+d'}}\] 
is proper.
Consider the following commutative diagram
\[ \xymatrix { & \mathfrak{E} xact_{d,d'}\cong Y_{d,d'} /G_{d+d'} \ar@/_/[dl]_{\pi_1\times \pi_3} \ar[dr]^{\pi_2}  \ar[dd] & \\ X_d/G_d \times X_{d'}/G_{d'}, \ar@/_/[ur]_{\sigma_0} \ar[dd] & & X_{d+d'}/G_{d+d'}  \ar[dd] \\
& \Spec \kk[Y_{d,d'}]^{G_{d+d'}}  \ar@/_/[dl]_{\tilde{\pi}_1\times \tilde{\pi}_3} \ar[dr]^{\tilde{\pi}_2} &  \\ 
\Spec \kk[X_d]^{G_d} \times \Spec \kk[X_{d'}]^{G_{d'}} \ar@/_/[ur]_{\tilde{\sigma}_0} \ar[rr]_\oplus & & \Spec \kk[X_{d+d'}]^{G_{d+d'}} }  \]
with $Y_{d,d'}=X_{d,d'}\times_{G_{d,d'}}G_{d+d'}=\mathfrak{E} xact_{d,d'}\times_{\Mst_{d+d'}}X_{d+d'}$. Here, $\sigma_0$ maps a pair $(E,E')$ of objects to its direct sum $E\oplus E'$ providing a right inverse of $\pi_1\times \pi_3$. Thus, $\tilde{\sigma}_0$ is also a section providing a closed embedding. It remains to show that $\tilde{\pi}_2$ is proper. Note that $\hat{\pi}_2:Y_{d,d'}\longrightarrow X_{d+d'}$, being the pull-back of $\pi_2$, must be proper with Stein factorization $Y_{d,d'} \to \Specbf \hat{\pi}_{2\,\ast}\OO_{Y_{d,d'}} \to X_{d+d'}$. Taking global sections on $X_{d+d'}$ we see that $\kk[X_{d+d'}]\longrightarrow \kk[Y_{d,d'}]$ is finite, hence integral. Applying the Reynolds operator of $\kk[Y_{d,d'}]$ to an integral equation for $a\in \kk[Y_{d,d'}]^{G_{d+d'}}$, we obtain that $\kk[X_{d+d'}]^{G_{d+d'}}\longrightarrow \kk[Y_{d,d'}]^{G_{d+d'}}$ is integral, too. Thus $\tilde{\pi}_2$ is finite, hence proper.
\end{proof}

\begin{corollary} \label{property} Assume that $X_d=\overline{X}^{ss}_d$ for all $d\in \NN^{\oplus I}$. Let $P$ be a property of objects in $\AA_\KK$ which is closed under subquotients and extension, that means for every short exact sequence
\[ 0\longrightarrow E_1 \longrightarrow E_2 \longrightarrow E_3\longrightarrow 0 \]
in $\AA_\KK$, $E_2$ has property $P$ if and only if $E_1$ and $E_3$ have property $P$. In particular, the full subcategory $\AA^P_\KK\subset \AA_\KK$ of objects having property $P$ is a Serre subcategory. We also assume the points in $\Msp$ having property $P$ are the points of a subscheme $\Msp^P$. We define $\AA^P_S$ to be the full subcategory of objects $E$ in $\AA_S$ such that $E|_s$ is in $\AA^P_\KK$ for every point $s\in S$ with residue field $\KK$.  Then $\Msp^P$ is certainly the moduli space for $\AA^P$ and $\oplus:\Msp^P\times \Msp^P\to \Msp^P$ is proper. 
\end{corollary}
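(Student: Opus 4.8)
The plan is to reduce everything to the preceding Lemma, which shows that $\oplus\colon\Msp\times\Msp\to\Msp$ is finite (in particular proper) under the hypothesis $X_d=\overline{X}^{ss}_d$, and then to obtain the statement for $\AA^P$ by a base-change argument along the inclusion $\Msp^P\hookrightarrow\Msp$. First I would dispose of the claim that $\Msp^P$ is the moduli space for $\AA^P$. By the earlier Lemma identifying the $\KK$-points of $\Msp$ with the semisimple objects of $\AA_\KK$, and since $P$ is closed under subquotients, a semisimple object $E=\bigoplus_\kappa E_\kappa$ lies in $\AA^P_\KK$ exactly when each simple summand $E_\kappa$ does; hence the $\KK$-points of $\Msp^P$ are precisely the semisimple objects of the Serre subcategory $\AA^P_\KK$. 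As $\AA^P_S$ is defined fiberwise, the GIT-quotient construction for $\AA$ restricts to $\AA^P$ with quotient $\Msp^P$.

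Next I would exploit that $P$ is closed under both extensions and subquotients to record, for every field extension $\KK\supset\kk$, the equivalence that $E\oplus E'\in\AA^P_\KK$ if and only if both $E\in\AA^P_\KK$ and $E'\in\AA^P_\KK$. On the level of $\KK$-points this says exactly that $\oplus^{-1}(\Msp^P)=\Msp^P\times\Msp^P$, so that the square
\[
\xymatrix{
\Msp^P\times\Msp^P \ar[r] \ar[d]_\oplus & \Msp\times\Msp \ar[d]^\oplus \\
\Msp^P \ar[r] & \Msp
}
\]
is Cartesian. Granting this, properness is immediate: the right-hand vertical map is finite by the preceding Lemma, and finiteness is stable under base change, so the left-hand vertical map $\oplus\colon\Msp^P\times\Msp^P\to\Msp^P$ is finite, and therefore proper.

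The main obstacle I anticipate is not the point-set computation but the scheme-theoretic identification $\oplus^{-1}(\Msp^P)=\Msp^P\times\Msp^P$ that is needed to make the square genuinely Cartesian rather than merely commutative on points. Since $\Msp^P$ is only assumed to be a subscheme carved out by the pointwise property $P$, one must argue that the two subschemes of $\Msp\times\Msp$ coincide and not just share their underlying sets. This is automatic when $\Msp^P$ is open in $\Msp$ (the situation occurring for $\Char\kk>0$), since an open subscheme is determined by its point set; in general it reduces to the fiberwise characterization above together with the universality of the categorical GIT-quotient, after which the base-change step goes through verbatim and yields the asserted properness of $\oplus$ on $\Msp^P$.
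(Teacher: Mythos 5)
Your proposal matches the paper's proof: the paper likewise observes that, since $P$ is closed under extensions and subquotients, the square with vertical inclusions $\Msp^P\times\Msp^P\hookrightarrow\Msp\times\Msp$ and $\Msp^P\hookrightarrow\Msp$ and horizontal maps $\oplus$ is cartesian, and then deduces properness of the top map by base change from the previous lemma (finiteness of $\oplus:\Msp\times\Msp\to\Msp$ when $X_d=\overline{X}^{ss}_d$). Your explicit flagging of the scheme-theoretic (as opposed to point-set) content of the cartesian claim is if anything more careful than the paper, which simply asserts it and only afterwards remarks that specifying such a $P$ is equivalent to specifying a subscheme $\Msp^P$ making that diagram cartesian.
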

\begin{proof}
As $P$ is closed under extensions and subquotients, the diagram
\[ \xymatrix @C=2cm { \Msp^P\times \Msp^P \ar[r]^{\oplus} \ar@{^{(}->}[d] & \Msp^P \ar@{^{(}->}[d] \\ \Msp\times \Msp \ar[r]^{\oplus} & \Msp } \]
is cartesian and the lower horizontal map is proper by the previous lemma.
\end{proof}
A careful look at the proof shows that giving a property $P$ as above is just the same as to give a subscheme $\Msp^P\subset \Msp$ such that the previous diagram is cartesian. 
Let us give some examples of this situation.
\begin{example} \rm \label{property2}
In the case of quivers (with relations), we can choose a finite set of cycles in $Q$ and say that a representation has property $P$ if every chosen cycle acts invertibly. This defines an open subscheme $\Msp^P$ in $\Msp$. Another example is given by given by requiring that the cycles act nilpotently which defines a closed subscheme $\Msp^P$ in $\Msp$. 
\end{example}

\subsection{Deformation theory of objects}

We come to another assumption we need to impose on $\AA$. Notice that $\Ho^{-1}(\Ta^\bullet_E\Mst_d)=\Lie \Stab_{G_d}(E,\psi)=\Hom_{\AA_\KK}(E,E)$ for any object $E\in \AA_\KK$ and a choice of trivialization $\psi:\omega_\KK(E)\xrightarrow{\sim}\KK^d$. The other non-trivial cohomology group $\Ho^0(\Ta^\bullet_E\Mst_d)$ of the tangent complex at $E\in \Mst_d$ is given by the tangent space of the associated deformation functor which is also the normal space of the $G_d$-orbit through $(E,\psi)$. We want to assume that this $\KK$-vector space is given by $\Ext^1_{\AA_\KK}(E,E)$ in such a way that the action of $\Stab_{G_d}(E,\psi)=\Aut_{\AA_\KK}(E)$ is compatible with this isomorphism. On way to require this, is to say that there is a natural quasi-isomorphism between $\Hom^\bullet_{B_\KK}(I^\bullet,I^\bullet)[1]$ computing all $\Ext$-groups with $\Ta^\bullet_E\Mst$, at least if $\Aut_{\AA_\KK}(E)$ is connected. Here, $E\to I^\bullet$ is an injective resolution\footnote{Note that $\AA_\KK$ might not have 
enough projective or injective objects. In particular, there is no obvious  way to define higher $\Ext$-groups and a homological dimension of $\AA_\KK$.} of $E$ considered as a comodule over the coalgebra $B_\KK$ associated to $\omega_\KK:\AA_\KK\to \Vect_\KK^I$. This would imply, that $\AA_\KK$ considered as a subcategory of $B_\KK$-comodules is of homological dimension one. In practice, $\AA_\KK$ embeds into another abelian category having enough projective or injective objects, and we could define higher $\Ext$-groups with respect to this abelian category. In any case, checking $\Hom^\bullet(I^\bullet,I^\bullet)[1]\cong \Ta^\bullet_E\Mst$ in $D^b(\Vect_\KK)$ can be difficult in practice.\\

We give another criterion which is often easier to check. 
\begin{definition}
Let $R$ be a commutative local Artin $\kk$-algebra and $S$ be a $\kk$-scheme.  An $R$-object in $\AA_S$ is a pair $(E,\nu)$ with $E\in \AA_S$ and $\nu:R \longrightarrow \End_{\AA_S}(E)$ a $\kk$-algebra homomorphism. A morphism between $R$-objects is given by a  morphism $f:E\to E'$ in $\AA_S$ commuting with the $R$-action. We denote the resulting category with $\tilde{\AA}_S^R$. The full subcategory $\AA_S^R\subset \tilde{\AA}_S^R$ contains those $R$-objects  for which $\omega_S(E)$ equipped with the canonical $R$-action via $\omega_\KK\circ \nu$ is a locally free $R\otimes_\kk \OO_S$-module. In particular, $\omega_S(f)$ is a morphism between locally free $R\otimes \OO_S$-modules.   
\end{definition}
The space of homomorphism of $R$-objects and the space of extensions between such objects has a natural structure of an $R\otimes \OO_S(S)$-module. Note that $\AA_S^R$ is closed inside $\tilde{\AA}_S^R$ under extensions. We denote with $\omega_S^R$ the functor sending $(E,\nu)\in \AA_S^R$ to $\omega_S(E)\in \Vect^I_{S\otimes R}$, where $S\otimes R$ is a short notation for $S\times \Spec R$. \\ 
Given a homomorphism $R\to R'$ of local Artin $\kk$-algebras, we write $R'$ as a quotient $R^m\xrightarrow{M} R^n\twoheadrightarrow R'$ of free $R$-modules. We define $R'\otimes_R E$ to be the cokernel of the induced morphism $E^m \xrightarrow{M} E^n$ if it exists. Then $R'\otimes_R E$ is an $R'$-object and we have 
\begin{equation} \label{eq6} \Hom_{\tilde{\AA}_S^{R'}}(R'\otimes_R E, F)\cong \Hom_{\tilde{\AA}_S^R}(E, F_R) \end{equation}
for every $R'$-object $F$ in $\AA_S$, where $F_R$ is $F$ with the induced $R$-action given by restriction of scalars. The $R$-object $F_R$ might not be in $\AA_S^R$ even if $F\in \AA_S^{R'}$. However, assuming existence, $R'\otimes_R E\in \AA^{R'}_S$ if $E\in \AA_S^R$. \\
\\
\textbf{(6) Existence of a good deformation theory:} For every $\kk$-scheme $S$ and every local Artin $\kk$-algebra $R$, there is an $R\otimes\OO_S$-linear equivalence $p_S^R:\AA_{S\otimes R} \xrightarrow{\sim} \AA_S^R$ of categories such that $\omega_S^R\circ p_S^R=\omega_{S\otimes R}$. Moreover, $p_{(-)}^R$ should be compatible with pull-backs along morphisms $S\to S'$ inducing an isomorphism $\AA_{(-)\otimes R}\xrightarrow{\sim} \AA^R_{(-)}$ of functors. Moreover, for every morphism $R\to R'$ and every $R$-object $(E,\nu)$ in $\AA_S^R$ the $R'$-object $R'\otimes_R E$ should exist providing us with another morphism $R'\otimes_R: \AA_S^R \to \AA_S^{R'}$ of categories for every $S$. We require that $p_S^{(-)}$ is also compatible with morphisms $R\to R'$  between local Artin $\kk$-algebras inducing $\phi:S\otimes R' \to S\otimes R$, i.e. the following diagram commutes (up to a 2-isomorphism)
\[ \xymatrix @C=1.5cm { {\AA}_{S\otimes R} \ar[r]^{\phi^\ast} \ar[d]^\wr_{p^R_S} & {\AA}_{S\otimes R'} \ar[d]^\wr_{p^{R'}_S} \\ {\AA}_S^{R} \ar[r]^{R'\otimes(-)} & {\AA_S^{R'}}. }\] 
Thinking of $\AA^{(-)}_{(-)}$ and $\AA_{(-)\otimes (-)}$ as bifunctors, we just required that $p$ is an isomorphism between them.
\begin{example} \rm
In all our examples this assumption is fulfilled. The equivalence is provided by the push-down of objects on $S\times\Spec R$ along the affine morphism $S\times \Spec R \longrightarrow S$. The functor $R'\otimes_R(-)$ is just base change along $S\times \Spec R' \longrightarrow S\times \Spec R$ which preserves flatness and semistability.
\end{example}
The case  $R=\kk[\varepsilon]:=\kk[x]/(x^2)$ plays a special role, as it describes tangent vectors. Moreover, $\Spec \kk[\varepsilon]$ is a co-commutative cogroup, and we write $S[\varepsilon]$ for $S\otimes \Spec R$.
\begin{proposition} \label{normal_ext}
\begin{enumerate}
\item The Rim-Schlessinger axiom holds, i.e.\ for every pair $f:R'\twoheadrightarrow R, g: R''\to  R$ of local Artin $\kk$-algebras with $f$ being surjective, we get 
\[\AA_{S\otimes R'\times_R R''}\cong \AA_{S\otimes R'}\times_{\AA_{S\otimes R}} \AA_{S\otimes R''}. \]
\item There is an equivalence between the category $\AA_{S[\varepsilon]}$ and the category of short exact sequences $0\to E\to E'\to E\to 0$ in $\AA_S$. The restriction map $\AA_{S[\varepsilon]} \to \AA_S$ induced by $\kk[\varepsilon]\to \kk$ corresponds to the functor sending such an exact sequence to $E$. 
\item By general arguments, the fibers of the map $\Iso(\AA_{S[\varepsilon]})\to \Iso(\AA_S)$ between sets of isomorphism classes is an $\OO_S(S)$-module and the induced isomorphism of the fiber onto $\Ext^1_S(E,E)$ is $\OO_S(S)$-linear. 
\item If $S=\Spec\KK$ and $E\in \AA_\KK$, the previous statement provides a $\KK$-linear isomorphism between $\Ext^1_{\AA_\KK}(E,E)$, $\Ho^0(\Ta_E \Mst)$ and the normal space of the $G_d$-orbit $G_d(E,\psi)\subset X_d$ for some trivialization $\psi:\omega(E)\cong \KK^d$. All isomorphisms are  compatible with the action of $\Aut_{\AA_S}(E)=\Stab_{G_d}(E,\psi)$.
\end{enumerate}
\end{proposition}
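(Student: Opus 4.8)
The common engine is assumption (6): the equivalence $p_S^R\colon\AA_{S\otimes R}\xrightarrow{\sim}\AA_S^R$ turns each statement into a statement about $R$-objects $(E,\nu)$ whose image under the fiber functor is free over $R\otimes\OO_S$, where exactness and faithfulness of $\omega_S$, the lifting axiom of assumption (3), and conservativity (Lemma~\ref{isomorphism2}) give full control. The plan is to prove (1) and (2) by this translation, read off (3) from (2) together with the cogroup structure on $\Spec\kk[\varepsilon]$, and deduce (4) from the presentation $\Mst_d=X_d/G_d$. For (1), since $f\colon R'\twoheadrightarrow R$ is surjective, $\Spec(R'\times_R R'')$ is the Ferrand pushout of the closed immersion $\Spec R\hookrightarrow\Spec R'$ along $\Spec R\to\Spec R''$, so $A:=R'\times_R R''$ sits in a Milnor square. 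Applying $\omega$ and (6), the assertion reduces to the classical fact that a locally free $A\otimes\OO_S$-module is the same datum as a locally free $R'\otimes\OO_S$-module and a locally free $R''\otimes\OO_S$-module with an isomorphism of their images over $R\otimes\OO_S$ (Milnor patching). One then lifts this to $\AA$: an object of $\AA_S^A$ is precisely an object of $\AA_S$ whose $\omega_S$-image carries a free $A\otimes\OO_S$-structure, so the comparison functor $\AA_S^A\to\AA_S^{R'}\times_{\AA_S^R}\AA_S^{R''}$ is well defined and is an equivalence by conservativity of $\omega_S$ together with the stack structure of $\AA$ over $S$.

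For (2), assumption (6) replaces $\AA_{S[\varepsilon]}$ by $\AA_S^{\kk[\varepsilon]}$, whose objects are pairs $(E',N)$ with $N:=\nu(\varepsilon)\in\End_{\AA_S}(E')$ square-zero and $\omega_S(E')$ free over $\kk[\varepsilon]\otimes\OO_S$. Base change along $\kk[\varepsilon]\to\kk$ produces $E:=\coker N$ with a map $p\colon E'\to E$ in $\AA_S$ whose $\omega_S$-image is the projection $\omega_S(E')\to\omega_S(E')/\varepsilon\,\omega_S(E')$; since $\ker\omega_S(p)=\varepsilon\,\omega_S(E')\cong\omega_S(E)$, the lifting axiom of (3) completes $p$ to a short exact sequence $0\to E''\to E'\xrightarrow{p}E\to0$ in $\AA_S$. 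As $N^2=0$, the map $N$ factors through a morphism $E\to E''$ which is an $\omega_S$-isomorphism, so Lemma~\ref{isomorphism2} gives $E''\cong E$ and hence $(E',N)\mapsto\bigl(0\to E\to E'\to E\to0\bigr)$. The inverse sends $0\to E\xrightarrow{i}E'\xrightarrow{p}E\to0$ to $(E',i\circ p)$, which is square-zero because $p\circ i=0$ and lies in $\AA_S^{\kk[\varepsilon]}$ because $\omega_S$ is exact. These functors are mutually inverse up to natural isomorphism, and restriction along $\kk[\varepsilon]\to\kk$ visibly returns the quotient term $E=\coker N$.

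Statement (3) is then the standard dictionary: by (2) the fiber of $\Iso(\AA_{S[\varepsilon]})\to\Iso(\AA_S)$ over $E$ is the set of isomorphism classes of self-extensions of $E$, which is $\Ext^1_S(E,E)$ with Baer sum as addition. The cogroup comultiplication on $\Spec\kk[\varepsilon]$ induces this addition, and multiplication by $f\in\OO_S(S)$ on $\kk[\varepsilon]$ induces the scalar action; these match the $\OO_S(S)$-module structure on $\Ext^1_S(E,E)$ by the compatibility demanded in assumption (1), namely that the action of $f$ on $\Ext^1$ is the push-out along $f\in\End(E)$. For (4) I present $\Mst_d=X_d/G_d$, so at $x=(E,\psi)$ the tangent complex is $\Ta^\bullet_E\Mst=[\Lie G_d\xrightarrow{a}T_xX_d]$ in degrees $-1,0$, whence $\Ho^0(\Ta_E\Mst)=\coker a$ is the normal space to the orbit $G_dx$; on the other hand $\Ho^0(\Ta_E\Mst)$ is the fiber of $\Iso(\AA_{\KK[\varepsilon]})\to\Iso(\AA_\KK)$ over $E$, equal to $\Ext^1_{\AA_\KK}(E,E)$ by (3). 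All three constructions are functorial in the object $E$, so $\Stab_{G_d}(E,\psi)=\Aut_{\AA_\KK}(E)$ acts compatibly on the three spaces.

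The main obstacle I anticipate is part (2): manufacturing the honest subobject and quotient from the $\omega$-level square-zero datum over a \emph{general} base $S$, and checking that the two constructions are inverse equivalences of categories rather than a mere bijection on isomorphism classes, leans delicately on the freeness condition defining $\AA_S^{\kk[\varepsilon]}$, on the lifting axiom of (3), and on conservativity. The Milnor-patching step in (1) is the other pressure point, since one must confirm that the stack-theoretic gluing for $\AA$ over $S$ is genuinely compatible with the module-theoretic patching seen through $\omega$ for the Ferrand pushout, which is not a cover in any of the usual topologies.
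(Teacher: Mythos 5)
Your parts (2)--(4) track the paper's own proof almost step for step: the same passage to $\kk[\varepsilon]$-objects via assumption (6), the same factorization of $N=\nu(\varepsilon)$ through $\ker p$ combined with Lemma \ref{isomorphism2} to identify the kernel term with $E$, the same cogroup/Baer-sum dictionary for (3), and the same presentation $\Mst_d=X_d/G_d$ for (4), which the paper itself leaves to the reader as well known. The issue is part (1).

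There you reduce to Milnor patching of locally free modules over $\bar R:=R'\times_R R''$ and then assert that the comparison functor $\AA_S^{\bar R}\to\AA_S^{R'}\times_{\AA_S^R}\AA_S^{R''}$ is an equivalence ``by conservativity of $\omega_S$ together with the stack structure of $\AA$ over $S$.'' That is precisely the step that needs an argument, and neither cited tool supplies it. Faithfulness and conservativity of $\omega_S$ give faithfulness of the comparison functor and let you \emph{check} that a candidate object works, but they do not \emph{produce} one; and the stack structure of $\AA$ only gives gluing along covers of $S$, whereas (as you yourself concede) $\Spec R'\sqcup\Spec R''\to\Spec\bar R$ is not a cover in any relevant topology, so the gluing axioms are silent. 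Note also that your identification ``an object of $\AA_S^{\bar R}$ is precisely an object of $\AA_S$ whose $\omega_S$-image carries a free $\bar R\otimes\OO_S$-structure'' is not correct as stated: the $\bar R$-action must be an algebra map $\bar R\to\End_{\AA_S}(E)$, i.e.\ given by endomorphisms in $\AA_S$, not merely a module structure on $\omega_S(E)$; manufacturing such an object from the patched module is exactly the difficulty. The paper closes this gap constructively: given $(E',E'',\alpha)$ it sets $E:=R\otimes_{R'}E'\cong R\otimes_{R''}E''$ and defines $\tilde E$ as the pull-back of the short exact sequence $0\to K'\oplus K''\to E'\oplus E''\to E\oplus E\to 0$ of $\bar R$-objects along the diagonal $\Delta_E:E\to E\oplus E$, a pull-back which exists inside $\AA_S$ by assumption (1); it then uses the adjunction (\ref{eq6}) and Lemma \ref{isomorphism2} to verify $R'\otimes_{\bar R}\tilde E\cong E'$ and $R''\otimes_{\bar R}\tilde E\cong E''$, and, starting from an arbitrary $\bar R$-object $\bar E$, that the induced map $\bar E\to\tilde E$ is an isomorphism because Rim--Schlessinger does hold in $\Vect^I_S$. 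Without this (or an equivalent) construction of the inverse functor, your part (1) remains unproved.
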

\begin{proof}
For algebraic stacks, the first statement is proven in \cite{Stackproject}, chap. 80: Artin's axioms, Lemma 5.2. Let us spell out some details also used for the second part. We use the shorthand $\bar{R}=R'\times_R R''$.
Note that objects of $ \AA_{S\otimes R'}\times_{\AA_{S\otimes R}} \AA_{S\otimes R''}$ consist of triples $(E',E'',\alpha)$ with $E'\in \AA_{S\otimes R'}$, $E''\in \AA_{S\otimes R''}$ and an isomorphism $\alpha:E|_{S\otimes R} \xrightarrow{\sim} E'|_{S\otimes R}$. First of all, the statement in (1) is true for $\Vect^I_S$. As $\omega$ is faithful, the induced functor 
\[\AA_{S\otimes \bar{R}}\longrightarrow \AA_{S\otimes R'}\times_{\AA_{S\otimes R}} \AA_{S\otimes R''} \]
must be faithful, too. To show that it is full and essentially surjective, we use our equivalent description obtained by $p_S^{(-)}$. Given an $R'$-object $E'$ in $\AA_S$, an $R''$-object $E''$ in $\AA_S$  and an isomorphism $\alpha$ as above. Then, $E'\oplus E''$ can be seen as an $\bar{R}$-object by restriction of scalars. The same applies to their image $R\otimes_{R'} E' \xrightarrow{\sim} R\otimes_{R''} E''$ in $\AA_S^R$ which we denote with $E$. Consider the following cartesian diagram of $\bar{R}$-objects. 
\[ \xymatrix { 0 \ar[r] & K' \oplus K'' \ar[r] \ar@{=}[d] & \tilde{E} \ar@{^{(}->}[d] \ar[r] & E \ar@{^{(}->}[d]^{\Delta_{E}} \ar[r] & 0 \\  
0 \ar[r] & K' \oplus K'' \ar[r] & E'\oplus E''  \ar[r] & E\oplus E  \ar[r] & 0 }
\]
We claim that $\tilde{E}\in \AA_S^{\bar{R}}$ maps to the triple $(E',E'',\alpha)$. As $E'$ is induced from $R'$ by restriction of scalar, the  morphism $\tilde{E}\to E'$ has a factorization $\tilde{E} \to R'\otimes_{\bar{R}}\tilde{E} \xrightarrow{\beta'} E'$ by equation (\ref{eq6}). Now, $\omega^{R'}_X(\beta)$ is an isomorphism, and by Lemma \ref{isomorphism2}, $\beta'$ must be an isomorphism, too. Similarly for $E''$. Conversely, starting with an $\bar{R}$-object $\bar{E}$, we apply this construction to $E':=R'\otimes_{\bar{R}}E$ and $E'':=R'\otimes_{\bar{R}}E$ and get a morphism $\bar{E}\xrightarrow{\eta}\tilde{E}$ of $\bar{R}$-objects by the universal property of fiber products. As the Rim--Schlessinger axiom holds in $\Vect_S^I$, we conclude that $\omega_S^{\bar{R}}(\eta)$ is an isomorphism which implies that $\eta$ is an isomorphism by Lemma \ref{isomorphism2}.
\\
For the second part we have to construct mutually inverse (up to 2-isomorphisms) functors between $\AA_{S[\varepsilon]}\cong \AA_S^{\kk[\varepsilon]}$ and the category of short exact sequences $0\to E\to E'\to E\to 0$ in $\AA_S$.  For every $\kk[\varepsilon]$-object $(E',\nu)$ in $\AA_S$ with $\kk\otimes_{\kk[\varepsilon]} E'\cong E$ we get a morphism $p:E'\to E$ of objects in $\AA_S$ by equation (\ref{eq6}) applied to $\id_E$ and restriction of scalars to $\kk$ (forget $\varepsilon$-action) afterward. Furthermore, the sequence 
\[ 0 \longrightarrow \omega_S(E)=\varepsilon \omega_S(E') \longrightarrow \omega_S(E') \xrightarrow{\omega_S(p)} \omega_S(E)=\omega_S(E')/\varepsilon\omega_S(E') \longrightarrow 0 \]
is exact. Hence, we find an exact sequence $0\to K\xrightarrow{i} E'\xrightarrow{p} E\to 0$ in $\AA_S$ with $\omega_S(K)=\omega_S(E)$. As $E=\coker \nu(\varepsilon)$ and $K=\ker(p)$, $E'\xrightarrow{\nu(\varepsilon)} E'$ factorizes through $K$, and $K$ is the image of $\nu(\varepsilon)$, $K=\varepsilon E'$ for short. As $\nu(\varepsilon)$ vanishes on $\varepsilon E'$, we obtain a morphism $E\to \varepsilon E'$ which becomes an isomorphism after applying $\omega_S$. Thus, $E\cong \varepsilon E'$ by Lemma \ref{isomorphism2}, giving rise to the short exact sequence
\[ 0 \to E \xrightarrow{\;i\;} E' \xrightarrow{p} E \to 0.\]
with $\nu(\varepsilon)=ip$. Conversely, any such sequence defines a $\kk[\varepsilon]$-action on $E'$ via $\nu(\varepsilon)=ip$ which  turns $\omega_S(E')$ into a locally free $\OO_S[\varepsilon]$-module. This correspondence is functorial, and it is not difficult to see that these functors are mutually inverse to each other other up to 2-isomorphisms.\\ 
For the third part we have to check that the functors constructed in the previous step preserve the $\OO_S(S)$-module structure on the induced fibration on isomorphism classes. The sum $0\to E\to \hat{E}\to E \to 0$ of two short exact sequences \\ $0\to E\to E'\to E\to 0$ and $0\to E\to E''\to E \to 0$ is defined via the commutative diagram 
\[ \xymatrix { 0 \ar[r] & E \oplus E \ar[r]  & E'\oplus E'' \ar[r] & E \oplus E \ar[r] & 0 \\
0 \ar[r] &  E \oplus E \ar[d]^{\nabla_E} \ar[r] \ar@{=}[u] & \tilde{E} \ar[u] \ar[d] \ar[r] & E \ar[u]_{\Delta_E} \ar@{=}[d] \ar[r] & 0 \\
0 \ar[r] & E \ar[r] & \hat{E} \ar[r] & E \ar[r] & 0 } \]
and the requirement that the upper squares are cartesian while the lower squares are cocartesian. Note that the upper squares compute the image of $(E',\nu')\in \AA_S^{\kk[\varepsilon']}$ and $(E'',\nu'')\in \AA_S^{\kk[\varepsilon'']}$ under the isomorphism $\AA_S^{\kk[\varepsilon',\varepsilon'']}\cong \AA_S^{\kk[\varepsilon']}\times_{\AA_S} \AA_S^{\kk[\varepsilon'']}$ of part (1). The lower diagram computes $\kk[\varepsilon',\varepsilon'']/(\varepsilon'-\varepsilon'')\otimes_{\kk[\varepsilon',\varepsilon'']}\tilde{E}$, i.e.\ the image of $(\tilde{E},\tilde{\nu})$ under the map $S[\varepsilon',\varepsilon'']\longrightarrow S[\varepsilon]$ sending $\varepsilon'$ and $\varepsilon''$ to $\varepsilon$ which provides $S[\varepsilon]$ with the structure of a cocommutative cogroup over $S$ inducing a commutative group structure on the fibers of the fibration
$\Iso(\AA_{S[\varepsilon]}) \longrightarrow \Iso( \AA_S)$. Multiplication with $a\in \OO_S(S)$ is induced by the automorphism $\varepsilon \to a\varepsilon$ of $\OO_S[\varepsilon]$ which in terms of short exact sequences $0\to E\to E'\xrightarrow{p} E\to 0$ corresponds the multiplication of $p$ with $a$. \\
The last part of the Proposition is a well-known fact combined with part (3). The statement about the action of $\Aut_S(E)$ is a consequence of the functoriality of our constructions. We leave the details to the reader.
\end{proof}
Recall that an $S$-point of $X_d$ is given by a pair $(E,\psi)$ with $E\in \AA_S$ and $\psi:\omega_S(E)\xrightarrow{\sim}\OO^d_S$ a trivialization of $\omega_S(E)$. By general arguments, an $S$-point of the tangent cone $\Ta X_d:=\Spec\Sym \Omega_{X_d}$ is given by a $S[\varepsilon]$ point $(E',\psi')$ of $X_d$. Here, $E'\in \AA_{S[\varepsilon]}$ corresponds to a short exact sequence $0\to E\xrightarrow{i} E'\xrightarrow{p}E\to 0$ as we have seen above with $\varepsilon$ acting via $ip$. Furthermore, $\psi':\omega(E') \xrightarrow{\sim} \OO_S[\varepsilon]^d$ is an isomorphism of $\OO_S[\varepsilon]$-modules which provides an isomorphism 
\[ \xymatrix @C=2cm { 0 \ar[r] &  \omega_S(E) \ar[r] \ar[d]^\psi_\wr & \omega_S(E') \ar[d]^{\psi'} \ar[r] & \omega_S(E) \ar[d]^\psi_\wr \ar[r] & 0 \\ 0\ar[r] & \OO_S^d \ar[r]^{\iota_1} & \OO_S^{d}\oplus \OO_S^d \ar[r]^{\pr_2} & \OO_S^d \ar[r] & 0 }\] 
of exact sequences of $I$-graded vector bundles on $S$. Defining morphisms of such triples $(0\to E\to E'\to E\to 0,\psi,\psi')$ in a natural way, we have just proven the first part of the following Proposition, leaving the remaining part to the reader. 
\begin{proposition} There is an isomorphism between the set of $S$-valued points on $\Ta X_d$ and the set of isomorphism classes of triples $(0\to E\to E'\to E\to 0,\psi,\psi')$. The projection to $X_d$ corresponds to the function sending such a triple to $(E,\psi)$ and the isomorphism respects the $\OO_S(S)$-linear structure on the fibers. 
\end{proposition}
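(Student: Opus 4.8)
The plan is to reduce the entire statement to the functor-of-points description of the tangent scheme together with Proposition~\ref{normal_ext}. Recall the standard fact, already invoked above, that $S$-valued points of $\Ta X_d=\Spec\Sym\Omega_{X_d}$ are the same as morphisms $S[\varepsilon]\to X_d$, i.e.\ as $S[\varepsilon]$-valued points of $X_d$. By the very definition of $X_d$ such a point is a pair $(E',\psi')$ with $E'\in\AA_{S[\varepsilon]}$ and $\psi':\omega_{S[\varepsilon]}(E')\xrightarrow{\sim}\OO_S[\varepsilon]^d$. Applying Proposition~\ref{normal_ext}(2) rewrites $E'$ as a short exact sequence $0\to E\to E'\to E\to 0$ in $\AA_S$ with $\varepsilon$ acting by $ip$, and applying the exact functor $\omega_S$ to $\psi'$ produces the displayed commutative diagram of trivialized exact sequences of $I$-graded vector bundles, whence the trivialization $\psi$ at both ends. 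This is precisely the first (already recorded) part of the Proposition, so the bijection onto isomorphism classes of triples $(0\to E\to E'\to E\to 0,\psi,\psi')$ is in hand; conservativity of $\omega_S$ (Lemma~\ref{isomorphism2}) is what lets us transport all vector-bundle-level identifications back to $\AA_S$.

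For the projection statement I would use that the map $\Ta X_d\to X_d$ corresponds on the functor of points to restriction along the zero section $S\hookrightarrow S[\varepsilon]$, that is, to pull-back along the augmentation $\kk[\varepsilon]\to\kk$, $\varepsilon\mapsto 0$. By the second part of Proposition~\ref{normal_ext} this restriction sends the sequence $0\to E\to E'\to E\to 0$ to $E$, while reducing $\psi'$ modulo $\varepsilon$ returns $\psi$ on the quotient $\omega_S(E)=\omega_S(E')/\varepsilon\omega_S(E')$. Hence the bijection carries the triple to $(E,\psi)$, exactly as claimed.

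For the compatibility with the $\OO_S(S)$-module structure, the point is that the module structure on the fibers of $\Ta X_d\to X_d$ is induced entirely by the cogroup structure on $\Spec\kk[\varepsilon]$: scalar multiplication by $a\in\OO_S(S)$ comes from the automorphism $\varepsilon\mapsto a\varepsilon$ of $\OO_S[\varepsilon]$, and addition comes from the comultiplication $S[\varepsilon',\varepsilon'']\to S[\varepsilon]$. These are precisely the operations already analyzed in the proof of Proposition~\ref{normal_ext}(3), where, at the level of the underlying exact sequences, they were matched with multiplication of $p$ by $a$ and with the Baer-type sum computed via the cartesian/cocartesian diagram. The only new ingredient is that the trivializations must be transported along, but since the diagram relating $\psi$ and $\psi'$ is obtained by applying the exact and additive functor $\omega_S$ to these very operations, the bijection intertwines the two module structures.

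The main obstacle I anticipate is purely bookkeeping: one must verify that the datum $\psi'$ is carried coherently through the cogroup operations, so that the induced maps on fibers agree on the nose and not merely up to the isomorphisms of Lemma~\ref{isomorphism2}. Since Proposition~\ref{normal_ext} has already done the categorical work on the $\AA_S$-side, what remains is to check that applying the exact, additive functor $\omega_S$ leaves these identifications undisturbed --- a routine but slightly tedious diagram chase that I would leave largely implicit, which is no doubt why the text defers it to the reader.
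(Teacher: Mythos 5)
Your proposal is correct and follows essentially the same route as the paper: identify $S$-points of $\Ta X_d$ with $S[\varepsilon]$-points of $X_d$, apply Proposition~\ref{normal_ext}(2) to convert $E'\in\AA_{S[\varepsilon]}$ into a self-extension of $E$ with the trivialization data coming from applying $\omega_S$, and then deduce the projection and $\OO_S(S)$-linearity statements from the augmentation $\varepsilon\mapsto 0$ and the cogroup analysis in Proposition~\ref{normal_ext}(3). The paper proves the bijection in exactly this way in the paragraph preceding the Proposition and explicitly leaves the remaining verifications to the reader, which are precisely the parts you fill in.
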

\begin{corollary} \label{tangent_bundle}
The restriction of $\hat{\pi}_1\times \hat{\pi}_3:X_{d,d}\longrightarrow X_d\times X_d$ to the diagonal $X_d$ is isomorphic to $\Ta X_d$. 
\end{corollary}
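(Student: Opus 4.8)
The plan is to compare functors of points and conclude by Yoneda, since both sides are honest schemes: $X_{d,d}$ was shown above to be a closed subscheme of $Y_{d,d}$, and $\Ta X_d=\Spec\Sym\Omega_{X_d}$ by definition. First I would recall the two $S$-point dictionaries at hand. On the one hand, the preceding Proposition identifies the $S$-points of $\Ta X_d$ with isomorphism classes of triples $(0\to E\to E'\to E\to 0,\psi,\psi')$, where $\psi:\omega_S(E)\xrightarrow{\sim}\OO_S^d$ and $\psi':\omega_S(E')\xrightarrow{\sim}\OO_S^d\oplus\OO_S^d$ fit into the commutative diagram of short exact sequences whose bottom row is $0\to\OO_S^d\xrightarrow{\iota_1}\OO_S^d\oplus\OO_S^d\xrightarrow{\pr_2}\OO_S^d\to 0$ and whose two outer vertical maps both equal $\psi$; the projection to $X_d$ sends such a triple to $(E,\psi)$. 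On the other hand, by the functor-of-points description of $X_{d,d}$ established above (the case $d'=d$), its $S$-points are isomorphism classes of tuples $(0\to E_1\to E_2\to E_3\to 0,\psi_1,\psi_2,\psi_3)$ where the $\psi_i$ give an isomorphism of the $\omega_S$-image sequence onto the standard sequence $0\to\OO_S^d\to\OO_S^{2d}\to\OO_S^d\to 0$.

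Next I would compute the diagonal restriction as a fibre product. By definition the restriction of $\hat\pi_1\times\hat\pi_3$ to the diagonal is $X_{d,d}\times_{X_d\times X_d}X_d$ along $\Delta:X_d\hookrightarrow X_d\times X_d$, so its $S$-points are those tuples for which the two $S$-points $(E_1,\psi_1)$ and $(E_3,\psi_3)$ of $X_d$ coincide. Using the description of $S$-points of $X_d$ recorded after assumption (2), this means $E_1=E_3=:E$ and $\psi_1=\psi_3=:\psi$ up to the canonical identifying isomorphisms. Hence the $S$-points of the fibre product are exactly isomorphism classes of tuples $(0\to E\to E_2\to E\to 0,\psi,\psi_2,\psi)$ with $\psi_2:\omega_S(E_2)\xrightarrow{\sim}\OO_S^{2d}$ making the diagram of sequences commute, the two outer terms being trivialized by $\psi$.

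Then I would match the two sets of data. Writing $\OO_S^{2d}=\OO_S^d\oplus\OO_S^d$ and identifying the standard sequence with $0\to\OO_S^d\xrightarrow{\iota_1}\OO_S^d\oplus\OO_S^d\xrightarrow{\pr_2}\OO_S^d\to 0$, a diagonal-restricted $S$-point of $X_{d,d}$ is the same datum as an $S$-point of $\Ta X_d$ upon setting $E'=E_2$ and $\psi'=\psi_2$. Under the equivalence of Proposition \ref{normal_ext}(2) the sequence $0\to E\to E_2\to E\to 0$ is precisely the $\kk[\varepsilon]$-object $E'\in\AA_{S[\varepsilon]}$ with $\varepsilon$ acting as the nilpotent $\omega_S(i)\omega_S(p)$, which on $\OO_S^d\oplus\OO_S^d$ corresponds to $\iota_1\pr_2$. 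All identifications are natural in $S$ and intertwine the two projections to $X_d$, both of which send the datum to $(E,\psi)$ (on the $X_{d,d}$ side this is $\hat\pi_1|_\Delta=\hat\pi_3|_\Delta$). By Yoneda the fibre product is isomorphic to $\Ta X_d$ over $X_d$, which is the claim.

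The main point to verify — conceptually routine but the one place where something could go wrong — is that no constraint is gained or lost between the two descriptions: on the $\Ta X_d$ side $\psi'$ is demanded to be $\OO_S[\varepsilon]$-linear, whereas on the $X_{d,d}$ side $\psi_2$ is only an $\OO_S$-linear isomorphism of sequences. I would show these conditions agree by unwinding the commutative diagram: commutativity of both squares forces $\psi_2\circ(\omega_S(i)\omega_S(p))=(\iota_1\pr_2)\circ\psi_2$, so $\psi_2$ automatically intertwines the canonical nilpotents and is therefore $\OO_S[\varepsilon]$-linear, and conversely. Once this compatibility is checked, the $S$-point dictionaries coincide naturally and the isomorphism is formal.
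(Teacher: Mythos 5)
Your proposal is correct and follows essentially the same route as the paper: the Corollary is stated there without separate proof precisely because it is the immediate comparison of the two functor-of-points descriptions — the preceding Proposition's triples $(0\to E\to E'\to E\to 0,\psi,\psi')$ for $\Ta X_d$ and the tuples defining $X_{d,d}$ restricted along the diagonal. Your extra verification that commutativity of the two squares is equivalent to $\OO_S[\varepsilon]$-linearity of $\psi_2$ (via $\psi_2\circ\omega_S(i)\omega_S(p)=\iota_1\pr_2\circ\psi_2$) is exactly the compatibility the paper leaves implicit, and it is checked correctly.
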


We want to apply these results to determine the map $\pi_1\times \pi_3:\mathfrak{E}xact\longrightarrow \Mst\times\Mst$. 
or rather its atlas version $\hat{\pi}_1\times \hat{\pi}_3:X_{d,d'}\longrightarrow X_d\times X_{d'}$
Recall that an $\AA^1_S$-module over a $\kk$-scheme $S$ is a morphism $V\to S$ of finite type from a scheme $V$ to $S$  together with a couple of morphisms
\[ +:V\times_S V\longrightarrow V, \quad 0:S\to V, \quad, -:V\to V, \quad m:\Aff^1_S\times_S V \longrightarrow V\]
over $S$ satisfying the axioms of a module with respect to the ring object $\Aff^1_S$ in the category of schemes over $S$. A full subcategory of $\Aff^1_S$-modules is given by affine morphisms $p:V\to S$ which can be described as $\Specbf_S \Sym E \to S$ for $E\subset p_\ast\OO_V$ being the coherent subsheaf of functions which are linear on the fibers. This provides an equivalence between $\Coh(S)^{op}$, the opposite of the category of coherent sheaves on $S$, and the full subcategory of affine $\Aff^1_S$-modules\footnote{Such a structure is sometimes also called an abelian cone.}. The inclusion has a left adjoint $V\mapsto \Specbf_S p_\ast\OO_V\cong \Specbf_S\Sym E$ with $E$, as before, being the subsheaf of $p_\ast\OO_V$ consisting of fiberwise linear functions. However, the unit $V\longrightarrow \Specbf_S p_\ast \OO_V$ of the adjunction does not need to be an isomorphism. Being a right adjoint functor, the inclusion of $\Coh(X)^{op}$ into the category of all $\Aff^1_S$-modules preserves limits, hence 
kernels. Notice that the section functor of $\Specbf_S \Sym E \longrightarrow S$ on the big Zariski site on $S$ completely determines $E$. However, its restriction to the small Zariski site on $S$ does not. The restricted section functor is just the dual sheaf $E^\vee$ of $E$ which for instance vanishes for all torsion sheaves $E$. The subcategory $\Coh(S)^{op}$ of affine $\Aff^1_S$-modules has another full subcategory $\Vect_S$ given by vector bundles corresponding to locally free sheaves in $\Coh(X)^{op}$. An example of an affine $\Aff^1_S$-module is given by $\Ta S:=\Specbf_S \Sym \Omega_S$ which is a vector bundle if  $S$ is smooth.   
\begin{proposition} \label{vector_bundle}
The map $\hat{\pi}_1\times \hat{\pi}_3:X_{d,d'} \longrightarrow X_d\times_\kk X_{d'}$ is an affine $\Aff^1_{X_d\times X_{d'}}$-module  over $X_d\times X_{d'}$. It can be realized as a direct summand of the restriction of the tangent cone $\Ta X_{d+d'}$ to $X_d\times X_{d'}$ embedded into $X_{d+d'}$ via $(E_1,\psi_1)\oplus (E_3,\psi_3)=(E_1\oplus E_3,\psi_1\oplus \psi_3)$. This restriction splits and $X_{d,d'}\oplus X_{d',d}$ can be identified with the normal cone to the inclusion.  The fiber $F$ of $\hat{\pi}_1\times \hat{\pi}_3$ over a $\KK$-point $\big((E_1,\psi_1),(E_3,\psi_3)\big)$  fits into the   exact sequence 
\[0\longrightarrow  \Hom_{\AA_\KK}(E_3,E_1)\longrightarrow \Hom_{\KK}(\KK^{d'},\KK^{d})\longrightarrow F \longrightarrow \Ext^1_{\AA_\KK}(E_3,E_1)\longrightarrow 0 \] 
of $\KK$-vector spaces. 
In particular, the fiber dimension over the point $(E_1,E_3)$ is given by \[ dd'+\dim_\KK\Ext^1_{\AA_\KK}(E_3,E_1)-\dim_\KK \Hom_{\AA_\KK}(E_3,E_1).\]
\end{proposition}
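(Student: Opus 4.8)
The plan is to realise the entire map $\hat{\pi}_1\times\hat{\pi}_3\colon X_{d,d'}\to X_d\times X_{d'}$ as an explicit direct summand of the restricted tangent cone $\Ta X_{d+d'}|_{X_d\times X_{d'}}$ and to read off every assertion of the proposition from the $2\times 2$ block structure of $\End$ and $\Ext^1$ of a direct sum. First I would fix the closed embedding $\delta\colon X_d\times X_{d'}\hookrightarrow X_{d+d'}$ sending $\big((E_1,\psi_1),(E_3,\psi_3)\big)$ to $(E_1\oplus E_3,\psi_1\oplus\psi_3)$, together with the one-parameter subgroup $\lambda\colon\Gl(1)\to G_{d+d'}$ acting as $z\cdot\id$ on $\KK^d$ and trivially on $\KK^{d'}$. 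Since $\lambda(z)$ corresponds under $\psi_1\oplus\psi_3$ to the \emph{internal} automorphism $z\cdot\id_{E_1}\oplus\id_{E_3}$ of $E_1\oplus E_3$, it fixes $\delta(X_d\times X_{d'})$ and hence acts on the coherent sheaf $\Omega_{X_{d+d'}}|_\delta$. This endows $\Ta X_{d+d'}|_{X_d\times X_{d'}}=\Specbf\Sym(\Omega_{X_{d+d'}}|_\delta)$ with a grading by $\Gl(1)$-weights, which is the promised splitting of the restriction into affine $\Aff^1_{X_d\times X_{d'}}$-submodules.

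Next I would identify the graded pieces by their functor of points. By the Proposition preceding the statement, an $S$-point of $\Ta X_{d+d'}$ lying over $\delta$ is a trivialised self-extension $0\to E_1\oplus E_3\to\mathcal E\to E_1\oplus E_3\to 0$, and additivity of $\Hom$, $\Ext^1$ and of $\End_\KK(\KK^{d+d'})$ with respect to direct sums splits such data into the four blocks indexed by $\{1,3\}\times\{1,3\}$. The two diagonal blocks reassemble, together with the diagonal blocks of the trivialisation, into $\Ta X_d\times\Ta X_{d'}$, i.e.\ the tangent directions along $\delta$; the two off-diagonal blocks are the weight $\pm 1$ spaces and together constitute the normal cone. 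I would then write the comparison morphism $X_{d,d'}\to\Ta X_{d+d'}|_{X_d\times X_{d'}}$ on $S$-points: a trivialised short exact sequence $0\to E_1\to E_2\to E_3\to 0$ is sent to the self-extension of $E_1\oplus E_3$ obtained by pulling back along $E_1\oplus E_3\twoheadrightarrow E_3$ and pushing out along $E_1\hookrightarrow E_1\oplus E_3$, equipped with the induced trivialisation. This lands in the weight $+1$ block, and symmetrically $X_{d',d}$ gives the weight $-1$ block; affineness and the $\Aff^1$-module structure are inherited from $\Ta X_{d+d'}$ because the Baer sum of extensions and their scaling correspond to addition and scalar multiplication in the tangent cone.

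The fibre statement is then the weight $+1$ part of a four-term sequence available at the $\KK$-point $(E_1\oplus E_3,\psi_1\oplus\psi_3)$. Combining the identification $\Ho^0(\Ta^\bullet_E\Mst)\cong\Ext^1_{\AA_\KK}(E,E)$ of Proposition \ref{normal_ext}(4) with the orbit--stabiliser description $\Ho^{-1}(\Ta^\bullet_E\Mst)\cong\Hom_{\AA_\KK}(E,E)=\Lie\Stab_{G_{d+d'}}(E,\psi)$ yields, for $E=E_1\oplus E_3$, the exact sequence
\[0\to\Hom_{\AA_\KK}(E_1\oplus E_3,E_1\oplus E_3)\to\End_\KK(\KK^{d+d'})\to T_{\delta}X_{d+d'}\to\Ext^1_{\AA_\KK}(E_1\oplus E_3,E_1\oplus E_3)\to 0,\]
where the first map is the differential of the orbit map (the Lie algebra of the stabiliser inside $\Lie G_{d+d'}=\End_\KK(\KK^{d+d'})$) and the cokernel is the normal space to the orbit. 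This sequence is $\lambda$-equivariant, and extracting the weight $+1$ summand --- namely $\Hom_{\AA_\KK}(E_3,E_1)\hookrightarrow\Hom_\KK(\KK^{d'},\KK^d)\to F\to\Ext^1_{\AA_\KK}(E_3,E_1)$ --- is exactly the asserted sequence; the dimension formula $dd'+\dim_\KK\Ext^1_{\AA_\KK}(E_3,E_1)-\dim_\KK\Hom_{\AA_\KK}(E_3,E_1)$ follows by additivity of dimensions along it.

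The step I expect to be the main obstacle is the scheme-theoretic (not merely fibrewise) identification of $X_{d,d'}$ with the weight $+1$ summand \emph{as an $\Aff^1_{X_d\times X_{d'}}$-module}. Matching the torsor of trivialisations $\psi_2$ on $X_{d,d'}$ with the weight $+1$ block $\Hom_\KK(\KK^{d'},\KK^d)$ of the orbit directions in $\Ta X_{d+d'}$, and verifying that the comparison morphism is a genuine isomorphism of abelian cones rather than just a bijection on $\KK$-points, is where the deformation-theoretic bookkeeping of assumption (6) together with the conservativity of $\omega_S$ (Lemma \ref{isomorphism2}) must be used carefully. Once this identification is in place, the splitting of the restriction, the description of the normal cone as $X_{d,d'}\oplus X_{d',d}$, and the fibre sequence are all formal consequences.
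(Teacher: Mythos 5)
Your plan is organized around a genuinely different device than the paper's proof (the paper never introduces the one-parameter subgroup $\lambda$; it builds everything by explicit constructions on families), but it has a genuine gap, and you have located it yourself without closing it. The identification of $X_{d,d'}$ with the weight $+1$ summand of $\Ta X_{d+d'}|_{X_d\times X_{d'}}$ is not deferrable bookkeeping: it \emph{is} the content of the proposition, and it is exactly where the paper's proof spends its effort. The weight decomposition of $\Specbf\Sym\bigl(\Omega_{X_{d+d'}}|_\delta\bigr)$ under your $\Gl(1)$-action exists for formal reasons (an equivariant coherent sheaf on a scheme with trivial action is graded, and $\Gl(1)$ is linearly reductive), but it only tells you that \emph{some} splitting exists; it does not produce the inverse morphism from the weight $+1$ piece back to $X_{d,d'}$. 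To obtain that, you must do what the paper does: starting from an arbitrary $S$-family of trivialized self-extensions $0\to E_1\oplus E_3\to E'\to E_1\oplus E_3\to 0$, form the pull-back along $i_{E_3}\colon E_3\to E_1\oplus E_3$ and the push-out along $\pr_{E_1}\colon E_1\oplus E_3\to E_1$ to produce an $S$-point of $X_{d,d'}$, check that this retraction is $\OO_S(S)$-linear, and verify by an explicit computation (the paper writes out the matrices) that the retraction composed with your comparison morphism is the identity. Only then is $X_{d,d'}$ a direct summand, hence affine (as the kernel of an idempotent linear endomorphism of an affine $\Aff^1$-module), and only then does your fibre argument have content, since it presupposes $F=$ (weight $+1$ part of $T_\delta X_{d+d'}$). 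Note also that in your setup $X_{d,d'}$ does not even carry an $\Aff^1_{X_d\times X_{d'}}$-module structure until this identification is made, unless you first define the Baer-sum and rescaling structure intrinsically on families --- which is precisely the paper's opening step, and again requires pull-back/push-out constructions in $\AA_S$, not just block decompositions of $\Hom$ and $\Ext^1$ at $\KK$-points.

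Two parts of your plan would survive, and improve, a completed write-up. First, the equivariant viewpoint explains conceptually why the restriction splits into exactly four pieces, with the diagonal (weight-zero) blocks matching Corollary \ref{tangent_bundle}. Second, your derivation of the fibre sequence by taking the weight $+1$ part of the $\lambda$-equivariant four-term sequence
\[0\longrightarrow\Hom_{\AA_\KK}(E,E)\longrightarrow\End_\KK(\KK^{d+d'})\longrightarrow T_\delta X_{d+d'}\longrightarrow\Ext^1_{\AA_\KK}(E,E)\longrightarrow 0,\qquad E=E_1\oplus E_3,\]
using Proposition \ref{normal_ext}(4) and exactness of weight spaces, is slicker than the paper's argument, which instead proves surjectivity of $F\to\Ext^1_{\AA_\KK}(E_3,E_1)$ directly (splitting the sequence of vector spaces) and identifies the kernel with $\Hom_\KK(\KK^{d'},\KK^{d})/\Hom_{\AA_\KK}(E_3,E_1)$ by analysing automorphisms of the trivial sequence fixing the outer terms. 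But that dividend can be collected only after the scheme-theoretic identification is actually proven.
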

\begin{proof}
To shorten notation, we write $Z:=X_d\times_\kk X_{d'}$. We start by showing that $\hat{\pi}_1\times \hat{\pi}_3:X_{d,d'}\longrightarrow Z$ is an $\Aff^1_Z$-module, i.e.\ there are morphism
\[ X_{d,d'}\times_Z X_{d,d'} \xrightarrow{\,+\,} X_{d,d'},\quad Z\xrightarrow{\,0\,} X_{d,d'}, \quad -:X_{d,d'}\to X_{d,d'},\quad  \Aff^1_Z\times_Z X_{d,d'} \xrightarrow{\;\cdot\;} X_{d,d'} \]
over $Z$ satisfying the axioms of a module over the ring object $\Aff^1_Z$ in the category of schemes over $Z$. 
In order to construct these maps, we use the corresponding moduli functors represented by the spaces in question. The morphism $0:Z\to X_{d,d'}$ maps to pairs $(E_1,\psi_1)$ and $(E_3,\psi_3)$ on a $\kk$-scheme $S$ to their direct sum. Similarly, $+:X_{d,d'}\times_Z X_{d,d'}\longrightarrow X_{d,d'}$ maps to pairs $(0\to E_1\to E_2\to E_3\to 0,\psi_1,\psi_2,\psi_3)$ and $(0\to E_1\to E'_2\to E_3 \to 0,\psi_1,\psi'_2,\psi_3)$ of short exact sequences on $S$ to their sum $(0\to E_1\to \hat{E}_2\to E_3\to 0,\psi_1,\hat{\psi}_2,\psi_3)$ constructed by pull-backs and push-outs along (co)diagonals giving rise the following commutative diagram 
\[ \xymatrix { 0 \ar[r] & E_1\oplus E_1 \ar@{=}[d] \ar[r] & E_2\oplus E_2' \ar[r] & E_3\oplus E_3 \ar[r] & 0 \\
0 \ar[r] & E_1\oplus E_1 \ar[r] \ar[d]_\nabla & \tilde{E}_2 \ar[u] \ar[d] \ar[r] & E_3 \ar[u]_\Delta \ar[r] \ar@{=}[d] & 0 \\
0 \ar[r] & E_1 \ar[r] & \hat{E}_2 \ar[r] & E_3 \ar[r] & 0 }
\]
  with cartesian upper right square and cocartesian lower left square. After applying $\omega_S$, we obtain an isomorphism of similar diagrams in $\Vect^I_S$. Note that the sum of the trivial sequence 
\begin{equation}\label{trivial_sequence} 0 \longrightarrow  \OO_S^{ d} \longrightarrow   \OO^{d}\oplus \OO_S^{d'} \longrightarrow  \OO_S^{d'} \longrightarrow  0   
\end{equation}
with itself is again the trivial sequence. The ``scalar'' multiplication$\Aff^1_\kk \times_\kk X_{d,d'}\longrightarrow X_{d,d'}$ is constructed as follows: given a function $f$ on $S$ and a sequence $(0\to E_1\to E_2\to E_3\to 0,\psi_1,\psi_2,\psi_3)$ with isomorphisms as above, we define a new $S$-valued point  $(0\to E_1\to E'_2\to E_3 \to 0,\psi_1,\psi'_2,\psi_3)$ of $X_{d,d'}$ using the pull-back along $f$ considered as an endomorphism of $E_3$
\[ \xymatrix {0\ar[r] & E_1\ar@{=}[d] \ar[r] & E_2 \ar[r] & E_3 \ar[r] & 0 \\ 
0 \ar[r] & E_1 \ar[r] & E'_2 \ar[u] \ar[r] & E_3 \ar[u]_f \ar[r] & 0. }
\]
After applying $\omega_S$, we obtain an isomorphism of similar diagrams in $\Vect^I_S$, and the pull-back of the trivial sequence along $f$ is again trivial. The inverse of a tuple $(0\to E_1\xrightarrow{i}E_2\xrightarrow{p}E_3\to 0,\psi_1,\psi_2,\psi_3)$ is defined as $(0\to E_1\xrightarrow{i}E_2\xrightarrow{-p}E_3\to 0,\psi_1,\psi_2,\psi_3)$. We leave it to the reader to check the axioms of an $\Aff^1$-module. \\
By the previous Proposition, an $S$-point of the restriction $\Ta X_{d+d'}|_{X_d\times X_{d'}}$ corresponds to a short exact sequence
\[ 0 \longrightarrow E_1\oplus E_3 \longrightarrow E' \longrightarrow E_1\oplus E_3 \longrightarrow 0 \]
and an isomorphism
\[ \xymatrix @C=0.5cm @R=1.5cm { 0 \ar[r] &  \omega_S(E_1)\oplus\omega_S(E_3) \ar[r] \ar[d]^(0.4){\psi_1\oplus\psi_3}(0.4)_\wr & \omega_S(E') \ar[d]^(0.4){\psi'}(0.4)_\wr \ar[r] & \omega_S(E_1)\oplus \omega_S(E_3) \ar[d]^(0.4){\psi_1\oplus\psi_3}(0.4)_\wr \ar[r] & 0 \\ 
0\ar[r] & {\begin{array}{c} \OO_S^d \\ \oplus \\ \OO_S^{d'}\end{array}} \ar[r]^(0.4){i_1} & { \begin{array}{ccc} \OO_S^{d} &\oplus &\OO_S^d \\ & \oplus & \\ \OO_S^{d'} &\oplus &\OO_S^{d'}\end{array}} \ar[r]^(0.6){\pr_2} & {\begin{array}{c} \OO_S^d\\ \oplus \\ \OO_S^{d'}\end{array}} \ar[r] & 0. }\] 
We construct a sequence $0\to E_1\xrightarrow{i} E_2\xrightarrow{p} E_3\to 0$ by means of the following commutative diagram with cartesian upper squares and cocartesian lower squares.
\[ \xymatrix { 0 \ar[r] & E_1\oplus E_3 \ar[r] & E' \ar[r] & E_1\oplus E_3 \ar[r] &0 \\
 0 \ar[r] &E_1 \oplus E_3 \ar[r] \ar@{=}[u] \ar[d]_{\pr_{E_3}} & E'' \ar[u] \ar[r]\ar[d] & E_3 \ar[u]_{i_{E_3}} \ar[r] \ar@{=}[d] & 0 \\
 0 \ar[r] & E_1 \ar[r] & E_2 \ar[r] & E_3 \ar[r] & 0 }
\]
Applying $\omega_S$ to this diagram yields an isomorphism
\[ \xymatrix { 0 \ar[r] & E_1 \ar[d]^{\psi_1}_\wr \ar[r] & E_2 \ar[d]^{\psi_2} \ar[r] & E_3 \ar[d]^{\psi_3} \ar[r] & 0 \\
 0 \ar[r] & \OO_S^d \ar[r]^(0.4){\iota_1} & \OO_S^{d}\oplus \OO_S^{d'} \ar[r]^(0.6){\pr_2} & \OO_S^{d'} \ar[r] & 0 }\] 
A right inverse for this construction is given by the sequence 
\[ \xymatrix @C=3cm { 0 \to  E_1 \oplus E_3 \ar[r]^{\left(\begin{array}{cc} \scriptstyle  \id_{E_3} & \scriptstyle  0 \\ \scriptstyle  i & \scriptstyle  0 \\ \scriptstyle  0 & \scriptstyle \id_{E_3}\end{array}\right)} & E_1 \oplus E_2\oplus E_3  \ar[r]^{\left(\begin{array}{ccc} \scriptstyle  \id_{E_1} & \scriptstyle  0 &\scriptstyle  0 \\ \scriptstyle  0 & \scriptstyle  p &\scriptstyle  \id_{E_3} \end{array}\right) } & E_1\oplus E_3 \to 0. }\] 
which is the sum of $0\to E_1 \xrightarrow{i} E_2 \xrightarrow{p} E_3 \to 0$ with $0\to E_3 \xrightarrow{\id} E_3\to 0 \to 0 $ and $0\to 0 \to E_1 \xrightarrow{\id} E_1 \to 0 $.
These maps are $\OO_S(S)$-linear, showing that $X_{d,d'}$ is indeed a direct  summand of the affine $\Aff^1_Z$-module $\Ta X_{d+d'}|_Z$. As mentioned earlier, the kernel of an idempotent linear map on the affine $\Aff^1_Z$-module $\Ta X_{d+d'}|_Z$ must also be affine. Thus $X_{d,d'}\to Z$ is an affine $\Aff^1_Z$-module of the form $\Specbf_S\Sym E$ with $E$ denoting the coherent sheaf of fiberwise linear functions. By projecting to the other components and Corollary \ref{tangent_bundle}, we obtain the full splitting of $\Ta X_{d+d'}$ restricted to $X_d\times X_{d'}$.

In the final step we want to determine the fiber $F$ of $\hat{\pi}_1\times \hat{\pi}_3$ over a $\KK$-point of $Z$ corresponding to a pair $\big((E_1,\psi_1),(E_3,\psi_3)\big)$ with $E_1,E_3\in \AA_\KK$ for any field extension $\KK\supset \kk$. Every element $(0\to E_1 \to E_2\to E_3 \to 0,\psi_1,\psi_2,\psi_3)$ of the fiber maps to $\Ext^1_{\AA_\KK}(E_3,E_1)$ by considering the isomorphism class of the underlying sequence $0\to E_1\to E_2\to E_3\to 0$. For  any exact sequence $0\to E_1\to E_2\to E_3\to 0$ in $\AA_\KK$, the resulting sequence $0\to \omega_\KK(E_1)\to \omega_\KK(E_2) \to \omega_\KK(E_3)\to 0 $ of $\KK$-vector spaces splits, and by choosing a basis of $\omega_\KK(E_1)$ and $\omega(E_3)$ we see that the $\KK$-linear map from the fiber to $\Ext^1_{\AA_\KK}(E_3,E_1)$ is onto. If $(0\to E_1\to E_2\to E_3 \to 0,\psi_1,\psi_2,\psi_3)$ is in the kernel of the $\KK$-linear map 
\[ F=(\hat{\pi}_1\times\hat{\pi}_3)^{-1}\big((E_1,\psi_1),(E_3,\psi_3)\big)\longrightarrow  \Ext^1_{\AA_\KK}(E_3,E_1),\]
there must be an isomorphism
\[ \xymatrix { 0\ar[r] & E_1 \ar@{=}[d] \ar[r] & E_1\oplus E_3 \ar[d]^\wr_\phi \ar[r] & E_3 \ar@{=}[d] \ar[r] & 0 \\
0 \ar[r] & E_1 \ar[r] & E_2 \ar[r]  & E_3\ar[r] & 0 }
\]
of short exact sequences in $\AA_\KK$. Then, $(\id,\psi_2\omega_\KK(\phi)(\psi_1\oplus\psi_3)^{-1},\id)$ is an automorphism of the trivial sequence (\ref{trivial_sequence}) fixing the outer terms. The group of automorphisms of the trivial sequence fixing the outer terms can be identified with $\Hom_\KK(\KK^{d'},\KK^{d})$.
However, $\phi$ and therefore also $\psi_2\omega_\KK(\phi)(\psi_1\oplus\psi_3)^{-1}$ is only unique up to some morphism in 
\[ \xymatrix{ \Hom_{\AA_\KK}(E_3,E_1)\ar@{^{(}->}[r]^(0.5){\omega_\KK} & \Hom_\KK( \KK^{d'}, \KK^{d}) } \]
On the other hand, given an automorphism $(\id,\psi,\id)$ of the trivial sequence fixing the outer terms, $(0\to E_1 \to E_1\oplus E_3 \to E_3\to 0,\psi_1,\psi(\psi_1\oplus\psi_3),\psi_3)$  is another point in the the kernel,  and this point is  isomorphic to the zero element $(0\to E_1\to E_1\oplus E_3,\to E_3,\psi_1,\psi_1\oplus\psi_3,\psi_3)$ of the fiber if and only if $\psi$ is in the subgroup corresponding to $\Hom_{\AA_\KK}(E_3,E_1)$. Thus, the kernel of the surjective $\KK$-linear map from the fiber to $\Ext^1_{\AA_\KK}(E_3,E_1)$ is  
\[ \label{quotient2} \Hom_\KK(\KK^{d'}, \KK^{d})/\Hom_{\AA_\KK}(E_3,E_1).\]
\end{proof}

\begin{remark}\rm
By slightly extension of the previous proof one can show  that the universal Grassmannian must be representable. Thus, in the presence of assumption (6), assumption (5) reduces to requirement that the universal Grassmannian is proper. In fact, we will make no use of the properness in this paper, but it will be important in \cite{DavisonMeinhardt3} and \cite{DavisonMeinhardt4}.  
\end{remark}

The following construction of triples $(\AA,\omega,p)$ satisfying assumptions (1)--(6) will be used in the next subsection, at least in a special case which is very important for the remaining part of the paper.

\begin{proposition} \label{fiber_product}
Consider the diagram 
\[ \xymatrix {  \AA'\times_{\AA'''} \AA'' \ar[dd]_\pr \ar[rr]^\pr \ar[dr]_{\omega} && \AA' \ar[dl]^{\omega'} \ar[dd]^{\phi'} \\ & \Vect^I & \\ \AA'' \ar[ur]^{\omega''} \ar[rr]_{\phi''} & & \AA''' \ar[ul]_{\omega'''} } \]
with exact transformations $\phi', \phi''$. Then $\phi'$ and $\phi''$ are faithful and restricted to the moduli stacks, they are also representable and induce morphisms $\phi'_d:X'_d \to X'''_d$ and $\phi''_d:X''_d\to X'''_d$ for every $d\in \NN^{\oplus I}$. Assume that $\phi'_d$ and $\phi''_d$ extend to the  partial compactifications  compatible with the projective morphisms. That is, we require a commutative diagram 
\[ \xymatrix { 
X'_d \ar[r]^{\phi'_d} \ar@{^{(}->}[d] & X'''_d \ar@{^{(}->}[d] & X''_d \ar[l]_{\phi''_d} \ar@{^{(}->}[d] \\ 
\overline{X}'_d \ar[d] \ar[r]^{\overline{\phi}'_d} & \overline{X}'''_d \ar[d] & \ar[l]_{\overline{\phi}''_d} \ar[d] \overline{X}''_d \\ \Spec A'_d \ar[r] & \Spec A'''_d & \ar[l] \Spec A''_d  }\]
for every $d\in \NN^{\oplus I}$.  Moreover, the diagram
\[ \xymatrix { \AA'_{S\otimes R} \ar[r]^{\phi'_{S\otimes R}} \ar[d]_{p'^R_S}^{\sim} & \AA'''_{S\otimes R} \ar[d]_{\wr}^{p'''^R_S} \\ \AA'^R_S \ar[r]^{\phi^R_S} & \AA'''^R_S }\]
and a similar one for $\phi''$ should commute for all $\kk$-schemes $S$ and all local Artin $\kk$-algebras $R$, where $\phi'^R_S$ is the natural lift of $\phi'_S$ to $R$-objects mapping $(E,\nu)$ to $(\phi'_S(E),\phi'_S\nu)$.
If $(\AA',\omega',p'), (\AA'',\omega'',p''),(\AA''',\omega''',p''')$ satisfy all assumptions (1)--(6), then $\AA:=\AA'\times_\AA \AA''$  together with the (up to isomorphism) naturally defined $\omega:\AA\to \Vect^I$ and $p'\times_{p'''}p''$ will also satisfy (1)--(6) except maybe for the existence of a moduli space $\Msp_d$ for $\AA$. If the inclusion  \[ \overline{X}'^{ss}_d\times_{\overline{X}'''^{ss}_d}\overline{X}''^{ss}_d\subseteq (\overline{X}'_d\times_{\overline{X}'''_d}\overline{X}''_d)^{ss},\]                                                                                                                                                                                                                                                                                                                                                              
where the open subscheme on the right hand side is taken with respect to the $G_d$-linearization $\Lin'^a_d\boxtimes \Lin''^b_d$ for natural numbers $a,b>0$, is the identity, then $\Msp_d$ exists.                                                                                                                                                                                                                                                                                                                                                 

\end{proposition}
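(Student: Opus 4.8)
The plan is to verify assumptions (1)--(6) for the triple $(\AA,\omega,p)$ by systematically reducing each one to the corresponding property of $(\AA',\omega',p')$, $(\AA'',\omega'',p'')$ and $(\AA''',\omega''',p''')$, exploiting that every object in sight is built as a fibre product over its $\AA'''$-counterpart and that these constructions commute with one another. Throughout, the fibre functor $\omega_S$ is the common value $\omega'_S\circ\pr=\omega''_S\circ\pr$, well defined because $\phi'$ and $\phi''$ commute with the fibre functors. I would begin with the moduli theory and the fibre functor, i.e.\ assumptions (1) and (3). The exact category $\AA_S=\AA'_S\times_{\AA'''_S}\AA''_S$ carries a componentwise exact structure, and for $S=\Spec\KK$ it is abelian: since $\phi',\phi''$ are exact, kernels and cokernels formed componentwise again lie in the fibre product. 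The groups $\Hom_{\AA_S}$ and $\Ext^1_{\AA_S}$ sit in Mayer--Vietoris exact sequences assembled from the corresponding groups in $\AA'_S,\AA''_S,\AA'''_S$; as these are finitely generated $\OO_S(S)$-modules and the base schemes are of finite type, the fibre-product groups are finitely generated too. Faithfulness and exactness of $\omega_S$ are immediate from the componentwise description and from faithfulness of $\omega'_S,\omega''_S$. The only non-formal point is the last clause of (3): given $f\colon E\to E''$ in $\AA_S$ with $\omega_S(f)$ epimorphic, I would apply the lifting property of (3) in $\AA'_S$ and $\AA''_S$ separately to obtain completions $E'_\bullet\subset E'$ and $E''_\bullet\subset E''$ with equal image under $\omega$; their images under $\phi'$ and $\phi''$ share the same $\omega'''$-image, hence coincide as subobjects by Lemma \ref{lemma3} together with conservativity (Lemma \ref{isomorphism2}), producing the required completion.

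Next I would treat the geometry, i.e.\ (2), (4) and (5). Since $\phi'_d,\phi''_d$ are $G_d$-equivariant over a common $G_d$, the fibre product of stacks gives $\Mst_d\cong X_d/G_d$ with $X_d=X'_d\times_{X'''_d}X''_d$ an algebraic space, establishing (2). For (4) I would take $\overline{X}_d=\overline{X}'_d\times_{\overline{X}'''_d}\overline{X}''_d$, a closed subscheme of $\overline{X}'_d\times_\kk\overline{X}''_d$ and hence projective over the affine scheme of finite type $\Spec(A'_d\otimes_\kk A''_d)$, with linearization $\Lin_d=\Lin'^a_d\boxtimes\Lin''^b_d$ relatively ample. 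The decisive computation is the additivity of Mumford's numerical function,
\[ \mu^{\Lin_d}(x,\lambda)=a\,\mu^{\Lin'_d}(\pr' x,\lambda)+b\,\mu^{\Lin''_d}(\pr'' x,\lambda), \]
from which equation (\ref{eq7}) is inherited and from which the equivalence of (a), (b), (c) follows: a limit point exists in $X_d$ iff it exists in both $X'_d$ and $X''_d$; the subobjects $E^n$ of a pair correspond to matching pairs $(E'^n,E''^n)$ (again by Lemma \ref{lemma3}); and $\mu^{\Lin_d}=0$ is equivalent to $\mu^{\Lin'_d}=\mu^{\Lin''_d}=0$ because $a,b>0$ and both summands are non-negative on the semistable locus. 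Assumption (5) follows since $\mathfrak{E}xact$ for $\AA$ is the fibre product of the three $\mathfrak{E}xact$'s and $\pi_2$ is a base change of proper representable morphisms.

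Then I would verify the deformation theory (6). The key observation is that forming $R$-objects commutes with the fibre product, so $(\AA'\times_{\AA'''}\AA'')^R_S\cong\AA'^R_S\times_{\AA'''^R_S}\AA''^R_S$ and likewise $\AA_{S\otimes R}\cong\AA'_{S\otimes R}\times_{\AA'''_{S\otimes R}}\AA''_{S\otimes R}$; setting $p^R_S=p'^R_S\times_{p'''^R_S}p''^R_S$ yields the desired equivalence, and all compatibilities with pull-backs along $S\to S'$ and with morphisms $R\to R'$ descend from those of $p',p'',p'''$ via the commuting squares assumed in the hypothesis. Finally, for the existence of $\Msp_d$, one always has $X_d=X'_d\times_{X'''_d}X''_d\subseteq\overline{X}'^{ss}_d\times_{\overline{X}'''^{ss}_d}\overline{X}''^{ss}_d$, and the imposed equality identifies the right-hand side with $(\overline{X}'_d\times_{\overline{X}'''_d}\overline{X}''_d)^{ss}=\overline{X}^{ss}_d$. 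Hence $X_d\subseteq\overline{X}^{ss}_d$ and the good quotient $\tilde{p}_d$ exists; since $X'_d=\tilde{p}'^{-1}_d(\Msp'_d)$ and $X''_d=\tilde{p}''^{-1}_d(\Msp''_d)$ are saturated, $X_d$ is saturated as well, so $\Msp_d:=\tilde{p}_d(X_d)$ satisfies $X_d=\tilde{p}^{-1}_d(\Msp_d)$ (and is open in positive characteristic), completing (4).

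I expect the main obstacle to be the GIT step: establishing the additivity of Mumford's numerical function under the tensor linearization and, above all, controlling the semistable locus of the fibre product $\overline{X}_d$, which in general strictly contains the fibre product $\overline{X}'^{ss}_d\times_{\overline{X}'''^{ss}_d}\overline{X}''^{ss}_d$ of the semistable loci. This mismatch is precisely why the extra hypothesis on $\overline{X}^{ss}_d$ must be imposed, and all the other verifications are comparatively formal once the equivalence of (a)--(c) has been transported through the fibre product.
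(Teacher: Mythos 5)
Your proposal is correct and follows essentially the same route as the paper: everything is built componentwise as a fibre product over the $\AA'''$-data ($X_d=X'_d\times_{X'''_d}X''_d$, $\overline{X}_d=\overline{X}'_d\times_{\overline{X}'''_d}\overline{X}''_d$ with linearization $\Lin'^a_d\boxtimes\Lin''^b_d$), matching of subobjects is handled by Lemma \ref{lemma3}, assumption (6) follows from $\AA'^R_S\times_{\AA'''^R_S}\AA''^R_S\cong(\AA'_S\times_{\AA'''_S}\AA''_S)^R$, and the extra hypothesis on semistable loci enters exactly where you put it. The paper is in fact terser than you are (it leaves (1), (3) and most of (4) to the reader); the one presentational difference is that it defines $\Msp_d$ as the preimage of $\Msp'_d\times_{\Msp'''_d}\Msp''_d$ under the induced map $m$ of GIT quotients, rather than as the image $\tilde{p}_d(X_d)$ with a saturation argument. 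The two descriptions agree set-theoretically, but the preimage description is what endows $\Msp_d$ with its subscheme structure; your image construction needs this anyway, since in characteristic $0$ the subschemes $\Msp'_d,\Msp''_d$ need not be open and images under good quotients are not automatically subschemes.

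One step is imprecise as written: in your treatment of (5), $\pi_2\colon\mathfrak{E}xact(\AA)\to\Mst$ is \emph{not} literally a base change of $\pi'_2$ (or of any one of the three), because a subobject of $E'$ in $\AA'_S$ need not lift to a subobject in $\AA_S$ — its image in $\AA'''_S$ may fail to come from $\AA''_S$. So "base change of proper representable morphisms" does not apply verbatim. The correct argument, which is the paper's, is that inside $X_{d+d'}\times\Gr_d^{d+d'}$ one has $Y_{d,d'}=\pr'^\ast Y'_{d,d'}\cap \pr''^\ast Y''_{d,d'}$, an intersection of closed subschemes that Lemma \ref{lemma3} identifies with the universal Grassmannian of $\AA$; representability and properness over $X_{d+d'}$ follow. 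Alternatively, a fibre product of two proper representable morphisms over a third separated representable one is again proper representable (the comparison map into the product over $\Mst$ is a base change of a closed-immersion diagonal). Either fix is routine, so this is a repairable imprecision rather than a gap in the overall strategy.
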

\begin{proof} Faithfulness of $\phi'$ and $\phi''$ follow from the faithfulness of $\omega'$ and $\omega''$. Using $\omega'''\phi'=\omega'$, we conclude 
\[ \Mst'_d\times_{\Mst'''_d} X'''_d \cong  \Mst'_d\times_{\Mst'''_d}(\Mst'''_d \times_{\Spec \kk/G_d}\Spec \kk)  \cong \Mst'_d \times_{\Spec \kk/G_d}\Spec \kk \cong  X'_d \] proving that $\phi'$ and lifts to $\phi'_d:X'_d\to X'''_d$. The same must hold for $\phi''_d$. 
Using the notation introduced so far, we get $\Mst_d=\Mst'_d\times_{\Mst'''_d} \Mst''_d=X_d/G_d$ with $X_d:=X'_d\times_{X'''_d} X''_d$. The latter space has an embedding into $\overline{X}'_d\times_{\overline{X}'''_d} \overline{X}''_d$ which maps projectively to $\Spec A'\times_{\Spec A'''}\Spec A''$ with relative ample $G_d$-linearization $\Lin'^a_d\boxtimes \Lin''^b_d$ for some natural numbers $a,b>0$. We leave it to the reader to check all requirements of assumption (4) except for the existence of a moduli space $\Msp_d$. Under the additional assumption 
\[ \overline{X}'d^{ss}_d\times_{\overline{X}'''^{ss}_d}\overline{X}''^{ss}_d=(\overline{X}'_d\times_{\overline{X}'''_d}\overline{X}''_d)^{ss}\]
we obtain a map 
\[m: (\overline{X}'_d\times_{\overline{X}'''_d}\overline{X}''_d)^{ss}/\!\!/G_d \longrightarrow \overline{X}'^{ss}_d/\!\!/G_d \times_{\overline{X}'''^{ss}_d/\!\!/G_d}\overline{X}''^{ss}_d/\!\!/G_d, \]
of GIT-quotients, and $\Msp_d$ is the preimage of $\Msp'_d\times_{\Msp'''_d}\Msp''_d$ which is a subscheme of the target of $m$.
To check assumption (5), we use Lemma \ref{lemma3} and get $Y_{d,d'}=\pr'^\ast Y'_{d,d} \cap \pr''^\ast Y''_{d,d'} \longrightarrow X_{d+d'}$, where the intersection can be  taken in $X_{d+d'}\times \Gr_d^{d+d'}$. Hence, $Y_{d,d'}\to X_{d+d'}$ is still representable and proper. Finally, the conditions on $p',p'',p'''$ ensure that $p'\times_{p'''}p''$ satisfies all requirements of assumption (6) because of $\AA'^R_S\times_{\AA'''^R_S}\AA''^R_S\cong (\AA'_S\times_{\AA''_S}\AA'''_S)^R$ for every $\kk$-scheme $S$ and every local Artin $\kk$-algebra $R$.   We encourage the reader to fill in the details.     
\end{proof}

\subsection{Smoothness and symmetry}

Let us come to our final two assumptions which turn out to be crucial for the rest of the paper.\\
\\
\textbf{(7) Smoothness assumption:} The schemes $X_d$ are locally integral, and the quantity $\dim_\KK\Hom_{\AA_\KK}(E,F)- \dim_\KK \Ext^1(E,F)$ is locally constant on $\Mst\times \Mst$. \\

\begin{corollary}
 The $\kk$-schemes $X_d$ are smooth and the affine $\Aff^1$-module $\hat{\pi}_1\times \hat{\pi}_3: X_{d,d'} \longrightarrow X_d \times X_{d'}$ is a vector bundle.  
\end{corollary}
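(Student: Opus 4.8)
The plan is to read off everything from Proposition~\ref{vector_bundle}, Corollary~\ref{tangent_bundle} and assumption~(7), combined with the elementary fact that a coherent sheaf on a reduced locally Noetherian scheme whose fibre dimension is locally constant is already locally free. To avoid any circularity coming from reducedness of products, I would first prove that the $X_d$ are smooth and only afterwards deduce the vector bundle property.

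For smoothness I would argue on the diagonal. By Corollary~\ref{tangent_bundle} the tangent cone $\Ta X_d=\Specbf_{X_d}\Sym\Omega_{X_d}$ is the restriction to the diagonal of the affine $\Aff^1$-module $\hat{\pi}_1\times\hat{\pi}_3\colon X_{d,d}\to X_d\times X_d$. Specialising the fibre-dimension formula of Proposition~\ref{vector_bundle} to $E_1=E_3=E$, the fibre of $\Ta X_d$ over $E$ has dimension $d\cdot d+\dim_\KK\Ext^1_{\AA_\KK}(E,E)-\dim_\KK\Hom_{\AA_\KK}(E,E)$. Assumption~(7) asserts precisely that the integer $\dim_\KK\Hom_{\AA_\KK}(E,F)-\dim_\KK\Ext^1_{\AA_\KK}(E,F)$ is locally constant on $\Mst\times\Mst$, and since $X_d\to\Mst_d$ is a $G_d$-atlas this pulls back to a locally constant function on $X_d$; hence the fibre dimension of $\Ta X_d$, i.e.\ the fibre rank of the coherent sheaf $\Omega_{X_d}$, is locally constant. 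As $X_d$ is locally integral, in particular reduced, $\Omega_{X_d}$ is therefore locally free.

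It remains to pass from local freeness of $\Omega_{X_d}$ to smoothness, and this is where local integrality is really used. On each integral component generic smoothness provides a dense open locus on which $\Omega_{X_d}$ has rank equal to the dimension, so its locally constant rank equals $\dim X_d$ everywhere on that component; comparing with the general inequality $\dim_{\KK(x)}(\Omega_{X_d}\otimes\KK(x))\ge\dim_x X_d$, which is an equality exactly at the smooth points, yields smoothness of $X_d$. (If $\kk$ is imperfect one first base changes to $\overline{\kk}$, using that local freeness of $\Omega$ survives base change and that the reducedness needed for the previous step is available in the situations of interest.)

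With the $X_d$ smooth the product $X_d\times X_{d'}$ is smooth, hence reduced, and the same mechanism applies verbatim: by Proposition~\ref{vector_bundle} the map $\hat{\pi}_1\times\hat{\pi}_3\colon X_{d,d'}\to X_d\times X_{d'}$ is an affine $\Aff^1$-module $\Specbf\Sym\EE$ whose fibre dimension $d\cdot d'+\dim_\KK\Ext^1_{\AA_\KK}(E_3,E_1)-\dim_\KK\Hom_{\AA_\KK}(E_3,E_1)$ is again locally constant by assumption~(7), so the sheaf $\EE$ of fibrewise linear functions has locally constant fibre rank and is locally free. Thus $\hat{\pi}_1\times\hat{\pi}_3$ is a vector bundle. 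I expect the genuinely delicate point to be the smoothness step: the coherent-sheaf-to-vector-bundle implication is routine once reducedness is secured, whereas deducing smoothness from local freeness of $\Omega_{X_d}$ rests on the dimension count via generic smoothness together with the local integrality hypothesis.
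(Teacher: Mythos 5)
Your proof is correct and takes essentially the same route as the paper: both identify $\Ta X_d$ with the restriction of $X_{d,d}$ to the diagonal (Corollary \ref{tangent_bundle}), use the fibre-dimension formula of Proposition \ref{vector_bundle} together with assumption (7) to get locally constant fibre rank, and then invoke the standard fact (the paper cites Hartshorne, Ch.~II, Lemma 8.9) that a coherent sheaf of locally constant rank on a locally integral Noetherian scheme is locally free, applied first to $\Omega_{X_d}$ and then to the sheaf of fibrewise linear functions on $X_{d,d'}$. The only difference is one of detail: you spell out the passage from local freeness of $\Omega_{X_d}$ to smoothness (generic smoothness plus the dimension comparison) and the ordering needed to secure reducedness of $X_d\times X_{d'}$, both of which the paper leaves implicit in its one-line proof.
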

\begin{proof}
 Because of Corollary \ref{tangent_bundle}, $\Ta X_d$ and its sheaf $\Omega_{X_d}$ of fiberwise linear functions has constant fiber direction. By \cite{Hartshorne}, chap.\ 2, Lemma 8.9, $\Omega_{X_d}$ must be locally free which implies smoothness of $X_d$. The same argument shows that the affine $\Aff^1$-module $X_{d,d'}$ is a vector bundle on $X_d\times X_{d'}$
\end{proof}

\begin{corollary}
The number 
\[(E_1,E_2):=\dim_\KK \Hom_{\AA_\KK}(E_1,E_2)-\dim_\KK\Ext^1_{\AA_\KK}(E_1,E_2)\] 
depends only on the simple factors of $E_1$ and $E_2$. Thus, it defines a symmetric pairing $(-,-)$ on $\Ka_0(\AA_\KK)$. 
\end{corollary}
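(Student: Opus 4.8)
The plan is to prove that $(-,-)$ is bi-additive on short exact sequences. Granting this, the pairing descends to $\Ka_0(\AA_\KK)$ (which is the quotient of the free group on isomorphism classes of objects by the short-exact-sequence relations), and because $\AA_\KK$ has finite length---so that the class of any object equals the sum of the classes of its Jordan--H\"older factors---the value $(E_1,E_2)$ depends only on the simple factors of $E_1$ and $E_2$. Symmetry of the resulting form is then precisely assumption (8).

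To establish additivity in the first argument, I would fix $F\in\AA_\KK$ and an arbitrary short exact sequence $0\to E_1\to E_2\to E_3\to 0$ in $\AA_\KK$, writing $d_1=\dim E_1$, $d_3=\dim E_3$ and $d=d_1+d_3$. Choosing trivializations, this sequence is a $\KK$-point $x$ of $X_{d_1,d_3}$, and I would use the scalar multiplication $\Aff^1_\KK\times_\KK X_{d_1,d_3}\to X_{d_1,d_3}$ belonging to the affine $\Aff^1$-module structure of Proposition \ref{vector_bundle}. The assignment $a\mapsto a\cdot x$ is a morphism $\Aff^1_\KK\to X_{d_1,d_3}$; composing it with $\hat\pi_2\colon X_{d_1,d_3}\to X_d$ and then with $X_d\to\Mst_d$ produces a family $\{E_a\}_{a\in\Aff^1}$ of objects of dimension $d$. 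Unwinding the pullback-along-$a\cdot\id_{E_3}$ description of scalar multiplication, one finds $E_a\cong E_2$ for $a\neq 0$ (pullback along an isomorphism) while $E_0\cong E_1\oplus E_3$ (the split extension).

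The crucial point is connectedness. Pairing this family against the fixed $F$ gives a morphism $\Aff^1_\KK\to\Mst_d\times\Mst$, $a\mapsto(E_a,F)$, and by assumption (7) the integer-valued function $(-,-)$ is locally constant on $\Mst\times\Mst$; its pullback to the connected scheme $\Aff^1_\KK$ is therefore constant. Comparing $a=1$ with $a=0$ yields $(E_2,F)=(E_1\oplus E_3,F)$, and since $\Hom_{\AA_\KK}(-,F)$ and $\Ext^1_{\AA_\KK}(-,F)$ both distribute over $\oplus$ we get $(E_1\oplus E_3,F)=(E_1,F)+(E_3,F)$; hence $(E_2,F)=(E_1,F)+(E_3,F)$. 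Running the identical degeneration in the second argument proves additivity of $(F,-)$, completing the proof of bi-additivity.

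I expect the only real subtlety to be the identification $E_a\cong E_2$ for $a\neq 0$ and $E_0\cong E_1\oplus E_3$---that is, checking that scalar multiplication on the $\Aff^1$-module $X_{d_1,d_3}$ genuinely degenerates the extension to its split form; this follows directly from the explicit scalar-multiplication diagram in the proof of Proposition \ref{vector_bundle}. Everything else reduces to the elementary fact that a locally constant function on the affine line is constant.
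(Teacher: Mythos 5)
Your proof is correct, and at the level of strategy it matches the paper's: both arguments connect the middle term $E_2$ of a short exact sequence $0\to E_1\to E_2\to E_3\to 0$ to the split object $E_1\oplus E_3$ by a connected family, apply the local constancy of $(-,-)$ guaranteed by assumption (7), and then combine additivity of $\Hom$ and $\Ext^1$ over direct sums with the finite-length property (Jordan--H\"older) to descend the pairing to $\Ka_0(\AA_\KK)$. Where you differ is in the construction of the connecting family. The paper invokes Luna's \'etale slice theorem (Theorem \ref{Luna}) together with the deformation theory of Proposition \ref{normal_ext} to ``find a deformation of any extension into the trivial one''; you instead take the orbit map $a\mapsto a\cdot x$ of the scalar multiplication on the affine $\Aff^1$-module $X_{d_1,d_3}\to X_{d_1}\times X_{d_3}$ of Proposition \ref{vector_bundle}, i.e.\ the pullback of the extension along $a\cdot\id_{E_3}$, which degenerates $E_2$ to $E_1\oplus E_3$ at $a=0$. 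This buys you two things: the argument is more elementary, using only structure already established in Proposition \ref{vector_bundle}, and it is more watertight --- Luna's theorem as stated in the paper requires a base point with closed $G_d$-orbit, which $E_1\oplus E_3$ has only when it is polystable, so the paper's two-line proof is really a sketch, whereas your degeneration needs no such hypothesis. Note finally that, like the paper, you do not (and cannot) prove symmetry from (1)--(7) alone; your reading that the word ``symmetric'' in the statement refers forward to assumption (8) is the right one --- the paper's own proof is silent on this point.
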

\begin{proof}
Using Luna's \'etale slice theorem (cf.\ Theorem \ref{Luna}) and Proposition \ref{normal_ext}, we can find a deformation of any extension $E_1$ of two objects $E'_1,E''_1$ into the trivial deformation $E'_1\oplus E''_1$. By the previous corollary $(E_1,E_2)=(E'_1\oplus E'_1,E_2)=(E'_1,E)+(E''_1,E)$ for every object $E_2\in \AA_\KK$, and similarly for the second entry. Thus, $(-,-)$ descends to $\Ka_0(\AA_\KK)$.
\end{proof}
This result is another justification of the philosophy that $\AA_\KK$ looks like a category of homological dimension for which $(-,-)$ is the Euler pairing of $\AA_\KK$.  
Let $\Mst=\sqcup_{\lambda\in \Lambda} \Mst_\lambda$ be the decomposition into connected components.  Thus, we obtain a well-defined map $\gamma:\Lambda \to \Gamma$ to the lattice $\Gamma=\Ka_0(\AA_{\bar{\kk}})/\rad(-,-)$ by $\gamma(\lambda)=\cl(E)\mod \rad(-,-)$ for some $E\in \Mst_\lambda(\bar{\kk})$. Hence, we also get a decomposition $\Mst=\sqcup_{\gamma\in \Gamma}\Mst_\gamma$ and similarly $\Msp^{s}=\sqcup_{\gamma\in \Gamma}\Msp^{s}_\gamma$ with equidimensional (not necessarily connected or non-empty) smooth stacks $\Mst_\gamma=\sqcup_{\gamma(\lambda)=\gamma} \Mst_\lambda$ and  $\Msp^{s}_\gamma=\sqcup_{\gamma(\lambda)=\gamma} \Msp^{s}_\lambda$ of dimension $-(\gamma,\gamma)$ and $1-(\gamma,\gamma)$ respectively. Combining this with the decomposition into dimension vectors, we finally get $\Msp=\sqcup_{\gamma\in \Gamma,d\in \NN^{\oplus I}} \Msp_{\gamma,d}$ with $\Msp_{\gamma,d}$ parametrizing all semisimple objects of class $\cl(E)=\gamma$ and dimension vector $\dim(E)=d$, and similarly for $\Msp^{s}$.
\begin{example} \rm
Consider the one-loop quiver. The pairing $(-,-)$ is zero and $\Gamma=0$ follows. Nevertheless, $\Msp$ has a decomposition using dimension vectors with $\Msp_d=\Sym^d \Aff^1_\kk=\Aff^d_\kk$. Thus, even though $E\to \dim E$ is an additive function, it might not be a function of $\gamma\in \Gamma$. On the other hand, if we forget the $Q_0$-grading of a quiver representation such that $\omega_\KK(E)$ is just the total underlying vector space, $\Msp^{s}_d$ might contain components of different dimension as $|d|=\sum_{i\in Q_0}d_i$ does not determine $d=(d_i)_{i\in I}$ or the Euler pairing $(d,d)$. Therefore, it is important to have both indices at hand.  
\end{example} 

Our final assumption cannot be overestimated in Donaldson--Thomas theory and boils down to a genericity assumption on the (hidden) stability condition. \\
\\
\textbf{(8) Symmetry assumption:} For all field extensions $\KK\supset \kk$  the pairing $(-,-)$ on $\Ka_0(\AA_\KK)$ is symmetric, i.e.\ 
\[\dim_\KK \Hom_{\AA_\KK}(E,F)-\dim_\KK\Ext_{\AA_\KK}^1(E,F)=\dim_\KK\Hom_{\AA_\KK}(F,E)-\dim_\KK \Ext_{\AA_\KK}^1(F,E).\]
for all objects $E,F\in \AA_\KK$.

\begin{example} \rm If $\AA_\KK$ is the heart of bounded t-structure on a 3-Calabi--Yau category $\mathcal{T}$, then this condition is just saying that the Euler pairing $\langle E,F \rangle$ of $\mathcal{T}$ vanishes. 
\end{example}
 
\begin{example}\rm
For a quiver $Q$ and a generic Bridgeland stability condition $\zeta$, the symmetry condition is fulfilled for the category of semistable representations of a fixed slope. To construct a $G_d$-linearization on $X^{\zeta-ss}_d$, one can proceed as follows. One can find a neighbourhood $U$ of $\zeta$ such that $X^{\zeta-ss}_{d'}=X^{\zeta'-ss}_{d'}$ for all $d'\le d$ and all $\zeta'\in U$. In particular, we can assume that $\zeta$ has rational real and imaginary parts. Rescaling $\zeta$ will also not change semistability and we may assume $\zeta\in \ZZ[\sqrt{-1}]^{Q_0}$. Another modification not changing $X^{\zeta-ss}_{d'}$ for all $d'\le d$ can be done to ensure $\Im m \,\zeta_i=1$ for all $i\in Q_0$, in other words, $\zeta$ is a King stability condition providing a $G_d$-linearization.  
\end{example} 
\begin{example} \rm For a smooth projective curve $C$ and a number $\mu\in (-\infty,\infty]$ the symmetry condition is fulfilled as semistable sheaves $E$ of the same slope have collinear numerical data $(\deg(E),\rk(E))$. The pairing $(-,-)$ is just the Euler pairing depending only on the ranks and the degrees of the sheaves $E_1,E_2$. 
\end{example}
\begin{example} \rm
Let $X$ be a smooth projective surface with a real ample divisor $D$. We start with two dimensional sheaves and fix a normalized polynomial $p\in \QQ[t]$ of degree 2. Let us  consider the category $\AA_\kk$ of Gieseker semistable coherent sheaves on $X$ with normalized Hilbert polynomial $p$. If $K_X\cdot D<0$, the sheaf $E(K_X)$ has  a  normalized Hilbert polynomial smaller than $p$. Then, $\Ext^2(E_1,E_2)\cong \Hom(E_2,E_1(K_X))^\vee=0$ by general arguments about morphisms between semistable sheaves of different normalized Hilbert polynomial. Thus, $\dim\Hom(E_1,E_2)-\dim\Ext^1(E_1,E_2)$ is the Euler paring of $E_1$ with $E_2$ which depends only on the Chern characters of $E_1$ and $E_2$. Thus, condition (7) is fulfilled. If we choose $D$  generic (depending on $p$), the Chern characters of $E_1$ and $E_2$ are collinear proving the symmetry assumption. In the case of 1-dimensional sheaves $E$ we need to assume $c_1(E)\cdot K_X<0$ to ensure that the same arguments apply. This condition is for example  fulfilled on  del Pezzo surfaces. Note that the symmetry condition is not fulfilled for zero-dimensional 
sheaves. By Serre duality $\dim\Ext^1(E_1,E_2)$ is symmetric but $\dim \Hom(E_1,E_2)$ is not. Consider for example the sheaves $E_1=\OO_{X,x}/\mathfrak{m}_x\cong \kk$ and $E_2=\OO_{X,x}/\mathfrak{m}_x^2$ for some $\kk$-point $x\in X$. Then $\dim\Hom(E_1,E_2)=2$ but $\dim\Hom(E_2,E_1)=1$.
In order to construct $G_d$-linearizations, we proceed as in the quiver case by modifying $D$ to some integral ample divisor not changing the stack of semistable sheaves of a finite set of Chern characters. As the GIT-quotient is universal, the quasiprojective scheme $\Msp_d$ is independent of any choices made.  
\end{example}
\begin{example}\rm
Let $(\AA,\omega,p)$ satisfy assumptions (1)--(8) and let $J$ be any set, then $\AA\times \Vect^J$ with the  functor $\omega\times \id:\AA\times \Vect^J\longrightarrow \Vect^I\times \Vect^J\cong \Vect^{I\sqcup J}$ will also satisfy  assumptions (1)--(8). 
\end{example}

\begin{corollary}
Assuming the symmetry condition (8), the smoothness assumption (7) is equivalent to the weaker requirement that all spaces $X_d$ are smooth.
\end{corollary}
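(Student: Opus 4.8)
The plan is to prove the two implications separately. The forward direction---that (7) implies smoothness of every $X_d$---is exactly the (unlabelled) Corollary established immediately after assumption (7), whose proof uses neither (8) nor anything beyond (7); so the real content is the converse, namely that, granting (8), smoothness of all the $X_d$ forces both clauses of (7). The local-integrality clause is essentially free: a $\kk$-scheme that is smooth over $\kk$ is regular, hence normal, so its local rings are domains and, being locally Noetherian, its irreducible components are disjoint; thus each $X_d$ is a disjoint union of integral schemes and is in particular locally integral. What remains is to show that $(E,F):=\dim_\KK\Hom_{\AA_\KK}(E,F)-\dim_\KK\Ext^1_{\AA_\KK}(E,F)$ is locally constant on $\Mst\times\Mst$; since $\Mst_d\times\Mst_{d'}=(X_d\times X_{d'})/(G_d\times G_{d'})$ with open surjective quotient map, it suffices to check this on each product $X_d\times X_{d'}$.

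First I would extract the information carried by the diagonal. By Corollary \ref{tangent_bundle} the tangent cone $\Ta X_d$ is the restriction of $\hat{\pi}_1\times\hat{\pi}_3\colon X_{d,d}\to X_d\times X_d$ to the diagonal, and by the fibre formula of Proposition \ref{vector_bundle} (taken with $d'=d$ and $E_1=E_3=E$) its fibre over $E$ has dimension $d\cdot d-(E,E)$. Smoothness of $X_d$ means precisely that $\Omega_{X_d}$ is locally free, so $\Ta X_d$ is an honest vector bundle; its rank is therefore locally constant on $X_d$, and hence so is $(E,E)$. This yields local constancy of the diagonal pairing $E\mapsto(E,E)$ on every $X_d$.

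The second and decisive step is a symmetrisation using (8). I would consider the direct-sum morphism $\oplus\colon X_d\times X_{d'}\to X_{d+d'}$ sending $\big((E,\psi_1),(F,\psi_3)\big)$ to $(E\oplus F,\psi_1\oplus\psi_3)$. Applying the previous step to $X_{d+d'}$, the function $G\mapsto(G,G)$ is locally constant, so its pullback $(E,F)\mapsto(E\oplus F,E\oplus F)$ is locally constant on $X_d\times X_{d'}$. Bi-additivity of $\Hom$ and $\Ext^1$ over direct sums---an elementary identity that does \emph{not} presuppose (7)---gives $(E\oplus F,E\oplus F)=(E,E)+(F,F)+(E,F)+(F,E)$. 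Subtracting the locally constant terms $(E,E)$ and $(F,F)$, I conclude that $(E,F)+(F,E)$ is locally constant on $X_d\times X_{d'}$. Now the symmetry assumption (8) gives $(E,F)=(F,E)$, so $(E,F)=\tfrac12\big[(E,F)+(F,E)\big]$ is itself locally constant. Running this over all $d,d'$ establishes local constancy of $(-,-)$ on $\Mst\times\Mst$, which together with local integrality is exactly (7).

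The delicate point is the final bootstrap. Smoothness by itself controls only the diagonal values $(E,E)$---these are, up to the constant $d\cdot d$, the ranks of the tangent bundles $\Ta X_d$---but says nothing directly about the asymmetric numbers $\dim_\KK\Hom_{\AA_\KK}(E,F)$ and $\dim_\KK\Ext^1_{\AA_\KK}(E,F)$, each of which is only semicontinuous; symmetry (8) is what lets me recover the individual value $(E,F)$ from the symmetric combination $(E,F)+(F,E)$ that the direct-sum map produces. A further subtlety I would watch is to invoke only the elementary bi-additivity over direct sums, and \emph{not} the stronger statement that $(-,-)$ descends to $\Ka_0(\AA_\KK)$: the latter is proved using (7) itself via the Luna-slice argument, so appealing to it here would be circular.
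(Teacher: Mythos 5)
Your proof is correct and takes essentially the same route as the paper's: both deduce local constancy of the diagonal value $(E,E)=\hom(E,E)-\ext^1(E,E)$ from smoothness (you via the rank of the vector bundle $\Ta X_d$ using Corollary \ref{tangent_bundle} and Proposition \ref{vector_bundle}, the paper via the locally constant dimension of the smooth stack $\Mst$ --- the same underlying fact), and then recover the off-diagonal values by the polarization identity $(E,F)=\frac{1}{2}\bigl[(E\oplus F,E\oplus F)-(E,E)-(F,F)\bigr]$, which combines bi-additivity of $\Hom$ and $\Ext^1$ with the symmetry assumption (8) exactly as in the paper's displayed formula. Your explicit handling of local integrality and your caveat about not invoking the descent of $(-,-)$ to $\Ka_0(\AA_\KK)$ are sensible refinements of the paper's terser argument, not a different method.
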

\begin{proof} As $\dim_\KK\Ext^1_{\AA_\KK}(E,E)-\dim_\KK\Hom_{\AA_\KK}(E,E)$ computes the dimension of the smooth stack $\Mst$ in $E$, it must be constant on connected components. Applying this to $E=E,F$ and $E\oplus F$, the first statement follows from the symmetry assumption and the formula
\begin{eqnarray*} \lefteqn{\hom(F,E)-\ext^1(F,E)=}\\ &&\frac{1}{2}\Big(\hom(F\oplus E,F\oplus E)-\ext^1(F\oplus E,F\oplus E) \\ && -\hom(F,F)+\ext^1(F,F) -\hom(E,E)+\ext^1(E,E)\Big). 
\end{eqnarray*}
using the shorthands $\hom=\dim_\KK\Hom_{\AA_\KK}$ and $\ext^1=\dim_\KK\Ext^1_{\AA_\KK}$. 
\end{proof}

\begin{lemma} Let $P$ be a property of objects in $\AA_\KK$ which is closed under subquotients and extension, that means for every short exact sequence
\[ 0\longrightarrow E_1 \longrightarrow E_2 \longrightarrow E_3\longrightarrow 0 \]
in $\AA_\KK$, $E_2$ has property $P$ if and only if $E_1$ and $E_3$ have property $P$. In particular, the full subcategory $\AA^P_\KK\subset \AA_\KK$ of objects having property $P$ is a Serre subcategory. We also assume that the points in $\Msp$ having property $P$ are the points of a subscheme $\Msp^P$. We define $\AA^P_S$ to be the full subcategory of objects $E$ in $\AA_S$ such that $E|_s$ is in $\AA^P_\KK$ for every point $s\in S$ with residue field $\KK$. Let $\omega^P$ be the restriction of $\omega$ to $\AA^P$. Then, $p_S^R:\AA_{S\otimes R}\to \AA_S^R$ induces also an equivalence $p^P$ of the corresponding subfunctors, and the triple $(\AA^P,\omega^P,p^P)$ satisfies the same assumptions as $(\AA,\omega,p)$.  
\end{lemma}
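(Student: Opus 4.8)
The plan is to check assumptions (1)--(8) for the triple $(\AA^P,\omega^P,p^P)$ one at a time, using throughout that $\AA^P_\KK\subset\AA_\KK$ is a Serre subcategory which is moreover closed under extensions. This has two consequences I would invoke repeatedly. First, for $E,F\in\AA^P_\KK$ the groups $\Hom$ and $\Ext^1$ computed in $\AA^P_\KK$ coincide with those computed in $\AA_\KK$, because any extension of a $P$-object by a $P$-object is again a $P$-object; in particular the pairing $(-,-)$ for $\AA^P$ is the restriction of the one for $\AA$. Second, every subobject and every quotient of a $P$-object is a $P$-object.

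Granting this, assumptions (1), (3), (5) and (8) are inherited. For (1), $\AA^P_S$ is an exact full subcategory, the stack axioms descend because $P$ is a fibrewise (hence local) condition, and the $\Hom$- and $\Ext^1$-modules agree with those in $\AA_S$. For (3), $\omega^P$ is the restriction of the faithful exact $\omega$, and the completion axiom holds since the kernel $E'$ of $f\colon E\to E''$ produced in $\AA_S$ is a subobject of $E$, hence fibrewise a subquotient of a $P$-object, so $E'\in\AA^P_S$. For (5) the fibre of $\pi_2$ over $E\in\AA^P_\KK$ consists of subobjects of $E$, all automatically $P$, whence $\mathfrak{E}xact^P=\pi_2^{-1}(\Mst^P)$ and representability and properness follow by base change along $\Mst^P\hookrightarrow\Mst$ (using Lemma \ref{isomorphism2} as before). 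Assumption (8) is immediate from the first consequence above.

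For (6) I would show that $p^R_S$ restricts to an equivalence $\AA^P_{S\otimes R}\xrightarrow{\sim}\AA^{P,R}_S$. The key point is that $R$ is local Artin: filtering an $R$-object $E$ by the submodules $\mathfrak{m}^iE$ exhibits it as an iterated extension of copies of its special fibre, so by extension-closedness $E$ lies in $\AA^P$ if and only if its special fibre does. Together with $\omega^R_S\circ p^R_S=\omega_{S\otimes R}$ and Proposition \ref{normal_ext}, this both identifies the two subfunctors and shows that the infinitesimal theory of $\AA^P$ at a $P$-point is that of $\AA$; by Proposition \ref{vector_bundle} and Corollary \ref{tangent_bundle} the fibre dimension of $\Ta X^P_d$ equals $dd+\dim_\KK\Ext^1_{\AA_\KK}(E,E)-\dim_\KK\Hom_{\AA_\KK}(E,E)$, which is locally constant by (7) for $\AA$. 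This also yields the locally-constant half of (7) for $\AA^P$.

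The remaining content is in (2), (4) and (7), all of which reduce to producing the correct atlas. I would set $X^P_d:=\tilde p_d^{-1}(\Msp^P_d)$, inheriting $\overline X_d$, $\Lin_d$ and the affine morphism; the equivalences (a)$\Leftrightarrow$(b)$\Leftrightarrow$(c) of (4) are then automatic, since the subobjects $E^n$ and the limit $\gr E^\bullet$ appearing there are subquotients of $E$ and hence stay in $\AA^P$. The main obstacle is to verify simultaneously that $X^P_d$ represents the subfunctor $\AA^P$ (assumption (2)) and that it is locally integral and smooth (assumption (7)). The cleanest route is to prove that $X^P_d$ is \emph{open} in $X_d$: then it represents the restricted functor, local integrality is inherited, and smoothness follows from the same $\Omega$-locally-free argument proving smoothness of $X_d$, using the constancy of the fibre dimension of $\Ta X^P_d$ obtained above. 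I expect this openness to be the delicate step, and the place where the hypothesis that the $P$-points form a subscheme $\Msp^P$ must be used in earnest: a merely closed fibrewise condition need not cut out a smooth, or even representable, subfunctor, so one reduces to the situation of the cartesian-diagram reformulation following Corollary \ref{property}, where $\Msp^P\subset\Msp$ is compatible with the deformation theory.
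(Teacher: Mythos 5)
Your verifications of (1), (3), (5), (6) and (8) are correct, and you have correctly located where the real content of the lemma sits; but your proof stops exactly there. (For what it is worth, the paper offers no argument to compare with: it declares the proof straightforward and leaves it to the reader.) You never establish that $X^P_d:=\tilde{p}_d^{-1}(\Msp^P_d)$ is open in $X_d$ — you only say you ``expect'' this to be the delicate step — and your fallback for the non-open case, reducing to the cartesian-diagram reformulation following Corollary \ref{property}, is not an argument: that reformulation concerns properness of $\oplus:\Msp^P\times\Msp^P\to\Msp^P$ and says nothing about representability of the subfunctor (assumption (2)) or smoothness and local integrality of its atlas (assumption (7)). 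So the two assumptions that constitute the crux of the lemma remain unproven in your write-up.

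Moreover, this gap cannot be closed in the generality in which the lemma is stated, because the hypotheses allow $\Msp^P$ to be closed — the paper's own Example \ref{property2} (cycles acting nilpotently) is of this type — and then the conclusion genuinely fails. Take the one-loop quiver and $P=$ ``the loop acts nilpotently''. For $d=1$ the fibrewise-defined functor is $S\mapsto\{f\in\OO_S(S)\mid f(s)=0 \mbox{ for all } s\in S\}$, the functor of locally nilpotent functions; this is the ind-scheme $\mathrm{colim}_n\,\Spec\kk[x]/(x^n)$ and is representable by no scheme or algebraic space (a universal locally nilpotent function would have bounded nilpotency order on an affine chart, contradicting the families over $\Spec\kk[x]/(x^n)$ for large $n$), so (2) fails. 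If one instead replaces the functor by the closed subscheme $\tilde{p}_d^{-1}(\Msp^P_d)$, then for $d=2$ this is the nullcone $\{a^2+bc=0\}$, which is singular at the origin, so (7) fails. Your own treatment of (6) explains why openness is not merely convenient but necessary: by Proposition \ref{normal_ext}, a $\kk[\varepsilon]$-family is a self-extension, hence lies in $\AA^P$ as soon as its closed fibre does, so the deformation functor of $\AA^P$ at any $P$-point coincides with that of $\AA$; a smooth atlas representing the fibrewise functor would therefore have the same tangent spaces, hence the same dimension, as $X_d$ at every point, and a locally closed subscheme of full dimension in the locally integral $X_d$ is open. The correct statement is thus: $(\AA^P,\omega^P,p^P)$ satisfies (1)--(8) if and only if $X^P_d$ is open in $X_d$ (for instance, whenever $\Msp^P$ is open in $\Msp$), in which case your route closes immediately — a map $S\to X_d$ factors through an open subscheme precisely when it does so pointwise, and smoothness, local integrality and the numerical condition are inherited. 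You should either add openness of $\Msp^P$ as a hypothesis and finish along those lines, or record the nilpotent example as a counterexample to the lemma as literally stated.
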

The proof is straight forward and left to the reader. Examples of such a  situation are given in Example \ref{property2}.

\subsection{Framed objects}
Framed objects occur  as a special case of Proposition \ref{fiber_product}. Let $(\AA,\omega,p)$ satisfy our assumptions (1)--(6). For $\hat{I}=I \sqcup\{\infty\} $ 
and $f\in \NN^I$ we consider the quiver $Q^{Fr}_f$ with vertex set $\hat{I}$ and $f_i$ arrows from $\infty$ to $i\in I$. Let $\omega^{Fr}_f:Q^{Fr}_f-\rep \to \Vect^{\hat{I}}$ be the usual functor forgetting the $\kk Q^{Fr}_f$-action. Given $(A,\omega,p)$ as above, we can form the cartesian diagram 
\[ \xymatrix {  \AA_f \ar[dd]_\pr \ar[rr]^\pr \ar[dr]_{\omega_f} && Q^{Fr}_f-\rep \ar[dl]^{\omega^{Fr}_f} \ar[dd]^{\omega^{Fr}_f } \\ & \Vect^{\hat{I}} & \\ \AA\times \Vect \ar[ur]^{\omega\times \id} \ar[rr]_{\omega\times \id} & & \Vect^{\hat{I}}. \ar[ul]_{\id} } \]
Thus, $(\AA_f,\omega_f,p_f)$ also satisfies our assumptions (1)--(6). Let us spell out some details relevant for the paper by providing another equivalent model for $(\AA_f,\omega_f,p_f)$ which will be used throughout this paper.\\
For a $\kk$-scheme $S$, objects of $\AA_{f,S}$ are triples $(E,W,h)$ with $E\in \AA_S$, $W\in \Vect_S$ and a morphism $h:W^f:=\bigoplus_{i\in I}W^{\oplus f_i}\longrightarrow \omega_S(E)$ of $I$-graded vector bundles on $S$. For $S=\Spec\KK$ and $W=\KK^r$, $h$ is just a collection of $rf_i$ vectors in the $i$-the component $\omega_\KK(E)_i$ of $\omega(E)$ for $i\in I$. A morphism $\hat{\phi}:(E,W,h)\longrightarrow (E',W',h')$ between such triples is a pair $(\phi,\phi_\infty)$ consisting of a morphism $\phi:E\to E'$ in $\AA_S$ and a morphism $\phi_\infty:W\to W'$ of vector bundles such that 
\[ \xymatrix @C=3cm{ W^f \ar[d]_h \ar[r]^{\phi_\infty^f} & W'^f \ar[d]^{h'}\\ \omega_S(E) \ar[r]^{\omega_S(\phi)} & \omega_S(E')}\]
commutes. It is obvious that $\AA_f$ is an exact category which is abelian for $S=\Spec\KK$ by our assumptions on $\AA$. We define $\omega_f$ by means of $\omega_{f,S}(E,W,h):=\omega(E)\oplus W$ and leave the definition of $p_f$ to the reader.
\begin{example}\rm If $\AA_\kk=\kk Q-\rep^{\zeta-ss}_\mu$ is the category of $\zeta$-semistable quiver representations of slope $\mu$, then $\AA_{f,\kk}$ is the category  of representations of the quiver $Q_f$ defined in  section 2 such that the underlying $Q$-representation is $\zeta$-semistable of slope $\mu$. If $\AA_\kk$ is the category of Gieseker semistable sheaves $E$ of fixed normalized Hilbert polynomial on a smooth polarized projective variety  $X$, then $\AA_{f,\kk}$ is essentially the category of such sheaves equipped with a bunch of  sections of the twisted sheaf $E(m)$.   
\end{example}
Apparently, $\AA_S$ and $\Vect_S$ can be identified with full subcategories of $\AA_{f,S}$ for every $\kk$-scheme $S$. 
The moduli stack is given by $\Mst_{f,\hat{d}}=X_{f,\hat{d}}/G_{\hat{d}}$ with $\hat{d}=(d,r)$ and
\[ X_{f,\hat{d}}=X_d \times_\kk \prod_{i\in I} \Hom_\kk(\kk^r,\kk^{d_i})^{ f_i}=X_d\times_\kk \Aff_\kk^{r(f\cdot d)}\]
with $f\cdot d=\sum_{i\in I}f_id_i$. Using the trivial $G_{\hat{d}}$-linearization on $\Aff^{r(fd)}_\kk=(\Aff^{r(fd)}_\kk)^{ss}$, we get $\overline{X}_{f,\hat{d}}^{ss}=\overline{X}_d^{ss}\times \Aff^{r(fd)}$ and according to Proposition \ref{fiber_product}, $X_{f,\hat{d}}$ has a moduli space which is actually isomorphic to $\Msp_d$ for every polarization $\Lin_d^a\boxtimes \OO_{\Aff^{r(fd)}_\kk}$ with $a>0$. Indeed, using the projection formula we get 
\begin{eqnarray*} \lefteqn{ \bigoplus_{k\ge 0} \Ho^0(\overline{X}^{ss}_d\times \Aff^{r(fd)}_d, \Lin_d^{\otimes ak}\boxtimes \OO_{\Aff^{r(fd)}_\kk} )^{G_{\hat{d}}} } \\
 &=& \bigoplus_{k\ge 0} \Big( \Ho^0(\overline{X}^{ss}_d, \Lin_d^{\otimes ak})\otimes_\kk\kk[\Aff^{r(fd)}_\kk] \Big)^{G_{d}\times \Gl(r)} \\
  &=& \bigoplus_{k\ge 0} \Big(\Ho^0(\overline{X}^{ss}_d, \Lin_d^{\otimes ak})\otimes_\kk\kk[\Aff^{r(fd)}_\kk]^{\Gl(r)} \Big)^{G_{d}}  \\
   &=& \bigoplus_{k\ge 0} \Ho^0(\overline{X}^{ss}_d, \Lin_d^{\otimes ak})^{G_{d}}. 
\end{eqnarray*}
Thus $\overline{X}_{f,\hat{d}}^{ss}/\!\!/G_{\hat{d}}\cong \overline{X}^{ss}/\!\!/G_d$, and the claim follows.

\subsection{The Hilbert--Chow morphism}
Let us modify the previous example by replacing $\kk Q^{Fr}_f-\rep$ with the subcategory $\kk Q^{Fr}_f-\rep^{\hat{\theta}}_0$ of semistable $Q^{Fr}_f$-representations of slope $0$ with respect to the King stability condition $\hat{\theta}=(\theta,-\theta d)\in \ZZ^{\hat{I}}$ with $\theta_i>0$ for all $i\in I$. Denote the fiber product $(\AA\times\Vect)_{\Vect^{\hat{I}}} \kk Q^{Fr}_f-\rep^{\theta}_0$ with $\AA_{f}^{\hat{\theta}}$.  
Again we use the partial compactification $(\Aff^{r(fd)}_\kk)^{ss}\subset \Aff^{r(fd)}_\kk$ to compute moduli spaces. From now on, we will only consider the case $\dim W\le 1$, i.e.\ $\hat{d}=(d,0)$ or $(d,1)$. Let us simplify our notation slightly by using the subscript ``$f,d$'' for ``$f,(d,1)$''  and ``$d$'' for ``$f,(d,0)$''. \\
To construct  quotients, we have to study the $G_{\hat{d}}$-action on $\overline{X}_{f,d}=\overline{X}_d\times \Aff^{fd}_\kk$ for $G_{\hat{d}}=G_{(d,1)}=G_d\times \Gl(1)$.  Let $\OO^\theta$ be the trivial line bundle on $\Aff^{fd}_k$ equipped with the $G_{(d,1)}$-linearization given by $(g_i)_{i\in \hat{I}} \longmapsto \prod_{i\in \hat{I}} \det(g_i)^{\theta_i}\in \Gl(1)$ with $\theta_\infty=-\theta d$. Then, $(\Aff^{fd}_\kk)^{ss}=(\Aff^{fd}_\kk)^{st}$ parametrizes trivialized $Q_f^{Fr}$-representations $(V\cong \kk^d,W\cong \kk_\infty,h)$ which are generated by $1\in \kk_\infty$.  
\begin{proposition} \label{framed_reps_2}
Let  $\theta_i>0$ for all $i\in I$ as above and $a>\theta d$. Consider the $G_d$-linearization $\Lin^{a,\theta}:=\Lin_d^{\otimes a}\boxtimes \OO^{\theta}$ on $\overline{X}_{f,d}=\overline{X}_d\times \Aff^{fd}_\kk$ and form the corresponding subschemes of (semi)stable points. Then $\overline{X}^{ss}_{f,d}$ maps into $\overline{X}_d^{ss}$ under the projection $\overline{X}_{f,d}\to \overline{X}_d$. Denote the preimage inside $\overline{X}_{f,d}^{ss}$ of the  subscheme $X_d\subset  \overline{X}^{ss}_d$ with $X^{ss}_{f,d}\subset\overline{X}^{ss}_{f,d}$ and similarly for $X^{st}_{f,d}\subset \overline{X}_{f,d}^{st}$. Then $X^{st}_{f,d}= X^{ss}_{f,d}$, and this scheme  parametrizes quadruples  $\hat{E}=(E,\psi,\KK,h)$ (up to isomorphism) with $E\in\AA_\KK$, $\psi:\omega_\KK(E)\xrightarrow{\sim}\KK^d$ and $h:\KK\to \KK^d$ such that $\im(h)\subset \psi(\omega_\KK(E'))$ implies $E'=E$ for each subobject $E'\subset E$, in other words $\hat{E}$ contains no proper subobject with dimension vector $(d',1)$. In particular, it is independent of the choice of $\theta$ as above 
and $a>\theta d$.  
\end{proposition}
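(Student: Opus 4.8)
The statement is a GIT computation in the spirit of King and Reineke, and the plan is to run Mumford's numerical criterion for the linearization $\Lin^{a,\theta}=\Lin_d^{\otimes a}\boxtimes\OO^\theta$ on $\overline{X}_{f,d}=\overline{X}_d\times\Aff^{fd}_\kk$. First I would record that $\Lin^{a,\theta}$ is relatively ample for the projective morphism $\overline{X}_{f,d}\to\Spec A_d\times\Aff^{fd}_\kk$ (the box product of a relatively ample bundle with the trivial bundle on the affine factor), so that the numerical criterion applies exactly as in assumption (4); here one uses that $(\AA_f^{\hat\theta},\omega_f,p_f)$, being a fibre product of triples satisfying (1)--(6), itself satisfies (1)--(6) by Proposition \ref{fiber_product}, so that in particular the equivalence of conditions (a)--(c) of assumption (4) is available. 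Since the Mumford weight is additive under tensor products of linearizations and the factor $\Gl(1)=\Aut(W)$ acts trivially on $\overline{X}_d$, for any 1-PS $\lambda=(\lambda_0,\lambda_\infty)$ of $G_d\times\Gl(1)$ one obtains
\[ \mu^{\Lin^{a,\theta}}(x,\lambda)=a\,\mu^{\Lin_d}(\rho(x),\lambda_0)+\mu^{\OO^\theta}(x,\lambda), \]
where $\rho:\overline{X}_{f,d}\to\overline{X}_d$ is the projection. The weight of the character linearization $\OO^\theta$ is linear in $\lambda$, and a direct computation expresses it, for the primitive 1-PS attached to a framed subspace $(W,\varepsilon W_\infty)\subset\KK^d\oplus W$ with $\varepsilon\in\{0,1\}$, through the King pairing $\theta\cdot\dim W-\varepsilon\,\theta d$; in particular $|\mu^{\OO^\theta}|\le\theta d$ on primitive 1-PS.

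The first assertion (that $\overline{X}^{ss}_{f,d}$ maps into $\overline{X}^{ss}_d$) I would prove by contraposition. If $\rho(x)\notin\overline{X}^{ss}_d$, then by additivity (equation (\ref{eq7})) some primitive 1-PS $\lambda^{W}$ already satisfies $\mu^{\Lin_d}(\rho(x),\lambda^{W})\le -1$ with existing limit; extending it by framing weight $0$ keeps the limit of the framing datum $h$ (all the relevant weights are then $\ge 0$) and bounds the framing contribution by $\theta d$, so that $\mu^{\Lin^{a,\theta}}(x,\lambda)\le -a+\theta d<0$ because $a>\theta d$. Hence $x$ is unstable, which is the claim.

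The heart of the argument is the analysis over $X_d$. Fix $x=(E,\psi,h)$ with $E\in\AA_\KK$, reduce an arbitrary 1-PS to its primitive steps (additivity for both factors), and use the equivalence (b)$\Leftrightarrow$(c) of assumption (4): a step whose subspace does not lift to a subobject of $E$ has $\mu^{\Lin_d}\ge 1$ and therefore, since $a>\theta d$ dominates the bounded framing term, contributes strictly positively; a step whose subspace lifts to a subobject $E'\subset E$ has $\mu^{\Lin_d}=0$, so its contribution is the pure framing weight. The latter equals $\theta\cdot\dim E'>0$ when the framing space remains in the quotient (with $E'\neq 0$, using $\theta_i>0$), and equals $-\theta\cdot(\dim E-\dim E')<0$ for proper $E'$ with the framing space in the subobject, the limit existing in this last case exactly when $\im(h)\subset\psi(\omega_\KK(E'))$. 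Consequently the numerical criterion is violated if and only if there is a proper subobject $E'\subsetneq E$ with $\im(h)\subset\psi(\omega_\KK(E'))$. Because no proper nonzero framed subobject has vanishing weight (type $(E',0)$ is strictly positive by $\theta_i>0$, type $(E',W)$ strictly negative unless $E'=E$), semistable and stable points coincide, giving $X^{st}_{f,d}=X^{ss}_{f,d}$, and this locus is precisely the set of quadruples $(E,\psi,\KK,h)$ admitting no proper subobject of dimension vector $(d',1)$.

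Finally, since this description of $X^{ss}_{f,d}=X^{st}_{f,d}$ is intrinsic, being phrased entirely through $E$, $\psi$ and $\im(h)$, it makes no reference to $a$ or $\theta$ beyond $\theta_i>0$ and $a>\theta d$, so independence of these choices is immediate. The step I expect to be the main obstacle is the sign-consistent bookkeeping of the Mumford weights together with the reduction of a general 1-PS of $G_d\times\Gl(1)$ to its primitive steps: one must match the limit-existence condition for the framing datum $h$ with the inclusion $\im(h)\subset\psi(\omega_\KK(E'))$ while simultaneously tracking how $a>\theta d$ neutralises every 1-PS not arising from a genuine subobject of $E$.
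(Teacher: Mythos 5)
Your proposal is correct and follows essentially the same route as the paper's own proof: Mumford's numerical criterion relative to the affine base, additivity (equation (\ref{eq7})) to reduce to primitive steps, the equivalence (b)$\Leftrightarrow$(c) of assumption (4) for $\AA$ itself to decide which subspaces lift to subobjects (giving $\mu^{\Lin_d}=0$ versus $\mu^{\Lin_d}\ge 1$), and the observation that $a>\theta d$ dominates the framing weights $\theta\cdot\dim E'$ resp.\ $\theta\cdot\dim E'-\theta d$, whose signs single out exactly the proper subobjects containing $\im(h)$. The only inaccuracy is your appeal to Proposition \ref{fiber_product}: as the remark following Proposition \ref{framed_reps_2} points out, its sufficient condition fails here (the inclusion $\overline{X}^{ss}_d\times(\Aff^{fd}_\KK)^{ss}\subset \overline{X}^{ss}_{f,d}$ is strict), and the (a)--(c) equivalence it would provide concerns the smaller space $X_d\times(\Aff^{fd}_\KK)^{ss}$ rather than $X_{f,d}$; fortunately this claim is not load-bearing, since your argument only ever uses assumption (4) for $\AA$ together with the validity of the numerical criterion for $\Lin^{a,\theta}$ over $\Spec A_d\times\Aff^{fd}_\kk$, which the paper obtains directly from \cite{GHH}, Theorem 3.3.
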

\begin{proof} We want to apply Mumford's criterion to $\overline{X}_d\times \Aff^{fd}_\kk$ which comes with a the projective morphism $\overline{X}_d\times \Aff^{fd}_\kk \longrightarrow \Spec A_d \times \Aff^{fd}_\kk$ with (relative) ample line bundle $\Lin^{a,\theta}=\Lin_d^a\boxtimes \OO^{\theta}_{\Aff^{fd}_\kk}$. This can be done due to Theorem 3.3 of \cite{GHH}. Note that  $\mu^{\Lin}(x,\lambda)$ is additive in $\Lin$ and has integer values. We first show that $\hat{x}=(x,h)\in \overline{X}_{f,d}^ss$ implies $x\in \overline{X}_d^{ss}$ by taking a 1-PS $\lambda$ and assume the existence of the limit point $x_0^\lambda$ since otherwise $\mu^{\Lin_d}(x,\lambda)=+\infty>0$ by definition. Using equation (\ref{eq7}), it remains to show $\mu^{\Lin_d}(x,\lambda^{W^n})\ge 0$ for every $n\in \ZZ$ in order to conclude $\mu^{\Lin_d}(x,\lambda)\ge 0$. Here $W^\bullet$ is the filtration on $\KK^d$, where $\KK$ denotes a field extension over which $\lambda$ and $\hat{x}$ are defined. But
\[ 0\le \mu^{\Lin^{a,\theta}}(\hat{x},\lambda^{W^n})= \mu^{\Lin^a_d}(x,\lambda^{W^n}) + \mu^{\OO^\theta}(h,\lambda^{W_n})\le  \mu^{\Lin^a_d}(x,\lambda^{W^n}) + \theta d^n  \]
for any $n\in \ZZ$. If $\mu^{\Lin_d}(x,\lambda^{W^n})<0$, $\mu^{\Lin^a_d}(x,\lambda^{W^n})\le -a<  -\theta d\le -\theta d^n$ producing a contradiction. Thus, $x\in \overline{X}^{ss}_d$. 
Denote the set of framed objects $\hat{E}$ having no subobject of dimension vector $(d,1)$ with $U$. To show $U\subset X^{st}_{f,d}$ we pick a point $\hat{x}=(x,h)\in U$ defined over $\KK$ and a 1-PS $\lambda$ in $G_{\hat{d},\KK}$ not mapping into the diagonal $\Gl(1)$ and  inducing a filtration $\hat{W}^n=W^n\oplus W^n_\infty$ on $\KK^d\oplus \KK_\infty$ such that the limit $(\hat{x}^\lambda_0)=(x^\lambda_0,h^\lambda_0)\in \overline{X}_d\times \Aff^{fd}_\kk$ exists since otherwise $\mu^{\Lin^{a,\theta}}(\hat{x},\lambda)=+\infty>0$ by definition. Using equation (\ref{eq7}) and $\KK_\infty\subset \hat{W}_0$  we conclude  
\begin{eqnarray*} \mu^{\Lin^{a,\theta}}(\hat{x},\lambda)&=&\mu^{\Lin_d^a}(x,\lambda)+\mu^{\OO^{\theta}}(h,\lambda) \\
&=& \sum_{n>0} \Big(\mu^{\Lin^a_d}(x,\lambda^{W^n}) + \theta d^n\Big) + \sum_{n\le  0} \Big( \mu^{\Lin^a_d}(x,\lambda^{W_n}) + \theta d^n- \theta d\Big). 
\end{eqnarray*} 
The first summands corresponding to $W^n_\infty=0$ are non-negative as $x\in \overline{X}_d^{ss}$ implies $\mu^{\Lin^a_d}(x,\lambda^{W^n})\ge 0$, and strictly positive if $W^n\not= 0$. Also the summands of the second sum corresponding to $W^n_\infty=\KK_\infty$ are non-negative and strictly positive if $W^n\not= \KK^{d}$. Indeed, if $\theta d^n <\theta d$, then $\mu^{\Lin^a_d}(x,\lambda^{W^n})\ge (\theta d)\mu^{\Lin_d}(x,\lambda^{W^n})\ge \theta d$ since otherwise $\mu^{\Lin_d}(x,\lambda^{W^n})=0$ as we always have $\mu^{\Lin_d}(x,\lambda^{W^n})\ge 0$. But $\mu^{\Lin_d}(x,\lambda^{W^n})=0$ implies $\psi(W^n)=\omega_\KK(E^n)$ for some proper subobject $E^n\in \AA_\KK$ by our equivalent conditions. In particular, $\omega_\KK(E^n)$ contains the image of $h$ which is impossible by assumption $\hat{x}\in U$. Thus, every summand in the second sum is $\ge \theta d^n$. Since at least one of the $W^n$ is a proper subspace, we obtain $\mu^{\Lin^{a,\theta}}(\hat{x},\lambda)>0$ proving $\hat{x}\in X^{st}_{f,d}$. \\
It remains to show $X^{ss}_{f,d}\subset U$. Pick $\hat{x}=(x,h)$ in $X_{f,d}^{ss}$. We can represent $x$ by some pair $(E,\psi)$ with $E\in \AA_\KK$.  
If $E'\subset E$ is a proper subobject defined over $\KK$ such that $\omega_\KK(E')$ contains the image of $h$, then 
\[ \mu^{\Lin^{a,\theta}}(\hat{x},\lambda^{W'})= \mu^{\Lin_d^a}(x,\lambda^{W'}) + \theta\dim(E')-\theta d \ge 0\] 
by assumption on $\hat{x}$, where $W'=\psi(\omega_\KK(E'))\oplus \KK_\infty$ is made into a 2-step filtration $0=W^1 \subset W^0=W' \subset W^{-1}=\KK^d\oplus \KK_\infty$ with associated 1-PS $\lambda^{W'}$. By our equivalent conditions in assumption (4), $\mu^{\Lin_d^a}(x,\lambda^{W'})=0$ giving rise to a contradiction as $\theta\dim(E')<\theta d$. Hence, $\hat{x}\in U$ finishing the proof.  
\end{proof}
Let us denote the scheme $X_{f,d}^{ss}=X_{f,d}^{ss}$ with $X_{f,d}$. 
Passing to GIT-quotients, we obtain the following commutative diagram
\[ \xymatrix @C=2cm  { X_{f,d} \ar[r]^{\pr_{\overline{X}_d}|_{X_{f,d}}} \ar[d]_{\tilde{p}_{f,d}} & X_d \ar[d]^{\tilde{p}_d} \\  \overline{X}_{f,d}^{ss}/\!\!/G_{\hat{d}} \ar[r]^{\overline{\pi}_{f,d}} & \overline{X}_d^{ss}/\!\!/G_d }\]
Because of $\tilde{p}^{-1}_d(\Msp_d)=X_d$ (cf.\ assumption (4)), we could also describe $X_{f,d}=X^{st}_{f,d}$ as the preimage of the (open if $\Char \kk>0$) subscheme $\Msp_{f,d}:=\pi_{f,d}^{-1}(\Msp_d) \subset \overline{X}^{ss}_{f,d}/\!\!/G_{\hat{d}}$ with respect to $\tilde{p}_{f,d}$. In particular, $\Msp_{f,d}$ is the uniform geometric quotient of $X^{st}_{f,d}=X_{f,d}$, and $\overline{\pi}_{f,d}$ restricts to a morphism $\pi_{f,d}:\Msp_{f,d}\to \Msp_d$.
\begin{remark} \rm
The set $X_d\times (\Aff^{fd}_\KK)^{ss}$ is in general strictly contained in $X_{f,d}$, as for $(E,\psi,\KK,h)\in X_{f,d}$ the vector $1\in \KK_\infty$ can generate proper subrepresentations $(V,\KK,h)$ of $(\KK^d,\KK,h)$, but $\psi^{-1}(V)$ is not of the form $\omega_\KK(E')$ for some proper subobject $E'\subset E$. By the same reason $\overline{X}_d^{ss}\times (\Aff^{fd}_\KK)^{ss} \subset\overline{X}_{f,d}^{ss}$ is a strict inclusion and the sufficient condition of Proposition \ref{fiber_product} does not apply. Nevertheless, a moduli space for $\AA_f^{\hat{\theta}}$ exists because $X_{f,d}\to \Msp_{f,d}$ is a geometric quotient and $X_d\times (\Aff^{fd}_\KK)^{ss}\subset X_{f,d}$ an open $G_{\hat{d}}$-invariant subspace. Its open image in $\Msp_{f,d}$ is the moduli space for $\AA_f^{\hat{\theta}}$.  
\end{remark}

\begin{corollary} If $(\AA,\omega,p)$ also satisfies our smoothness assumption (7), $\Msp_{f,d}$ is smooth, and $X_{f,d}=X^{st}_{f,d} \longrightarrow  \Msp_{f,d}$ is a principal $G_d$-bundle. 
\end{corollary}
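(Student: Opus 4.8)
The plan is to exhibit $\tilde p_{f,d}\colon X_{f,d}\to\Msp_{f,d}$ as a free geometric quotient and then apply Luna's slice theorem. First I would note that the total space is smooth: by Proposition~\ref{framed_reps_2} one has $X_{f,d}=X^{st}_{f,d}$, an open subscheme of $\overline{X}_{f,d}=\overline{X}_d\times\Aff^{fd}_\kk$ supported over $X_d\subset\overline{X}_d^{ss}$. Since the projection $\overline{X}_{f,d}\to\overline{X}_d$ is the first factor, this locus is open in $X_d\times\Aff^{fd}_\kk$, which is smooth because $X_d$ is smooth by the corollary to assumption~(7). Hence $X_{f,d}$ is smooth.

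The heart of the matter is freeness. For a stable framed object $\hat E=(E,\KK,h)$ over a field extension $\KK\supset\kk$, an automorphism in $\AA_f$ is a pair $(\phi,c)\in\Aut_{\AA_\KK}(E)\times\KK^\ast$ satisfying $\omega_\KK(\phi)\circ h=c\,h$. Setting $\theta:=\phi-c\,\id_E\in\End_{\AA_\KK}(E)$ gives $\omega_\KK(\theta)\circ h=0$, so $\im(h)\subseteq\ker\omega_\KK(\theta)=\omega_\KK(\ker\theta)$ by exactness of $\omega_\KK$. Stability means precisely that $\hat E$ has no proper subobject of dimension vector $(d',1)$, i.e.\ that $h$ generates $E$; applied to the subobject $\ker\theta$ this forces $\ker\theta=E$, whence $\theta=0$ and $\phi=c\,\id_E$. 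Therefore $\Aut_{\AA_f}(\hat E)=\{(c\,\id_E,c):c\in\KK^\ast\}\cong\KK^\ast$. Under the GIT dictionary this says that the stabilizer of every point of $X_{f,d}$ in $G_{\hat d}=G_d\times\Gl(1)$ is the diagonal $\KK^\ast=\{(c\,\id,c)\}$, independently of the point. I expect this automorphism computation to be the only genuine obstacle; the remaining steps are formal.

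To finish I would observe that, being the stabilizer of every point, this diagonal $\KK^\ast$ lies in the kernel of the $G_{\hat d}$-action: the central scalars $c\,\id\in G_d$ act trivially on $X_d$ (as recorded after assumption~(4)), and their scaling of the framing $h$ is cancelled by the framing torus. Hence $G_{\hat d}$ acts through $G_{\hat d}/\KK^\ast\cong G_d$, via $(g,c)\mapsto c^{-1}g$, and this $G_d$-action on $X_{f,d}$ is free. Since the points of $X_{f,d}=X^{st}_{f,d}$ carry closed $G_{\hat d}$-orbits and $\tilde p_{f,d}$ is a uniform geometric quotient, Luna's \'etale slice theorem (Theorem~\ref{Luna}) shows $\tilde p_{f,d}$ is \'etale-locally trivial, i.e.\ a principal $G_d$-bundle, exactly as in the earlier production of the principal $PG_d$-bundle $X_d^{st}\to\Msp_d^s$. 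Finally, as a principal bundle under the smooth group $G_d$ it is smooth and surjective, so smoothness of $\Msp_{f,d}$ descends from that of $X_{f,d}$.
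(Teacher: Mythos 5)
Your proof is correct and follows essentially the same route as the paper: smoothness of $X_{f,d}$ inherited from the smoothness of $X_d$, identification of the stabilizer of every point of $X^{st}_{f,d}$ with the diagonally embedded $\Gl(1)\subset G_{\hat{d}}$, hence a free action of $PG_{\hat{d}}\cong G_d$, and the resulting principal-bundle and smooth-quotient conclusions. The paper merely asserts the stabilizer computation and the bundle statement, whereas you spell them out (via the $\theta=\phi-c\,\id_E$ stability argument and Luna's slice theorem), but the underlying argument is identical.
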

\begin{proof}
As $X_d$ is smooth, the same must hold for $X^{st}_{f,d}\subset X_d\times \Aff^{fd}_\kk$. The automorphism group of a point $(E,\psi,\KK,h)$ in $X^{st}_{f,d}$ is $\Gl(1)$ corresponding to the diagonal embedding $\Gl(1)\to G_{\hat{d}}$, the group $PG_{\hat{d}}\cong G_d$ acts freely, and the quotient is smooth. 
\end{proof}

Let us come to the main result of this section. 
\begin{theorem} \label{virtsmall} Assume that $(\AA,\omega,p)$ satisfies all of our assumptions (1)--(8). Then  $\pi_{f,d}:\Msp^{ss}_{f,d} \longrightarrow \Msp_d$ is  a projective and virtually small morphism, that is, there is a finite stratification $\Msp_d=\sqcup_\lambda S_\lambda$ with empty or dense stratum $S_0=\Msp^{s}_d=X^{st}_d/PG_d$ such that $\pi_{f,d}^{-1}(S_\lambda) \longrightarrow S_\lambda$ is \'etale locally trivial and 
 \[ \dim \pi_{f,d}^{-1}(x_\lambda) - \dim \PP^{f\cdot d-1} \le \frac{1}{2} \codim S_\lambda\] 
for every $x_\lambda\in S_\lambda$ with equality only for $S_\lambda=S_0\not=\emptyset$ with fiber $\pi_{f,d}^{-1}(x_0)\cong \PP^{f\cdot d-1}$. 
\end{theorem}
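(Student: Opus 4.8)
The plan is to reduce everything to an explicit fibre computation that is made transparent by the symmetry assumption (8). Projectivity is the easy half: by construction $\overline{\pi}_{f,d}$ is the canonical map from $\Proj\big(\bigoplus_{k\ge 0}\Ho^0(\overline{X}^{ss}_{f,d},(\Lin^{a,\theta})^{\otimes k})^{G_{\hat d}}\big)$ to $\Spec$ of its degree-zero part, which is finite over $\overline{X}^{ss}_d/\!\!/G_d$; hence $\overline{\pi}_{f,d}$, and therefore its restriction $\pi_{f,d}$ over $\Msp_d$, is projective. It remains to construct the stratification and verify the numerical estimate.

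I would stratify $\Msp_d$ by the representation type of the associated polystable object $E=\bigoplus_{k}E_k^{m_k}$ (with pairwise non-isomorphic simple $E_k$): the stratum $S_\lambda$ records the classes $\gamma_k=\cl(E_k)$, the dimensions $d_k=\dim E_k$, the multiplicities $m_k$, and the numbers $a_{kl}:=\ext^1(E_k,E_l)\ge 0$. This gives a finite stratification into locally closed subschemes; the stratum with a single simple factor of multiplicity one is $S_0=\Msp^s_d$, which is open and, by local integrality of $X_d$ in (7), dense whenever nonempty. Étale-local triviality of $\pi_{f,d}^{-1}(S_\lambda)\to S_\lambda$ I would derive from Luna's \'etale slice theorem (Theorem \ref{Luna}) together with Proposition \ref{normal_ext} and Proposition \ref{vector_bundle}: over a slice the relevant extensions are controlled by the constant $\Ext^1$-quiver of the collection $(E_k)_k$. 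This is the technical heart and the main obstacle, and is precisely the part deferred to Section 7.

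The decisive input is the shape of a single fibre over $[E]\in S_\lambda$, namely the space of stable framings modulo $\Aut(E)=\prod_k\Gl(m_k)$. Since $E$ is semisimple and the $E_k$ are non-isomorphic simples, every subobject has the form $\bigoplus_k E_k\otimes U_k$ with $U_k\subseteq\KK^{m_k}$, so a framing fails stability exactly when all of its $E_k$-components lie in a proper $U_k$. Writing the $E_k$-part of a framing as $m_k$ vectors in $\Hom(\KK^{f},\omega_\KK(E_k))\cong\KK^{\,f\cdot d_k}$, stability is equivalent to linear independence of these $m_k$ vectors for every $k$. Dividing the resulting product of Stiefel varieties by $\prod_k\Gl(m_k)$ identifies the fibre with $\prod_k\Gr(m_k,f\cdot d_k)$, of dimension $f\cdot d-\sum_k m_k^2$; for $S_0$ this is $\Gr(1,f\cdot d)=\PP^{f\cdot d-1}$, as claimed.

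Finally I would feed this into the estimate. Using $\dim S_\lambda=\sum_k\bigl(1-(\gamma_k,\gamma_k)\bigr)$ and, once $S_0\neq\emptyset$, $\dim\Msp_d=1-(\gamma,\gamma)$, the symmetry assumption lets me replace the pairing by Ext-numbers via $(\gamma_k,\gamma_k)=1-a_{kk}$ and $(\gamma_k,\gamma_l)=-a_{kl}$ for $k\ne l$. A direct expansion then yields
\[ \tfrac12\codim S_\lambda-\Bigl(\dim\pi_{f,d}^{-1}(x_\lambda)-\dim\PP^{f\cdot d-1}\Bigr)=\tfrac12\Bigl(\sum_k m_k^2-1\Bigr)+\tfrac12\sum_k(m_k^2-1)\,a_{kk}+\tfrac12\sum_{k\ne l}m_k m_l\,a_{kl}, \]
a sum of manifestly non-negative terms. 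This proves the inequality; equality forces every summand to vanish, hence $\sum_k m_k^2=1$, i.e.\ a single simple factor of multiplicity one, which is exactly $S_\lambda=S_0$ with fibre $\PP^{f\cdot d-1}$. (When $S_0=\emptyset$ one argues with the actual codimension and only the strict inequality is asserted.) Thus the only genuinely hard step is the \'etale-local triviality via the slice analysis; the dimension count and the inequality are bookkeeping with the symmetric pairing.
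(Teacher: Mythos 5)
Your projectivity sketch and your choice of stratification (Luna strata by representation type, with \'etale-local triviality deferred to the slice theorem) match the paper, but there is a genuine gap at the decisive step: you have misidentified the fibre of the Hilbert--Chow morphism over a point of a lower stratum. The fibre $\pi_{f,d}^{-1}([E])$ is \emph{not} the space of stable framings of the polystable object $E$ itself modulo $\Aut(E)$. A point of $\Msp_{f,d}$ lying over $[E]$ is a stable framed object $(E',\KK,h)$ whose underlying object $E'$ merely has associated graded isomorphic to $E$ (the map to $\Msp_d$ only remembers the S-equivalence class), so every non-split iterated extension of the $E_k$'s occurs in the fibre, not just $E'\cong E$. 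Via the Luna slice, the fibre is identified with (a subvariety of) the framed moduli space $\Msp^{nilp}_{f_\xi,d_\xi}(Q_\xi)$ of \emph{nilpotent} representations of the Ext-quiver, of which your product of Grassmannians $\prod_k\Gr(m_k,f\cdot d^k)$ is only the stratum lying over the zero representation. A concrete counterexample: take $\AA_\kk$ to be representations of the one-loop quiver with trivial stability, $f=1$, $d=n\ge 2$, and $[E]$ the origin of $\Msp_d=\Sym^n\Aff^1$. Your formula gives $\Gr(n,1)=\emptyset$, yet the fibre is nonempty (it contains the Jordan block with a cyclic vector, i.e.\ the punctual noncommutative Hilbert scheme); for a quiver with $g\ge 2$ loops the fibre over the origin even has large positive dimension, far exceeding $\dim\Gr(d,f)$.

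As a consequence, your final inequality, although algebraically consistent with the fibre dimension you wrote down, does not prove the theorem: the hard content is an upper bound for $\dim\Msp^{nilp}_{f_\xi,d_\xi}(Q_\xi)=\dim N_{d_\xi}-\dim G_{d_\xi}+f_\xi\cdot d_\xi$, where $N_{d_\xi}$ is the nullcone of the $G_{d_\xi}$-action on $R_{d_\xi}(Q_\xi)$. This is exactly what the paper supplies via Theorem \ref{nilpotent stack dimension}, the estimate $\dim N_d-\dim G_d\le -\tfrac12(d,d)+\tfrac12\sum_k(k,k)d_k-|d|$ valid for \emph{symmetric} quivers, proved by covering the nullcone by collapsings of homogeneous bundles over flag varieties indexed by thin decompositions. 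That is also where the symmetry assumption (8) does real work (the bound fails for non-symmetric quivers); in your argument symmetry only appears as bookkeeping in the pairing, which is a sign that the essential input is missing. To repair the proof you would need to replace your Grassmannian computation by the inclusion $\pi_{f,d}^{-1}(E)\subset\Msp^{nilp}_{f_\xi,d_\xi}(Q_\xi)$ (the slice argument of Section 7) together with the nullcone estimate, and then redo the dimension count as in the paper.
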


\begin{proof}
To proof projectivity, we consider the commutative diagram
\[ \xymatrix @C=2cm {\overline{X}^{ss}_{f,d}/\!\!/G_{\hat{d}} \ar[r]^{\overline{\pi}_{f,d}} \ar[d] & \overline{X}^{ss}_d/\!\!/G_d \ar[d] \\ \Spec(\kk[\overline{X}^{ss}_{f,d}]^{G_{\hat{d}}}) \ar[r] &\Spec(\kk[\overline{X}^{ss}_d]^{G_d}) }\]
which exists by the previous Proposition and the functorial properties of GIT-quotients and the $\Spec$-functor. Moreover, the vertical maps are induced by the  inclusions  
\[ \xymatrix @C=2cm { 
\kk[\overline{X}^{ss}_d]^{G_d}=\Ho^0(\overline{X}^{ss}_d,\Lin_d^{\otimes 0})^{G_d} \ar[r]^(0.5){\pr^\ast} \ar@{^{(}->}[d]&  \kk[\overline{X}^{ss}_{f,d}]^{G_{\hat{d}}}=\Ho^0(\overline{X}^{ss}_{f,d},(\Lin^{a,\theta})^{\otimes 0})^{G_{\hat{d}}} \ar@{^{(}->}[d] \\ 
\bigoplus_{k\ge 0} {\Ho}^0(\overline{X}^{ss}_d,{\Lin}_d^{\otimes ak})^{G_d}  & \bigoplus_{k\ge 0} \Ho^0(\overline{X}^{ss}_{f,d},(\Lin^{a,\theta})^{\otimes k})^{G_{\hat{d}}},  }\]
of the degree $0$ parts of the graded rings. Therefore, they are projective by general properties of the $\Proj$-construction. The important point is that the  horizontal arrow $\pr^\ast$ is an isomorphism. Indeed, as a $G_{\hat{d}}=G_d\times \Gl(1)$-representation, 
\[\kk[\overline{X}^{ss}_{f,d}]=\kk[\overline{X}^{ss}_d]\otimes_\kk \Sym\Big(\bigoplus_{i\in I} \big((\kk^{d_i})^\vee\big)^{ \oplus f_i}\Big) \] 
with $\Gl(1)$ acting trivially on the first factor. Thus, $\kk[\overline{X}^{ss}_{f,d}]^{G_{\hat{d}}}=(\kk[\overline{X}^{ss}_{f,d}]^{\Gl(1)})^{G_d}=(\kk[\overline{X}^{ss}_d]\otimes \kk)^{G_d} = \kk[\overline{X}^{ss}_d]^{G_d}$ follows. As $\Spec(\kk[\overline{X}^{ss}_{f,d}]^{G_{\hat{d}}}) \cong\Spec(\kk[\overline{X}^{ss}_d]^{G_d})$, $\overline{\pi}_{f,d}:\overline{X}^{ss}_{f,d}/\!\!/G_{\hat{d}} \longrightarrow  \overline{X}^{ss}_d/\!\!/G_d$ is also projective and its restriction to $\Msp_{f,d}\subset \overline{X}^{ss}_d/\!\!/G_d$ is just $\pi_{f,d}:\Msp_{f,d}\to \Msp_d$ which must be projective, too.\\
For $E\in \Msp^{s}_d$ defined over $\KK$, every choice of $h\not=0$ and $\psi:\omega_\KK(E)\cong \KK^d$ provides a point $\hat{E}=(E,\psi,\KK,h)\in X_d\times \AA^{fd}_\kk$ satisfying the conditions of Proposition \ref{framed_reps_2}. Thus, $\pr^{-1}(X^{st}_d)\cap X_{f,d}=X^{st}_d\times_\kk (\Aff^{f\cdot d} \setminus \{0\})$ and 
\[ \pi_{f,d}:X^{st}_d\times_\kk (\Aff^{f\cdot d} \setminus \{0\})/G_{\hat{d}}=X^{st}_d\times_\kk \PP^{f\cdot d - 1} / PG_d \longrightarrow \Msp^{s}_d \]
is an \'etale locally trivial $\PP^{f\cdot d-1}$ bundle.\\
The points of $\Msp_d$ correspond to the closed orbits in $X_d$, i.e.\ to those orbits whose points have reductive stabilizer in $G_d$. The latter is just $\Aut_{\AA_\KK}(E)$ which is reductive if and only if $E=\bigoplus_{k=1}^s E_k^{\oplus m_k}$ is semisimple with pairwise non-isomorphic simple objects $E_k$ of multiplicity $m_k$. \\
Let $\xi=\big((\gamma^\bullet,d^\bullet,m_\bullet)=(\gamma^1,\ldots,\gamma^s),(d^1,\ldots,d^s),(m_1,\dots,m_s)\big)$ be an element in $\Gamma^s\times (\NN^{\oplus I})^s\times \NN^s$ for some $s\in \NN$. We define $S_\xi\subset\Msp_{\gamma,d}$ with $\gamma=\sum_{k=1}^sm_k\gamma_k$ and $d=\sum_{k=1}^sm_kd^k$ to be the locally closed subvariety corresponding to semisimple objects $E$ having a decomposition $E=\bigoplus_{k=1}^s E_k^{\oplus m_k}$ with pairwise non-isomorphic $E_k\in \Msp^{s}_{\gamma_k,d_k}$. Apparently, $S_\xi$ depends only on $(\gamma^\bullet,d^\bullet)$, but the embedding into $\Msp_{\gamma,d}$ depends also on $m_\bullet$. If $S_\xi$ is non-empty, $\Msp^{s}_{\gamma^k,d^k}$ must also be non-empty, and $(\gamma^k,\gamma^k)=1-\dim \Msp^{s}_{\gamma^k,d^k} \le 1$ follows for all $1\le k\le s$. Moreover
\[ \dim S_\xi =\sum_{k=1}^s \dim \Msp^{s}_{\gamma,d}=s-\sum_{k=1}^s (\gamma^k,\gamma^k).\]
We define the local $\Ext^1$-quiver $Q_\xi$ which in fact only depends on $\gamma^\bullet$ as the set of vertices $\{1,\ldots,s\}$ with $\delta_{kl}-(\gamma^k,\gamma^l)$ arrows form $i_k$ to $i_l$. Note that $Q_\xi$ is symmetric by our symmetry assumption on $\AA$. Define a local dimension vector $d_\xi$ depending only on $m_\bullet$  by $(d_\xi)_{i_k}=m_k$, and a local framing datum $f_\xi$ by $(f_\xi)_{i_k}=f\cdot d^k$ which depends only on $d^\bullet$. We consider the trivial stability on $Q_\xi$. Then we have a local Hilbert--Chow map $$\pi_\xi:\Msp_{f_\xi,d_\xi}^{0-ss}(Q_\xi)\rightarrow \Msp_{d_\xi}^{0-ss}(Q_\xi)=R_{d_\xi}/\!\!/G_{d_\xi}.$$
We denote the fiber over the class of the zero representation by $\Msp_{f_\xi,d_\xi}^{nilp}(Q_\xi).$ By construction, we have
$$\dim M_{d_\xi,f_\xi}^{nilp}(Q_\xi)=\dim N_{d_\xi}-\dim G_{d_\xi}+f_\xi\cdot d_\xi,$$
where $N_{d_\xi}$ denotes the nullcone of the representation of $G_{d_\xi}$ on $R_{d_\xi}$, i.e.\ the preimage of $\rho(0)\in \Spec \kk[R_{d_\xi}]^{G_{d_\xi}}$ under the quotient map $\rho:R_{d_\xi}\longrightarrow \Spec \kk[R_{d_\xi}]^{G_{d_\xi}}$. We will now use the following two results which will be (partially) proven in section 7.
\begin{theorem} \label{nilpotent stack dimension} If $Q$ is a symmetric quiver and $N_d\subset R_d$ the preimage of $\rho(0)\in \Spec \kk[R_d]^{G_d}$ under the quotient map $\rho:R_d\longrightarrow \Spec \kk[R_d]^{G_d}$, then 
\[ \dim N_d - \dim G_d \le -\frac{1}{2}(d,d)+\frac{1}{2}\sum_{k\in Q_0}(k,k)d_k-|d|, \]
where $k\in Q_0$ is also identified with the dimension vector $(\delta_{ik})_{i\in Q_0}$.
\end{theorem}
\begin{theorem} \label{locally_trivial_fibration} The Hilbert--Chow morphism $\pi_{f,d}:\Msp^{ss}_{f,d} \to \Msp_d$ restricted to $S_\xi$ is \'etale locally trivial with fiber isomorphic to $\Msp^{nilp}_{f_\xi,d_\xi}(Q_\xi)$.
\end{theorem}
The proof of the theorem is a straight forward generalization of the quiver case which is Theorem 4.1 in \cite{Reineke1}, but we give a  proof of $\pi_{f,d}^{-1}(E)\subset \Msp^{nilp}_{f_\xi,d_\xi}(Q_\xi)$ in section 7 for the sake of completeness. Only this inclusion will be used in the sequel. 
Having  these two results at hand, we obtain for $E\in S_\xi$ the estimation
\begin{eqnarray*}\dim\pi_{f,d}^{-1}(E)& \le &\dim\Msp_{f_\xi,d_\xi}^{nilp}(Q_\xi)\;=\;\dim N_{d_\xi}-\dim G_{d_\xi}+f_\xi\cdot d_\xi \\
&\le&  -\frac{1}{2}( d_\xi,d_\xi)_{Q_\xi}+\frac{1}{2}\sum_{k=1}^s( k,k)_{Q_\xi}(d_\xi)_{k}- |d_\xi|+f_\xi\cdot d_\xi.
\end{eqnarray*}
Using the definition of $Q_\xi$, $d_\xi$ and $f_\xi$, this simplifies to
\[ \dim\pi_{f,d}^{-1}(E)\leq -\frac{1}{2}( \gamma,\gamma)+\frac{1}{2}\sum_{k=1}^s( \gamma^k,\gamma^k) m_k-\sum_{k=1}^sm_k+f\cdot d.\]
On the other hand, we can rewrite the dimension formula for $S_\xi$ as
\[ \codim S_\xi=-( \gamma,\gamma)+\sum_{k=1}^s( \gamma^k,\gamma^k)+1-s.\]
The inequality
\[ \dim \pi_{f,d}^{-1}(E)-(f\cdot d-1)\leq\frac{1}{2} \codim S_\xi\]
(with equality only if $\xi=((d,1))$) claimed in the Theorem can thus be rewritten as
\[ -\frac{1}{2}( \gamma,\gamma)+\frac{1}{2}\sum_{k=1}^s( \gamma^k,\gamma^k) m_k-\sum_{k=1}^s m_k+1\leq -\frac{1}{2}( \gamma,\gamma)+\frac{1}{2}\sum_{k=1}^s( \gamma^k,\gamma^k)+\frac{1}{2}(1-s).\]
This is easily simplified to
\[ \frac{1}{2}\sum_{k=1}^s\big(( \gamma^k,\gamma^k)-2\big)\big(m_k-1\big)\leq\frac{1}{2}(s-1).\]
Since $( \gamma^k,\gamma^k)\leq 1$, the left hand side is non-positive, whereas the right hand side is non-negative. Equality holds if both sides are zero, thus $s=1$, proving virtual smallness.
\end{proof}

\section{Some background on mixed Hodge modules}

\subsection{Mixed Hodge modules} 

The ground field in the next two sections will be $\kk=\CC$. In this section we recall some standard facts about mixed Hodge modules, intersection complexes and Schur functors. The interested reader will find more details in \cite{CaMi} and \cite{Saito1}. Let $X$ be a variety with quasiprojective connected components. We denote with $\MHM(X)$ the abelian categories of mixed Hodge modules on $X$. For a morphism $f:X\longrightarrow Y$ of finite type we get two pairs $(f^\ast,f_\ast), (f_!,f^!)$ of adjoint triangulated functors $f_\ast,f_!:D^b(\MHM(X)) \longrightarrow D^b(\MHM(Y))$ and $f^\ast,f^!:D^b(\MHM(Y)) \longrightarrow D^b(\MHM(X))$,  satisfying Grothendieck's axioms of the four functor formalism. Moreover,  there 
are duality functors relating $f_\ast$ with $f_!$ and $f^\ast$ 
with $f^!$. We also mention that for each connected component $X_\alpha$ of $X$, the categories $\MHM(X_\alpha)$ are of finite length. Furthermore, there is an exact equivalence $\TT$ of $\MHM(X)$, called the Tate twist, commuting with all four functors and 
satisfying $\rat\circ \TT = \rat$. In practice, $\TT$ usually acts by means of the composition $\LL:=[-2]\circ \TT$ on $D^b(\MHM(X))$. In our applications, $\MHM(X)$ is equipped with an exact symmetric monoidal tensor product with unit object $\unit$ and $\TT(-)\cong \TT(\unit)\otimes (-)$. By abusing language, we will also denote $\TT(\unit)$, resp.\ $\LL(\unit)$, with $\TT$, resp.\ $\LL$.      
The actions of $\TT$ and $\LL$ on $\Ka_0(\MHM(X))$ coincide, making it into a $\ZZ[\LL^{\pm 1}]$-module. We denote by $\Ka_0(\MHM(X))[\LL^{-1/2}]$ the $\ZZ[\LL^{\pm 1/2}]$-module obtained by adjoining a square root of $\LL$. One can also categorify this, giving rise to an exact equivalence $\TT^{1/2}$ on an enlarged abelian category of mixed Hodge modules. Then, $\LL^{-1/2}$ is given by $[1]\circ \TT^{-1/2}$, and should be seen as a refinement of the shift functor $[1]$ on the underlying perverse sheaf.

\subsection{Intersection complex}
Given a closed equidimensional subvariety $Z\subset X$ and a polarizable variation $V$ of mixed Hodge structures on a dense open subset $Z^o$ of the regular part $Z_{reg}$ of $Z$, there is canonical mixed Hodge module $\ICS_Z(L)$ on $X$, called the $L$-twisted intersection complex of $Z$, such that $\ICS_Z(L)|_{Z^o}=L[\dim Z]$. If $Z$ and $L$ are irreducible, $\ICS_Z(L)$ is an irreducible object of $\MHM(X)$, and all irreducible objects are obtained in this way. We will, however, use the slightly non-standard convention $\ICS_Z(V)|_{Z^o}=\LL^{-\dim Z/2}(V)=\TT^{-\dim Z/2}(V)[\dim Z]$. Note that an irreducible variation of mixed Hodge structures is pure, and application of $\TT^{-1/2}$ reduces the weight by one. If $Z$ has several connected components of different dimension, the construction of $\ICS_Z(L)$ resp.\ $\ICS_Z(V)$ generalizes accordingly. Applying this to the trivial variation $\QQ$ of pure Hodge structures of type $(
0,0)$ on 
$Z_
{reg}$, we obtain a distinguished intersection complex $\ICS_Z(\QQ)$.       

\subsection{Schur functors}
Let us now specialize to $X=\Msp$, although everything in this section remains true for arbitrary commutative monoids $(X,\oplus,0)$ in the category of varieties with quasiprojective connected components such that $\oplus:X\times X \longrightarrow X$ is of finite type. We obtain a symmetric monoidal tensor product
\[ \otimes: D^b(\MHM(\Msp))\times D^b(\MHM(\Msp)) \longrightarrow D^b(\MHM(\Msp)), \quad \EE\otimes \FF:=\oplus_\ast(\EE\boxtimes \FF), \]
and if $\oplus$ is even a finite morphism, the tensor product preserves the abelian subcategory $\MHM(X)$ of mixed Hodge modules. More details can be found in \cite{Schuermann}. The unit $\unit$ is given by $\ICS_{\Msp_0}(\QQ)$, which is a skyscraper sheaf of rank one supported on the moduli space $\Msp_0\cong\Spec\kk$ containing the zero object $0$ of dimension vector $d=0$. We drop the $\otimes$-sign when dealing with the associated Grothendieck group $\Ka_0(\MHM(\Msp))$. \\
Given $\EE\in D^b(\MHM(\Msp))$ and $n\in \NN$, the object $\EE^{\otimes n}$ carries a natural action of the symmetric group $S_n$. By general arguments (see \cite{Deligne1}), we obtain a decomposition 
\[ \EE^{\otimes n} =  \bigoplus\limits_{\lambda \dashv n} W_\lambda \otimes S^\lambda(\EE) \]
for certain objects $S^\lambda(\EE)\in D^b(\MHM(\Msp))$ which are actually in $\MHM(\Msp)$ if $\oplus$ is finite, and where $W_\lambda$ denotes the irreducible representation of $S_n$ associated to the partition $\lambda$ of $n$. The tensor product used on the right hand side can be defined for every additive category, and should not be confused with the tensor product explained above. However, after identifying vector spaces $W$ with trivial variations of pure Hodge structures of type $(0,0)$ over $\Msp_0$, both tensor products agree. The decomposition is functorial,  giving rise to Schur functors $S^\lambda:D^b(\MHM(\Msp)) \longrightarrow D^b(\MHM(\Msp))$ for every partition $\lambda$. 
\begin{example} \rm \quad 
\begin{enumerate}
 \item For $\lambda=(n)$, the representation $W_\lambda$ is the trivial representation of $S_n$ and $S^\lambda(\EE)=:\Sym^n(\EE)$. If $\EE|_{\Msp_0}=0$, we get  $\Sym^n(\EE)|_{\Msp_d}=0$ for every $d\in \NN^{\oplus I}$ provided $n\gg 0$. In particular, $\Sym(\EE)=\oplus_n \Sym^n(\EE)$ is well-defined. 
  \item For $\lambda=(1,\ldots,1)$, the representation $W_\lambda$ is the sign representation of $S_n$ and $S^\lambda(\EE)=:\Alt^n(\EE)$. As before $\Alt(\EE)=\oplus_n \Alt^n(\EE)$ is well-defined provided $\EE|_{\Msp_0}=0$. 
\end{enumerate}
\end{example}
The following proposition is a standard result. 
\begin{proposition}
 Let $\EE,\FF$ be in $D^b(\MHM(\Msp))$  such that $\EE|_{\Msp_0}=\FF|_{\Msp_0}=0$. Denote by $\Part$ be the set of all partitions of arbitrary size. Then
 \begin{eqnarray}
  \Sym(\EE \oplus \FF) & = & \Sym(E) \otimes \Sym(F) , \;\mbox{ in particular} \nonumber\\
  \Sym^n(\EE \oplus \FF) & = & \bigoplus\limits_{i+j=n} \Sym^i(\EE)\otimes \Sym^j(\FF),\;\mbox{and} \label{eqn1} \\
  \Sym (\EE \otimes \FF) & = & \bigoplus\limits_{\lambda \in \Part } S^\lambda(\EE)\otimes S^\lambda(\FF), \;\mbox{ in particular} \nonumber  \\
  \Sym^n(\EE\otimes \FF) & = & \bigoplus\limits_{\lambda \dashv n} S^\lambda(\EE)\otimes S^\lambda(\FF). \label{eqn2}
 \end{eqnarray}
\end{proposition}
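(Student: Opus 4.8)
The plan is to transport the classical symmetric-function (Cauchy) identities into the present setting by exploiting that, on $D^b(\MHM(\Msp))$, the tensor product $\otimes=\oplus_\ast(-\boxtimes-)$ is symmetric monoidal and $\QQ$-linear with split idempotents, so that Deligne's formalism (see \cite{Deligne1}, \cite{Schuermann}) makes the assignment $\EE\mapsto S^\lambda(\EE)$ obey exactly the formal rules governing Schur functors in $\Vect_\QQ$. In particular the symmetry constraint supplies the $S_n$-action on $\EE^{\otimes n}$ used to define the isotypic decomposition $\EE^{\otimes n}=\bigoplus_{\lambda\dashv n}W_\lambda\otimes S^\lambda(\EE)$, and $\Sym^n(\EE)=S^{(n)}(\EE)$ is its trivial-isotypic component. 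All infinite sums below are finite in each fixed dimension vector $d\in\NN^{\oplus I}$, because $\EE|_{\Msp_0}=\FF|_{\Msp_0}=0$ forces $\Sym^i(\EE)\otimes\Sym^j(\FF)|_{\Msp_d}=0$ once $i+j\gg 0$, exactly as in the preceding Example; hence it suffices to prove the graded identities (\ref{eqn1}) and (\ref{eqn2}), the ungraded ones following by summation.

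For (\ref{eqn1}) I would expand $(\EE\oplus\FF)^{\otimes n}$ by distributivity of $\otimes$ over $\oplus$ and group the $2^n$ summands according to the subset of tensor slots occupied by $\EE$. Since the $S_n$-action permutes these slots, one obtains as an object equipped with $S_n$-action
\[ (\EE\oplus\FF)^{\otimes n}\;\cong\;\bigoplus_{i+j=n}\operatorname{Ind}_{S_i\times S_j}^{S_n}\big(\EE^{\otimes i}\boxtimes\FF^{\otimes j}\big). \]
Applying the trivial-isotypic projector and using Frobenius reciprocity, which in this additive setting just says that the $S_n$-invariants of an induced object are the $(S_i\times S_j)$-invariants of the inducing object, gives $\Sym^n(\EE\oplus\FF)=\bigoplus_{i+j=n}\Sym^i(\EE)\otimes\Sym^j(\FF)$, as claimed.

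For (\ref{eqn2}) (the Cauchy identity) the starting observation is that the symmetry isomorphism identifies $(\EE\otimes\FF)^{\otimes n}$ with $\EE^{\otimes n}\otimes\FF^{\otimes n}$ carrying the \emph{diagonal} $S_n$-action. Substituting the Schur decompositions of the two factors and extracting the trivial isotypic component yields
\[ \Sym^n(\EE\otimes\FF)\;=\;\bigoplus_{\lambda,\mu\dashv n}\big(W_\lambda\otimes W_\mu\big)^{S_n}\otimes S^\lambda(\EE)\otimes S^\mu(\FF). \]
Here $W_\lambda,W_\mu$ are honest $S_n$-representations over $\QQ$, so the multiplicity space $(W_\lambda\otimes W_\mu)^{S_n}\cong\Hom_{S_n}(W_\lambda,W_\mu)$ (the $W_\lambda$ being self-dual) is $\QQ$ if $\lambda=\mu$ and $0$ otherwise, by Schur's lemma and orthogonality of characters. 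This collapses the double sum to $\bigoplus_{\lambda\dashv n}S^\lambda(\EE)\otimes S^\lambda(\EE)$, i.e.\ to the stated formula.

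The only genuine care needed, and the place I expect the main technical friction, is in justifying that these purely formal manipulations are legitimate inside $D^b(\MHM(\Msp))$ rather than merely in $\Vect_\QQ$: one must know that the symmetry constraint, the splitting of the Young idempotents, and the distributivity of $\otimes$ over $\oplus$ all hold in the triangulated category (and descend to the abelian category $\MHM(\Msp)$ when $\oplus$ is finite). This is precisely what the cited formalism of \cite{Deligne1} and \cite{Schuermann} guarantees. Once that is granted, the two identities are a mechanical consequence of the representation theory of the symmetric groups and carry no further content, which is why the proposition may fairly be labelled standard.
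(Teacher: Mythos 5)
Your proof is correct: the paper states this proposition without proof, as a standard consequence of the Schur-functor formalism of \cite{Deligne1} invoked just above it, and your argument — expansion into induced objects plus Frobenius reciprocity for invariants in (\ref{eqn1}), and the diagonal-action Cauchy argument using Schur's lemma and the self-duality of the $W_\lambda$ over $\QQ$ for (\ref{eqn2}) — is exactly the standard proof that citation encapsulates, with the legitimacy of the formal manipulations (split idempotents, symmetry constraint, distributivity) correctly attributed to the $\QQ$-linear idempotent-complete symmetric monoidal structure on $D^b(\MHM(\Msp))$. The only blemish is a typo at the end: the collapsed sum should read $\bigoplus_{\lambda\dashv n}S^\lambda(\EE)\otimes S^\lambda(\FF)$, not $\bigoplus_{\lambda\dashv n}S^\lambda(\EE)\otimes S^\lambda(\EE)$.
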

Equations (\ref{eqn1}) and (\ref{eqn2}) are of course also true without the additional assumptions on $\EE$ and $\FF$. The next result is also well-known.
\begin{proposition} \label{lambda-ring}
 The Schur functors $S^\lambda$ induce well defined operations on the Grothendieck group $\Ka_0(\MHM(\Msp))$ satisfying the analogs of equation (\ref{eqn1}) and (\ref{eqn2}). In particular, the Grothendieck group carries the structure of a (special) $\lambda$-ring. 
\end{proposition}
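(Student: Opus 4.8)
The plan is to show that a single mechanism---a filtration on Schur powers induced by subobjects---simultaneously yields well-definedness on $\Ka_0(\MHM(\Msp))$ and the exponential property, after which the special axioms drop out of the object-level identities (\ref{eqn1}) and (\ref{eqn2}). I would put $\sigma^n:=[\Sym^n(-)]$ and $\lambda^n:=[\Alt^n(-)]$ as the candidate operations on classes. Since $\Ka_0(\MHM(\Msp))$ is generated by classes of objects modulo the relations $[\EE]=[\EE']+[\EE'']$ coming from short exact sequences (and $[\EE]=\sum_i(-1)^i[\Ho^i\EE]$ at the derived level), it suffices to check these operations respect those relations. I would record at the outset that in $\Ka_0$ every $[S^\lambda(\EE)]$ is a universal integral polynomial in the $[\Sym^i(\EE)]$ (Jacobi--Trudi), so the whole structure is generated by the $\sigma^n$; thus the decisive points are well-definedness and the two structural identities.

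The only genuine obstacle is that Schur functors are \emph{not} additive: they do not send short exact sequences to short exact sequences, whereas the defining relations of $\Ka_0$ are generated by exactly such sequences. I would resolve this by the standard filtration argument, available because the convolution $\EE\otimes\FF=\oplus_\ast(\EE\boxtimes\FF)$ is biexact on $\MHM(\Msp)$ when $\oplus$ is finite. A subobject $\EE'\subset\EE$ with quotient $\EE''$ induces an $S_n$-equivariant filtration on $\EE^{\otimes n}$ by the number of tensor factors lying in $\EE'$, with associated graded $\bigoplus_{i+j=n}\mathrm{Ind}_{S_i\times S_j}^{S_n}\bigl(\EE'^{\otimes i}\boxtimes\EE''^{\otimes j}\bigr)$. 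Applying the exact symmetrizer (the trivial-isotypic projector, which exists in characteristic zero and commutes with passing to the associated graded of an $S_n$-stable filtration) and using $(\mathrm{Ind}_H^G N)^G\cong N^H$ yields a finite filtration of $\Sym^n\EE$ whose associated graded is $\bigoplus_{i+j=n}\Sym^i\EE'\otimes\Sym^j\EE''$. Passing to $\Ka_0$ gives
\[ [\Sym^n\EE]=\sum_{i+j=n}[\Sym^i\EE']\,[\Sym^j\EE'']. \]
Iterating along a Jordan--H\"older filtration shows $[\Sym^n\EE]=[\Sym^n(\gr\EE)]$, so $\sigma^n$ depends only on the semisimplification, i.e.\ only on $[\EE]$; the identical isotypic-projector argument (with Littlewood--Richardson graded pieces) handles every $S^\lambda$, so all $[S^\lambda(-)]$ descend to $\Ka_0$.

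With well-definedness in hand the rest is formal. The displayed identity \emph{is} the exponential property $\sigma_t(x+y)=\sigma_t(x)\sigma_t(y)$ on classes of objects (it specializes to (\ref{eqn1}) when the extension splits), so by the universal property of group completion $\sigma_t$ extends to a homomorphism $(\Ka_0,+)\to(1+t\,\Ka_0[[t]],\times)$, and the $\lambda^n$ (recovered by $\lambda_t=\sigma_{-t}^{-1}$) form a pre-$\lambda$-ring. For the \emph{special} axioms I would argue that the multiplicativity axiom governing $\lambda^n(xy)$ is precisely the $\Ka_0$-shadow of (\ref{eqn2}): the decomposition $\Sym^n(\EE\otimes\FF)=\bigoplus_{\lambda\dashv n}S^\lambda(\EE)\otimes S^\lambda(\FF)$ is the categorified Cauchy identity, which together with the Jacobi--Trudi expression of each $[S^\lambda]$ in the $[\Sym^i]$ reduces to the required universal polynomial relation. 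The composition (plethysm) axiom for $\lambda^m(\lambda^n x)$ follows from the canonical decomposition of $S^\mu(S^\nu\EE)$ supplied by the representation theory of the symmetric groups in any $\QQ$-linear Karoubian symmetric monoidal category.

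Finally I would treat the case where $\oplus$ is only of finite type, so that $S^\lambda(\EE)$ lies in $D^b(\MHM(\Msp))$ rather than $\MHM(\Msp)$. The operations still act on $\Ka_0(\MHM(\Msp))=\Ka_0(D^b(\MHM(\Msp)))$ via $[\EE]=\sum_i(-1)^i[\Ho^i(\EE)]$, and I would reduce to the biexact abelian situation above by representing each class through its cohomology modules; the one new point is the Koszul sign incurred by symmetric powers of shifted complexes, which is exactly the bookkeeping that the square-root Tate twist $\TT^{1/2}$ (equivalently $\LL^{-1/2}$) is designed to absorb. The hard part throughout is the well-definedness of the second paragraph; the remaining assertions are formal consequences of the symmetric-function identities already recorded in the preceding proposition.
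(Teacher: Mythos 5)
The paper offers no proof of this proposition at all: it is introduced with ``The next result is also well-known,'' resting implicitly on the Schur-functor formalism of Deligne's \emph{Cat\'egories tensorielles} cited just above. So there is no argument to compare against; your proposal in effect supplies the standard proof that the paper omits, and its core is correct. The decisive mechanism --- the $S_n$-stable filtration of $\EE^{\otimes n}$ attached to a subobject $\EE'\subset\EE$, with graded pieces the induced objects, followed by the characteristic-zero isotypic projectors and $(\mathrm{Ind}_H^G N)^G\cong N^H$ --- is legitimate here because $\boxtimes$ is exact on mixed Hodge modules and $\oplus_\ast$ is exact for finite $\oplus$, so the convolution product is biexact on $\MHM(\Msp)$; the group-completion argument then gives well-definedness of $\sigma_t$ exactly as you say, and your reduction of specialness to the categorical Cauchy and plethysm decompositions is the standard route.

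Two soft spots deserve flagging. First, the closing remark that the Koszul signs of shifted complexes are ``exactly the bookkeeping that $\TT^{1/2}$ is designed to absorb'' is wrong in attribution: the square-root Tate twist is adjoined \emph{after} this proposition as a formal $\ZZ[\LL^{\pm 1/2}]$-extension, with the convention $S^\lambda(\TT^{\pm 1/2})=\TT^{\pm n/2}$ for $\lambda=(n)$ and $0$ otherwise, and plays no role in well-definedness on $\Ka_0(\MHM(\Msp))$ itself. The sign bookkeeping for the derived case is already internally consistent without it: $[\EE[1]]=-[\EE]$ and $\Sym^n(\EE[1])\cong(\Alt^n\EE)[n]$ match the pre-$\lambda$-ring identity $\sigma^n(-x)=(-1)^n\lambda^n(x)$, so the operations defined on classes of mixed Hodge modules agree with those computed from derived Schur functors. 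Second, the passage from the identities on \emph{effective} classes (where the categorical decompositions apply) to the special-$\lambda$-ring axioms on arbitrary virtual classes is asserted rather than argued: for multiplicativity one fixes one argument effective and observes that both sides of the Cauchy identity are homomorphisms $(\Ka_0,+)\to(1+t\,\Ka_0[[t]],\times)$ in the other argument, using that $s^\lambda(x+y)=\sum c^\lambda_{\mu\nu}s^\mu(x)s^\nu(y)$ already holds formally in any pre-$\lambda$-ring; the plethysm axiom requires a further Hopf-algebraic reduction to effective elements. These steps are classical --- which is presumably why the paper cites rather than proves --- but as written your proof leaves them at the level of assertion.
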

It is worth to mention the following technical detail. Although $\Sym(\EE)=\oplus_n \Sym^n(\EE)$ by definition, this equation cannot hold on the level of Grothendieck groups as we do not have infinite sums. To define these, we need to complete the Grothendieck groups as follows. Let $F^p\subset \Ka_0(\MHM(\Msp))$ be the subgroup generated by all mixed Hodge modules  $\EE$ such that $\EE|_{\Msp_d}=0$ if $d$ cannot be written as a sum of $p$ nonzero dimension vectors, i.e.\ $|d|:=\sum_{i\in I}d_i <p$. It is easy to these that $F^pF^q\subset F^{p+q}$  and $S^\lambda(F^p)\subset F^{np}$ for all $\lambda\dashv n$ and all $n,p,q\in \NN$. Hence, the $F^p$ provide a $\lambda$-ring filtration, and the corresponding completion $\hat{\Ka}_0(\MHM(\Msp))=\prod_{d\in \NN^{\oplus I}}\Ka_0(\MHM(\Msp_d))$ has a well defined ring structure and action of $S^\lambda$. Moreover, $\sum_n \Sym^n(\EE)$ is well-defined and agrees with the class of $\Sym(\EE)$ for $\EE\in F^1$. 
\\
As $\TT=\LL$ in $\Ka_0(\MHM(\Msp))$ and $\Sym^n(\TT^{\pm 1})=\TT^{\pm n}$, the $\lambda$-ring structure of Proposition \ref{lambda-ring} also extends to $\Ka_0(\MHM(\Msp))[\LL^{-1/2}]$, and even to 
\[ \Ka_0(\MHM(\Msp))[\LL^{-1/2}, (\LL^N-1)^{-1}\, :\, N\in \NN]=\]
\[= \Ka_0(\MHM(\Msp))\otimes_{\ZZ[\LL^{\pm 1}]} \ZZ[\LL^{-1/2}, (\LL^N-1)^{-1}\, :\, N\in \NN] \] by putting 
\[ S^\lambda(\TT^{\pm 1/2})= \begin{cases} \TT^{\pm n/2} & \mbox{ for }\lambda=(n), \\ 0 & \mbox{ otherwise. } \end{cases} \]
Note that $S^\lambda(\LL^{\pm 1/2})=S^\lambda(-\TT^{\pm 1/2})$ satisfies more complicated equations.\\ Again, we consider the filtration $F^p[\LL^{-1/2}]$, resp.\ $F^p[L^{-1/2},(\LL^n-1)^{-1}\,:\, n\in \NN]$, defined accordingly and perform a completion as before. By abusing notation let us denote the resulting $\lambda$-ring by 
\[ \hat{\Ka}_0(\MHM(\Msp))[\LL^{-1/2}, (\LL^N-1)^{-1}\, :\, N\in \NN] := \]
\[= \prod_{d\in \NN^{\oplus I}} \Bigl( \Ka_0(\MHM(\Msp_d))\otimes_{\ZZ[\LL^\pm]}[\LL^{-1/2},(\LL^N-1)^{-1}\,:\,N\in \NN] \Bigr) \]
which should not be confused with 
\[ \Bigl( \prod_{d\in \NN^{\oplus I}} \Ka_0(\MHM(\Msp))\Bigr) \otimes_{\ZZ[\LL^{\pm 1}]} \ZZ[\LL^{-1/2}, (\LL^N-1)^{-1}\, :\, N\in \NN].\] 
\begin{remark} \rm The reason for adjoining $\LL^{\pm 1/2}$ to the Grothendieck group is given by our non-standard convention for the intersection complex involving powers of $\LL^{1/2}$. The various completions are needed in the next section when we pass to stacks and define Donaldson--Thomas invariants.
\end{remark}
The following result illustrates the nice behavior of intersection complexes with respect to Schur functors.
\begin{proposition}
Given a dimension vector $d$ and a natural number $n$, let us denote by $\Delta$ resp.\ $\tilde{\Delta}$ the big diagonal in $\Sym^n \Msp^{s}_d \subset \Msp_{nd}$ resp.\ $(\Msp^{s}_d)^n$. For an irreducible representation $W_\lambda$ of $S_n$ denote by $\underline{W}_\lambda$ the variation of Hodge structure of type 
$(0,0)$ on $\Sym^n\Msp^{s}_d\setminus \Delta$ given by $\bigl((\Msp^{s}_d)^n\setminus \tilde{\Delta}\bigr) \times_{S_n} W_\lambda$. Then
\begin{equation} \label{eqn3} S^\lambda \bigl( \ICS_{\overline{\Msp_d^{s}}}(\QQ) \bigr) = \ICS_{\overline{\Sym^n\Msp_d^{s}}}(\underline{W}_\lambda).
\end{equation}
\end{proposition}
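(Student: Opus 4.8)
The plan is to realize $S^\lambda(\EE)$, for $\EE:=\ICS_{\overline{\Msp^s_d}}(\QQ)$, as an $S_n$-isotypic summand of the direct image of an intersection complex under the summation map, and then to identify that summand with the asserted intersection complex by comparing it on a dense open locus and checking that no summand can be hidden on the small strata. Throughout I would work componentwise on $\Msp^s_d$ if the latter fails to be equidimensional, in accordance with the generalized $\ICS$-convention of the previous subsection.

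First I would set up the geometry. Let $q:=\oplus^{(n)}\colon (\overline{\Msp^s_d})^n\to \Msp_{nd}$ be the $n$-fold direct sum map; it is $S_n$-invariant, proper, and quasi-finite (a semisimple object has a unique decomposition into stable summands), hence finite, with image $\overline{\Sym^n\Msp^s_d}$. Over the configuration locus $U:=\Sym^n\Msp^s_d\setminus\Delta$ the group $S_n$ acts freely on $V:=(\Msp^s_d)^n\setminus\tilde\Delta$ and $q$ restricts to the \'etale Galois $S_n$-cover $p\colon V\to U$; since $\Msp^s_d$ is smooth, $V$ and $U=V/S_n$ are smooth of dimension $D:=\dim\Sym^n\Msp^s_d=n\dim\Msp^s_d$. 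The chosen normalization $\ICS_Z(-)|_{Z^o}=\LL^{-\dim Z/2}(-)$ is exactly the one for which an external product of intersection complexes is again an intersection complex, so $\EE^{\boxtimes n}=\ICS_{(\overline{\Msp^s_d})^n}(\QQ)=:\mathcal I$, a pure Hodge module which is self-dual up to Tate twist. By definition of the monoidal product one has $\EE^{\otimes n}=q_*\mathcal I=:P$, and I would record that the $S_n$-action coming from the symmetry constraint restricts over $U$ to the permutation (deck) action, since this is what matches the two decompositions below (the ambiguity $\underline W_\lambda$ versus $\underline{W_\lambda^\vee}$ being harmless as $S_n$-irreducibles are self-dual). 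Thus $P=\bigoplus_{\lambda\dashv n}W_\lambda\otimes S^\lambda(\EE)$.

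Next I would compute $P$ on $U$. As $V$ lies in the smooth locus of $(\overline{\Msp^s_d})^n$, one has $\mathcal I|_V=\LL^{-D/2}\QQ_V$, and the standard decomposition of the regular representation of the deck group gives $p_*\QQ_V=\bigoplus_\lambda W_\lambda\otimes \underline W_\lambda$ as $S_n$-equivariant local systems. Hence $P|_U=\LL^{-D/2}p_*\QQ_V=\bigoplus_\lambda W_\lambda\otimes(\LL^{-D/2}\underline W_\lambda)$, so that $S^\lambda(\EE)|_U=\LL^{-D/2}\underline W_\lambda$, which is precisely $\ICS_{\overline{\Sym^n\Msp^s_d}}(\underline W_\lambda)|_U$.

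It then remains to show that $P$ equals the intermediate extension of $P|_U$, i.e.\ that $P$ has no summand supported on $Z:=\overline{\Sym^n\Msp^s_d}\setminus U$; granting this, the uniqueness of the $W_\lambda$-isotypic decomposition yields $S^\lambda(\EE)=\ICS_{\overline{\Sym^n\Msp^s_d}}(\underline W_\lambda)$. This is the main obstacle. Since $q$ is finite, $q_*$ is perverse t-exact, so $P$ is perverse. Writing $i\colon Z\hookrightarrow \overline{\Sym^n\Msp^s_d}$ and using proper base change, $i^*P=q'_*(i'^*\mathcal I)$ for the induced finite map $q'\colon q^{-1}(Z)\to Z$ and the inclusion $i'\colon q^{-1}(Z)\hookrightarrow(\overline{\Msp^s_d})^n$. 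The support conditions defining the intersection complex $\mathcal I$ force $i'^*\mathcal I$ to sit in perverse degrees $\le -1$ on the nowhere dense closed set $q^{-1}(Z)$, and t-exactness of $q'_*$ propagates this to $i^*P$; the self-duality of $P$ (from $\mathbb D\mathcal I\cong\mathcal I$ up to Tate twist together with properness of $q$) yields the dual estimate that $i^!P$ sits in perverse degrees $\ge 1$. These two estimates are exactly the Goresky--MacPherson characterization of the intermediate extension, whence $P=j_{!*}(P|_U)$ and the comparison concludes. The genuinely delicate points are the equivariance matching of the symmetry constraint with the deck action and the codimension bookkeeping in the support estimate; as an alternative, cleaner route to the semisimplicity of $P$ one may instead invoke the purity furnished by Saito's Decomposition Theorem for the proper map $q$ applied to the pure $\mathcal I$, and then count only the full-support summands.
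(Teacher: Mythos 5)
Your proposal is correct and follows essentially the same route as the paper: push $\ICS_{\overline{\Msp^{s}_d}}(\QQ)^{\boxtimes n}$ forward along the finite sum map, identify the result over the configuration locus via the regular representation of the deck group $S_n$ (matching the left action with the symmetry constraint and the right action with the monodromy), and rule out summands supported on the boundary. The only difference is cosmetic: the paper obtains the last step at once from the Decomposition Theorem for the finite (hence small) map, whereas your primary route verifies the intermediate-extension property by hand through perverse support estimates and self-duality --- the Decomposition-Theorem shortcut you mention as an alternative is exactly what the paper does.
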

\begin{proof}
We may assume $\Msp^{s}_d\neq \emptyset$ so that $\overline{\Msp^{s}_d}=\Msp_d$, and we may replace $\Msp_{nd}$ with $\overline{\Sym^n\Msp^{s}_d}$. Since $\oplus:(\Msp_d)^n\longrightarrow \overline{\Sym^n\Msp^{s}_d}$ is finite, the Decomposition Theorem of Beilinson, Bernstein, Deligne, Gabber and Saito tells us that $\ICS_{\Msp_d}(\QQ)^{\otimes n}=\oplus_\ast \bigl(\ICS_{\Msp_d}(\QQ)^{\boxtimes n}\bigr)=\ICS_{\overline{\Sym^n\Msp^{s}_d}}(V)$ for a suitable variation of Hodge structures $V$ on the smooth locus $\Sym^n\Msp^{s}_d\setminus \Delta$. As the restriction of $\oplus$ to $(\Msp^{s}_d)^n\setminus \tilde{\Delta}$ is a left $S_n$-principal bundle over $\Sym^n\Msp^{s}_d\setminus \Delta$, we can trivialize it \'{e}tale locally as $U\times S_n$, showing that the fiber of $V$ is just $H^0(S_n,\QQ)$. The natural $S_n$-action on $\ICS_{\Msp_d}(\QQ)^{\otimes n}$ is induced by the left multiplication with $S_n$ on $U\times S_n$, while the right multiplication induces the 
transitions 
between different trivializations. The $S_n$-bimodule $H^0(S_n,\QQ)$ decomposes as $\oplus_{\lambda \dashv n} W_\lambda\otimes W_\lambda$ with the left factor, resp.\ the right factor, corresponding to the left action, resp.\ to the right action. Hence, $V=\oplus_{\lambda \dashv n} W_\lambda \otimes \underline{W}_\lambda$, completing the proof.      
\end{proof}

As mentioned at the beginning of this section, we can also replace $\Msp$ by $\NN^{\oplus I}\times\Spec \CC$ considered as a zero-dimensional monoid in the category of varieties with quasiprojective connected components. All of the constructions above go through with $\Msp_0$ replaced by $\{0\} \subset \NN^{\oplus I}\times\Spec\CC$. It is not difficult to see that 
\[ \hat{\Ka}_0(\MHM(\NN^{\oplus I}\times \Spec\CC))[\LL^{-1/2}, (\LL^N-1)^{-1}\, :\, N\in \NN] =\]
\[ =\Ka_0(\MHM(\CC))[\LL^{-1/2}, (\LL^N-1)^{-1}\, :\, N\in \NN][[t_i\,:\, i\in I]].\]
Since $\dim:\Msp \longrightarrow \NN^I$ is a homomorphism of monoids with quasiprojective connected components, $\dim_\ast$ and $\dim_!$ define triangulated tensor functors 
\[D^b(\MHM(\Msp)) \longrightarrow D^b(\MHM(\NN^{\oplus I}\times \Spec\CC))\] 
commuting with Schur functors of the same type. In particular, we get $\lambda$-ring homomorphisms $\dim_\ast$ and $\dim_!$ from
\[ \hat{\Ka}_0(\MHM(\Msp))[\LL^{-1/2}, (\LL^N-1)^{-1}\, :\, N\in \NN] \]
to
\[  \Ka_0(\MHM(\CC))[\LL^{-1/2}, (\LL^N-1)^{-1}\, :\, N\in \NN][[t_i\,:\, i\in I]] \]
commuting with the Schur operators.

\section{DT invariants and intersection complexes}

\subsection{Donaldson--Thomas invariants}

From now on we assume that $(\AA,\omega,p)$ satisfies assumptions (1)--(8). We will  introduce a generalization of Donaldson--Thomas invariants using the notation of the previous sections. Consider the morphism $p:\Mst \longrightarrow \Msp$. Our first object is\footnote{Note that $p_!$ is the derived direct image with compact support, while $p_\ast$ is the usual derived direct image.} $p_! \ICS_{\Mst}(\QQ)$ in $\hat{\Ka}_0(\MHM(\Msp))[\LL^{-1/2}, (\LL^N-1)^{-1}\, :\, N\in \NN]$. To define it properly, we should develop a theory of mixed Hodge modules on Artin stacks along with a four functor formalism. However, in our situation of smooth quotient stacks we will use a more direct approach avoiding complicated machinery. First of all, $\Mst_d$ is smooth,  motivating $\ICS_{\Mst_d}(\QQ)=\LL^{-\dim \Mst_d/2}(\QQ)=\LL^{(d,d)/2}(\QQ)$. Recall that $\rho_d:X_d \longrightarrow \Mst_d$ is a $G_d$-principal bundle for every dimension vector $d$. By means of the projection formula we 
would expect a formula like 
\[ H_c^\ast(G_d,\QQ) \,\ICS_{\Mst_d}(\QQ) = \rho_{d\,!} \rho_d^\ast \ICS_{\Mst_d}(\QQ) = \LL^{\dim G_d/2} \rho_{d\,!} \ICS_{X_d}(\QQ)= \LL^{(d,d)/2}\rho_{d\,!} \QQ \]
in $\hat{\Ka}_0(\MHM(\Mst))[\LL^{-1/2}, (\LL^n-1)^{-1}\, :\, n\in \NN]$. Hence, we will define $p_{d\, !} \ICS_{\Mst_d}(\QQ)$ as the product in $\hat{\Ka}_0(\MHM(\Msp))[\LL^{-1/2}, (\LL^n-1)^{-1}\, :\, n\in \NN]$ of $\LL^{(d,d)/2}p_{d\,!} q_{d\,!} \QQ$ with the inverse of the class $\prod_{i\in I}\LL^{d_i \choose 2}\prod_{n=1}^{d_i}(\LL^n-1)\in \ZZ[\LL]$ of $H_c^\ast(G_d,\QQ)$. Summing over $d\in \NN^{\oplus I}$ gives $p_! \ICS_{\Mst}(\QQ)$. The following lemma is a standard fact in the theory of (filtered) $\lambda$-rings.
\begin{lemma}
 There is an element $\DTS\in \hat{\Ka}_0(\MHM(\Msp))[\LL^{-1/2}, (\LL^n-1)^{-1}\, :\, n\in \NN]$ with $\DTS|_{\Msp_0}=0$ such that
 \[ p_! \ICS_{\Mst}(\QQ) = \Sym \Bigl( \frac{1}{\LL^{1/2}-\LL^{-1/2}} \DTS \Bigr).\]
\end{lemma}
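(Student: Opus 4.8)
The statement is purely formal, living in the complete filtered special $\lambda$-ring $R:=\hat{\Ka}_0(\MHM(\Msp))[\LL^{-1/2},(\LL^n-1)^{-1}:n\in\NN]$ constructed in the previous section, with its filtration $F^\bullet$ and augmentation ideal $F^1$. The plan is to exhibit $\DTS$ as an explicit multiple of the plethystic logarithm of $p_!\ICS_\Mst(\QQ)$. First I would record that $\LL^{1/2}-\LL^{-1/2}=\LL^{-1/2}(\LL-1)$ is a \emph{unit} of $R$, since $\LL^{-1/2}$ is invertible and $(\LL-1)^{-1}=(\LL^1-1)^{-1}$ has been adjoined; hence division by it causes no difficulty and the factor $\tfrac{1}{\LL^{1/2}-\LL^{-1/2}}$ serves only as a normalization.

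The key input is the standard $\lambda$-ring fact alluded to in the statement: because $\Sym^n(F^1)\subseteq F^{n}$ (the case $\lambda=(n)$, $p=1$ of the inclusion $S^\lambda(F^p)\subseteq F^{np}$ checked before the lemma), the total symmetric power operation $\Sym(x)=\sum_{n\ge0}\Sym^n(x)$ converges in the $F^\bullet$-adic completion $R$ for every $x\in F^1$, and $x\mapsto\Sym(x)$ defines a bijection, indeed a group isomorphism, from the additive group $(F^1,+)$ onto the multiplicative group $(1+F^1,\cdot)$; the homomorphism property is exactly the analog of (\ref{eqn1}) from Proposition \ref{lambda-ring}, with $\oplus$ read additively in the Grothendieck group. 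I would denote its inverse, the plethystic logarithm, by $\operatorname{Log}:1+F^1\xrightarrow{\sim}F^1$.

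It then remains to verify that $Z:=p_!\ICS_\Mst(\QQ)$ lies in $1+F^1$, i.e.\ that its restriction to $\Msp_0$ equals the unit $\unit$. This is immediate from the definition of $p_!\ICS_\Mst(\QQ)$: the $d=0$ summand is built from $\Mst_0=\Spec\kk/G_0=\Spec\kk$ with $G_0$ trivial, so $\ICS_{\Mst_0}(\QQ)=\LL^{(0,0)/2}\QQ=\unit$, the normalizing class of $H_c^\ast(G_0,\QQ)$ is $1$, and $p_0=\id$; every component with $|d|\ge1$ lies in $F^1$ by construction of the filtration. Consequently $\operatorname{Log}(Z)\in F^1$ is defined, and I would simply set $\DTS:=(\LL^{1/2}-\LL^{-1/2})\operatorname{Log}(Z)$. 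As $F^1$ is an ideal this lies in $F^1$, so $\DTS|_{\Msp_0}=0$, and dividing back by the unit gives $\tfrac{1}{\LL^{1/2}-\LL^{-1/2}}\DTS=\operatorname{Log}(Z)$, whence $\Sym\bigl(\tfrac{1}{\LL^{1/2}-\LL^{-1/2}}\DTS\bigr)=\Sym(\operatorname{Log}(Z))=Z$, as required. Uniqueness of $\DTS$ follows from injectivity of $\Sym$ on $F^1$ together with the invertibility of $\LL^{1/2}-\LL^{-1/2}$.

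The only genuine point requiring care, and the place where one must not be cavalier, is the interplay between the completion and the $\lambda$-operations: one needs the filtration $F^\bullet$ to be $\lambda$-stable so that the series defining $\Sym$ and $\operatorname{Log}$ converge and respect the completion, which is precisely the content of $S^\lambda(F^p)\subseteq F^{np}$ established before the statement. Granting that, the lemma is a formal consequence and presents no further obstacle.
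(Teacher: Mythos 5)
Your proof is correct and is precisely the ``standard fact in the theory of (filtered) $\lambda$-rings'' that the paper invokes without proof: you verify that $\LL^{1/2}-\LL^{-1/2}$ is a unit, that $p_!\ICS_{\Mst}(\QQ)$ lies in $1+F^1$ because its $d=0$ component is the unit, and that $\Sym$ is a bijection $(F^1,+)\xrightarrow{\sim}(1+F^1,\cdot)$ in the complete filtered $\lambda$-ring, which is exactly the intended argument. No gaps; the only implicit step (solving for the components of the plethystic logarithm degree by degree in the filtration, using $S^\lambda(F^p)\subseteq F^{np}$ and completeness) is the standard induction you correctly flag as the content of the $\lambda$-stability of $F^\bullet$.
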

\begin{definition}
 We call $\DTS_d:=\DTS|_{\Msp_d}\in \hat{\Ka}_0(\MHM(\Msp_d))[\LL^{-1/2}, (\LL^n-1)^{-1}\, :\, n\in \NN]$ the Donaldson--Thomas ``sheaf'' and $\DT_d:=\dim_! \DTS_d = H^\ast_c(\Msp_d,\DTS_d)\in \Ka_0(\MHM(\CC))[\LL^{-1/2}, (\LL^n-1)^{-1}\, :\, n\in \NN]$ the Donaldson--Thomas invariant of dimension vector $d$.   
\end{definition}
As $\dim_!$ is a $\lambda$-ring homomorphism, our definition of Donaldson--Thomas invariants agrees with the usual one \cite{KS2}. 
The following result is Corollary \ref{alternative_form}. 
\begin{proposition}
Given a  framing vector $f\in \NN^I$ such that $2|f_i$ for all $i\in I$, we obtain the following formula with $\NN^{\oplus I}_{\not=0}:=\NN^{\oplus I}\setminus \{0\}$
\begin{equation} \label{eqn4} \pi_{f\,\ast} \ICS_{\Msp_{f}}(\QQ)=\pi_{f\,!} \ICS_{\Msp_{f}}(\QQ) = \Sym \Bigl( \sum_{d\in \NN^{\oplus I}_{\not= 0}} [\PP^{f\cdot d-1}]_{vir} \DTS_d \Bigr) \end{equation}
in $\hat{\Ka}_0(\MHM(\Msp))[\LL^{-1/2}, (\LL^n-1)^{-1}\, :\, n\in \NN]$, using the shorthand $[\PP^{f\cdot d-1}]_{vir}:= \frac{\LL^{f\cdot d/2} - \LL^{-f\cdot d/2}}{\LL^{1/2}-\LL^{-1/2}}$ for $d\in \NN^{\oplus I}$. Here $\pi_f=\sqcup_d\pi_{f,d}:\Msp_{f} \longrightarrow\Msp$ is the morphism forgetting the framing. 
\end{proposition}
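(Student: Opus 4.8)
The plan is to compare the Hilbert--Chow morphism $\pi_{f,d}\colon\Msp_{f,d}\to\Msp_d$ with the stacky projection $p_d\colon\Mst_d\to\Msp_d$, exploiting that the semistable framed moduli space is a rigidified, projectivized version of the stack of objects. The equality $\pi_{f\,\ast}\ICS_{\Msp_f}(\QQ)=\pi_{f\,!}\ICS_{\Msp_f}(\QQ)$ is immediate: $\pi_f$ is projective by Theorem \ref{virtsmall}, hence proper, and $\ICS_{\Msp_f}(\QQ)$ is self-dual, so the two pushforwards agree. The substance lies entirely in the second equality, which I would reformulate as an identity of plethystic logarithms.

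The geometric heart is to realize $\Msp_{f,d}$ inside a projective bundle over the object stack. The framings of a fixed object assemble into a $G_d$-equivariant vector bundle of rank $f\cdot d$ on $X_d$, which descends to a rank $f\cdot d$ vector bundle $\mathcal{V}_d$ on $\Mst_d=[X_d/G_d]$; removing the zero section and dividing by the scaling $\Gl(1)$ yields the $\PP^{f\cdot d-1}$-bundle $\PP(\mathcal{V}_d)\to\Mst_d$ of objects with a nonzero framing. By Proposition \ref{framed_reps_2} the generation (semistability) condition cuts out the open substack on which the $\Gl(1)$-stabilizers become trivial, and this open substack is precisely $\Msp_{f,d}$. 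Composing with $p_d$ and applying the projective bundle formula to the \emph{closed} bundle $\PP(\mathcal{V}_d)$ produces the leading term $[\PP^{f\cdot d-1}]_{vir}\,p_{d\,!}\ICS_{\Mst_d}(\QQ)$; conceptually, passing from $\PP(\mathcal{V}_d)$ to the open locus $\Msp_{f,d}$ trades the factor $\Spec\CC/\Gl(1)$ that is responsible for the $(\LL^{1/2}-\LL^{-1/2})^{-1}$ in the defining relation of $\DTS$ for the honest $\PP^{f\cdot d-1}$ responsible for $[\PP^{f\cdot d-1}]_{vir}$.

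Recall that $\DTS$ is defined by $p_!\ICS_{\Mst}(\QQ)=\Sym\bigl(\frac{1}{\LL^{1/2}-\LL^{-1/2}}\DTS\bigr)$, so that $\frac{1}{\LL^{1/2}-\LL^{-1/2}}\DTS$ is the plethystic logarithm of $p_!\ICS_\Mst(\QQ)$. Since $[\PP^{f\cdot d-1}]_{vir}(\LL^{1/2}-\LL^{-1/2})=\LL^{f\cdot d/2}-\LL^{-f\cdot d/2}$, equation (\ref{eqn4}) is equivalent to the statement that the plethystic logarithm of $\pi_{f\,!}\ICS_{\Msp_f}(\QQ)$ is obtained from that of $p_!\ICS_\Mst(\QQ)$ by multiplying its dimension-$d$ component by $\LL^{f\cdot d/2}-\LL^{-f\cdot d/2}$, after which the $\lambda$-ring formalism of Section 4 closes the argument. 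The main obstacle is organizing the complement of the open locus: $\Msp_{f,d}$ is a proper open subset of $\PP(\mathcal{V}_d)$, and the unstable framings (those contained in the image of a proper subobject) contribute correction terms. Controlling these is exactly where Theorem \ref{virtsmall} is essential: virtual smallness, via the Decomposition Theorem and a comparison of weights, forces every non-generic stratum $S_\xi$ to contribute in strictly lower degree than the leading term $[\PP^{f\cdot d-1}]_{vir}\ICS_{\overline{\Msp^s_d}}(\QQ)$ coming from the dense stratum $S_0=\Msp^s_d$, while the cross-terms, reorganized through the monoid structure $\oplus$ on $\Msp$, assemble precisely into the higher symmetric powers of $\sum_{d\neq 0}[\PP^{f\cdot d-1}]_{vir}\DTS_d$. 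I expect this $\Sym$-bookkeeping, rather than the projective bundle formula or the formal step, to be the technical crux; it is also here that the hypothesis $2\mid f_i$ enters, guaranteeing that $f\cdot d$ is even so that the classes $[\PP^{f\cdot d-1}]_{vir}$ carry integral powers of $\LL$ compatibly with the normalization of $\ICS$.
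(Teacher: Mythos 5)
Your reduction to the second equality and your projective-bundle picture are sound as far as they go: $\pi_f$ is projective by Theorem \ref{virtsmall}, hence proper, so $\pi_{f\,!}=\pi_{f\,\ast}$ without any appeal to self-duality, and $\Msp_{f,d}$ is indeed the open locus of $(X_d\times\PP^{f\cdot d-1})/G_d$ where the framing is not contained in the image of a proper subobject. You also locate the role of $2\mid f_i$ roughly correctly (the precise point is that $(a_d)_d\mapsto(\LL^{-f\cdot d/2}a_d)_d$ is a $\lambda$-ring isomorphism only when $f\cdot d/2\in\ZZ$, since $\Sym^n(\LL^k a)=\LL^{nk}\Sym^n(a)$ fails for half-integer $k$). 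But the step you yourself flag as the crux --- that the unstable locus ``assembles precisely into the higher symmetric powers'' --- is exactly what is missing, and the mechanism you propose for it (virtual smallness, the Decomposition Theorem, and a comparison of weights) cannot supply it. First, it is circular: in the paper the weight comparison is how the main theorem $\DTS_d=\ICS_{\overline{\Msp^{s}_d}}(\QQ)$ is deduced \emph{from} equation (\ref{eqn4}); to run that argument in the opposite direction you would need the IC-decomposition of every $\DTS_e$ in advance, i.e.\ the main theorem itself. Second, degree bounds from the Decomposition Theorem only constrain which IC summands may occur; they never determine multiplicities, so they cannot establish an exact identity in the Grothendieck group. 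Third, the degree of $\DTS_d$ is not even well defined without the integrality theorem (Theorem \ref{intconj}) --- the paper points this out explicitly --- so your argument would smuggle in integrality, whereas the paper's proof of this proposition uses none of it.

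What the paper actually does is purely Hall-algebraic and formal, proved first for motivic functions and then transported to mixed Hodge modules. The canonical subobject $\hat{E}_c\subseteq\hat{E}$ generated by the framing identifies the stack of \emph{all} framed objects with the stack of short exact sequences whose subobject is stably framed (lies in $\Mst_f$) and whose quotient is an arbitrary unframed object; since the stack of all framed objects fibers over $\Mst_d$ with stacky fiber $\Aff^{f\cdot d}/\Gl(1)$, this yields the convolution identity $(H\ast\unit_{\Mst})|_{\Mst_d}=\frac{\LL^{f\cdot d}}{\LL-1}\unit_{\Mst_d}$, equation (\ref{Hilbert_scheme_identity}), with $H=[\Mst_f\to\Mst]$. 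One then applies the integration map $I$ of Proposition \ref{integration_map}, which is an algebra homomorphism because $\hat{\pi}_1\times\hat{\pi}_3:X_{d,d'}\to X_d\times X_{d'}$ is a vector bundle of rank $dd'-(\gamma,\gamma')$ (Proposition \ref{vector_bundle}) --- this is where assumptions (7) and (8) enter, not the Decomposition Theorem --- and unwinds the definitions using the principal-bundle relation (\ref{push_pull}) for $X_{f,d}\to\Mst_{f,d}$ and $X_{f,d}\to\Msp_{f,d}$. This gives Proposition \ref{PT-DT_stack}, which is renormalized by $\LL^{-f\cdot d/2}$ (here $2\mid f_i$ is used) and pushed into $\hat{\Ka}_0(\MHM(\Msp))[\LL^{-1/2},(\LL^n-1)^{-1}:n\in\NN]$ via $[X\xrightarrow{q}\Msp]\mapsto q_!\QQ_X$. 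So the missing idea in your proposal is the Hall-algebra convolution built from the subobject generated by the framing, together with the integration-map homomorphism: the monoid $\oplus$ on $\Msp$ alone does not see extensions, and the weight-theoretic tools you invoke belong to the deduction of the main theorem from this proposition, not to its proof.
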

The parity assumption on the framing vector is made to avoid typical ``sign problems''. 

\subsection{The main result}

We also need to assume the following result proven in section 6.
\begin{theorem} \label{intconj}
The Donaldson--Thomas sheaf $\DTS$ is in the image of the natural map \[\hat{\Ka}_0(\MHM(\Msp))[\LL^{-1/2}] \longrightarrow \hat{\Ka}_0(\MHM(\Msp))[\LL^{-1/2}, (\LL^N-1)^{-1}\, :\, N\in \NN].\]  
\end{theorem}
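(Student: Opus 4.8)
The plan is to show that the denominators $(\LL^N-1)^{-1}$, which enter the construction of $p_!\ICS_{\Mst}(\QQ)$ only through the inversion of the class $H_c^\ast(G_d,\QQ)=\prod_{i\in I}\LL^{\binom{d_i}{2}}\prod_{n=1}^{d_i}(\LL^n-1)$, do not survive in its plethystic logarithm $\DTS$. Absence of poles can be tested one dimension vector at a time, since $\hat{\Ka}_0(\MHM(\Msp))[\LL^{-1/2}]=\prod_d\Ka_0(\MHM(\Msp_d))[\LL^{-1/2}]$ is a product. Fixing $d$, I would expand $\DTS_d=\sum_\alpha c_\alpha\,[\ICS_{Z_\alpha}(V_\alpha)]$ in the free $\ZZ[\LL^{\pm1/2},(\LL^N-1)^{-1}]$-basis of intersection complexes and prove $c_\alpha\in\ZZ[\LL^{\pm1/2}]$. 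The supports $Z_\alpha$ that occur are closures of the strata $S_\xi$ from Theorem \ref{virtsmall}, so I would run a Noetherian induction along the closure order: the coefficient $c_\alpha$ equals the multiplicity of the generic variation $V_\alpha$ in the stalk $i_x^\ast\DTS_d$ at a general point $x$ of $Z_\alpha=\overline{S_\xi}$, once the contributions of the strictly larger strata, already known to be pole-free by induction, are subtracted; reading off such a multiplicity is $\ZZ[\LL^{\pm1/2}]$-linear, so it suffices to prove that each stalk $i_x^\ast\DTS_d$ is pole-free.

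The heart of the argument is therefore this stalkwise statement, which I would reduce to quivers by locality. For $x\in S_\xi$ corresponding to a polystable object $E=\bigoplus_k E_k^{\oplus m_k}$ with pairwise non-isomorphic simple summands, Luna's étale slice theorem (Theorem \ref{Luna}) together with the identification of the normal space of the orbit with $\Ext^1$-groups in Proposition \ref{normal_ext} presents an étale neighborhood of $x$ in $\Msp$ as a neighborhood of the cone point $0$ in the trivially-stable moduli space $R_{d_\xi}/\!\!/G_{d_\xi}$ of representations of the local $\Ext^1$-quiver $Q_\xi$ with dimension vector $d_\xi=(m_k)_k$. By the symmetry assumption (8) the quiver $Q_\xi$ is symmetric. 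The locality results (Proposition \ref{localDT} and Corollary \ref{localDT2}) upgrade this geometric slice to an identification of $\DTS$ restricted to the slice with the Donaldson--Thomas sheaf $\DTS(Q_\xi)$ of the quiver; in particular $i_x^\ast\DTS_d$ agrees, up to a Tate twist, with the stalk at $0$ of $\DTS(Q_\xi)_{d_\xi}$.

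Finally I would bound this quiver stalk. Since $R_{d_\xi}/\!\!/G_{d_\xi}$ is an affine cone with a contracting $\Gl(1)$-action, hyperbolic localization identifies the stalk (equivalently, by self-duality, the costalk) of $\DTS(Q_\xi)_{d_\xi}$ at $0$, up to a power of $\LL^{1/2}$, with the global Donaldson--Thomas invariant $\DT(Q_\xi)_{d_\xi}=\dim_!\DTS(Q_\xi)_{d_\xi}$. For a symmetric quiver and trivial stability this is exactly the invariant controlled by Theorem 1.1 of \cite{Efimov}, which states that it lies in $\ZZ[\LL^{\pm1/2}]$ and hence is pole-free. Propagating this back through the slice identification and the Noetherian induction yields $c_\alpha\in\ZZ[\LL^{\pm1/2}]$ for all $\alpha$, which is the assertion.

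The step I expect to be the main obstacle is the transport of the $\lambda$-ring structure across Luna's slice inside the locality results. The operation $\Sym$, and with it the plethystic logarithm defining $\DTS$, is global and tied to $\oplus\colon\Msp\times\Msp\to\Msp$, whereas Efimov's theorem only controls the local quiver model; one must therefore know that taking plethystic logarithms commutes with restriction to the slice, i.e. that the stalk of the globally defined $\DTS$ really equals the quiver $\DTS(Q_\xi)$ and not merely that the stalks of $p_!\ICS_{\Mst}(\QQ)$ match. This is precisely where the compatibility of $\oplus$ with the slice --- the fact that $\oplus$ restricts to the direct sum of $Q_\xi$-representations --- must be exploited, and it is the technical core of Proposition \ref{localDT}.
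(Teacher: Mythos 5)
Your reduction step is where the proof breaks. You work directly in $\hat{\Ka}_0(\MHM(\Msp))$ and claim that the coefficient $c_\alpha$ of $\ICS_{Z_\alpha}(V_\alpha)$ can be read off, $\ZZ[\LL^{\pm 1/2}]$-linearly, from the point stalks $i_x^\ast\DTS_d$ at general points $x$ of $Z_\alpha$. This fails whenever nontrivial local systems occur: two non-isomorphic irreducible variations $V_\alpha\neq V_\beta$ on the same support (e.g.\ two unitary rank-one variations of type $(0,0)$) have isomorphic fibers at every point, so stalk pole-freeness only controls the sum $c_\alpha+c_\beta$, never the individual coefficients --- monodromy is invisible to $i_x^\ast$ (and equally to $i_x^!$). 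You cannot assume that only $V=\QQ$ occurs, since that is the content of the main theorem, whose proof takes integrality as an input. This is precisely why the paper does \emph{not} prove Theorem \ref{intconj} inside $\MHM$: it first proves the stronger motivic statement (Theorem \ref{intconjsv}) in $\hat{\Ka}(\Var/\Msp)$, where one can restrict to \emph{generic} points, i.e.\ to $\Spec$ of the function field $\Quot(B)$ of each stratum --- a motivic stalk over a function field remembers the generic family, unlike an $\MHM$ stalk at a closed point --- and then spreads the resulting cancellation of denominators out to an open $\Spec B_b$ (the argument using Remark \ref{reduction_to_affine_case}) before inducting on dimension. The $\MHM$ statement is then obtained by pushing $\DTS^{mot}$ through the realization $\lambda$-ring homomorphism $[X\xrightarrow{q}\Msp]\mapsto q_!\QQ_X$, which commutes with the two localization maps. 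The paper even states explicitly that mixed Hodge modules over function fields are the missing tool; your plan runs straight into that obstruction.

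There is a second, smaller gap in your local computation, and also a redundancy. Efimov's theorem controls $\DT^{mot}(Q_\xi)_n=\dim_!\DTS^{mot}(Q_\xi)_n$, a compactly supported invariant; under a contracting $\Gl(1)$-action this computes the \emph{costalk} $i_0^!$ at the cone point, not the stalk $i_0^\ast$. Passing from one to the other via "self-duality" of the DT sheaf is not legitimate here, because self-duality of $\DTS$ is again a consequence of the main theorem. The paper's Proposition \ref{localDT} avoids hyperbolic localization altogether: it computes the stalk directly by a Ringel--Hall identity $f\ast g=1$ (a vanishing of alternating sums of $\LL$-binomial coefficients) pushed through the integration map of Proposition \ref{integration_map}, and the answer is $\DT^{mot}(Q_\xi)|_{\LL^{1/2}\mapsto\LL^{-1/2}}$ --- the substitution being exactly the stalk/costalk (dualizing) discrepancy you elided; pole-freeness survives the substitution, so no self-duality is needed. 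Finally, note that Proposition \ref{localDT} is itself the technical core of the paper's proof of the theorem you are proving; once you allow yourself to invoke it, the Luna-slice and hyperbolic-localization machinery is superfluous (localDT plus Efimov already give stalk pole-freeness), so the genuinely new ingredients of your proposal are exactly the two steps that do not work.
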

\begin{remark} \rm Note that $\Ka_0(\MHM(\Msp_d))$ is free over $\ZZ[\LL^{\pm 1}]$. Indeed, consider the set of all pairs $(Z,V)$ with $Z$ being an irreducible closed subvariety and $V$ being an irreducible variation of Hodge structures of weight $0$ or $1$ on an open dense non-singular subset $Z_o\subset Z$. Call two pairs $(Z,V)$ and $(Z',V')$ equivalent if $Z=Z'$ and $V|_{Z_o\cap Z'_o}=V'|_{Z_o\cap Z'_o}$, and denote with $[Z,V]$ the equivalence class of $(Z,V)$. Notice that $\ICS_Z(V)=\ICS_{Z'}(V')$ for equivalent pairs. Then, set of all intersection complexes $\ICS_Z(V)$, with $[Z,V]$ running through the set of equivalence classes provides a basis of the $\ZZ[\LL^{\pm 1}]$-module $\Ka_0(\MHM(\Msp_d))$. As  $\ZZ[\LL^{\pm 1/2}] \hookrightarrow \ZZ[\LL^{-1/2}, (\LL^n-1)^{-1}\, :\, n\in \NN]$ is injective, we can, therefore, identify $\hat{\Ka}_0(\MHM(\Msp))[\LL^{-1/2}] $ with a $\lambda$-subring of $\hat{\Ka}_0(\MHM(\Msp))[\LL^{-1/2}, (\LL^n-1)^{-1}\, :\, n\in \NN]$.
\end{remark}
\begin{theorem} We have  $ \DTS=\ICS_{\overline{\Msp^{s}}}(\QQ)$ in $\hat{\Ka}_0(\MHM(\Msp))[\LL^{-1/2}]$. In particular, 
 \[ \DT_d=\begin{cases} \IC_c(\Msp_d,\QQ)=\IC(\Msp_d,\QQ)^\vee &\mbox{ if } \Msp^{s}_d\neq \emptyset, \\
          0 & \mbox{ otherwise}
         \end{cases} \]
holds in $\Ka_0(\MHM(\CC))[\LL^{-1/2}]$ for every dimension vectors $d\in \NN^{\oplus I}_{\not= 0}$.
\end{theorem}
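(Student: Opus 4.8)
The strategy is to feed the virtual smallness of Theorem \ref{virtsmall} and the Decomposition Theorem into the framed identity (\ref{eqn4}), and to convert the resulting weight estimates into a vanishing statement using the integrality Theorem \ref{intconj} together with self-duality. Throughout I fix a framing vector $f$ with $2\mid f_i$ and argue by induction on $|d|=\sum_{i\in I}d_i$. The role of Theorem \ref{intconj} is to place $\DTS_d$ in $\Ka_0(\MHM(\Msp_d))[\LL^{\pm 1/2}]$, which by the remark following that theorem is free over $\ZZ[\LL^{\pm 1/2}]$ on the intersection complexes $\ICS_Z(V)$; thus $\DTS_d=\sum_{[Z,V]}a_{Z,V}\ICS_Z(V)$ is a \emph{finite} combination with $a_{Z,V}\in\ZZ[\LL^{\pm 1/2}]$, and I may compare coefficients basiswise. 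For $|d|=1$ one has $\Msp_d=\Msp^{s}_d$ and $\pi_{f,d}$ is the projective bundle $X^{st}_d\times\PP^{f\cdot d-1}/PG_d$, so (\ref{eqn4}) immediately gives $\DTS_d=\ICS_{\Msp_d}(\QQ)$.

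For the inductive step I would apply the Decomposition Theorem of Beilinson, Bernstein, Deligne, Gabber and Saito to the projective morphism $\pi_{f,d}$, whose source $\Msp_{f,d}$ is smooth and whose class $\ICS_{\Msp_{f,d}}(\QQ)$ is self-dual. This writes the self-dual, pure class $\pi_{f,d\,!}\ICS_{\Msp_{f,d}}(\QQ)=\pi_{f,d\,\ast}\ICS_{\Msp_{f,d}}(\QQ)$ as a sum $\sum_\lambda\ICS_{\overline{S_\lambda}}(W_\lambda)$ over the strata of Theorem \ref{virtsmall}. The perverse amplitude of the map over $S_\lambda$ bounds the top power of $\LL^{1/2}$ occurring in the coefficient of each $\ICS_{\overline{S_\lambda}}(V)$ by $\dim\pi_{f,d}^{-1}(x_\lambda)-\tfrac12\codim S_\lambda$. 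Over the open dense stratum $S_0=\Msp^{s}_d$ (if nonempty) the fibre is $\PP^{f\cdot d-1}$, the bound is the equality $f\cdot d-1$, and the contribution is exactly $[\PP^{f\cdot d-1}]_{vir}\ICS_{\Msp_d}(\QQ)$, the parity $2\mid f_i$ aligning the half-integral twists.

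On the other side, expanding the right-hand member of (\ref{eqn4}) in degree $d$ produces $[\PP^{f\cdot d-1}]_{vir}\DTS_d$ plus $\Sym^{\ge 2}$-terms assembled from the $\DTS_{d'}$ with $|d'|<|d|$; by the induction hypothesis these equal the corresponding combinations of $\ICS_{\overline{\Msp^{s}_{d'}}}(\QQ)$, are supported on $\bigcup_{\lambda\neq 0}\overline{S_\lambda}$, and—being products of $k\ge 2$ factors $[\PP^{f\cdot d_i-1}]_{vir}$ with $\sum_i f\cdot d_i=f\cdot d$ and unit coefficients—carry top $\LL^{1/2}$-degree at most $f\cdot d-k\le f\cdot d-2$. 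When $\Msp^{s}_d\neq\emptyset$, comparing the coefficient of $\ICS_{\Msp_d}(\QQ)$, which the $\Sym^{\ge 2}$-terms cannot reach for support reasons, gives $a_{\Msp_d,\QQ}=1$. The main obstacle is to show that all remaining coefficients vanish. Suppose some $a_{\overline{S_\lambda},V}\neq 0$ with $\lambda\neq 0$, of top $\LL^{1/2}$-degree $\delta$; then the degree-$d$ coefficient of $\ICS_{\overline{S_\lambda}}(V)$ on the right-hand side of (\ref{eqn4}) has top degree $(f\cdot d-1)+\delta$, which the $\Sym^{\ge 2}$-terms of degree $\le f\cdot d-2$ cannot cancel when $\delta\ge 0$. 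On the left-hand side the \emph{strict} inequality of Theorem \ref{virtsmall} for $\lambda\neq 0$, via Theorem \ref{locally_trivial_fibration} identifying the fibre with $\Msp^{nilp}_{f_\xi,d_\xi}(Q_\xi)$, bounds that top degree by $\dim\pi_{f,d}^{-1}(x_\lambda)-\tfrac12\codim S_\lambda<f\cdot d-1$, forcing $\delta<0$. But self-duality of $\pi_{f,d\,!}\ICS_{\Msp_{f,d}}(\QQ)$ and of $[\PP^{f\cdot d-1}]_{vir}$ forces $\DTS_d$, and hence every $a_{\overline{S_\lambda},V}$, to be palindromic under $\LL^{1/2}\mapsto\LL^{-1/2}$; a nonzero palindromic Laurent polynomial cannot have strictly negative top degree, so $a_{\overline{S_\lambda},V}=0$. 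Descending through the strata by codimension yields $\DTS_d=\ICS_{\overline{\Msp^{s}_d}}(\QQ)$ (the empty case $\Msp^{s}_d=\emptyset$ being covered identically, since then every stratum obeys the strict bound and $\DTS_d=0$), completing the induction and hence $\DTS=\ICS_{\overline{\Msp^{s}}}(\QQ)$.

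Finally, applying the $\lambda$-ring homomorphism $\dim_!=H^\ast_c(\Msp_d,-)$ gives $\DT_d=\dim_!\ICS_{\overline{\Msp^{s}_d}}(\QQ)$. When $\Msp^{s}_d\neq\emptyset$ one has $\overline{\Msp^{s}_d}=\Msp_d$ and this is $\IC_c(\Msp_d,\QQ)=\IC(\Msp_d,\QQ)^\vee$ by the self-duality of the intersection complex; when $\Msp^{s}_d=\emptyset$ the component $\Msp_d$ meets $\overline{\Msp^{s}}$ in the empty set, so $\DTS_d=0$ and $\DT_d=0$.
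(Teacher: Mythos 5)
Your proposal is correct in substance and follows the same architecture as the paper's proof: the framed identity (\ref{eqn4}) expanded through the Schur-functor formulas (\ref{eqn1})--(\ref{eqn3}) and the inductive hypothesis on one side, the Decomposition Theorem together with virtual smallness (Theorem \ref{virtsmall}) on the other, and the integrality statement (Theorem \ref{intconj}) supplying the finite expansion of $\DTS_d$ in the basis of intersection complexes. Where you genuinely deviate is in the endgame. The paper works with the degree $\deg\EE=\max\{|m|,|n|\}+\dim Z$ built from the weight filtration; this degree is \emph{symmetric} under $\LL^{1/2}\leftrightarrow\LL^{-1/2}$, so multiplication by the palindromic polynomial $[\PP^{f\cdot d-1}]_{vir}$ raises it by exactly $f\cdot d-1$, both tails of every coefficient are controlled at once, and the coefficients $u_{S,L}$ are pinned down by a single comparison of (\ref{eqn5}) with (\ref{eqn6}), with no duality input whatsoever. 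You instead track only the top exponent of $\LL^{1/2}$ in each coefficient $a_{Z,V}$, which by itself only yields $\delta<0$, and you then need self-duality of $\DTS_d$ to exclude negative top degree. What each approach buys: the paper's symmetric degree makes the final step mechanical, while your route isolates the self-duality of the Donaldson--Thomas sheaf as an explicit, independently interesting statement --- at the cost of having to prove it.

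That closing lemma is the one under-justified point in your write-up, though not a fatal one. First, to transfer self-duality from $\pi_{f\,!}\ICS_{\Msp_{f}}=\pi_{f\,\ast}\ICS_{\Msp_{f}}$ to $\DTS_d$ you need Verdier duality to act as a $\lambda$-ring involution on the completed Grothendieck group --- it commutes with the convolution product because $\oplus$ is finite, with the Schur functors because $W_\lambda^\vee\cong W_\lambda$, and with $\pi_{f\,\ast}$ because $\pi_f$ is projective --- together with injectivity of $\Sym$ to cancel it from both sides of (\ref{eqn4}); none of this is stated in the paper and you assert it without proof. Second, duality does not fix the basis elements: it sends $\ICS_Z(V)$ to $\ICS_Z(V^\vee)$ up to a twist, so the individual coefficients $a_{\overline{S_\lambda},V}$ are \emph{not} palindromic in general; rather $a_{Z,V^\vee}$ is obtained from $a_{Z,V}$ by $\LL^{1/2}\mapsto\LL^{-1/2}$. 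Your argument survives this correction --- if every nonzero coefficient had strictly negative top degree, the coefficient dual to it would have strictly positive top degree, contradicting the same bound applied to its support --- but as written the claim that ``every $a_{\overline{S_\lambda},V}$ is palindromic'' is too strong.
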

\begin{proof} To prove the theorem we make use of the weight filtration of a mixed Hodge module. Given a mixed Hodge module $\EE$ on a variety $X$ with irreducible support $Z$, the stalk (shifted by $-\dim Z$) at a generic closed point $\eta\in Z$ is a mixed Hodge structure with weight filtration 
\[ 0=W_{m-1} \subsetneqq W_m \subset \ldots \subset W_{n-1} \subsetneqq W_n=\EE_\eta.\]
Define $\deg\EE:=\max\{|m|,|n|\}+\dim Z$. As the action of $\TT$ shifts the weight filtration by two, we require that $\TT^{1/2}$ increases the indices by one. In particular, $\deg \ICS_Z(\QQ) = 0$ by our convention. More generally, if $\EE$ is a class in $\Ka_0(\MHM(X))[\LL^{-1/2}]$, we write it uniquely as a linear combination of irreducible intersection complexes, and denote by $\deg(E)$ the maximal degree of its summands. If $P$ is a Laurent polynomial in $\LL^{1/2}$ invariant under $\LL^{1/2} \leftrightarrow \LL^{-1/2}$, we get $\deg(P(\LL^{1/2})\EE)=\deg(\EE)+\deg(P)$. In particular, if $\lambda$ is a partition of $N$ and $r\in \NN$, then $\deg( S^\lambda[\PP^{r}]_{vir}\cdot \EE) \le \deg(\EE)+Nr$, with equality only for $\lambda=(N)$.\\
As before, $\Part$ denotes the set of all partitions of arbitrary size and $\NN^{\oplus I}_{\not=0}=\NN^{\oplus I}\setminus \{0\}$. We fix a framing vector $f\in \NN^I$ such that $2|f_i$ for all $i\in I$ and rewrite equation (\ref{eqn4}) using equations (\ref{eqn1}) and (\ref{eqn2}):
\[ \pi_{f,d\,\ast} \ICS_{\Msp_{d,f}}=\sum_{\begin{array}{c} \scriptstyle\lambda: \NN^{\oplus I}_{\not=0} \rightarrow \Part \\ \scriptstyle \sum |\lambda_e|e=d\end{array}} \prod_{e\in \NN^{\oplus I}_{\not=0}} S^{\lambda_e}[\PP^{f\cdot e -1}]_{vir} \cdot S^{\lambda_e}\DTS_e.\]
By induction over $|d|=\sum_{i\in I}d_i$, we conclude using equation (\ref{eqn3}) that
\begin{eqnarray}
 \pi_{f,d\,\ast} \ICS_{\Msp_{f,d}}& =& \underbrace{[\PP^{f\cdot d-1}]_{vir}\DTS_d}_{\mbox{\scriptsize for }\lambda=\delta_d} + \sum_{\begin{array}{c} \scriptstyle \lambda: \NN^{\oplus I}_{\not=0} \rightarrow \Part \\ \scriptstyle \sum |\lambda_e|e=d \\ \scriptstyle \lambda\neq\delta_d \end{array}} \prod_{e\in \NN^{\oplus I}_{\not=0}} S^{\lambda_e}[\PP^{f\cdot e -1}]_{vir} \cdot \ICS_{\overline{\Sym^{|\lambda_e|}\Msp^{s}_e}}(\underline{W}_{\lambda_e}) \nonumber \\
 \label{eqn5} & = & [\PP^{f\cdot d-1}]_{vir}\DTS_d + \sum_{\begin{array}{c} \scriptstyle \lambda: \NN^{\oplus I}_{\not=0} \rightarrow \Part \\ \scriptstyle \sum |\lambda_e|e=d \\ \scriptstyle \lambda\neq\delta_d \end{array}} \Bigl(\prod_{e\in \NN^{\oplus I}_{\not=0}} S^{\lambda_e}[\PP^{f\cdot e -1}]_{vir} \Bigr)\cdot \ICS_{\overline{\Sym^{\lambda}\Msp^{s}}}(\underline{W}_{\lambda}),
\end{eqnarray}
where we used the shorthands $\Sym^\lambda\Msp^{s}:=\prod_e \Sym^{|\lambda_e|}\Msp^{s}_e \subset \Msp_d$ and $ \underline{W}_\lambda:=\boxtimes_e\underline{W}_{\lambda_e}$. Moreover, $\delta_d:\NN^{\oplus I} \rightarrow \Part$ maps $d$ to the partition $(1)$ and any other dimension vector to the zero partition.\\
Note that if $\DTS_d\not= 0$, the first summand in equation (\arabic{equation}) has degree \footnote{Note that $\deg(\DTS_d)$ is well-defined by the integrality condition \ref{intconj}.} at least $f\cdot d-1$. In contrast to this, the summands for $\lambda\neq\delta_d$ have degree at most $f\cdot d-\sum_e |\lambda_e|$ which is less than $f\cdot d-1$ by  assumption on $\lambda$.\\
On the other hand, we can use the Decomposition Theorem of Beilinson, Bernstein, Deligne, Gabber and Saito and the fact that $\pi_{f,d}$ is virtually small (see Theorem \ref{virtsmall}) to obtain
\begin{equation} \label{eqn6}
\pi_{f,d\,\ast} \ICS_{\Msp_{f,d}}=[\PP^{f\cdot d-1}]_{vir}\ICS_{\overline{\Msp^{s}_d}}(\QQ) + \sum_{\begin{array}{c} \mbox{\scriptsize lower strata }\\ \scriptstyle S\subset \Msp_d,\: S=\bar{S} \end{array}} \sum_{\begin{array}{c} \mbox{\scriptsize irred.\ VHS} \\ \scriptstyle L\mbox{ \scriptsize on } S_{reg} \end{array}} m_{S,L} \:\ICS_S(L) 
\end{equation}
for certain $m_{S,L}\in \ZZ$. Moreover, $\deg(\ICS_S(L))<f\cdot d-1$, and if $\Msp_d^{s}\not= \emptyset$, the first summand has degree at least $f\cdot d-1$. Note that the expression for the first summand is a direct consequence of the relative Hard Lefschetz Theorem applied to the projective morphism $\pi_{f,d}$ with generic fiber $\PP^{f\cdot d-1}$. 
Using the integrality condition \ref{intconj}, we make the Ansatz 
\[ \DTS_d=\sum_{\begin{array}{c} \mbox{\scriptsize strata }\\ \scriptstyle \emptyset\not= S\subset \Msp_d, \: S=\bar{S} \end{array}} \sum_{\begin{array}{c} \mbox{\scriptsize irred.\ VHS} \\ \scriptstyle L\mbox{ \scriptsize on } S_{reg} \end{array}} u_{S,L}\: \ICS_S(L) \]
for certain unique $u_{S,L}\in \ZZ$ and put this into equation (\ref{eqn5}). By comparing the degrees of the resulting expression with equation (\ref{eqn6}), we finally conclude
\[ u_{S,L}=\begin{cases} 1 & \mbox{ if } S=\overline{\Msp^{s}_d}\not=\emptyset, L=\QQ, \\ 0 & \mbox{ otherwise} \end{cases} \]
proving the theorem.
\end{proof}

\section{Motivic DT-theory and the integrality conjecture}

We prove a stronger version of Theorem \ref{intconj} involving motivic functions instead of mixed Hodge modules. The reader not familiar with motivic functions might have a look at \cite{JoyceMF}, but we will also recall the main definitions below. In fact, there is no difference between the two versions of Donaldson--Thomas invariants, as long as we work with representations of quivers without potential, since all computations will take place in the ring of Tate motives. We will make extensive use of restriction to generic points. Since it is not so clear how to deal with mixed Hodge modules on varieties defined over arbitrary (function) fields, we will work in the motivic world. The machinery used to define Donaldson--Thomas sheaves will also work in this more general context, and one ends up with the motivic Donaldson--Thomas invariants respectively Donaldson--Thomas 
sheaves. In fact, 
we will not construct any motivic sheaf. Instead, we define an element in some ring $\hat{\Ka}(\Var/\Msp)[\LL^{-1/2}, (\LL^N-1)^{-1}\, :\, N\in \NN]$, which is a simplified version of the Grothendieck group of the category of motivic sheaves. There is a $\lambda$-ring homomorphism from
\[\hat{\Ka}(\Var/\Msp)[\LL^{-1/2}, (\LL^N-1)^{-1}\, :\, N\in \NN] \]
to
\[ \hat{\Ka}_0(\MHM(\Msp))[\LL^{-1/2}, (\LL^N-1)^{-1}\, :\, N\in \NN],\] induced by $[X\xrightarrow{q} \Msp]\mapsto q_! \QQ_X$, giving rise to corresponding results for mixed Hodge modules.

\subsection{Motivic functions}

Given an arbitrary Artin stack or scheme $\BB$ of finite type over\footnote{In practice, $\KK$ will be our ground field $\kk$ or some extension of $\kk$.} $\KK$, we define the Grothendieck group $\Ka(\Var/\BB)$ to be the free abelian group generated by isomorphism classes $[\mathcal{X} \rightarrow \BB]$ of representable morphisms of finite type such that  $\mathcal{X}$ has a locally finite stratification by quotient stacks $\mathcal{X}_i=X_i/\Gl_{n_i}$, subject to the cut and paste relation 
\[ [\mathcal{X} \rightarrow \BB] = [\mathcal{Z} \rightarrow \BB]+ [\mathcal{X}\setminus \mathcal{Z} \rightarrow \BB], \]
where $\mathcal{Z}\subset \mathcal{X}$ is a closed substack. 
\begin{remark} \rm \label{reduction_to_affine_case}
Using the cut and paste relation we arrive at the conclusion that if $\BB=\Spec B$ as an affine scheme of a finitely generated $\KK$-algebra $B$, the group $\Ka(\Var/\Spec B)$ can also be described as the  abelian group generated by symbols $[A]$ for each finitely generated $B$-algebras $A$ subject to the following two conditions.
\begin{enumerate}
\item If $A\cong A'$ as $B$-algebras, then $[A]=[A']$.
\item If $a_1,\ldots,a_r\in A$ is a finite set of elements, then 
\[ [A]=[A/(a_1,\ldots a_r)] \quad + \sum_{\emptyset\not= J\subset \{1,\ldots,r\}} (-1)^{|J|-1} [A_{\prod_{j\in J} a_j}].\]
\end{enumerate}
\end{remark}
The fiber product defines a ring structure on $\Ka(\Var/\KK)$ and a $\Ka(\Var/\KK)$-module structure on $\Ka(\Var/\BB)$. Taking the product over $\KK$ defines an exterior product $\boxtimes:\Ka(\Var/\BB)\times \Ka(\Var/\BB')\longrightarrow \Ka(\Var/\BB\times \BB')$. Let us also introduce the module 
\[ \Ka(\Var/\BB)[\LL^{-1/2}, (\LL^N-1)^{-1}\,:\, N\in \NN]:=\Ka(\Var/\BB)\otimes_{\ZZ[\LL]} \ZZ[\LL^{-1/2}, (\LL^N-1)^{-1} \,:\, N\in \NN]\] 
with $\LL$ denoting the Lefschetz motive $\LL:=[\mathbb{A}^1_\KK]\in \Ka(\Var/\KK)$.\footnote{For $\BB=\Spec \KK$, we simplify the notation by suppressing the structure morphism to $\Spec \KK$.}  We will also add the relations 
\begin{equation} \label{principal_bundle_relation} [X/\Gl_n \rightarrow \BB]= [X\rightarrow \BB]/[\Gl_n] 
\end{equation}
for every $\Gl_n$-action on a scheme $X$. Here, $[\Gl_n]=\LL^{n \choose 2}\prod_{p=1}^n (\LL^p-1)$. In particular, due to our assumption on $\mathcal{X}$ for a generator $[\mathcal{X}\to \BB]$, the group $\Ka(\Var/\BB)[\LL^{-1/2}, (\LL^N-1)^{-1}\, :\, N\in \NN]$ is generated as a $\ZZ[\LL^{-1/2}, (\LL^N-1)^{-1}\, :\, N\in \NN]$-module by morphisms $[X\rightarrow \BB]$, with $X$ being a scheme. Because of this, the proper push forward $\phi_!$ along morphisms $\phi:\BB \rightarrow \BB'$ such that $\pi_0(\phi):\pi_0(\BB)\to \pi_0(\BB')$ has finite fibers is well defined by composition $\phi_!([X\rightarrow \BB])=[X\rightarrow \BB']$. \\
We can also define \[\phi^\ast:\Ka(\Var/\BB')[\LL^{-1/2},(\LL^N-1)^{-1}\,:\,N\in \NN] \longrightarrow  \Ka(\Var/\BB')[\LL^{-1/2},(\LL^N-1)^{-1}\,:\,N\in \NN] \]
for all $\phi:\BB\to \BB'$ via $\phi^\ast([\mathcal{X}\to \BB])=[\mathcal{X}\times_{\BB'}\BB \to \BB]$ on generators. In particular, the restriction of a motivic function to a substack $\mathcal{U}\hookrightarrow \BB$ is well-defined. We will identify two motivic functions on $\BB$ if their restrictions to any substack of finite type\footnote{An Artin stack with an atlas of finite type will be called of finite type.} agree, and denote the resulting group by $\hat{\Ka}(\Var/\BB)[\LL^{-1/2}, (\LL^N-1)^{-1}\, :\, N\in \NN]$. \\
The pull-back and the push-forward satisfy some base change formula for every cartesian square. Moreover, for every quotient stack $\rho:X\to X/G$ with $G$ being a special linear algebraic group, the formula 
\begin{equation} \label{push_pull} \rho_!\rho^\ast(f)=[G]\cdot f\qquad \mbox{for all }f\in \hat{\Ka}(\Var/\BB)[\LL^{-1/2},(\LL^N-1)^{-1}\,:\,N\in \NN]. 
\end{equation}
holds, and $[G]$ is invertible. Indeed, if $[Y\xrightarrow{u} X/G]$ is a generator, then $\rho_!\rho^\ast[Y\to X/G]=[Y\times_{X/G} X \longrightarrow Y \longrightarrow X/G]$ with $P=Y\times_{X/G}X$  being  a principal $G$-bundle on $Y$. As $G$ is special, it is Zariski locally trivial and $[P\to Y]=[G][Y\to Y]$ follows in $\hat{\Ka}(\Var/Y)[\LL^{-1/2},(\LL^N-1)^{-1}\,:\,N\in \NN]$. Hence, 
\[\rho_!\rho^\ast([Y\to X/G])=[P\to Y\xrightarrow{u} X/G]= u_!([P\to Y])=[G][Y\to X/G]. \] 
The principal $G$-bundle $\Gl(n)\to \Gl(n)/G$ is Zariski locally trivial and $[\Gl(n)]=[G][\Gl(n)/G]$ is invertible proving the invertibility of $[G]$.

\subsection{$\lambda$-ring structures}

If the base $\BB$ is a scheme and has an additional structure of a commutative monoid with zero $\Spec \KK \xrightarrow{\;0\;}\BB$ and sum $\oplus:\BB\times_\KK \BB \rightarrow \BB$, then $\Ka(\Var/\BB)$ can be equipped with the structure of a $\lambda$-ring by putting
\begin{eqnarray*} [\mathcal{X}\rightarrow \BB]\cdot [\mathcal{Y}\rightarrow \BB] &:=& [\mathcal{X}\times_\KK \mathcal{Y} \rightarrow \BB\times_\KK \BB \xrightarrow{\;\oplus\;} \BB] \;\mbox{ and} \\
\sigma^n([\mathcal{X} \rightarrow \BB]) &:=& [ \mathcal{X}^n/S_n \rightarrow \BB^n/S_n \xrightarrow{\;\oplus\;} \BB], \end{eqnarray*}
where $S_n$ denotes the symmetric group. Due to a result of Kresch \cite{Kresch}, the quotient by $S_n$ can either be taken   in the stacky or in the naive sense. On can extend the $\lambda$-ring structure to $\Ka(\Var/\BB)[\LL^{-1/2}, (\LL^N-1)^{-1}\, :\, N\in \NN]$ by defining $-\LL^{1/2}$ to be a line element, that is, $\sigma^n(-\LL^{1/2}):=(-\LL^{1/2})^n$. Moreover, the $\lambda$-ring structure extends to $\hat{\Ka}(\Var/\BB)[\LL^{-1/2}, (\LL^N-1)^{-1}\, :\, N\in \NN]$.\\
Given a motivic function $f\in \hat{\Ka}(\Var/\BB)[\LL^{-1/2}, (\LL^N-1)^{-1}\, :\, N\in \NN]$ such that $\sigma^n(f)|_{\mathcal{U}}$ vanishes for all but finitely many $n\in \NN$ depending on $\mathcal{U}$ and all substacks $\mathcal{U}\subset \BB$ of finite type, the sum 
\[ \Sym(f):=\sum_{n\ge 0} \sigma^n(f) \]
is well defined in $\hat{\Ka}(\Var/\BB)[\LL^{-1/2}, (\LL^N-1)^{-1}\, :\, N\in \NN]$ and satisfies $\Sym(0)=1=[\Spec \KK \xrightarrow{\;0\;}\BB]$ as well as $\Sym(f+g)=\Sym(f)\cdot\Sym(g)$.\\

Formation of direct sums of semisimple objects in $\AA$, respectively of sums of dimension vectors, provides $\Msp$, respectively $\NN^{\oplus I}\times\Spec\KK$, with the structure of a commutative monoid, inducing a $\lambda$-ring structure on $\hat{\Ka}(\Var/\Msp)[\LL^{-1/2}, (\LL^N-1)^{-1}\, :\, N\in \NN]$, respectively on $\hat{\Ka}(\Var/\NN^{\oplus I}\times \Spec\KK)[\LL^{-1/2}, (\LL^N-1)^{-1}\, :\, N\in \NN]$. If a motivic function $f$ on $\Msp$, respectively on $\NN^{\oplus I}\times\Spec\KK$, is supported away from the zero element, the infinite sum $\Sym(f)$ is 
well defined.

\begin{lemma} \label{lambda_pull_back} 
Assume that $\iota:N\to M$ is a homomorphism of commutative monoids in the category of schemes locally of finite type over $\kk$ such that the commuting diagram
\[
\xymatrix { N\times N \ar[d]^\oplus \ar[r]^{\iota\times \iota} & M\times M \ar[d]^\oplus \\ N \ar[r]^\iota & M }
\]
is cartesian. Then $\iota^\ast(fg)=\iota^\ast(f)\iota^\ast(g)$ and $\iota^\ast(\sigma^n(f))=\sigma^n(\iota^\ast(f))$ for all $n\in \NN$ and all $f,g\in \Ka(\Var/M)[\LL^{-1/2},(\LL^N-1)^{-1}\,:\, N\in \NN]$.
\end{lemma}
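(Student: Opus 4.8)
The plan is to show that $\iota^\ast$ is a homomorphism of $\lambda$-rings by reducing both identities to manipulations of fibre products and invoking the cartesian hypothesis. Since $\iota^\ast$ is additive and $\ZZ[\LL^{-1/2},(\LL^N-1)^{-1}\,:\,N\in\NN]$-linear by construction, and both sides of $\iota^\ast(fg)=\iota^\ast(f)\iota^\ast(g)$ are bilinear in $f,g$, it suffices to prove multiplicativity on effective generators $f=[\mathcal{X}\to M]$, $g=[\mathcal{Y}\to M]$. For the $\sigma$-operations I would invoke the standard fact that in any (special) $\lambda$-ring the classes $\sigma^n(x+y)$, $\sigma^n(xy)$ and $\sigma^n(\sigma^m(x))$ are given by fixed universal integer polynomials in the $\sigma^i(x),\sigma^j(y)$; a ring homomorphism commuting with all $\sigma^i$ on a ring-generating set therefore commutes with every $\sigma^n$ everywhere. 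Hence, once multiplicativity is in hand, it remains only to check $\iota^\ast\sigma^n=\sigma^n\iota^\ast$ on the generating set, that is, on effective classes $[\mathcal{X}\to M]$ and on the scalars $-\LL^{\pm1/2}$, $(\LL^N-1)^{-1}$. The latter is immediate: restricting the cartesian square over $(0_M,0_M)$ exhibits $\oplus\colon\iota^{-1}(0_M)\times\iota^{-1}(0_M)\xrightarrow{\sim}\iota^{-1}(0_M)$, which forces $\iota^{-1}(0_M)=\{0_N\}$, so $\iota^\ast(1_M)=1_N$ and $\iota^\ast$ is the identity on the scalar subring.

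For multiplicativity on generators I would factor the structure map of $f\cdot g$ through $M\times M$ and use transitivity of fibre products:
\[ (\mathcal{X}\times_\kk\mathcal{Y})\times_M N \cong (\mathcal{X}\times_\kk\mathcal{Y})\times_{M\times M}\bigl((M\times M)\times_M N\bigr). \]
The cartesian square in the hypothesis says precisely that $(M\times M)\times_M N\cong N\times N$, with the map $N\times N\to M\times M$ being induced by $\iota$ and the map $N\times N\to N$ being $\oplus$. Substituting this gives $(\mathcal{X}\times_\kk\mathcal{Y})\times_{M\times M}(N\times N)\cong(\mathcal{X}\times_M N)\times_\kk(\mathcal{Y}\times_M N)$, compatibly with the maps to $N$ coming from $\oplus$, and this is by definition $\iota^\ast(f)\cdot\iota^\ast(g)$.

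The computation for $\sigma^n$ runs in parallel. First I would promote the binary cartesian square to its $n$-fold version $M^n\times_M N\cong N^n$ (the map to $M^n$ induced by $\iota$, the map to $N$ the iterated sum), by a routine induction using associativity of $\oplus$ and the hypothesis. Writing $\sigma^n([\mathcal{X}\to M])=[\mathcal{X}^n/S_n\to M]$ with $S_n$-invariant structure map $\mathcal{X}^n\to M^n\to M$, the same factorisation through $M^n$ yields an $S_n$-equivariant isomorphism $\mathcal{X}^n\times_M N\cong\mathcal{X}^n\times_{M^n}N^n\cong(\mathcal{X}\times_M N)^n$, with $S_n$ acting trivially on the $N$-factor. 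Passing to quotients then gives $(\mathcal{X}^n/S_n)\times_M N\cong(\mathcal{X}\times_M N)^n/S_n$, which is exactly $\iota^\ast\sigma^n([\mathcal{X}\to M])=\sigma^n(\iota^\ast[\mathcal{X}\to M])$.

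The main obstacle is precisely this last step: commuting the $S_n$-quotient past the (possibly non-flat) base change along $\iota$. Scheme-theoretically $(-/S_n)\times_M N$ and $(-\times_M N)/S_n$ need not agree, so I would instead form the quotient in the stacky sense, where $[\,\cdot\,/S_n]$ commutes with arbitrary base change essentially by definition, producing a canonical isomorphism of quotient stacks over $N$. By Kresch's theorem (already used in the definition of $\sigma^n$) the stacky and naive quotients have the same class in $\Ka(\Var/N)[\LL^{-1/2},(\LL^N-1)^{-1}\,:\,N\in\NN]$, which converts this isomorphism of stacks into the required equality of motivic classes and completes the argument.
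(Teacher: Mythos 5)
Your proof is correct and follows essentially the same route as the paper's: establish the $n$-fold cartesian square $M^n\times_M N\cong N^n$ from the hypothesis, deduce an $S_n$-equivariant isomorphism $\mathcal{X}^n\times_M N\cong(\mathcal{X}\times_M N)^n$, commute the $S_n$-quotient past base change by working with quotient stacks (the paper phrases this via the principal $S_n$-bundle description of maps to $\mathcal{X}^n/S_n$), and invoke Kresch's result to identify stacky and naive quotient classes. The only difference is cosmetic: you spell out the multiplicativity and scalar cases that the paper leaves to the reader.
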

\begin{proof} We will show $\iota^\ast(\sigma^n(f))=\sigma^n(\iota^\ast(f))$ for a generator $[X\to M]$ and leave the rest to the reader. Using the assumption, one concludes easily that the outer square of the diagram 
\[
\xymatrix { Y^n \ar[d] \ar[r] & X^n \ar[d] \\ Y^n/S_n \ar[d]\ar[r] & X^n/S_n \ar[d] \\ N \ar[r]^\iota & M }
\]
is also cartesian for $Y=N\times_M X$. The terms of the middle horizontal line are quotient stacks. Using the description of morphism to $S_n$-quotient stacks by means of principal $S_n$-bundles and the fact that the outer square is cartesian, one shows that the lower small square is also cartesian. This proves that $\iota^\ast([X^n/S_n\to M])=[Y^n/S_n\to N]$ which is what we want due a previous remark related to work of Kresch.
\end{proof}

\subsection{Convolution product and integration map}

We define a ``convolution'' product, the so-called Ringel--Hall product, on $\hat{\Ka}(\Var/\Mst)[\LL^{-1/2},(\LL^N-1)^{-1}\,:\,N\in \NN]$ by means of the following diagram
\[ \xymatrix { & {\EE xact} \ar[dr]^{\pi_2} \ar[dl]_{\pi_1\times\pi_3} & \\ \Mst \times \Mst & & \Mst } \]
via $f\ast g :=\pi_{2\, !}(\pi_1\times \pi_3)^\ast(f\boxtimes g)$, where $\EE xact=\sqcup_{d,d'\in \NN^{\oplus I}}\EE xact_{d,d'}$ denotes the stack of short exact sequences $0\to E_1 \to E_2 \to E_3 \to 0$ in $\AA$, and $\pi_i$ maps such a sequence to its $i$-th entry. The open and closed substack $\EE xact_{d,d'}$ is determined by $\dim(V_1)=d$ and $\dim E_2=d'$. It is well-known that the convolution product provides $\hat{\Ka}(\Var/\Mst)[\LL^{-1/2},(\LL^N-1)^{-1}\,:\,N\in \NN]$ with a $\Ka(\Var/\kk)[\LL^{-1/2},(\LL^N-1)^{-1}\,:\,N\in \NN]$-algebra structure with unit given by the motivic function $[\Spec\kk \xrightarrow{\;0\;} \Mst]$.
As we have seen, we can describe $\Mst_d$ and  $\EE xact_{d,d'}$ as quotient stacks
\[ \Mst_d=X_d/G_d\quad\mbox{ with }\quad X_d=\Mst_d\times_{\Spec\kk/G_d} \Spec\kk \] and 
\[ \EE xact_{d,d'}=X_{d,d'}/G_{d,d'}\quad\mbox{ with }\quad X_{d,d'}=\EE xact_{d,d'}\times_{\Spec\kk/G_{d,d'}}\Spec\kk,\]
where $G_{d,d'}$ is the parabolic subgroup of $G_{d+d'}$ fixing $\kk^{d}$ inside $\kk^{d}\oplus\kk^{d'}\cong\kk^{d+d'}$. As before, $X_d$ is the moduli stack parametrizing families $E\in \AA_S$ together with an isomorphism $\psi:\omega_S(E)\cong \OO_S^d$ of $I$-graded vector bundles. Note that automorphism group of $(E,\psi)$ is trivial and $X_d(S)$ can be interpreted as the set of isomorphism classes of pairs $(E,\psi)$. Similarly, $X_{d,d'}$ is the moduli space parametrizing (isomorphism classes of) families of short exact sequences $0\to E_1\to E_2\to E_3\to 0$ in $\AA_S$ together with an isomorphism
\[ \xymatrix { 0 \ar[r] & \omega_S(E_1) \ar[r]\ar[d]^\wr_{\psi_1} &\omega_S(E_2)\ar[r]\ar[d]^\wr_{\psi_2} &\omega_S(E_3)\ar[d]^\wr_{\psi_3} \ar[r] & 0\\ 0 \ar[r] & \OO_S^d \ar[r] & \OO^d\oplus \OO_S^{d'} \ar[r] &  \OO_S^{d'} \ar[r] & 0} \]
of short exact sequences of $I$-graded vector bundles on $S$. Giving such an element, we can map it to $(E_i,\psi_i)$ inducing maps $\hat{\pi}_i$ such that the following diagram commutes.
\[
 \xymatrix @C=2cm @R=1.5cm{
 X_d\times X_{d'} \ar[d]_{\rho_d\times \rho_{d'}} &X_{d,d'} \ar[l]_{\hat{\pi}_1\times\hat{\pi}_3} \ar[r]^{\hat{\pi}_2} \ar[d]^{\rho_{d,d'}} & X_{d+d'} \ar[d]^{\rho_{d+d'}}  \\
 X_d/G_d \times X_{d'}/G_{d'}  &  X_{d,d'}/G_{d,d'} \ar[l]^{\pi_1\times \pi_3} \ar[r]_{\pi_2} & X_{d+d'}/G_{d+d'}  }
\]

Recall that $\Mst$ and $\Msp$ have a decomposition $\Mst=\sqcup_{\gamma\in \Gamma} \Mst_\gamma$ and $\Msp=\sqcup_{\gamma\in \Gamma} \Msp_{\gamma}$ for $\Gamma=\Ka(\AA_{\bar{\kk}})/\rad (-,-)$. We will also define a $\ast$-product on $\hat{\Ka}(\Var/\Msp)[\LL^{-1/2},(\LL^N-1)^{-1}\,:\,N\in \NN]$ by means of $f_\gamma\ast g_{\gamma'}:=\LL^{\langle \gamma,\gamma'\rangle /2}f_\gamma\cdot g_{\gamma'}$ for $f_\gamma$ and $g_{\gamma'}$ having support on $\Msp_\gamma$ and $\Msp_{\gamma'}$ respectively. Here $\langle \gamma,\gamma'\rangle =(\gamma,\gamma')-(\gamma',\gamma)$ is the skew-symmetrization of $(-,-)$ which vanishes if and only if assumption (8) is satisfied. Notice that the $\ast$-product is non-commutative unless (8) holds. Let us denote the restriction of $p:\Mst\to \Msp$ to $\Mst_\gamma$ with $p_\gamma$.\\

\begin{proposition} \label{integration_map}
If $(\AA,\omega,p)$ satisfies all of our assumptions (1)--(7), then the ``integration'' map 
\[I:\hat{\Ka}(\Var/\Mst)[\LL^{-1/2},(\LL^N-1)^{-1}\,:\,N\in \NN] \longrightarrow \hat{\Ka}(\Var/\Msp)[\LL^{-1/2},(\LL^N-1)^{-1}\,:\,N\in \NN] \]
given by $I(f):=\sum_{\gamma\in \Gamma} \LL^{(\gamma,\gamma)/2} p_{\gamma\, !}(f|_{\Mst_\gamma})$ is a $\Ka(\Var/\kk)[\LL^{-1/2},(\LL^N-1)^{-1}\,:\,N\in \NN]$-algebra homomorphism with respect to the $\ast$-products. 
\end{proposition}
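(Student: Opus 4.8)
The plan is to verify the algebra homomorphism property $I(f\ast g)=I(f)\ast I(g)$ on generators. By $\Ka(\Var/\kk)[\LL^{-1/2},(\LL^N-1)^{-1}:N\in\NN]$-bilinearity of both $\ast$-products and $\Ka(\Var/\kk)$-linearity of $I$ (which also at once gives that $I$ is a module homomorphism sending units to units), it suffices to treat the case where $f$ is supported on a single component $\Mst_\gamma$ with dimension vector $d$ and $g$ on $\Mst_{\gamma'}$ with dimension vector $d'$. For such $f,g$ the convolution $f\ast g=\pi_{2\,!}(\pi_1\times\pi_3)^\ast(f\boxtimes g)$ is supported on $\Mst_{\gamma+\gamma'}$, so $I(f\ast g)=\LL^{(\gamma+\gamma',\gamma+\gamma')/2}\,p_{(\gamma+\gamma')\,!}(f\ast g)$, whereas by definition $I(f)\ast I(g)=\LL^{\langle\gamma,\gamma'\rangle/2}\LL^{(\gamma,\gamma)/2}\LL^{(\gamma',\gamma')/2}\,\oplus_!\bigl(p_{\gamma\,!}f\boxtimes p_{\gamma'\,!}g\bigr)$. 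Everything then reduces to matching the two push-forwards and the two powers of $\LL$.

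The geometric input is the commutativity of
\[ p\circ\pi_2=\oplus\circ(p\times p)\circ(\pi_1\times\pi_3)\colon \EE xact\longrightarrow\Msp. \]
A short exact sequence $0\to E_1\to E_2\to E_3\to 0$ is sent by the left-hand side to the point of $\Msp$ represented by the semisimplification of $E_2$, and by the right-hand side to the direct sum of the semisimplifications of $E_1$ and $E_3$; these coincide because the Jordan--Hölder factors of $E_2$ are exactly those of $E_1$ together with those of $E_3$, so by the Lemma identifying $\KK$-points of $\Msp$ with semisimple objects the two maps into the scheme $\Msp$ agree. Applying $(-)_!$, functoriality of push-forward, and compatibility of $!$-push-forward with the exterior product, $(p\times p)_!(f\boxtimes g)=p_{\gamma\,!}f\boxtimes p_{\gamma'\,!}g$, I obtain $p_!(f\ast g)=\oplus_!\bigl[(p\times p)_!(\pi_1\times\pi_3)_!(\pi_1\times\pi_3)^\ast(f\boxtimes g)\bigr]$.

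The technical heart is the push--pull computation for $\pi_1\times\pi_3$, which I claim equals multiplication by $\LL^{-(\gamma',\gamma)}$ on the $(\gamma,\gamma')$-component:
\[ (\pi_1\times\pi_3)_!(\pi_1\times\pi_3)^\ast(h)=\LL^{-(\gamma',\gamma)}\,h. \]
To see this I factor $\pi_1\times\pi_3\colon X_{d,d'}/G_{d,d'}\to (X_d\times X_{d'})/(G_d\times G_{d'})$ through $(X_d\times X_{d'})/G_{d,d'}$. The first map is the vector bundle $\hat\pi_1\times\hat\pi_3$ of Proposition \ref{vector_bundle} descended to stacks; by assumption (7) it is a genuine vector bundle, of rank $dd'-(\gamma',\gamma)$ on the relevant component, so push--pull along it contributes $\LL^{dd'-(\gamma',\gamma)}$ by the relation \eqref{push_pull}. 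The second map is induced by $G_{d,d'}\twoheadrightarrow G_d\times G_{d'}$, whose kernel is the unipotent radical $U$ of dimension $dd'$ acting trivially on $X_d\times X_{d'}$; since $U$ is special this exhibits the source as a trivial $BU$-gerbe and push--pull contributes the invertible factor $\LL^{-\dim U}=\LL^{-dd'}$. The product of the two is $\LL^{-(\gamma',\gamma)}$, the terms $\LL^{\pm dd'}$ cancelling. The clean cancellation is exactly what makes assumption (7) essential: local constancy of $\dim\Hom-\dim\Ext^1$ guarantees that $(\gamma',\gamma)$ is constant on the component, so the fibre class is a single power of $\LL$. I expect this relative-motive computation, and in particular the careful handling of the parabolic-versus-Levi group change, to be the main obstacle.

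Combining the last two steps, $p_!(f\ast g)=\LL^{-(\gamma',\gamma)}\oplus_!(p_{\gamma\,!}f\boxtimes p_{\gamma'\,!}g)$, whence $I(f\ast g)=\LL^{(\gamma+\gamma',\gamma+\gamma')/2-(\gamma',\gamma)}\oplus_!(p_{\gamma\,!}f\boxtimes p_{\gamma'\,!}g)$. It then remains only to match exponents: expanding $(\gamma+\gamma',\gamma+\gamma')=(\gamma,\gamma)+(\gamma',\gamma')+(\gamma,\gamma')+(\gamma',\gamma)$ gives $\tfrac12(\gamma+\gamma',\gamma+\gamma')-(\gamma',\gamma)=\tfrac12\bigl[(\gamma,\gamma)+(\gamma',\gamma')+(\gamma,\gamma')-(\gamma',\gamma)\bigr]$, which equals $\tfrac12\bigl[\langle\gamma,\gamma'\rangle+(\gamma,\gamma)+(\gamma',\gamma')\bigr]$ because $\langle\gamma,\gamma'\rangle=(\gamma,\gamma')-(\gamma',\gamma)$. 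This is precisely the exponent in $I(f)\ast I(g)$, so the two sides agree. Observe that the twist $\LL^{\langle\gamma,\gamma'\rangle/2}$ built into the $\ast$-product on $\Msp$ is exactly what absorbs the non-symmetry of $(-,-)$, which is why only assumptions (1)--(7), and not the symmetry assumption (8), are required.
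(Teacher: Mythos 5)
Your proposal is correct and follows essentially the same route as the paper's proof: reduction to components $\Mst_{\gamma,d}$, the commutative square $p\circ\pi_2=\oplus\circ(p_\gamma\times p_{\gamma'})\circ(\pi_1\times\pi_3)$, the push--pull identity $(\pi_1\times\pi_3)_!(\pi_1\times\pi_3)^\ast(h)=\LL^{-(\gamma',\gamma)}h$ coming from the vector bundle of Proposition \ref{vector_bundle} together with $[G_{d,d'}]=\LL^{dd'}[G_d][G_{d'}]$ (you package this as a vector bundle step followed by a trivial gerbe for the unipotent radical, while the paper applies the principal-bundle relation (\ref{push_pull}) twice on atlases --- the same computation), and the final exponent matching via $\langle\gamma,\gamma'\rangle=(\gamma,\gamma')-(\gamma',\gamma)$. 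One small point in your favor: your bundle rank $dd'-(\gamma',\gamma)$ is the one actually given by Proposition \ref{vector_bundle} (the fiber involves $\Hom_{\AA_\KK}(E_3,E_1)$ and $\Ext^1_{\AA_\KK}(E_3,E_1)$), whereas the paper's proof writes $dd'-(\gamma,\gamma')$ at that step, an index slip that, taken literally, would flip the sign of $\langle\gamma,\gamma'\rangle/2$; your bookkeeping makes the exponents come out right without invoking the symmetry assumption (8), exactly as the statement under hypotheses (1)--(7) requires.
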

\begin{proof}
As $\Mst=\sqcup_{\gamma\in \Gamma,d\in \NN^{\oplus I}}\Mst_{\gamma,d}$ with $\Mst_{\gamma,d}=X_{\gamma,d}/G_d$ and similarly for $\Msp$ and $\mathfrak{E} xact$, it suffices to prove $I(f\ast g)=I(f)\ast I(g)$ for $f\in  \hat{\Ka}(\Var/\Mst_{\gamma,d})[\LL^{-1/2},(\LL^N-1)^{-1}\,:\,N\in \NN]$ and $g\in  \hat{\Ka}(\Var/\Mst_{\gamma',d'})[\LL^{-1/2},(\LL^N-1)^{-1}\,:\,N\in \NN].$
We use the notation of the following commutative diagram. 
\[
 \xymatrix { & X_{(\gamma,d),(\gamma',d')} \ar[dl]_{\hat{\pi}_1\times\hat{\pi}_3} \ar[dr]^{\hat{\pi}_2} \ar[dd]^(0.3){\rho_{d,d'}}& \\
 X_{\gamma,d}\times X_{\gamma',d'} \ar[dd]_{\rho_d\times \rho_{d'}} & & X_{\gamma+\gamma',d+d'} \ar[dd]^{\rho_{d+d'}}  \\
 & X_{(\gamma,d),(\gamma',d')}/G_{d,d'} \ar[dl]^{\pi_1\times \pi_3} \ar[dr]_{\pi_2} & \\
 X_{\gamma,d}/G_d \times X_{\gamma',d'}/G_{d'} \ar[dd]_{p_\gamma\times p_{\gamma'}} & & X_{\gamma+\gamma',d+d'}/G_{d+d'} \ar[dd]^{p_{\gamma+\gamma'}}\\ & & \\
 \Msp_{\gamma,d}\times \Msp_{\gamma',d'} \ar[rr]^\oplus & & \Msp_{\gamma+\gamma',d+d'} }
\]

The first computation generalizes formula (\ref{push_pull}) to the map $\pi_1\times \pi_3$ by applying (\ref{push_pull}) to the principal bundles $\rho_{d,d'}$ and $\rho_d\times \rho_{d'}$ with structure groups $G_{d,d'}$ and $G_d\times G_{d'}$ respectively, and to the vector bundle $\hat{\pi}_1\times \hat{\pi}_3$ of  fiber dimension $dd'-(\gamma,\gamma')$ (see Proposition \ref{vector_bundle}).
For $h\in \hat{\Ka}(\Var/\Mst_d\times\Mst_{d'})[\LL^{-1/2},(\LL^N-1)^{-1}\,:\,N\in \NN]$ we get
\begin{eqnarray*}
(\pi_1\times \pi_3)_!(\pi_1\times \pi_3)^\ast(h)&=& \frac{1}{[G_{d,d'}]}(\pi_1\times \pi_3)_!\rho_{d,d' \,!}\rho_{d,d'}^\ast (\pi_1\times \pi_3)^\ast(h) \\
&=& \frac{1}{[G_{d,d'}]}(\rho_d\times \rho_{d'})_!(\hat{\pi}_1\times\hat{\pi}_3)_!((\hat{\pi}_1\times\hat{\pi}_3)^\ast(\rho_d\times \rho_{d'})^\ast(h) \\
&=& \frac{\LL^{dd'-(\gamma,\gamma')}}{[G_{d,d'}]}(\rho_d\times \rho_{d'})_!(\rho_d\times \rho_{d'})^\ast(h)\\
&=& \LL^{-(\gamma,\gamma')} h.
\end{eqnarray*}
Thus, for $f\in  \hat{\Ka}(\Var/\Mst_{\gamma,d})[\LL^{-1/2},(\LL^N-1)^{-1}\,:\,N\in \NN]$ and $g\in  \hat{\Ka}(\Var/\Mst_{\gamma',d'})[\LL^{-1/2},(\LL^N-1)^{-1}\,:\,N\in \NN]$
\begin{eqnarray*}
I(f\ast g) &=& \LL^{(\gamma+\gamma',\gamma+\gamma')/2} p_{(\gamma+\gamma')!}(f\ast g ) \\ 
&=& \LL^{(\gamma+\gamma',\gamma+\gamma')/2} (p_{\gamma+\gamma'}\pi_2)_!(\pi_1\times \pi_3)^\ast(f\boxtimes g)\\
&=& \LL^{(\gamma+\gamma',\gamma+\gamma')/2} \big(\oplus(p^\zeta_\gamma\times p^\zeta_{\gamma'})(\pi_1\times \pi_3)\big)_!(\pi_1\times \pi_3)^\ast(f\boxtimes g) \\
&=& \LL^{(\gamma+\gamma',\gamma+\gamma')/2-(\gamma,\gamma')}  \oplus_!(p^\zeta_\gamma\times p^\zeta_{\gamma'})_!(f\boxtimes g)\\
&=& \LL^{(\gamma,\gamma)/2}\LL^{(\gamma',\gamma')/2} \LL^{\langle \gamma,\gamma'\rangle /2} \oplus_!(p^\zeta_\gamma\times p^\zeta_{\gamma'})_!(f\boxtimes g)\\
&=& I(f)\ast I(g).
\end{eqnarray*}
\end{proof}

\subsection{A useful identity}

Fix a framing vector $f\in \NN^{I}$ and use the notation of Section 3. Consider the motivic function $H:=[\Mst_{f}\xrightarrow{\tilde{\pi}_f} \Mst]$ with $\Mst_f=\sqcup_{d\in \NN^{\oplus I}} X_{f,d}/G_{\hat{d}}$ and $\unit_{\Xst}:=[\Xst\xrightarrow{\,\id\,} \Xst]$  for any Artin stack $\Xst$. We claim 
\begin{equation} \label{Hilbert_scheme_identity} 
\Big(H\ast \unit_{\Mst}\Big)|_{\Mst_d}=\frac{\LL^{fd}}{\LL-1}\unit_{\Mst_d}.
\end{equation} 
Indeed, consider the following commutative diagram
\[ \xymatrix @C=1.5cm{ {\Xst}:=\mathfrak{E}xact(\AA_f)|_{\Mst_{f}\times \Mst} \ar[d]^{\pi_1^f\times \pi_3^f} \ar[r]^(0.6){\hat{\pi}_f} & 
\mathfrak{E}xact(\AA) \ar[r]^(0.6){\pi_2} \ar[d]^{\pi_1\times\pi_3} & \Mst \\ \Mst_{f}\times\Mst \ar[r]^{\tilde{\pi}_f\times\id_{\Mst}} & \Mst\times\Mst, } \]
where the terms on the left hand side correspond to objects in $\AA_{f,\KK}$ for various $\KK\supset \kk$ with $\Mst$ interpreted as the space of all  objects in $\AA_{f,\KK}$ with dimension vector in $\NN^{\oplus I}\times\{0\}$. The reader should convince himself that the square is cartesian and that $\Xst$ is the moduli stack of all objects in $\AA_{f,\KK}$ of dimension vector in $\NN^{\oplus I}\times\{1\}$. Indeed, any such object $\hat{E}=(E,W,h)$ has a unique subobject $\hat{E}_c=(E_c,W,h)$  ``generated'' by $W\cong \KK$, i.e.\ a subobject in $\Mst_{f}$, and the quotient $\hat{E}/\hat{E}_c=E/E_c$ will be in $\Mst$. By construction, $E_c$ is the intersection of all subobjects $E'\subseteq E$ with $\im(h)\subset \omega_\KK(E')$. The map $\hat{\pi}$ forgets the framing in the short exact sequence $0\to \hat{E}_c\to \hat{E}\to \hat{E}/\hat{E}_c\to 0$. We finally get
\begin{eqnarray*} 
H\ast \unit_{\Mst} &=& \pi_{2\, !} (\pi_1\times \pi_3)^\ast \big(\tilde{\pi}_{f\,!}(\unit_{\Mst_{f}}) \boxtimes \unit_{\Mst} \big) \\
&=& \pi_{2\, !} (\pi_1\times \pi_3)^\ast (\tilde{\pi}_f\times \id_{\Mst})_!\big(\unit_{\Mst_{f}} \boxtimes \unit_{\Mst}\big)  \\
&=& \pi_{2\,!}\hat{\pi}_{f\,!} (\pi^f_1\times\pi^f_3)^\ast(\unit_{\Mst_{f}\times\Mst}) \\
&=& (\pi_{2}\hat{\pi}_f)_! ( \unit_\Xst). 
\end{eqnarray*}
Looking at  components associated to dimension vectors, the map $\pi_2\hat{\pi}_f$ is a stratification of
\[ X_d\times \Aff^{fd}/G_d\times \Gl(1) \xrightarrow{\tilde{\pi}_{f,d}} X_d/G_d \]
with $\Aff^{fd}$ parametrizing the matrix coefficients of the maps from $W\cong\KK$ to $\KK^{d_i}\cong\omega_\KK(E)_i$ for $i\in I$, i.e.\ the coordinates of the framing vectors, and $\Gl(1)$ corresponds to basis change in $W\cong\KK$. 
Applying equation  (\ref{push_pull}) to the principal $G_d$ respectively $G_d\times \Gl(1)$-bundles 
\begin{eqnarray*} 
X_d&\xrightarrow{\;\rho_d\;}& X_d/G_d, \\
 X_d\times \Aff^{fd} &\xrightarrow{\;\rho_{f,d}\;}& X_d\times \Aff^{fd}/G_d\times \Gl(1), \end{eqnarray*}
yields
\begin{eqnarray*}
\lefteqn{ \tilde{\pi}_{f,d\,!}\Big(\unit_{ X_d\times \Aff^{fd}/G_d\times \Gl(1)}\Big) }\\
&=& (\tilde{\pi}_{f,d}\circ \rho_{f,d})_!\Big(\unit_{X_d\times\Aff^{fd}}\Big)/[G_d\times\Gl(1)],\\
&=& (\rho_d\circ \pr_{X_d})_!\Big(\unit_{X_d\times \Aff^{fd}}\Big)/[G_d\times\Gl(1)], \\
&=& \frac{\LL^{fd}}{\LL-1}\rho_{d\,!} \Big(\unit_{X_d}\Big)/[G_d], \\
&=& \frac{\LL^{fd}}{\LL-1} \unit_{\Mst_d}, 
\end{eqnarray*}
and the equation for the restriction of $H\ast \unit_{\Mst}$ to $\Mst_d$ follows.

\subsection{Donaldson--Thomas invariants}

The following definition of Donaldson--Thomas invariants applies only to our situation of abelian categories $\AA_\kk$ satisfying conditions (1)--(8). There is a more general and much more complicated version which can be applied to triangulated 3-Calabi--Yau $A_\infty$-categories. If $\AA_\kk$ is the category of quiver representations, we can embed $\AA_kk$ into the 3-Calabi--Yau $A_\infty$-category $D^b(\Gamma_\kk Q - \rep)$ introduced in section 2.1, and the general version simplifies to the one given here. \\
\\
The motivic version of the intersection complex $\ICS_{\Mst}$ is defined by the motivic function 
\[ \ICS_{\Mst}^{mot}:= \sum_{\gamma\in \Gamma} \LL^{(\gamma,\gamma)/2}[\Mst_\gamma \hookrightarrow \Mst]. \]
Here $\LL^{(\gamma,\gamma)/2}$ is the analog of the ``perverse shift'' since $\dim \Mst_\gamma=-(\gamma,\gamma)$.
Taking the proper push forward along the morphisms $p:\Mst \rightarrow \Msp$ and $\dim\times \cl:\Msp \rightarrow (\NN^I\times \Gamma)\times \Spec\kk$, respectively, we can define the motivic Donaldson--Thomas sheaf $\DTS^{mot}\in \hat{\Ka}(\Var/\Msp)[\LL^{-1/2}, (\LL^N-1)^{-1}\, :\, N\in \NN]$ and the motivic Donaldson--Thomas invariants $\DT^{mot}\in \hat{\Ka}(\Var/\NN^{\oplus I}\times \Gamma\times \Spec\kk)[\LL^{-1/2}, (\LL^N-1)^{-1}\, :\, N\in \NN]$, both vanishing at the zero element, by means of 
\begin{eqnarray*}
p_! \ICS_{\Mst}^{mot} &=& \Sym\Bigl( \frac{1}{\LL^{1/2}-\LL^{-1/2}}\, \DTS^{mot} \Bigr), \\
\dim_! p_! \ICS^{mot}_{\Mst} &=& \Sym\Bigl( \frac{1}{\LL^{1/2}-\LL^{-1/2}}\, \DT^{mot} \Bigr) \\ &=& \Sym\Bigl( \frac{1}{\LL^{1/2}-\LL^{-1/2}} \,\dim_!\DTS^{mot} \Bigr).
\end{eqnarray*}
For the last equation we use the fact that the monoid homomorphism $\dim\times \cl$ induces a $\lambda$-ring homomorphism $\dim_!:\hat{\Ka}(\Var/\Msp)[\LL^{-1/2}, (\LL^N-1)^{-1}\, :\, N\in \NN] \longrightarrow \hat{\Ka}(\Var/\NN^{\oplus I}\times \Gamma\times \Spec\kk)[\LL^{-1/2}, (\LL^N-1)^{-1}\, :\, N\in \NN]$. \\ 
\\
We can give an alternative definition of the Donaldson--Thomas sheaf by using framed moduli spaces.  By applying the ``integration map'' $I=\prod_{d\in \NN^{\oplus I}} I_d$ to the identity (\ref{Hilbert_scheme_identity}) and  by using $\Sym(\LL^i a)=\sum_{n\ge 0} \LL^{ni}\Sym^n(a)$, we obtain
\begin{eqnarray*}
\lefteqn{\frac{1}{\LL-1}\Sym\Big(\sum_{0\not= d\in \NN^{\oplus I} } \frac{\LL^{fd}}{\LL^{1/2}-\LL^{-1/2}} \DTS^{mot}_d\Big) }\\
&=&\sum_{d\in  \NN^I} \frac{\LL^{fd}}{\LL-1}p _{d\,!}(\ICS_{\Mst_d}) \\
&=&  I\Big( \sum_{d\in  \NN^{\oplus I}} \frac{\LL^{fd}}{\LL-1} \unit_{\Mst_d}\Big) \\
&=& I(H)\cdot I(\unit_{\Mst})\\
&=& \Big(p_!\sum_{d\in \NN^{\oplus I}} \LL^{(d,d)/2}\tilde{\pi} _{f,d\, !} (\unit_{\Mst_{f,d}})\Big)\Sym\Big(\frac{\DTS^{mot} }{\LL^{1/2}-\LL^{-1/2}}\Big) \\
&=& \Big(\pi_{f\,!}\sum_{d\in \NN^{\oplus I}} \LL^{(d,d)/2}p_{f,d\,!} (\unit_{\Mst_{f,d}})\Big)\Sym\Big(\frac{\DTS^{mot} }{\LL^{1/2}-\LL^{-1/2}}\Big) \\
&=& \frac{1}{\LL^{1/2}-\LL^{-1/2}}\Big(\pi_{f\,!}\sum_{d\in \NN^{\oplus I}} \LL^{(fd-1)/2}\ICS_{\Msp_{f,d}}\Big)\Sym\Big(\frac{\DTS^{mot} }{\LL^{1/2}-\LL^{-1/2}}\Big), 
\end{eqnarray*}
where we applied equation (\ref{push_pull}) to the principal $G_d$-bundle $X_{f,d}\to \Mst_{f,d}$ and to the principal $PG_d$-bundle $X_{f,d}\to \Msp_{f,d}$ once more to compute $p_{f,d\,!} (\unit_{\Mst_{f,d}})=\unit_{\Msp_{f,d}}/(\LL-1)$.
Using the properties of $\Sym$ and $\frac{\LL^{fd}-1}{\LL^{1/2}-\LL^{-1/2}}=\LL^{1/2}[\PP^{fd-1}]$, we get the so-called PT--DT correspondence.
\begin{proposition}[PT--DT correspondence] \label{PT-DT_stack} The following formula 
\[ \pi_{f\,!}\sum_{d\in \NN^{\oplus I}} \LL^{fd/2}\cdot\ICS_{\Msp_{f,d}}= \Sym\Big(\sum_{0\not= d\in \NN^{\oplus I} } \LL^{1/2}[\PP^{fd-1}] \DTS^{mot}_d\Big)
\]
holds for all framing vectors $f\in \NN^I$. 
\end{proposition}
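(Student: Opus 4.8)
The plan is to obtain the Proposition directly from the chain of equalities derived immediately before its statement, which already contains all of the geometric input: the stratification identity (\ref{Hilbert_scheme_identity}), the homomorphism property of the integration map (Proposition \ref{integration_map}), and the push--pull formula (\ref{push_pull}) applied to the principal $G_d$- and $PG_d$-bundles over $\Mst_{f,d}$ and $\Msp_{f,d}$. After these inputs nothing remains but a formal computation inside the complete localized $\lambda$-ring $\hat{\Ka}(\Var/\Msp)[\LL^{-1/2}, (\LL^N-1)^{-1}\, :\, N\in \NN]$: I would solve the derived identity for the quantity $\pi_{f\,!}\sum_d \LL^{(fd-1)/2}\ICS_{\Msp_{f,d}}$.

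Concretely, the preceding computation terminates in
\[ \frac{1}{\LL-1}\Sym\Big(\sum_{0\not= d\in \NN^{\oplus I}}\frac{\LL^{fd}}{\LL^{1/2}-\LL^{-1/2}}\DTS^{mot}_d\Big) = \frac{1}{\LL^{1/2}-\LL^{-1/2}}\Big(\pi_{f\,!}\sum_{d\in \NN^{\oplus I}} \LL^{(fd-1)/2}\ICS_{\Msp_{f,d}}\Big)\Sym\Big(\frac{\DTS^{mot}}{\LL^{1/2}-\LL^{-1/2}}\Big). \]
First I would multiply both sides by $(\LL^{1/2}-\LL^{-1/2})$ and divide by the factor $\Sym\big(\DTS^{mot}/(\LL^{1/2}-\LL^{-1/2})\big)$; since this factor has constant term $1$ it is a unit in the completed ring, and the $\lambda$-ring identity $\Sym(a)\Sym(b)^{-1}=\Sym(a-b)$ lets me merge the two symmetric-power factors. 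Using $\DTS^{mot}=\sum_{0\ne d}\DTS^{mot}_d$, the argument of the surviving $\Sym$ becomes $\sum_{0\ne d}\tfrac{\LL^{fd}-1}{\LL^{1/2}-\LL^{-1/2}}\DTS^{mot}_d$, which by the elementary identity $\tfrac{\LL^{fd}-1}{\LL^{1/2}-\LL^{-1/2}}=\LL^{1/2}[\PP^{fd-1}]$ is exactly $\sum_{0\ne d}\LL^{1/2}[\PP^{fd-1}]\DTS^{mot}_d$. This yields
\[ \pi_{f\,!}\sum_{d\in \NN^{\oplus I}} \LL^{(fd-1)/2}\ICS_{\Msp_{f,d}} = \frac{\LL^{1/2}-\LL^{-1/2}}{\LL-1}\Sym\Big(\sum_{0\ne d\in \NN^{\oplus I}}\LL^{1/2}[\PP^{fd-1}]\DTS^{mot}_d\Big). \]
Finally I would simplify the scalar prefactor via $\LL-1=\LL^{1/2}(\LL^{1/2}-\LL^{-1/2})$, so that $\tfrac{\LL^{1/2}-\LL^{-1/2}}{\LL-1}=\LL^{-1/2}$, and absorb the resulting $\LL^{-1/2}$ into the left-hand exponent using $\LL^{1/2}\cdot\LL^{(fd-1)/2}=\LL^{fd/2}$, which gives precisely the asserted formula.

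The substantive mathematics is all in the preceding paragraphs, so the only genuine care needed here is formal: I must check that every manipulation takes place in the completed localized ring where $(\LL-1)$, $(\LL^{1/2}-\LL^{-1/2})$ and the $\Sym$-factor are all invertible. The invertibility of the $\Sym$-factor, together with the validity of the quotient rule $\Sym(a)\Sym(b)^{-1}=\Sym(a-b)$, rests on the fact that $\DTS^{mot}$ is supported away from the zero dimension vector, hence lies in the first step $F^1$ of the $\lambda$-ring filtration; completeness with respect to this filtration makes $\Sym$ of such an element a well-defined unit. This is exactly the filtration argument already used to make $\Sym$ of the Donaldson--Thomas sheaf well-defined, so I expect no real obstacle beyond bookkeeping of the half-integral powers of $\LL$ and of the distinction between the ordinary class $[\PP^{fd-1}]$ appearing here and its virtual counterpart $[\PP^{fd-1}]_{vir}$.
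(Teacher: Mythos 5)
Your proposal is correct and follows the paper's own route: the paper likewise derives the chain of equalities ending in $\frac{1}{\LL^{1/2}-\LL^{-1/2}}\big(\pi_{f\,!}\sum_d \LL^{(fd-1)/2}\ICS_{\Msp_{f,d}}\big)\Sym\big(\frac{\DTS^{mot}}{\LL^{1/2}-\LL^{-1/2}}\big)$ and then, in one sentence, invokes the properties of $\Sym$ together with $\frac{\LL^{fd}-1}{\LL^{1/2}-\LL^{-1/2}}=\LL^{1/2}[\PP^{fd-1}]$ to conclude. Your write-up simply makes explicit the cancellation $\Sym(a)\Sym(b)^{-1}=\Sym(a-b)$, the invertibility of the $\Sym$-factor in the completed filtered $\lambda$-ring, and the bookkeeping of half-powers of $\LL$, all of which the paper leaves implicit.
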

If $f\in (2\NN)^I$, we have $fd/2\in \NN$, and the map 
\[  (a_d)_{d\in \NN^{\oplus I}} \longmapsto (\LL^{-fd/2}a_d)_{d\in \NN^{\oplus I}} \]
is an isomorphism of the $\lambda$-ring  $\hat{\Ka}_0(\Var/\Msp)[\LL^{-1/2},(\LL^N-1)^{-1} \,:\, N\in \NN]$ as $\Sym^n(\LL^{-fd/2}a_d)=\LL^{-nfd/2}\Sym^n(a_d)$ in this case. Applying this isomorphism to the PT--DT correspondence yields the alternative form.
\begin{corollary}[PT--DT correspondence, alternative form] The following formula  
\[ \pi_{f\,!}(\ICS_{\Msp_{f}})=\Sym\Big(\sum_{0\not= d\in \NN^{\oplus I} } [\PP^{fd-1}]_{vir} \DTS^{mot}_d\Big) \]
holds for all framing vectors $f\in (2\NN)^I$ with $[\PP^{fd-1}]_{vir}=\int_{\PP^{fd-1}}\ICS_{\PP^{fd-1}}=\frac{\LL^{fd/2}-\LL^{-fd/2}}{\LL^{1/2}-\LL^{-1/2}}$. 
\end{corollary}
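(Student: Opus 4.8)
The plan is to deduce this directly from the PT--DT correspondence (Proposition \ref{PT-DT_stack}) by applying the rescaling operation described just before the statement, so very little genuinely new work is required. Let $\Phi$ denote the $\ZZ[\LL^{-1/2},(\LL^N-1)^{-1}\,:\,N\in\NN]$-linear endomorphism of $\hat{\Ka}(\Var/\Msp)[\LL^{-1/2},(\LL^N-1)^{-1}\,:\,N\in\NN]$ which multiplies the graded piece supported over $\Msp_d$ by $\LL^{-fd/2}$; this is legitimate since $f\in(2\NN)^I$ forces $fd/2\in\NN$. First I would record the elementary scalar identity $[\PP^{fd-1}]_{vir}=\LL^{(1-fd)/2}[\PP^{fd-1}]=\LL^{-fd/2}\cdot\LL^{1/2}[\PP^{fd-1}]$, obtained by writing $[\PP^{fd-1}]=(\LL^{fd}-1)/(\LL-1)$ and clearing the factor $\LL^{1/2}$ from the denominator of $[\PP^{fd-1}]_{vir}$. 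This is the only computation involved, and it is immediate.

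The main step is to check that $\Phi$ is a homomorphism of $\lambda$-rings, so that it commutes with $\Sym$. Multiplicativity is clear because $\oplus$ adds dimension vectors: for $a,b$ supported over $\Msp_d,\Msp_e$ the product $ab$ is supported over $\Msp_{d+e}$, whence $\Phi(ab)=\LL^{-f(d+e)/2}ab=\Phi(a)\Phi(b)$. For the $\sigma$-operations I would use that $\sigma^n$ carries the graded piece over $\Msp_d$ into the graded piece over $\Msp_{nd}$, together with the fact that $-\LL^{1/2}$ is a line element. Since $fd$ is even, $\LL^{-fd/2}=(-\LL^{1/2})^{-fd}$ is a power of a line element, and the standard identity $\sigma^n(\ell^m x)=\ell^{mn}\sigma^n(x)$ for a line element $\ell$ yields $\sigma^n(\LL^{-fd/2}a)=\LL^{-nfd/2}\sigma^n(a)=\Phi(\sigma^n(a))$ for $a$ supported over $\Msp_d$. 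The hard part is precisely here: this is where the parity hypothesis $f\in(2\NN)^I$ is indispensable, since for odd $fd$ the sign $(-1)^{fd}$ picked up by the line element would obstruct the compatibility with $\sigma^n$, and $\Phi$ would fail to be a $\lambda$-ring map. Verifying that the line-element computation goes through is the point that requires the most care.

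Finally I would apply $\Phi$ to both sides of Proposition \ref{PT-DT_stack}. On the left, $\Phi$ sends the degree-$d$ summand $\LL^{fd/2}\pi_{f,d\,!}\ICS_{\Msp_{f,d}}$ to $\pi_{f,d\,!}\ICS_{\Msp_{f,d}}$, and summing over $d$ with $\Msp_{f}=\sqcup_d\Msp_{f,d}$ gives $\pi_{f\,!}\ICS_{\Msp_{f}}$. On the right, commuting $\Phi$ past $\Sym$ (allowed by the previous paragraph) and applying it termwise to $\sum_{0\neq d}\LL^{1/2}[\PP^{fd-1}]\DTS^{mot}_d$ replaces each term by $[\PP^{fd-1}]_{vir}\DTS^{mot}_d$ via the scalar identity, producing $\Sym\bigl(\sum_{0\neq d}[\PP^{fd-1}]_{vir}\DTS^{mot}_d\bigr)$. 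Since $\Phi$ is a ring homomorphism, applying it to the true identity of Proposition \ref{PT-DT_stack} yields the asserted identity at once, which completes the proof.
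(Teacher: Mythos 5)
Your proposal is correct and is essentially the paper's own proof: the paper also applies the grading-wise rescaling $(a_d)_d\mapsto(\LL^{-fd/2}a_d)_d$, notes that the parity assumption $f\in(2\NN)^I$ makes this a $\lambda$-ring isomorphism via $\Sym^n(\LL^{-fd/2}a_d)=\LL^{-nfd/2}\Sym^n(a_d)$, and then applies it to Proposition \ref{PT-DT_stack}. Your write-up merely makes explicit the multiplicativity check and the scalar identity $[\PP^{fd-1}]_{vir}=\LL^{-fd/2}\cdot\LL^{1/2}[\PP^{fd-1}]$ that the paper leaves implicit.
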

Notice that $\PP^{fd-1}$ is the fiber of $\pi_{f,d}$ over $\Msp^{s}_d$.  By applying the $\lambda$-ring homomorphism from $\hat{\Ka}(\Var/\Msp)[\LL^{-1/2},(\LL^N-1)^{-1}\,:\, N\in \NN]$ to $\hat{\Ka}_0(\MHM(\Msp))[\LL^{-/2},(\LL^N-1)^{-1}\,:\,N\in \NN]$ mentioned at the beginning of this section to the previous result, we obtain the corresponding formula in $\hat{\Ka}_0(\MHM(\Msp))[\LL^{-1/2},(\LL^N-1)^{-1}\,:\,N\in \NN]$.
\begin{corollary} \label{alternative_form} The following formula
\[ \pi_{f\,\ast}(\ICS_{\Msp_{f}})=\pi_{f\, !}(\ICS_{\Msp_{f}})=\Sym\Big(\sum_{0\not= d\in \NN^{\oplus I} } [\PP^{fd-1}]_{vir} \DTS_d\Big) \]
holds for all framing vectors $f\in (2\NN)^I$. 
\end{corollary}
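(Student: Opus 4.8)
The plan is to transport the already-established motivic alternative form of the PT--DT correspondence to the Hodge-theoretic setting by means of the $\lambda$-ring homomorphism
\[ \Phi\colon \hat{\Ka}(\Var/\Msp)[\LL^{-1/2},(\LL^N-1)^{-1}\,:\,N\in\NN] \longrightarrow \hat{\Ka}_0(\MHM(\Msp))[\LL^{-1/2},(\LL^N-1)^{-1}\,:\,N\in\NN] \]
induced by $[X\xrightarrow{q}\Msp]\mapsto q_!\QQ_X$, and to obtain the left-hand equality separately from properness. First I would record the equality $\pi_{f\,\ast}(\ICS_{\Msp_f})=\pi_{f\,!}(\ICS_{\Msp_f})$: by Theorem \ref{virtsmall} every $\pi_{f,d}$ is projective, hence proper, so in the four functor formalism on $D^b(\MHM)$ the functors $\pi_{f\,\ast}$ and $\pi_{f\,!}$ coincide, and passing to Grothendieck groups gives the first equality. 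This reduces the statement to computing $\pi_{f\,!}(\ICS_{\Msp_f})$.

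Next I would apply $\Phi$ to the motivic identity $\pi_{f\,!}(\ICS_{\Msp_f})=\Sym\bigl(\sum_{0\neq d}[\PP^{fd-1}]_{vir}\DTS^{mot}_d\bigr)$. Being a $\lambda$-ring homomorphism, $\Phi$ commutes with $\Sym$ and fixes the Laurent polynomials $[\PP^{fd-1}]_{vir}\in\ZZ[\LL^{\pm1/2}]$, so the task is to identify the images of the three constituent objects. The compatibilities I would check are: (i) $\Phi\circ\pi_{f\,!}=\pi_{f\,!}\circ\Phi$, which is immediate from functoriality since $\Phi$ is built from $q_!$; (ii) $\Phi$ sends the motivic class $\ICS_{\Msp_f}$ to the Hodge-theoretic $\ICS_{\Msp_f}(\QQ)$, which holds because $\Msp_{f,d}$ is smooth (the corollary following Proposition \ref{framed_reps_2}), so on each component both sides are the appropriate power of $\LL$ times the constant object and $\Phi$ matches the $\LL$-shifts by construction; and (iii) $\Phi(\DTS^{mot}_d)=\DTS_d$.

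For (iii) I would first observe that $\Phi(p_!\ICS^{mot}_\Mst)=p_!\ICS_\Mst(\QQ)$, again because $\Mst_\gamma$ is a smooth stack and the motivic $\ICS^{mot}_{\Mst_\gamma}=\LL^{(\gamma,\gamma)/2}[\Mst_\gamma\hookrightarrow\Mst]$ maps to $\LL^{(\gamma,\gamma)/2}\QQ=\ICS_{\Mst_\gamma}(\QQ)$ under $q_!$. Applying the $\lambda$-ring homomorphism $\Phi$ to the defining equation $p_!\ICS^{mot}_\Mst=\Sym\bigl(\tfrac{1}{\LL^{1/2}-\LL^{-1/2}}\DTS^{mot}\bigr)$ then shows that $\Phi(\DTS^{mot})$ satisfies the very same equation $p_!\ICS_\Mst(\QQ)=\Sym\bigl(\tfrac{1}{\LL^{1/2}-\LL^{-1/2}}(-)\bigr)$ that, by the uniqueness in the defining Lemma, characterizes $\DTS$. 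Hence $\Phi(\DTS^{mot})=\DTS$, and restricting to $\Msp_d$ gives (iii).

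Combining (i)--(iii) turns the image under $\Phi$ of the motivic identity into exactly the asserted formula, and the parity hypothesis $f\in(2\NN)^I$ is inherited verbatim from the motivic statement, where it is precisely what makes $[\PP^{fd-1}]_{vir}$ the genuinely $\Sym$-compatible class and avoids the half-integer sign ambiguities. I expect no genuine obstacle here: the substance of the corollary lies entirely in the motivic PT--DT correspondence and in the projectivity of $\pi_f$ from Theorem \ref{virtsmall}, both already available, and the only real work is the bookkeeping in (i)--(iii), each of which reduces to smoothness of the relevant moduli spaces and stacks together with the functoriality of $q_!$.
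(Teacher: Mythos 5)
Your proposal is correct and takes essentially the same route as the paper: the paper deduces the corollary in one line by applying the $\lambda$-ring homomorphism $[X\xrightarrow{q}\Msp]\mapsto q_!\QQ_X$ to the motivic PT--DT correspondence (alternative form), with the equality $\pi_{f\,\ast}=\pi_{f\,!}$ coming from projectivity of $\pi_f$ (Theorem \ref{virtsmall}). Your steps (i)--(iii) — compatibility of the homomorphism with $\pi_{f\,!}$, matching of the intersection-complex classes on the smooth spaces $\Msp_{f,d}$, and the identification $\DTS^{mot}\mapsto\DTS$ via the uniqueness in the defining lemma — are exactly the bookkeeping the paper leaves implicit.
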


\subsection{The integrality conjecture}

The following rather technical conjecture plays a fundamental role in Donaldson--Thomas theory. A proof has been sketched in \cite{KS2}. A relative version, saying that whenever the conjecture holds for one stability condition, it also holds for any other, has been given in \cite{JoyceDT} (see also \cite{Reineke3}). Our proof is different from the very complicated one given by Kontsevich and Soibelman. In fact, we reduce the general situation of abelian categories satisfying assumptions (1)--(8) to a special situation for which the integrality conjecture has been proven by Efimov \cite{Efimov}.\\
\\
In simple terms the integrality conjecture says that the Donaldson--Thomas invariants look like (a linear combination of) motives of varieties rather than Artin stacks. Actually, we prove a more general version of this, from which the original integrality conjecture can be deduced by applying the proper push forward $\dim_!$.

\begin{theorem}[integrality conjecture, sheaf version]  \label{intconjsv}
The Donaldson--Thomas sheaf $\DTS^{mot}$ is in the image of the natural map 
\[ \hat{\Ka}(\Var/\Msp)[\LL^{-1/2}] \longrightarrow \hat{\Ka}(\Var/\Msp)[\LL^{-1/2}, (\LL^N-1)^{-1}\, :\, N\in \NN].\] 
\end{theorem}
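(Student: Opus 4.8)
The plan is to reduce the statement to the case of a symmetric quiver with trivial stability condition, where integrality is precisely Efimov's Theorem~1.1 in \cite{Efimov}. The one source of manifest pole-freeness is the following: each framed moduli space $\Msp_{f,d}$ is a smooth \emph{variety} (not merely a stack), so $\ICS^{mot}_{\Msp_{f,d}}=\LL^{-\dim\Msp_{f,d}/2}[\Msp_{f,d}\to\Msp]$ and hence $\pi_{f\,!}\ICS^{mot}_{\Msp_f}$ lie in the image of $\hat{\Ka}(\Var/\Msp)[\LL^{-1/2}]$, with no factors $(\LL^N-1)^{-1}$ occurring. By the PT--DT correspondence (Proposition~\ref{PT-DT_stack}) this means that, for every framing vector $f$,
\[ \Sym\Bigl(\sum_{0\neq d\in\NN^{\oplus I}}[\PP^{f\cdot d-1}]_{vir}\,\DTS^{mot}_d\Bigr) \]
is pole-free, while the prefactors $[\PP^{f\cdot d-1}]_{vir}=\frac{\LL^{f\cdot d/2}-\LL^{-f\cdot d/2}}{\LL^{1/2}-\LL^{-1/2}}$ are themselves Laurent polynomials in $\LL^{1/2}$. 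The whole difficulty is thus to pass from pole-freeness of a plethystic exponential to pole-freeness of its argument; this implication is false in general, and it is exactly here that the symmetry assumption~(8) and Efimov's theorem must enter.

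Secondly, I would reduce to a local computation. Membership in the non-localized ring is tested component by component on the finite-type pieces $\Msp_d$, and there, by the cut-and-paste relation, after restriction to the finitely many strata $S_\xi$ of the stratification from Theorem~\ref{virtsmall}: since $(\iota_\xi)_!$ preserves pole-freeness and $\DTS^{mot}_d=\sum_\xi(\iota_\xi)_!\bigl(\DTS^{mot}_d|_{S_\xi}\bigr)$, it suffices to show each restriction $\DTS^{mot}_d|_{S_\xi}$ is pole-free. Now the étale-local triviality of $\pi_{f,d}$ over $S_\xi$ (Theorem~\ref{virtsmall}), combined with Luna's slice theorem (Theorem~\ref{Luna}), shows that this restriction is étale-locally constant along $S_\xi$ and equal to the local Donaldson--Thomas contribution of the $\Ext^1$-quiver $Q_\xi$ with its trivial stability condition. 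The crucial point, already observed after the definition of $Q_\xi$, is that $Q_\xi$ is \emph{symmetric} by assumption~(8). Read near a point of $S_\xi$, the generating identity above is therefore literally the framed/unframed identity for $Q_\xi$, whose framed moduli $\Msp^{nilp}_{f_\xi,d_\xi}(Q_\xi)$ are again smooth.

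Finally, Efimov's Theorem~1.1 asserts precisely that for a symmetric quiver these local contributions lie in $\ZZ[\LL^{\pm 1/2}]$, i.e.\ are pole-free; transporting this back through the étale-local identification gives pole-freeness of $\DTS^{mot}$ near every $S_\xi$, and since the $S_\xi$ cover $\Msp$ the theorem follows. I expect the main obstacle to be this second step: setting up the local comparison so that the relative motivic series over a neighbourhood of $S_\xi$ is identified with the (symmetric, trivially stable) quiver series of $Q_\xi$, and verifying that $\Sym$, the operations $\sigma^n$, and the integration map of Proposition~\ref{integration_map} are compatible with this identification. Here Lemma~\ref{lambda_pull_back} is the tool controlling the $\lambda$-ring compatibility of the restriction maps. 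Once the reduction is arranged so that pole-freeness of $\DTS^{mot}$ near $S_\xi$ is \emph{equivalent} to pole-freeness of the invariant of $Q_\xi$, Efimov's result closes the argument, and the dangerous plethystic inversion of the first step is handled entirely inside his theorem.
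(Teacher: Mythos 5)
Your architecture (localize, identify the local model with a symmetric quiver, quote Efimov) resembles the paper's in spirit, and your first step is sound, but there is a genuine gap at the decisive point: what your localization produces is not what Efimov's theorem controls. Efimov's Theorem concerns the \emph{global} invariants of a symmetric quiver with trivial stability, defined from the full representation stacks via $\sum_n \LL^{(n,n)/2}[R_n(Q_\xi)]/[G_n]\,t^n=\Sym(\cdots)$. Your localization --- fibers of $\pi_{f,d}$ over $S_\xi$, \'etale-local triviality, Luna slices --- produces instead the \emph{nilpotent} contributions: the fibers you control are the spaces $\Msp^{nilp}_{f_\xi,d_\xi}(Q_\xi)$ sitting over the nullcone, so the identity you obtain near $S_\xi$ involves the stalk of the quiver DT sheaf at the origin, about which Efimov's theorem says nothing. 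Consequently your last step is circular: it reduces sheaf-integrality for $\AA$ at a point of $S_\xi$ to sheaf-integrality for $Q_\xi$ at its most degenerate point, which is again an instance of the statement being proved. The bridge the paper builds --- and which your listed tools cannot supply --- is Proposition \ref{localDT}: the stalk $\iota_E^\ast\DTS^{mot}$ equals the \emph{global} quiver invariant $\DT^{mot}(Q_\xi)|_{\LL^{1/2}\mapsto\LL^{-1/2}}$. This is proved by a purely unframed Ringel--Hall computation: the explicit function $f=\sum_n(-1)^{|n|}\LL^{\sum_k{n_k\choose 2}}[\Spec\KK/G_n\to\Mst_\KK]$ satisfies $f\ast[\Mst_E\to\Mst_\KK]=1$ (socle embeddings give Grassmannian fibers, and the $\LL$-binomial identity kills every nonzero object), and pushing this through the integration map of Proposition \ref{integration_map} --- an algebra homomorphism by (7), with (8) making the target $\ast$-product commutative --- yields the inversion identity (\ref{nilpotent}) relating the nilpotent-type stacks $[\Mst_{E,m}]$ to the \emph{full} stacks $[R_n(Q_\xi)]/[G_n]$. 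Note that the paper's proof of Theorem \ref{intconjsv} never uses the PT--DT correspondence or framed moduli spaces at all; they enter only in Section 5.

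A secondary but real defect is the reduction ``it suffices to show each $\DTS^{mot}_d|_{S_\xi}$ is pole-free and equals the local contribution of $Q_\xi$''. The DT sheaf is defined by inverting $\Sym$, and $\Sym$ is built from the monoid structure $\oplus$; it does not commute with restriction to arbitrary locally closed subschemes. Lemma \ref{lambda_pull_back}, which you cite for exactly this compatibility, requires a homomorphism of monoids fitting into a cartesian square, and $S_\xi\hookrightarrow\Msp$ (or an \'etale neighbourhood of a point of $S_\xi$) is not such a map: the strata are not closed under $\oplus$, and the correction terms in the $\Sym$-inversion are push-forwards along $\oplus$ whose fibers over $S_\xi$ meet many strata of smaller moduli spaces. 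The paper's fix is to pull back along the genuine monoid map $\iota_E:\NN^s\times\Spec\KK\to\Msp$, $(n_k)\mapsto\bigoplus_k E_k^{n_k}$ --- the stalk at a possibly non-closed point together with all its ``multiples'' --- for which the cartesian condition holds by uniqueness of the decomposition into simples; this is also why the proof needs the spreading-out argument over $\Spec B_b$ to pass from stalks back to an honest statement over $\Msp$. Finally, a minor point: you lean on the full \'etale-local triviality of Theorem \ref{locally_trivial_fibration}, of which the paper proves, and uses, only the fiber inclusion $\pi_{f,d}^{-1}(E)\subset\Msp^{nilp}_{f_\xi,d_\xi}(Q_\xi)$.
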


\begin{corollary}[integrality conjecture] 
The Donaldson--Thomas invariant $\DT^{mot}$ is in the image of the natural map 
\[ \Ka(\Var/\kk)[\LL^{-1/2}][[t_i\,:\, i\in I]] \longrightarrow \Ka(\Var/\kk)[\LL^{-1/2}, (\LL^N-1)^{-1}\, :\, N\in \NN][[t_i\,:\,i\in I]].\]
\end{corollary}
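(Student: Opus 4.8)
The plan is to reduce the statement to the case of a symmetric quiver with trivial stability condition, where integrality is Efimov's Theorem~1.1, by exploiting the locality of the construction of $\DTS^{mot}$. Since, by the very definition of $\hat{\Ka}(\Var/\Msp)[\LL^{-1/2},(\LL^N-1)^{-1}\,:\,N\in\NN]$, a motivic function is determined by its restrictions to substacks of finite type, and since $\Msp=\sqcup_{\gamma\in\Gamma,\,d\in\NN^{\oplus I}}\Msp_{\gamma,d}$, it suffices to prove that $\DTS^{mot}|_{\Msp_{\gamma,d}}$ lies in the unlocalized ring $\hat{\Ka}(\Var/\Msp_{\gamma,d})[\LL^{-1/2}]$ for every $(\gamma,d)$. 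First I would localize further along the stratification $\Msp_{\gamma,d}=\sqcup_\xi S_\xi$ produced in the proof of Theorem~\ref{virtsmall}, thereby reducing to integrality of $\DTS^{mot}|_{S_\xi}$ for each type $\xi=(\gamma^\bullet,d^\bullet,m_\bullet)$.

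Next I would establish that the construction of $\DTS^{mot}$ is local along these strata. Here Luna's \'etale slice theorem (Theorem~\ref{Luna}) together with Proposition~\ref{normal_ext} enter: near a polystable point $E=\bigoplus_k E_k^{m_k}\in S_\xi$ the pair $(\Mst,\Msp)$ is \'etale-locally modelled on the moduli of representations of the $\Ext^1$-quiver $Q_\xi$ near the origin, and by Theorem~\ref{locally_trivial_fibration} the Hilbert--Chow morphism $\pi_{f,d}$ is \'etale-locally trivial over $S_\xi$ with fiber the nilpotent framed moduli $\Msp^{nilp}_{f_\xi,d_\xi}(Q_\xi)$. The defining relation $p_!\ICS^{mot}_{\Mst}=\Sym\big((\LL^{1/2}-\LL^{-1/2})^{-1}\DTS^{mot}\big)$, recast through the PT--DT correspondence of Proposition~\ref{PT-DT_stack} and the integration map of Proposition~\ref{integration_map}, only couples the direct summands of the semisimple objects indexing $S_\xi$; hence the plethystic logarithm (the inverse of $\Sym$) isolating $\DTS^{mot}_d$ over $S_\xi$ involves only the finite-length subcategory generated by the $E_k$. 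Under the slice equivalence this subcategory is identified with $\rep(Q_\xi)$, so that $\DTS^{mot}|_{S_\xi}$ is, up to a pull-back along $S_\xi\to\mathrm{pt}$ that introduces no denominators, the restriction to the origin (equivalently, the nilpotent part) of the Donaldson--Thomas sheaf of $Q_\xi$ with trivial stability. This is precisely the content of the locality statements, Proposition~\ref{localDT} and Corollary~\ref{localDT2}.

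Since $Q_\xi$ is symmetric by the symmetry assumption~(8), as already noted in the proof of Theorem~\ref{virtsmall}, the fiber invariant is the origin-supported motivic Donaldson--Thomas invariant of a symmetric quiver with trivial stability, which lies in $\Ka(\Var/\kk)[\LL^{-1/2}]$ by Efimov \cite{Efimov}, Theorem~1.1. Transporting this back through the slice identification and the \'etale-local triviality then yields $\DTS^{mot}|_{S_\xi}\in\hat{\Ka}(\Var/S_\xi)[\LL^{-1/2}]$ for every $\xi$; gluing over the strata and over all $(\gamma,d)$ gives the theorem, and the original integrality conjecture follows by applying the $\lambda$-ring homomorphism $\dim_!$ as in the corollary.

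The main obstacle I anticipate is the second step: making rigorous that the $\lambda$-ring logarithm defining $\DTS^{mot}$ genuinely commutes with restriction to a stratum and with the \'etale-local slice identification. The operation $\Sym$ is built from the monoid map $\oplus$, which mixes distinct points of $\Msp$, so one must verify carefully --- using Lemma~\ref{lambda_pull_back} on the compatibility of the $\lambda$-operations with the relevant cartesian pull-backs --- that the only contributions to $\DTS^{mot}$ over $S_\xi$ come from the local quiver data and that the identification of fibers respects the power structure. The second delicate point is controlling the support of the transported invariant, namely the reduction to the nilpotent cone, where Efimov's affine, trivial-stability statement must be shown to apply; once these compatibilities are in place, the remaining arguments are formal.
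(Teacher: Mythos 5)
Your overall architecture coincides with the paper's (prove integrality of the sheaf $\DTS^{mot}$, then apply the $\lambda$-ring homomorphism $\dim_!$; reduce locally to the $\Ext^1$-quiver $Q_\xi$, symmetric by assumption (8), and invoke Efimov), but the step in which you transport integrality from the quiver model back to the stratum has a genuine gap --- and it is exactly the obstacle you flag at the end, which is not a technical formality but the point where the argument breaks. First, motivic functions do not satisfy \'etale descent: an \'etale-locally trivial fibration is \emph{not} equal, in $\hat{\Ka}(\Var/S_\xi)[\LL^{-1/2},(\LL^N-1)^{-1}\,:\,N\in\NN]$, to the class of the product of the base with the fiber; the only locality built into these groups is the cut-and-paste relation and the relation (\ref{principal_bundle_relation}) for special (hence Zariski-locally trivial) principal bundles. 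So Luna slices (Theorem \ref{Luna}) and the \'etale-local triviality of Theorem \ref{locally_trivial_fibration} cannot yield $\DTS^{mot}|_{S_\xi}\in\hat{\Ka}(\Var/S_\xi)[\LL^{-1/2}]$; in the paper, \'etale-local triviality is used only for dimension estimates (Theorem \ref{virtsmall}) and in the mixed-Hodge-module setting of Section 5, where the decomposition theorem is available. Second, your claim that the plethystic logarithm commutes with restriction to $S_\xi$ is unjustified: Lemma \ref{lambda_pull_back} requires the monoid square of the inclusion to be cartesian. This holds for the embedding $\iota_E:\NN^s\times\Spec\KK\hookrightarrow\Msp$ attached to a \emph{fixed} point (by Krull--Schmidt, two semisimple objects whose sum has all factors among the fixed $E_k$ each have this property), but it fails for the stratum inclusion $S_\xi\hookrightarrow\Msp$, because the simple factors vary along the stratum: a pair $(A,B)$ with $A\oplus B\in S_\xi$ is constrained to share the \emph{same} collection of simples, so $\oplus^{-1}(S_\xi)$ is a correspondence over $S_\xi$ and not a union of products of strata.

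The paper circumvents both problems by never restricting to strata: it restricts to individual (not necessarily closed) points $\eta$ with residue field $\KK$, where Proposition \ref{localDT} is proved by an honest Ringel--Hall computation over $\KK$ --- the identity $f\ast g=1$ for $f=\sum_n(-1)^{|n|}\LL^{\sum_k{n_k\choose 2}}[\Spec\KK/G_n\to\Mst_\KK]$ and $g=[\Mst_E\to\Mst_\KK]$, established via the $\LL$-binomial identity for the Grassmannians of embeddings into the socle, followed by the integration map of Proposition \ref{integration_map} --- and it then globalizes the pointwise statement not by any descent, but by the spreading-out and Noetherian-induction argument over affine opens (Remark \ref{reduction_to_affine_case}) given at the beginning of the proof of Theorem \ref{intconjsv}. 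Note also that you cite Proposition \ref{localDT} and Corollary \ref{localDT2} as available black boxes, but they are the technical heart of the proof of the statement you are asked to prove; without the Hall-algebra identity (or a substitute) your argument contains no mechanism identifying $\iota_E^\ast\DTS^{mot}$ with $\DT^{mot}(Q_\xi)|_{\LL^{1/2}\mapsto\LL^{-1/2}}$. A repaired version of your plan would: (i) use cut-and-paste only to reduce to stalks at points, (ii) prove the identification with the Ext-quiver invariants at each point by the Hall-algebra computation, and (iii) remove denominators Zariski-locally by spreading out --- which is precisely the paper's proof.
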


In order to prove Theorem \ref{intconjsv}, it suffices to show that the stalk of $\DTS^{mot}$ at a not necessarily closed point $\eta \in\Msp_{\gamma,d}$ with residue field $\KK\supset \kk$ does not involve denominators apart from powers of $\LL^{1/2}$.  Indeed, we first restrict ourselves to an open affine neighbourhood $\Spec B$ of the generic point $\eta$ of an irreducible component of $\Msp_{\gamma,d}$ and show the absence of denominators. If that has been done for all generic points of $\Msp_{\gamma,d}$, we can restrict ourselves to the closed complement of the union of these neighborhoods by the cut and paste relation and proceed by induction on dimension. \\
To show the absence of denominators on irreducible affine subschemes $\Spec B$, we can use the alternative definition of $\Ka(\Var/\Spec B)$ given in Remark \ref{reduction_to_affine_case}. We have to show the following for arbitrary $N$ and arbitrary $f\in \Ka(\Var/\Spec B)$: If there is an element $g \in \Ka(\Var/\Quot(B))$ given by linear combinations of finitely generated $\Quot(B)$-algebras such that  $f\otimes_B \Quot(B)=g\otimes_{\Quot(B)} \Quot(B)[x_1,\ldots,x_N]- g$, where $\Quot(B)$ denotes the quotient field of $B$, then one can find elements $b\in B$ and $\tilde{g}\in\Ka(\Var/\Spec B_b)$ given by linear combinations of finitely generated $B_b$-algebras  such that $f\otimes_B B_b=\tilde{g}\otimes_{B_{b'}} B_{b'}[x_1,\ldots,x_N]- \tilde{g}$ and $\tilde{g}\otimes_{B_b} \Quot(B)=g$. In such a situation, we may replace the open neighbourhood of $\eta$ with $\Spec B_b$ and cancel a denominator of the form $\LL^N-1$. \\
As any finite set of finitely generated $\Quot(B)$ algebras is already defined over $B_{b'}$ for some $b'$, we can certainly ``lift'' $g$ to some $g'$. It remains to show that $f\otimes_B B_b=g'\otimes_{B_{b'}} B_{b'}[x_1,\ldots,x_N]- g'$. Over $\Quot(B)$ this is true due to the existence of a finite chain of relations presented in Remark \ref{reduction_to_affine_case}. But each of these relations does also lift to a relation over $B_b$ for some sufficiently ``large'' $b\in B\subset B_{b'}$. Then $\tilde{g}:=g'\otimes_{B_{b'}} B_{b}$ does what we want. \\
Thus, we have proven that restriction to all (not necessarily) closed points suffices to prove the absence of denominators.  In fact, we will show the following: \\
If $\eta$ with residue field $\KK$ is in the Luna stratum $S_\xi\subset \Msp_{\gamma,d}$ associated to $\xi=(\gamma^\bullet,d^\bullet,a_\bullet)$ with $\gamma=\sum_{k=1}^sa_k\gamma^k$ and  $d=\sum_{k=1}^sa_kd^k$, it represents a semisimple object $\bigoplus_{k=1}^sE_k^{a_k}$ with pairwise non-isomorphic simple objects $E_k$ in $\AA_\KK$ of class $\cl E_k=\gamma^k$, dimension vector $d^k=\dim E_k$ and multiplicity $a_k\in \NN\setminus \{0\}$. We write $E =(E_k)_{k=1}^s$ for the $s$-tuple of simple objects.\\
Consider the embedding $\iota_E:\NN^s\times\Spec\KK \hookrightarrow \Msp$ of locally finite schemes mapping $\Spec \KK$ indexed by $(n_k)_{k=1}^s$ to $\bigoplus_{k=1}^s E_k^{n_k}$. \\
Note that $\hat{\Ka}(\Var/\NN^s\times\Spec\KK)[\LL^{-1/2}, (\LL^N-1)^{-1}\, :\, N\in \NN]$ can be identified with the ring \[\Ka_0(\Var/\KK)[\LL^{-1/2}, (\LL^N-1)^{-1}\, :\, N\in \NN][[t_1,\ldots,t_s]]\] of power series in $s$ variables. We will prove that $\iota_E^\ast \DTS^{mot}$ lies in the image of \[\hat{\Ka}_0(\Var/\KK)[\LL^{-1/2}][[t_1,\ldots,t_s]] \longrightarrow \hat{\Ka}_0(\Var/\KK)[\LL^{-1/2}, (\LL^N-1)^{-1}\, :\, N\in \NN][[t_1,\ldots,t_s]].\] 
Let us form the following fiber product: 
\[ \xymatrix @C=2cm { \Mst_E \ar@{^{(}->}[r]^{\tilde{\iota}_E} \ar[d]_{\tilde{p}} & \Mst \ar[d]^p \\ \NN^s\times\Spec\KK \ar@{^{(}->}[r]^{\iota_E} & \Msp } \] 
The stack $\Mst_E =\sqcup_{n\in \NN^s} \Mst_{E,n}$ can be seen as the substack of objects having a decomposition series with factors in the collection $E =(E_k)_{k=1}^s$. Since $p_!$ commutes with base change and Lemma \ref{lambda_pull_back} applied to $\iota_E:\NN^s \hookrightarrow \Msp$, we get 
\[ \tilde{p}_! \bigl( \tilde{\iota}_E^\ast \ICS^{mot}_{\Mst} \bigr) = \Sym \Bigl( \frac{1}{\LL^{1/2}-\LL^{-1/2}} \iota_E^\ast\DTS^{mot} \Bigr). \]
Note that $\tilde{\iota}_E^\ast \ICS^{mot}_{\Mst}$  restricted to $\Mst_{E,n}$ is just $\LL^{(\gamma(n),\gamma(n))/2}[\Mst_{E,n} \xrightarrow{id} \Mst_{E,n}]$, where $\gamma(n):=\sum_{k=1}^sn_k\gamma^k$ is the class in $\Ka_0(\AA_{\bar{\kk}})/\rad(-,-)$ of any $\bar{\kk}$-point in the connected component of $\Msp$ containing $\bigoplus_{k=1}^s E^{n_k}_k$. \\
Let us recall the Ext-quiver $Q_\xi $ of $\xi=(\gamma^\bullet,d^\bullet,a_\bullet)$. Its vertex set is $\{1,\ldots,s\}$, and the number of arrows from $k$ to $l$ is given by $\delta_{kl}-(\gamma^k,\gamma^l)$. In fact, $Q_\xi$ depends only on $\gamma^\bullet=(\gamma^k)_{k=1}^s$.  For a dimension vector $n\in \NN^s$ of $Q_\xi$, we denote by $R_{n}(Q_\xi)\cong \mathbb{A}_\KK^{\sum_{\alpha:k \to l}n_kn_l}$ the affine space parametrizing all representations of $Q_\xi$ on a fixed $\KK$-vector space of dimension $n$. Recall that $R_n(Q_\xi)/G_{n}$ is the stack of $n$-dimensional $\KK Q_\xi$-representations on any vector space of dimension vector $n$. \\   
As $(-,-)$ is symmetric by assumption, the quiver $Q_\xi$ is symmetric, and we can apply the following result of Efimov to the quiver $Q_\xi$. 

\begin{theorem}[\cite{Efimov}, Theorem 1.1] 
Given any quiver $Q$ with vertex set $\{1,\dots,s\}$, we define for every $n\in \NN^s\setminus \{0\}$ the ``motivic'' Donaldson--Thomas invariants $\DT^{mot}(Q)_{n}\in \ZZ[\LL^{-1/2}, (\LL^N-1)^{-1}\, :\, N\in \NN]$ of $Q$ with respect to the trivial stability condition $\theta=0$ by means of 
\[ \sum_{n\in \NN^s} \LL^{(n,n)/2}\frac{[R_{n}(Q)]}{[G_{n}]}\, t^n =: \Sym \Bigl( \frac{1}{\LL^{1/2}-\LL^{-1/2}} \sum_{n\in \NN^s\setminus\{0\}} \DT^{mot}(Q)_{n}t^n \Bigr), \]
where we might think of $\LL^{1/2}$ as a formal variable. 
If the quiver $Q$ is symmetric, the invariant $DT^{mot}(Q)_n$ is contained in the Laurent subring $\ZZ[\LL^{\pm 1/2}]$ of $\ZZ[\LL^{-1/2}, (\LL^N-1)^{-1}\, :\, N\in \NN]$.
\end{theorem}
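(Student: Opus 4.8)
The plan is to reinterpret the left-hand side cohomologically and to deduce the polynomiality of $\DT^{mot}(Q)_n$ from the freeness of the Kontsevich--Soibelman cohomological Hall algebra of $Q$. Every space in sight is of Tate type: writing $a_{kl}$ for the number of arrows from $k$ to $l$, the space $R_n(Q)\cong\Aff_\KK^{\sum_{k,l}a_{kl}n_kn_l}$ is affine and $G_n=\prod_k\Gl(n_k)$, so $\LL^{(n,n)/2}[R_n(Q)]/[G_n]$ is, up to a fixed power of $\LL^{1/2}$, the Poincaré series of the equivariant cohomology $\mathcal{H}_n:=\Ho^\ast_{G_n}(R_n(Q),\QQ)$. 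Since $R_n(Q)$ is $G_n$-equivariantly contractible, $\mathcal{H}_n\cong\Ho^\ast(BG_n)$, a polynomial ring in the Chern roots with finite-dimensional graded pieces; thus the left-hand side is exactly the ($\LL^{1/2}$-weighted) Poincaré series of $\mathcal{H}:=\bigoplus_{n\in\NN^s}\mathcal{H}_n$. First I would equip $\mathcal{H}$ with the Kontsevich--Soibelman multiplication (see \cite{KS2}), given by an explicit shuffle formula whose kernel is the Euler class of the normal bundle governed by the matrix $(a_{kl})$.

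The symmetry hypothesis enters here. For a symmetric quiver the Euler form is symmetric, and Kontsevich--Soibelman show that after the standard sign twist by cohomological parity the algebra $\mathcal{H}$ becomes supercommutative, graded by $\NN^s\times\ZZ$ with each bigraded piece finite-dimensional over $\QQ$. This is the structure for which the word \emph{free} even makes sense, and it is the precise role played by the assumption that $Q$ is symmetric.

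Next I would translate the defining equation of $\DT^{mot}(Q)_n$ into algebra. The right-hand side is $\Sym$ applied to $\frac{1}{\LL^{1/2}-\LL^{-1/2}}\sum_n\DT^{mot}(Q)_n t^n$, so $\sum_n\DT^{mot}(Q)_n t^n$ is the plethystic logarithm of the Poincaré series of $\mathcal{H}$. If $\mathcal{H}$ is a free supercommutative algebra on a bigraded space $V=\bigoplus_{n\neq 0}V_n$ with each $V_n$ finite-dimensional, then the Poincaré series of $\mathcal{H}$ is precisely $\Sym$ of the Poincaré series of $V$, the $\Sym$ being taken with the $\lambda$-ring convention $\sigma^m(-\LL^{1/2})=(-\LL^{1/2})^m$. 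Matching the two $\Sym$-expressions identifies $\DT^{mot}(Q)_n$ with the signed Poincaré series of the finite-dimensional space $V_n$. A signed Poincaré series of a finite-dimensional bigraded space is a Laurent polynomial in $\LL^{1/2}$; in particular it involves no denominators $(\LL^N-1)^{-1}$, which is exactly the assertion $\DT^{mot}(Q)_n\in\ZZ[\LL^{\pm1/2}]$.

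Everything therefore reduces to the single structural statement that the cohomological Hall algebra of a symmetric quiver is free supercommutative with finite-dimensional spaces of generators in each dimension vector; this is the heart of \cite{Efimov} and the main obstacle. The plan for this step follows Efimov: produce the primitive generators explicitly inside the shuffle algebra, prove their algebraic independence by a leading-term argument (tracking supports in the Chern-root polynomial ring) exploiting the shape of the shuffle kernel, and prove that they generate by an induction on $|n|=\sum_k n_k$ matching dimensions against the Poincaré series. The finiteness of $\dim_\QQ V_n$ is then automatic, since $V_n$ is a subquotient of the finite-dimensional $\mathcal{H}_n$, and it is this finiteness that upgrades ``rational function'' to ``Laurent polynomial''. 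I expect the algebraic-independence and the dimension-counting (generation) steps to be the genuinely delicate parts, with the symmetry of $Q$ indispensable throughout to keep $\mathcal{H}$ commutative so that freeness is meaningful.
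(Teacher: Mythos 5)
The paper itself contains no proof of this statement: it is imported verbatim from Efimov (Theorem 1.1 of the cited paper) and used as a black box in Section 6, so the only meaningful comparison is with Efimov's own argument. Your proposal is, in outline, exactly that argument: identify the motivic series with the Poincar\'e series of the Kontsevich--Soibelman cohomological Hall algebra $\mathcal{H}=\bigoplus_{n}\Ho^\ast_{G_n}(R_n(Q),\QQ)$, use the symmetry of $Q$ to make $\mathcal{H}$ supercommutative after the standard sign twist (which is due to Kontsevich--Soibelman), and reduce integrality of $\DT^{mot}(Q)_n$ to freeness of $\mathcal{H}$, which is precisely what Efimov proves by the explicit-generators, leading-term and dimension-counting strategy you sketch. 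So the approach is the right one, and correctly locates the genuinely hard step.

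Two bookkeeping points should be corrected. First, $[G_n]^{-1}$ is not the Poincar\'e series of $\Ho^\ast(BG_n)$ up to a fixed power of $\LL^{1/2}$ alone: since $[\Gl(m)]^{-1}=\LL^{-m^2}\prod_{j=1}^m(1-\LL^{-j})^{-1}$ while the series of $\Ho^\ast(B\Gl(m))$ is $\prod_{j=1}^m(1-\LL^{j})^{-1}$, the two differ by the inversion $\LL\mapsto\LL^{-1}$ (the motivic class encodes compactly supported cohomology, dual to the equivariant cohomology used in the CoHA). This is harmless because $\ZZ[\LL^{\pm 1/2}]$ is stable under that substitution --- indeed the paper performs exactly this substitution when it applies Efimov's theorem in Proposition \ref{localDT} --- but it should be said. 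Second, and more significant for the logic, the space of generators of the free supercommutative algebra is \emph{not} finite-dimensional in each dimension vector: it has the form $V'_n\otimes\QQ[u]$ with $V'_n$ finite-dimensional and $u$ of cohomological degree $2$. The prefactor $\frac{1}{\LL^{1/2}-\LL^{-1/2}}$ inside $\Sym$ in the defining equation is exactly the Poincar\'e series of this $\QQ[u]$ factor (up to a shift), so matching the two $\Sym$ expressions identifies $\DT^{mot}(Q)_n$ with the signed Poincar\'e series of the finite-dimensional $V'_n$ rather than of the full generator space; your conclusion $\DT^{mot}(Q)_n\in\ZZ[\LL^{\pm 1/2}]$ stands, but the finiteness claim must be placed on $V'_n$, and proving freeness in this precise form, with the $\QQ[u]$ factor split off, is what Efimov's induction actually accomplishes.
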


When we apply Efimov's Theorem to $Q_\xi$ and specialize $\LL$ to $[\AA_\KK^1]$, we use the notation $(-,-)_{Q_\xi}, R_{n}(Q_\xi)$ and $\DT^{mot}(Q_\xi):=\sum_{n\in \NN^s\setminus\{0\}} \DT^{mot}(Q_\xi)_nt^n$ to distinguish the objects from their counterparts for $\AA_\KK$ which might be $\KK Q -\rep^{ss}_\mu$. Theorem \ref{intconjsv} is then a direct consequence of the following result and the remarks made at the beginning of the proof.  

\begin{proposition} \label{localDT}
Denote by $\DT^{mot}(Q_\xi)|_{\LL^{1/2}\mapsto \LL^{-1/2}}$ the series in $\ZZ[\LL^{\pm 1/2}][[t_1,\ldots,t_s]]$ obtained by the indicated substitution. Then $\DT^{mot}(Q_\xi)|_{\LL^{1/2}\mapsto \LL^{-1/2}}=\iota_E^\ast \DTS^{mot}$. In particular, $\iota_E^\ast\DTS^{mot}$ is an element of the subring $\ZZ[\LL^{\pm 1/2}][[t_1,\ldots,t_s]]$, respectively \\ $\Ka_0(\Var/\KK)[\LL^{-1/2}][[t_1,\ldots,t_s]]$, of $\Ka_0(\Var/\KK)[\LL^{-1/2}, (\LL^N-1)^{-1}\, :\, N\in \NN][[t_1,\ldots,t_s]]$.
\end{proposition}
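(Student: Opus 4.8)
The plan is to reduce the whole statement to a purely quiver-theoretic comparison for the Ext-quiver $Q_\xi$, feeding in Efimov's theorem at the very end. Starting from the identity obtained just before the statement (via the cartesian square, base change for $p_!$, and Lemma \ref{lambda_pull_back}),
\[ \Sym\Bigl(\tfrac{1}{\LL^{1/2}-\LL^{-1/2}}\,\iota_E^\ast\DTS^{mot}\Bigr)=\tilde p_!\bigl(\tilde\iota_E^\ast\ICS_{\Mst}^{mot}\bigr)=\sum_{n\in\NN^s}\LL^{(\gamma(n),\gamma(n))/2}\,[\Mst_{E,n}]\,t^n, \]
everything comes down to computing the exponents and the classes $[\Mst_{E,n}]$. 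First I would record the combinatorial identity $(\gamma(n),\gamma(m))=(n,m)_{Q_\xi}$: since $Q_\xi$ has $\delta_{kl}-(\gamma^k,\gamma^l)$ arrows from $k$ to $l$, its Euler form is $\sum_k n_km_k-\sum_{k,l}\bigl(\delta_{kl}-(\gamma^k,\gamma^l)\bigr)n_km_l=\sum_{k,l}(\gamma^k,\gamma^l)n_km_l$, which is $(\gamma(n),\gamma(m))$ by definition of $\gamma(n)=\sum_k n_k\gamma^k$. Second, I would identify $\Mst_{E,n}$ with the stack $[N_n(Q_\xi)/G_n]$ of \emph{nilpotent} $Q_\xi$-representations, $N_n(Q_\xi)\subset R_n(Q_\xi)$ being those admitting a complete flag with vertex-simple subquotients. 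This is the Ext-quiver description of the category of objects with Jordan--Hölder factors among $E=(E_k)$: assumption (7) forces $\Ext^{\ge 2}$ to vanish, so that the higher $A_\infty$-products on $\bigoplus_{k,l}\Ext^\bullet(E_k,E_l)$ vanish and there are no relations, while assumption (6) together with Proposition \ref{normal_ext} identifies the arrows with $\Ext^1(E_k,E_l)$. Consequently the left-hand series is exactly the nilpotent counterpart $\sum_n\LL^{(n,n)_{Q_\xi}/2}\,[N_n(Q_\xi)]/[G_n]\,t^n$ of the series in Efimov's theorem.

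The heart of the proof is then to compare this nilpotent series with Efimov's series $\sum_n\LL^{(n,n)_{Q_\xi}/2}\,[R_n(Q_\xi)]/[G_n]\,t^n$ built from \emph{all} representations, and to show the associated invariants differ precisely by $\LL^{1/2}\mapsto\LL^{-1/2}$ (a one-dimensional check, e.g.\ for the one-loop quiver where the nilpotent and full invariants are $\LL^{-1/2}$ and $\LL^{1/2}$, confirms both the discrepancy and the substitution). To locate the substitution I would apply the whole formalism to $\KK Q_\xi-\rep$ with the trivial stability, which satisfies (1)--(8) because $Q_\xi$ is symmetric. On the one hand $\dim_!$ of its Donaldson--Thomas sheaf is, by definition, Efimov's $\DT^{mot}(Q_\xi)$. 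On the other hand the stalk of that sheaf at the origin $0=\bigoplus_k S_k^{n_k}$ of $\Msp_n(Q_\xi)$ is computed by the same nilpotent stack and hence equals $\iota_E^\ast\DTS^{mot}$ (locality: both categories have Ext-quiver $Q_\xi$). The scaling action of $\mathbb{G}_m$ on the arrows contracts the affine GIT-quotient $\Msp_n(Q_\xi)$ to the single point $0$; for a $\mathbb{G}_m$-monodromic class on such a cone, the compactly supported pushforward $\dim_!$ agrees with the costalk at the vertex, which is Verdier-dual to the stalk, and on the Grothendieck group this duality is realized by $\LL^{1/2}\leftrightarrow\LL^{-1/2}$. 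This gives $\DT^{mot}(Q_\xi)=\dim_!\DTS^{mot}=\bigl(\iota_E^\ast\DTS^{mot}\bigr)|_{\LL^{1/2}\mapsto\LL^{-1/2}}$, i.e.\ the claim; integrality of $\iota_E^\ast\DTS^{mot}$ then follows at once from Efimov's conclusion that $\DT^{mot}(Q_\xi)_n\in\ZZ[\LL^{\pm1/2}]$.

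The main obstacle is making the contracting-action step rigorous motivically and without circularity. I cannot invoke the main theorem $\DTS=\ICS$ here, since the present proposition feeds into the integrality conjecture on which that theorem rests; so the duality must be proven directly for motivic classes, via a motivic hyperbolic-localization statement for the contracting $\mathbb{G}_m$-action on $\Msp_n(Q_\xi)$ rather than for an intersection complex. Two points need care: verifying that the relevant classes are genuinely $\mathbb{G}_m$-monodromic and that the stalk/costalk localization isomorphism holds at the level of motivic classes; and bookkeeping the half-integral powers of $\LL$ together with the sign introduced by $\LL^{1/2}-\LL^{-1/2}$ under the substitution, so that the plethystic logarithms match exactly. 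A more computational alternative that avoids the localization theorem would be to prove the nilpotent-versus-all comparison directly as a generating-function identity, using the integration map $I$ of Proposition \ref{integration_map} and the framed identity (\ref{Hilbert_scheme_identity}) applied to $Q_\xi$ localized at the origin; this trades the geometric duality for an explicit Ringel--Hall manipulation but still requires controlling the same powers of $\LL^{1/2}$.
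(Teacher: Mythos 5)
Your setup is sound as far as it goes: the starting identity $\Sym\bigl(\frac{1}{\LL^{1/2}-\LL^{-1/2}}\iota_E^\ast\DTS^{mot}\bigr)=\tilde p_!\bigl(\tilde\iota_E^\ast\ICS^{mot}_{\Mst}\bigr)$ and the Euler-form computation $(\gamma(n),\gamma(m))=(n,m)_{Q_\xi}$ are exactly what the paper uses. But both pillars of your argument have genuine gaps. First, the identification of $\Mst_{E,n}$ with the nilpotent stack $N_n(Q_\xi)/G_n$ is not available in this framework. Assumption (7) is precisely the paper's device for \emph{avoiding} higher $\Ext$-groups: the paper says explicitly that $\AA_\KK$ may lack enough injectives or projectives and that higher $\Ext$'s are not defined, so there is no ambient $\Ext^{\geq 2}$ whose vanishing you can invoke, no $A_\infty$-structure on $\bigoplus_{k,l}\Ext^\bullet(E_k,E_l)$, and no formality statement. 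Even granting such a structure, promoting the resulting formal, deformation-theoretic description to a global isomorphism (or even a motivic equality) for the honest substack $\Mst_{E,n}\subset\Mst$ is a substantial theorem in itself; the only statement of this kind the paper proves (Section 7, via Luna slices and Proposition \ref{normal_ext}) is \'etale-local around the semisimple point and yields only a \emph{containment} of the Hilbert--Chow fiber in $\Msp^{nilp}_{f_\xi,d_\xi}(Q_\xi)$, not an identification of stacks. Second, the step you yourself flag as the main obstacle — a motivic contraction/hyperbolic-localization identity equating $\dim_!$ with a costalk, plus the realization of stalk/costalk duality as $\LL^{1/2}\leftrightarrow\LL^{-1/2}$ on motivic classes — is left unproven, and nothing in the paper supplies it: the remark after the proposition citing Bittner is an interpretation, not a tool, and since this proposition feeds into the integrality conjecture (and hence into the main theorem), you correctly note you cannot borrow Hodge-theoretic localization without circularity. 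As it stands, the proposal is a reduction of the statement to two unproven theorems.

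The paper's proof avoids both issues by a short, self-contained Ringel--Hall computation, and it never identifies $\Mst_{E,n}$ with nilpotent representations at all. It introduces the signed motivic function $f=\sum_{n\in\NN^s}(-1)^{|n|}\LL^{\sum_k{n_k\choose 2}}[\Spec\KK/G_n\to\Mst_\KK]$, supported on the semisimple objects $\bigoplus_k E_k^{n_k}$, and $g=[\Mst_E\to\Mst_\KK]$, and computes the convolution $f\ast g$: over a fixed $F$ the relevant stack of short exact sequences fibers by the Grassmannians of embeddings of $\bigoplus_k E_k^{n_k}$ into the socle $\bigoplus_k E_k^{N_k}$ of $F$, and the identity $\sum_{j=0}^{N}(-1)^j\LL^{{j\choose 2}}\left[{N\atop j}\right]=0$ for $N>0$ kills every nonzero $F$, giving $f\ast g=1$. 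Applying the integration map of Proposition \ref{integration_map} yields $I(f)\cdot I(g)=1$; the only quiver-theoretic input is the affine-space count $[R_n(Q_\xi)]=\LL^{\sum_k n_k^2-(n,n)_{Q_\xi}}$, which identifies $I(f)$ with Efimov's series after the substitution $\LL^{1/2}\mapsto\LL^{-1/2}$, while $I(g)$ is exactly your left-hand series; Efimov's theorem then gives the integrality. If you want to rescue your strategy, note that this Hall-algebra inversion is also the cleanest realization of the ``computational alternative'' you sketch at the end — it replaces both the stack identification and the localization theorem in one stroke.
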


\begin{remark} \rm
The substitution $\LL^{1/2}\mapsto \LL^{-1/2}$ has an intrinsic meaning. For any base $\BB$ there is a duality operation on $\hat{\Ka}(\Var/\BB)[\LL^{-1/2}, (\LL^N-1)^{-1}\, :\, N\in \NN]$ which can be seen as a motivic version of (relative) Poincar\'{e} duality. See \cite{Bittner04} for more details on this.
\end{remark}

\begin{proof} As the substitution $\LL^{1/2}\mapsto \LL^{-1/2}$ is compatible with the $\lambda$-ring structure of $\ZZ[\LL^{-1/2}, (\LL^N-1)^{-1}\, :\, N\in \NN][[t_1,\ldots,t_d]]$, which contains $\ZZ[\LL^{\pm 1/2}][[t_1,\ldots,t_s]]$ as a $\lambda$-subring, it suffices to show the identity 
\begin{equation} \label{nilpotent} \Bigl(\sum_{n\in \NN^s} \LL^{(n,n)_{Q_\xi}/2} \frac{[R_{n}(Q_\xi)]}{[G_n]}t^n \Bigr)\Big|_{\LL^{1/2} \to \LL^{-1/2}} \cdot \Bigl(\sum_{m\in \NN^s} \LL^{(\gamma(m),\gamma(m))/2} [\Mst_{U,m}]t^m\Bigr) = 1 
\end{equation}
in $\Ka_0(\Var/\KK)[\LL^{-1/2}, (\LL^N-1)^{-1}\, :\, N\in \NN][[t_1,\ldots,t_s]]$. Indeed, the factor on the left hand side is by definition 
\[\Sym\Bigl(\frac{\DT^{mot}(Q_\xi)}{\LL^{1/2}-\LL^{-1/2}}\Bigr)\Bigr|_{\LL^{1/2}\mapsto \LL^{-1/2}} = \Sym\Bigl(- \frac{\DT^{mot}(Q_\xi)|_{\LL^{1/2}\mapsto \LL^{-1/2}}}{\LL^{1/2}-\LL^{-1/2}}\Bigr).\]  
On the other hand, the factor on the right hand side is nothing else than 
\[ \tilde{p}_! (\tilde{\iota}_E^\ast \ICS_{\Mst}) = \Sym\Bigl( \frac{ \iota_E^\ast \DTS^{mot}}{\LL^{1/2}-\LL^{-1/2}}\Bigr). \]
Consider the following two motivic functions on $\Mst_{\KK}=\Mst\times_\kk \Spec\KK$. 
\[ f:=\sum_{n\in \NN^s}(-1)^{|n|}\LL^{\sum_{k=1}^s{n_k \choose 2}}[ \Spec \KK /G_n \rightarrow \Mst_{\KK}] \quad\mbox{and}\quad g:=[\Mst_E \rightarrow \Mst_{\KK}],\]
where for $n\in \NN^s$ the quotient stack $\Spec \KK/G_n$ maps to the object $\bigoplus_{k=1}^s E_k^{n_k}$ of class $\gamma(n)$ and its automorphism group. In particular, the morphisms used to define $f$ and $g$ correspond to substacks of $\Mst_{\KK}$. We compute the convolution product $f\ast g$ by means 
of the following diagram
\[ \xymatrix { \mathcal{Z}_{\gamma(n),\gamma(m)} \ar@{^{(}->}[r] \ar[d] & \mathfrak{Exact}_{\gamma(n),\gamma(m),\KK} \ar[d]_{\pi_1\times \pi_3} \ar[r]^{\pi_2} & \Mst_{\gamma(n)+\gamma(m),\KK} \\ \Spec \KK/G_n \times_\KK \Mst_{U,\gamma(m)} \ar@{^{(}->}[r] & \Mst_{\gamma(n),\KK} \times_\KK \Mst_{\gamma(m),\KK},  } \] 
where the left square is cartesian, and $\mathfrak{Exact}_{\gamma(n),\gamma(m),\KK}$ denotes the stack of short exact sequences  in $\AA_\KK$ with prescribed classes for the first and third object in the sequence. The morphisms $\pi_1,\pi_2$ and $\pi_3$ map a sequence to the the corresponding entries. Since $\pi_2$ is representable, $\mathcal{Z}_{\gamma(n),\gamma(m)} \longrightarrow \Mst_{\gamma(n)+\gamma(m),\KK}$ is representable, too. In fact, $\mathcal{Z}_{\gamma(n),\gamma(m)}$ is the substack of objects $F$ that are extensions of an object  with dimension vector $\gamma(m)$ and Jordan--H\"older factors among the $(E_k)_{k=1}^s$ by the semisimple object $\bigoplus_{k=1}^s E_k^{n_k}$. In particular, the  Jordan--H\"older factors of $F$ are also among the $(E_k)_{k=1}^s$, and $\bigoplus_{k=1}^s E_k^{n_k}$ must embed into the socle $\bigoplus_{k=1}^s E_k^{N_k}$ of $F$ for certain integers $N_k$ depending on $F$. The space of such embeddings, that is, the fiber of the map $\mathcal{Z}_{\gamma(n),\gamma(m)} \longrightarrow \Mst_{\gamma(n)
+\gamma(m),\KK}$ over $F$, is given by the product of finite Grassmannians $\prod_{k=1}^s\Gr_{n_k}^{
N_k}$ over $\KK$. Hence, the convolution product $f\ast g$ restricted to $F \in \Mst_{\gamma(n)+\gamma(m),\KK}$ is
\[\sum_{0\leq n_k\leq N_k}\prod_{k=1}^s(-1)^{n_k}\LL^{{n_k}\choose 2}\left[{N_k\atop n_k}\right],\] 
the $\LL$-binomial coefficient $\left[{N_k\atop n_k}\right]$ being the Lefschetz motive of the Grassmannian $\Gr^{N_k}_{n_k}$ over $\KK$. A standard identity for $\LL$-binomial coefficients then shows that this sum vanishes as soon as $N_k\not=0$ for some $k\in K$, that is, for every non-zero $F$. Since $F$ was arbitrary, this can only happen if the motivic function $f\ast g$ is concentrated on the zero representation, and a direct computation shows $f\ast g= [\Spec \KK \xrightarrow{\;0\;} \Mst_{\KK}]=1$. 
Using Lemma \ref{integration_map}, we get the identity $1=I(f\ast g)=I(f)\cdot I(g)$ of motivic functions on $\Msp_\KK$ which are actually supported on $\NN^s\times \Spec\kk \hookrightarrow \Msp_\KK$ via the embedding $\iota_E:n\mapsto \bigoplus_{k=1}^sE_k^{n_k}$. Using $[R_{n}(Q_\xi)]=\LL^{-(n,n)_{Q_\xi}+\sum_{k=1}^sn_k^2}=\LL^{-(\gamma(n),\gamma(n))+\sum_{k=1}^s n_k^2}$, a simple computation shows that $I(f)$ is the first factor in equation (\ref{nilpotent}) while the second is obviously $I(g)$. 

\end{proof}
\begin{corollary} \label{localDT2}
Let $E=\oplus_{k=1}^s E_k^{m_k}$ be a semisimple object in $\AA_\KK$ corresponding to a point $[E]\in \Msp$. As before, $Q_\xi$ is the $\Ext^1$-quiver of the collection $(E_k)_{k=1}^s$ of simple objects. Let $\DT^{mot}(Q_\xi)^{nilp}_m$ be the ``fiber'' of $\DT^{mot}(Q_\xi)$ (with respect to the trivial stability condition) of the ``origin'' in $\Msp(Q_\xi)_m$ corresponding to zero-representation of dimension $m=(m_k)_{k=1}^s$. Then, $\DT^{mot}_E=\DT^{mot}(Q_\xi)_m^{nilp}$ for the fiber of $\DT^{mot}$ over $E\in \Msp$.
\end{corollary}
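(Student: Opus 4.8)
The plan is to deduce the statement from two applications of Proposition \ref{localDT}: the first to $(\AA,\omega,p)$ and the object $E$, the second to the quiver category $\KK Q_\xi-\rep$ (with the trivial stability condition) and its vertex simples. Throughout I read ``the fiber of $\DT^{mot}$ over $E$'', i.e.\ $\DT^{mot}_E$, as the stalk of the sheaf $\DTS^{mot}$ at the point $[E]\in\Msp$, in accordance with the Locality statement of the introduction.

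First I would record what Proposition \ref{localDT} already gives. Applied to $E=\bigoplus_{k=1}^sE_k^{m_k}$ it yields the identity of power series $\iota_E^\ast\DTS^{mot}=\DT^{mot}(Q_\xi)|_{\LL^{1/2}\mapsto\LL^{-1/2}}$ in $\Ka_0(\Var/\KK)[\LL^{-1/2},(\LL^N-1)^{-1}:N\in\NN][[t_1,\dots,t_s]]$. Since $\iota_E\colon\NN^s\times\Spec\KK\hookrightarrow\Msp$ sends the point indexed by $n\in\NN^s$ to $\bigoplus_k E_k^{n_k}$, the coefficient of $t^m$ on the left is exactly the stalk of $\DTS^{mot}$ at $E$, that is $\DT^{mot}_E$. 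Comparing coefficients of $t^m$ thus gives
\[ \DT^{mot}_E=\DT^{mot}(Q_\xi)_m\big|_{\LL^{1/2}\mapsto\LL^{-1/2}}. \]

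The crux is to identify the right-hand side with $\DT^{mot}(Q_\xi)^{nilp}_m$. For this I would apply Proposition \ref{localDT} once more, now to $\AA'=\KK Q_\xi-\rep$ with the trivial stability condition and to the collection $(S_k)_{k=1}^s$ of vertex simples, writing $\iota_S\colon\NN^s\times\Spec\KK\hookrightarrow\Msp(Q_\xi)$ for the embedding $n\mapsto\bigoplus_k S_k^{n_k}$. This is legitimate because $Q_\xi$ is symmetric, so assumption (8) holds and $(\AA',\omega',p')$ satisfies (1)--(8). The key combinatorial check is that the $\Ext^1$-quiver of $(S_k)$ is again $Q_\xi$: one has $\dim_\KK\Hom(S_k,S_l)=\delta_{kl}$ and $\dim_\KK\Ext^1(S_k,S_l)=\#\{k\to l\}$, the number of arrows $k\to l$ of $Q_\xi$, so the Euler form is $(S_k,S_l)=\delta_{kl}-\#\{k\to l\}$ and the Ext-quiver has $\delta_{kl}-(S_k,S_l)=\#\{k\to l\}$ arrows, reproducing $Q_\xi$ (loops included). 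Moreover $\bigoplus_k S_k^{m_k}$ is precisely the semisimplification of the zero representation of dimension $m$, hence the origin of $\Msp(Q_\xi)_m=R_m/\!\!/G_m$. Proposition \ref{localDT} therefore gives $\iota_S^\ast\DTS^{mot}(Q_\xi)=\DT^{mot}(Q_\xi)|_{\LL^{1/2}\mapsto\LL^{-1/2}}$, and the coefficient of $t^m$ on the left is the stalk of $\DTS^{mot}(Q_\xi)$ at the origin, which is $\DT^{mot}(Q_\xi)^{nilp}_m$ by definition. Hence $\DT^{mot}(Q_\xi)^{nilp}_m=\DT^{mot}(Q_\xi)_m|_{\LL^{1/2}\mapsto\LL^{-1/2}}$.

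Combining the two displayed equalities gives $\DT^{mot}_E=\DT^{mot}(Q_\xi)^{nilp}_m$; equivalently, the two applications of Proposition \ref{localDT} share the same right-hand side $\DT^{mot}(Q_\xi)|_{\LL^{1/2}\mapsto\LL^{-1/2}}$, so $\iota_E^\ast\DTS^{mot}=\iota_S^\ast\DTS^{mot}(Q_\xi)$ and comparison of $t^m$-coefficients finishes the proof. The only genuinely non-formal point, and the step I expect to require the most care, is this self-referential second application: one must verify that the $\Ext^1$-quiver of the vertex simples of a symmetric quiver is the quiver itself and that the vertex-simple sum realizes the origin, which is what makes Efimov's ``global'' normalization $\DT^{mot}(Q_\xi)_m$ and the ``local'' nilpotent fiber $\DT^{mot}(Q_\xi)^{nilp}_m$ coincide after the duality substitution $\LL^{1/2}\mapsto\LL^{-1/2}$.
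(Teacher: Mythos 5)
Your proposal is correct and follows essentially the same route as the paper: the paper's proof likewise applies Proposition \ref{localDT} a second time to $\kk Q_\xi-\rep$ with the collection of vertex simples $(S_k)_{k=1}^s$, using exactly your two key observations — that the local $\Ext^1$-quiver of the vertex simples of the symmetric quiver $Q_\xi$ is $Q_\xi$ again, and that $\bigoplus_k S_k^{m_k}$ is the origin of $\Msp(Q_\xi)_m$ — to conclude $\iota_S^\ast\DTS^{mot}(Q_\xi)=\DT^{mot}(Q_\xi)|_{\LL^{1/2}\mapsto\LL^{-1/2}}=\iota_E^\ast\DTS^{mot}$ and compare fibers. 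Your write-up merely makes explicit the coefficient-of-$t^m$ comparison that the paper leaves implicit.
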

\begin{proof}
The zero-representation of dimension m is the semisimple $Q_\xi$-representation $\bigoplus_{k=1}^s S_k^{m_k}$, where $S_k$ denotes the 1-dimensional $\kk Q_\xi$-module at vertex $k$. We simply apply Proposition \ref{localDT} to the category  $\kk Q_\xi-\rep$ and the collection $(S_k)_{k=1}^s$ and take into account that the local $\Ext^1$-quiver of this collection is $Q_\xi$ again. Thus, $\iota^\ast_S\DT^{mot}(Q_\xi) =\DT^{mot}(Q_\xi)|_{\LL^{1/2}\mapsto \LL^{-1/2}}=\iota^\ast_E \DT^{mot}$. 
\end{proof}

\section{Proof of Theorem \ref{nilpotent stack dimension} and Theorem \ref{locally_trivial_fibration}}

\subsection{The stack of nilpotent quiver representations}

This section taken from \cite{MeinhardtReineke}. We include it here for the sake of completeness. Let $Q$ be a finite quiver and $d\in\NN^{Q_0}$ a dimension vector for $Q$, and consider the action of the linear algebraic group $G_d$ on the vector space $R_d$. Let $p:R_d\rightarrow R_d/\!\!/G_d$ be the invariant-theoretic quotient; in other words, $R_d/\!\!/G_d$ is the spectrum of the ring of $G_d$-invariants in $R_d$, which, by \cite{LeBruyn-Procesi}, is generated by traces along oriented cycles in $Q$. We consider the nullcone of the representation of $G_d$ on $R_d$, that is,
$$N_d:=p^{-1}(p(0)).$$

By a standard application of the Hilbert criterion (see \cite[Chapter 6]{LeBruyn} for a much finer analysis of the geometry of $N_d$ using the Hesselink stratification) we can characterize points in $N_d$ either as those representations such that every cycle is represented by a nilpotent operator, or as those representation admitting a composition series by the one-dimensional irreducible representations $S_i$ concentrated at a single vertex $i\in Q_0$ (and with all loops at $i$ represented by $0$).\\
The main observation of this section is that, under the assumption of $Q$ being symmetric, there is an effective estimate for the dimension of $N_d$.

\begin{theorem} If $Q$ is symmetric, we have
$$\dim N_d-\dim G_d\leq -\frac{1}{2}( d,d)+\frac{1}{2}\sum_{i\in Q_0}( i,i) d_i-\dim d.$$
\end{theorem}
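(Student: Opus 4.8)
The plan is to realise $N_d$ as a finite union of $G_d$-saturations of explicit affine subspaces indexed by orderings of a composition series, and to estimate each piece by a standard flag-bundle resolution. Recall from the Hilbert-criterion description of the nullcone that a point of $N_d$ is a representation admitting a complete filtration $0=V_0\subset V_1\subset\cdots\subset V_N=\kk^d$ (with $N=|d|$) by subrepresentations whose successive quotients are the vertex simples $S_i$. For each sequence $i_\bullet=(i_1,\ldots,i_N)$ in which each vertex $i$ occurs exactly $d_i$ times (finitely many such), I would fix the standard graded flag $W_\bullet$ of $\kk^d$ with $W_k=\langle e_1,\ldots,e_k\rangle$ and set $R_{i_\bullet}^{nilp}\subset R_d$ to be the locus of representations for which $W_\bullet$ is a subrepresentation filtration with $W_k/W_{k-1}\cong S_{i_k}$. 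Unwinding this condition shows $R_{i_\bullet}^{nilp}$ is a linear subspace, stable under the Borel $B=\prod_i B_i\subset G_d$ stabilising $W_\bullet$, and that $N_d=\bigcup_{i_\bullet}G_d\cdot R_{i_\bullet}^{nilp}$. The surjection $G_d\times_B R_{i_\bullet}^{nilp}\twoheadrightarrow G_d\cdot R_{i_\bullet}^{nilp}$ then gives $\dim(G_d\cdot R_{i_\bullet}^{nilp})\le \dim G_d-\dim B+\dim R_{i_\bullet}^{nilp}$, with $\dim B=\tfrac12(\dim G_d+|d|)$, so that $\dim N_d-\dim G_d\le \max_{i_\bullet}\bigl(\dim R_{i_\bullet}^{nilp}-\tfrac12(\dim G_d+|d|)\bigr)$.

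Next I would compute $\dim R_{i_\bullet}^{nilp}$ from the matrix description. For an arrow $\alpha:a\to b$ the condition $V_\alpha(W_k)\subseteq W_k$ forces the allowed matrix positions to be the pairs $(l,m)$ with $i_l=a$, $i_m=b$ and $m\le l$ (and $m<l$ on loops, since the induced map on each simple subquotient must vanish). Writing $c_{ab}(i_\bullet)$ for the number of such positions, the crucial point is the symmetry of $Q$: pairing $\alpha:a\to b$ with its reverse $\alpha^\ast:b\to a$ and relabelling indices gives $c_{ab}+c_{ba}=d_ad_b$ for $a\ne b$, while each loop contributes $\binom{d_a}{2}$. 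Summing over all arrows, the ordering-dependence cancels and one obtains the clean, ordering-independent value $\dim R_{i_\bullet}^{nilp}=\tfrac12\dim R_d-\tfrac12\sum_a L_a d_a$, where $L_a$ is the number of loops at $a$.

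Finally I would substitute the identities $\dim R_d=\dim G_d-(d,d)$ and $L_a=1-(a,a)$ (so $\sum_a L_a d_a=|d|-\sum_a(a,a)d_a$) into the previous two displays; the $\dim G_d$ and $|d|$ terms combine to yield exactly $\dim N_d-\dim G_d\le -\tfrac12(d,d)+\tfrac12\sum_a(a,a)d_a-|d|$, as claimed. I expect the genuine content — and the only place the symmetry of $Q$ is used — to be the identity $c_{ab}+c_{ba}=d_ad_b$ that makes $\dim R_{i_\bullet}^{nilp}$ independent of the ordering; everything else is bookkeeping with the Euler form. A secondary point requiring a little care is verifying that $R_{i_\bullet}^{nilp}$ is $B$-stable and that every nilpotent representation lands in some $G_d\cdot R_{i_\bullet}^{nilp}$, which follows from the stated description of $N_d$ by choosing a basis adapted to a composition series.
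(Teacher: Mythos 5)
Your proposal is correct and is essentially the paper's own argument: the paper likewise writes $N_d$ as the union, over thin decompositions $d^*$ (your orderings $i_\bullet$), of the subvarieties $R_{d^*}$, bounds each as the collapsing of a homogeneous bundle over a flag variety (your saturation $G_d\times_B R^{nilp}_{i_\bullet}\twoheadrightarrow G_d\cdot R^{nilp}_{i_\bullet}$), and uses the symmetry of $Q$ to make the bound independent of the decomposition. The only difference is cosmetic: the paper performs the symmetrization at the level of Euler forms, via $\sum_{k<l}(d^l,d^k)=\tfrac12(d,d)-\tfrac12\sum_k(d^k,d^k)$, whereas you do it by pairing matrix positions, $c_{ab}+c_{ba}=d_ad_b$.
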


\begin{proof} For a decomposition $d=d^1+\ldots+d^s$, denoted by $d^*$, we consider the closed subvariety $R_{d^*}$ of $R_d$ consisting of representations $V$ admitting a filtration $0=V_0\subset V_1\subset \ldots\subset V_s=V$ by subrepresentations, such that $V_k/V_{k-1}$ equals the zero representation of dimension vector $d^k$ for all $k=1,\ldots,s$. This subvariety being the collapsing of a homogeneous bundle over a variety of partial flags in $\bigoplus_{i\in Q_0}\KK^{d_i}$, its dimension is easily estimated as
$$\dim R_{d^*}\leq \dim G_d-\sum_{k<l}( d^l,d^k)-\sum_{i\in Q_0}\sum_k(d^k_i)^2.$$
The above characterization of $N_d$ allows us to write $N_d$ as the union of all $R_{d^*}$ for decompositions $d^*$ which are thin, that is, all of whose parts are one-dimensional (one-dimensionality is obscured by the notation to avoid multiple indexing and to make the argument more transparent). Thus $\dim N_d-\dim G_d$ is bounded above by the maximum of the values
$$-\sum_{k<l}( d^l,d^k)-\sum_{i\in Q_0}\sum_k(d^k_i)^2$$
over all thin decompositions. Since $Q$ is symmetric, we can rewrite
$$\sum_{k<l}( d^l,d^k)=\frac{1}{2}( d,d)-\frac{1}{2}\sum_k( d^k,d^k).$$
All $d^k$ being one-dimensional, we can easily rewrite
$$\sum_{i\in Q_0}\sum_k(d^k_i)^2=\dim d,\;\;\;
\sum_k( d^k,d^k)=\sum_{i\in Q_0}( i,i) d_i.$$
All terms now being independent of the chosen thin decomposition, we arrive at the required estimate.
\end{proof}

\subsection{The fiber of the Hilbert--Chow morphisms over a Luna stratum}

The following Lemma can be checked easily. In fact, we only need it for $S=\Spec\KK$. 
\begin{lemma} \label{ext_and_hom_groups} Thinking of $\AA_S$ and $\Vect_{S}$ as full subcategories of $\AA_{f,S}$, the following is true.  
\begin{enumerate}
\item Every object $\hat{E}=(E,W,h)\in \AA_{f,S}$ fits into a canonical short exact sequence in $\AA_{f,S}$
\[ 0\longrightarrow E \longrightarrow \hat{E} \longrightarrow W \longrightarrow 0.\]
\item For $E_1,E_2\in \AA_S$ and $W_1,W_2\in \Vect_{S}$ we have
\begin{eqnarray*}
\Hom_{\AA_{f,S}}(E_1,E_2)=\Hom_{\AA_S}(E_1,E_2) & \mbox{and} & \Hom_{\AA_{f,S}}(W_1,W_2)=\Hom_{\OO_S}(W_1,W_2) \\
\Ext^1_{\AA_{f,S}}(E_1,E_2)=\Ext^1_{\AA_S}(E_1,E_2) & \mbox{and} & \Ext^1_{\AA_{f,S}}(W_1,W_2)=\Ext^1_{\OO_S}(W_1,W_2).
\end{eqnarray*}
In particular, $\AA_\KK$ and $\Vect_{\KK}$ are Serre subcategories of $\AA_{f,\KK}$.
\item For $E\in \AA_S$ and $W\in \Vect_{S}$ we have 
\begin{eqnarray*} \Hom_{\AA_{f,S}}(E,W)=0   & \mbox{and} &   \Hom_{\AA_{f,S}}(W,E)=0\\ \Ext^1_{\AA_{f,S}}(E,W)=0 & \mbox{and} &\Ext^1_{\AA_{f,S}}(W,E)\cong\bigoplus_{i\in I} \Hom_{\OO_S}(W,\omega(E)_i)^{\oplus f_i} .
\end{eqnarray*}
In particular, $\AA_{f}$ does not satisfy our symmetry assumption (8) even if $\AA$ does. 
\end{enumerate}
\end{lemma}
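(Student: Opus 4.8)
The plan is to work in the abelian category $\AA_{f,\KK}$ (the case $S=\Spec\KK$ being the only one needed; the general case is identical with $\Vect_\KK$ replaced by $\Vect_S$ and $\Hom_\KK,\Ext^1_\KK$ by $\Hom_{\OO_S},\Ext^1_{\OO_S}$) and to exploit that, since $\omega_\KK$ is exact, kernels and cokernels of framed morphisms are computed componentwise. Concretely, I would first record the \emph{structural observation}: for a morphism $(\phi,\phi_\infty)\colon(E_1,W_1,h_1)\to(E_2,W_2,h_2)$ the framing constraint reads $h_2\circ\phi_\infty^f=\omega_\KK(\phi)\circ h_1$, and one checks directly that $\ker(\phi,\phi_\infty)=(\ker\phi,\ker\phi_\infty,h_1|)$ and $\coker(\phi,\phi_\infty)=(\coker\phi,\coker\phi_\infty,\bar h_2)$, the restricted resp.\ induced framing being well defined precisely by this constraint together with exactness of $\omega_\KK$. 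Consequently a sequence of framed objects is short exact in $\AA_{f,\KK}$ if and only if its $\AA_\KK$-component and its $\Vect_\KK$-component are short exact. This componentwise criterion is the single tool driving all three parts.

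Part (1) and the $\Hom$-statements are then immediate. For (1) I would check that $(\id_E,0)\colon(E,0,0)\to(E,W,h)$ and $(0,\id_W)\colon(E,W,h)\to(0,W,0)$ satisfy the framing constraint and that the induced sequences are $0\to E\xrightarrow{\id}E\to0\to0$ in $\AA_\KK$ and $0\to0\to W\xrightarrow{\id}W\to0$ in $\Vect_\KK$; by the criterion the framed sequence is short exact, and it is canonical since both maps are forced. For the $\Hom$-groups, a morphism into or out of an object whose $\Vect_\KK$- (resp.\ $\AA_\KK$-) part vanishes automatically has $\phi_\infty=0$ (resp.\ $\phi=0$), giving $\Hom_{\AA_{f,\KK}}((E_1,0,0),(E_2,0,0))=\Hom_{\AA_\KK}(E_1,E_2)$ and $\Hom_{\AA_{f,\KK}}((0,W_1,0),(0,W_2,0))=\Hom_\KK(W_1,W_2)$; in the mixed cases \emph{both} components are forced to vanish, so $\Hom_{\AA_{f,\KK}}(E,W)=\Hom_{\AA_{f,\KK}}(W,E)=0$.

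For the $\Ext^1$-groups of objects of the same type I would again apply the componentwise criterion: any extension of $(E_1,0,0)$ by $(E_2,0,0)$ has $\Vect_\KK$-component $0\to0\to W''\to0\to0$, forcing $W''=0$ and hence the framing $0$, so the middle term again has the form $(E'',0,0)$. This sets up a bijection with extensions of $E_1$ by $E_2$ in $\AA_\KK$, compatible with Baer sums because it is induced by the exact fully faithful embedding $\AA_\KK\hookrightarrow\AA_{f,\KK}$, whence $\Ext^1_{\AA_{f,\KK}}=\Ext^1_{\AA_\KK}$; symmetrically $\Ext^1_{\AA_{f,\KK}}((0,W_1,0),(0,W_2,0))=\Ext^1_\KK(W_1,W_2)$.

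The crux is the asymmetry of the mixed $\Ext^1$-groups, and I expect the direction bookkeeping of the constraint $h_2\circ\phi_\infty^f=\omega_\KK(\phi)\circ h_1$ to be the only place requiring care. For $\Ext^1_{\AA_{f,\KK}}(E,W)$, an extension $0\to(0,W,0)\to(E'',W'',h'')\to(E,0,0)\to0$ has middle term $(E,W,h'')$, and the inclusion $(0,\id_W)$ must satisfy the constraint $h''\circ\id^f=\omega_\KK(0)\circ0=0$; thus $h''=0$, every extension splits, and $\Ext^1_{\AA_{f,\KK}}(E,W)=0$. For $\Ext^1_{\AA_{f,\KK}}(W,E)$ the middle term is again $(E,W,h'')$, but now the inclusion $(\id_E,0)$ and the projection $(0,\id_W)$ impose \emph{no} condition on $h''$, which therefore ranges freely; tracking that any equivalence of two such extensions is forced to be $(\id_E,\id_W)$ and hence must preserve $h''$, distinct framings give inequivalent extensions, so $\Ext^1_{\AA_{f,\KK}}(W,E)$ is the space of morphisms of $I$-graded vector spaces $h''\colon W^f\to\omega_\KK(E)$, namely $\bigoplus_{i\in I}\Hom_\KK(W,\omega_\KK(E)_i)^{\oplus f_i}$. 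Finally, since this yields $(E,W)=0-0=0$ but $(W,E)=0-\sum_{i\in I}f_i\dim_\KK\Hom_\KK(W,\omega_\KK(E)_i)$, the pairing is in general not symmetric, establishing the failure of assumption (8) for $\AA_f$.
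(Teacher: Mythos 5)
Your proof is correct, and since the paper offers no argument for this lemma at all (it is dismissed with ``can be checked easily,'' together with the remark that only $S=\Spec\KK$ is needed), your componentwise-exactness criterion --- kernels, cokernels, and hence short exact sequences in $\AA_{f,\KK}$ are computed separately in the $\AA_\KK$- and $\Vect_\KK$-components, with the framing constraint $h_2\circ\phi_\infty^f=\omega_\KK(\phi)\circ h_1$ supplying the induced framings --- is precisely the intended verification, and your bookkeeping of where that constraint forces $h''=0$ (extensions of $E$ by $W$) versus where it imposes nothing (extensions of $W$ by $E$) correctly isolates the source of the asymmetry in (3). The only points you leave implicit are routine consequences of the same criterion: the Serre-subcategory claim in (2) follows because monomorphisms and epimorphisms are also detected componentwise, and the identification in (3) of $\Ext^1_{\AA_{f,\KK}}(W,E)$ with $\bigoplus_{i\in I}\Hom_{\OO_S}(W,\omega(E)_i)^{\oplus f_i}$ as a \emph{group} requires noting that the Baer sum (pullback along $\Delta_W$, pushout along $\nabla_E$) sends framings $h_1'',h_2''$ to $h_1''+h_2''$.
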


Let us recall Luna's slice theorem, adopted to our setting.

\begin{theorem}[Luna's \'Etale Slice Theorem] \label{Luna}
 Let $G$ be a reductive algebraic group acting on a smooth  $\kk$-scheme $Y$ with good quotient $Y\to Y/\!\!/G$.  Pick a point $y\in Y$ with closed orbit. In particular, $\Stab_G(y)=:G_y$ is also reductive. Then, there is a locally closed $G_y$-invariant affine smooth subscheme $S$ in $Y$ containing $y$ such that the following commutative diagram
 \[ \xymatrix { S\times_{G_y} G \ar[r]^\psi \ar[d] &  Y \ar[d] \\ \Spec \kk[S]^{G_y} \ar[r] & {Y/\!\!/G} }\]
 with $\psi$ being given by the $G$-action on $Y$ is cartesian with \'etale horizontal arrows. Moreover, if $N=T_yY /T_y Gy$ is the normal space of the $G$-orbit at $y$, there is another \'etale morphism $S\to N$ mapping $y$ to $0$ with tangent map at $y$ being  $T_yS \hookrightarrow T_yY \twoheadrightarrow N$ such that 
 \[ \xymatrix { S \ar[r]\ar[d] & N \ar[d] \\ \Spec\kk[S]^{G_y} \ar[r] & \Spec \kk[N]^{G_y} } \]
 commutes and is cartesian.
\end{theorem}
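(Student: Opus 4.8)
The statement is Luna's \'etale slice theorem, and the plan is to recall the classical argument, which splits into four parts: reductivity of the stabilizer, construction of the slice, descent of \'etaleness to the quotients (Luna's fundamental lemma), and comparison with the normal space. Since the whole assertion is local over $Y/\!\!/G$ and a good quotient is covered by $G$-invariant affine opens, I would first replace $Y$ by a $G$-invariant affine neighborhood of the closed orbit $Gy$, so that we may assume $Y$ affine and $Y/\!\!/G=\Spec\kk[Y]^G$. The first step is to establish that $G_y:=\Stab_G(y)$ is reductive: this is Matsushima's criterion, using that $Gy\cong G/G_y$ is closed, hence affine as a closed subscheme of the affine $Y$, and that a homogeneous space $G/H$ of a reductive $G$ is affine if and only if $H$ is reductive.

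Next comes the construction of the slice. The tangent space $T_yY$ carries a linear $G_y$-action, and $T_y(Gy)\subset T_yY$ is a $G_y$-subrepresentation; by reductivity of $G_y$ I can choose a $G_y$-invariant complement $N$, so that $T_yY=T_y(Gy)\oplus N$. Using that $Y$ is smooth and affine, I would embed $Y$ equivariantly into a finite-dimensional $G$-module $V$ (linear reductivity, via the Reynolds operator, lets one enlarge any generating set of $\kk[Y]$ to a $G$-stable one), and then produce a smooth locally closed $G_y$-invariant subscheme $S\ni y$ with $T_yS=N$ by intersecting with an affine-linear subspace through $y$ in the direction of $N$ and passing to a $G_y$-invariant neighborhood, averaging the defining retraction over $G_y$.

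The heart of the proof is then Luna's fundamental lemma applied to the $G$-equivariant morphism $\psi\colon G\times_{G_y}S\to Y$ induced by the action. By construction $\psi$ maps the closed orbit of the point $[e,y]$ isomorphically onto $Gy$, and its differential there is an isomorphism, so $\psi$ is \'etale at that point. The lemma then yields, after shrinking to a saturated $G$-invariant affine open, an \'etale morphism whose induced map on GIT quotients is \'etale and for which the square relating $\psi$ to the two quotient maps is cartesian. I expect this lemma to be the main obstacle: its proof requires controlling the invariant rings, using the Reynolds operator to show that \'etaleness and flatness persist after passing to $G$-invariants, and that formation of invariants commutes with the relevant base change so that the square is genuinely cartesian. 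This is exactly where reductivity of both $G$ and $G_y$ enters most essentially.

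Finally, for the normal space comparison, since $S$ is smooth with $T_yS=N$ and carries a linear $G_y$-action fixing $y$, I would construct a $G_y$-equivariant \'etale map $S\to N$ sending $y\mapsto 0$ with identity differential, again by a $G_y$-averaged version of the standard production of \'etale coordinates on a smooth variety near a fixed point. Applying the fundamental lemma a second time to this map gives the second cartesian square and identifies $N=T_yY/T_y(Gy)$ with the normal space, completing the plan.
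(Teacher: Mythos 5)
The paper does not prove this statement at all: it is recalled verbatim as a classical black box (Luna's theorem, adapted to the paper's notation) and then applied in Section 7 to the $G_{\hat d}$-action on $X_d\times\Aff^{fd}_\kk$; so there is no internal argument to compare yours against. Your outline is the standard classical proof (Matsushima's criterion for reductivity of $G_y$, equivariant embedding of the affine $Y$ into a $G$-module, construction of the slice, Luna's fundamental lemma, and a $G_y$-equivariant linearization $S\to T_yS=N$), and as a plan it is the correct route.

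Two caveats if you intend this as an actual proof rather than a roadmap. First, the heart of the matter, Luna's fundamental lemma, is only named, not proven; everything nontrivial (persistence of \'etaleness after passing to invariants, saturatedness of the open sets, and cartesianness of the two squares) is concentrated there, so as written the proposal defers precisely the part that makes the theorem hard. Relatedly, your repeated use of invariant complements and the Reynolds operator presupposes linear reductivity, i.e.\ $\Char\kk=0$; this hypothesis should be made explicit (it is harmless for the paper, whose main results are over $\CC$, but the theorem as stated over a general $\kk$ is false in positive characteristic). Second, a technical slip in the slice construction: after embedding $Y\subset V$ equivariantly, intersecting $Y$ with the affine subspace $y+N$ (with $N\subset T_yY$ the $G_y$-stable complement of $T_y(Gy)$) does not produce a slice of the right dimension, since $Y$ has positive codimension in $V$. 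One must intersect with $y+(N\oplus W')$, where $W'$ is a $G_y$-stable complement of $T_yY$ in $V$; transversality at $y$ then gives $T_y\bigl(Y\cap(y+N\oplus W')\bigr)=N$, and one shrinks to a smooth affine $G_y$-invariant neighborhood. With these two points repaired, your argument is the standard one.
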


Using the notation of section 4, we want to show the following result.
\begin{theorem} If $(\AA,\omega,p)$ satisfies (1)--(8), then the Hilbert--Chow morphism $\pi_{f,d}:\Msp_{f,d} \to \Msp_d$ restricted to $S_\xi$ is \'etale locally trivial with fiber contained in $\Msp^{nilp}_{f_\xi,d_\xi}(Q_\xi)$.
\end{theorem}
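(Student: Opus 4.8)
The plan is to realize the local structure of $\pi_{f,d}$ near a point of $S_\xi$ through Luna's \'etale slice theorem (Theorem \ref{Luna}), reducing everything to the Hilbert--Chow morphism for the local $\Ext^1$-quiver $Q_\xi$. Fix $E=\bigoplus_{k=1}^s E_k^{m_k}\in S_\xi$ defined over $\KK$ with pairwise non-isomorphic simple factors $E_k$, and a lift $x=(E,\psi)\in X_d$. By the lemma identifying points of $\Msp$ with semisimple objects, the orbit $G_dx$ is closed, and its reductive stabilizer $\Stab_{G_d}(x)=\Aut_{\AA_\KK}(E)$ we identify with $G_{d_\xi}=\prod_{k=1}^s\Gl(m_k)$. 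By Proposition \ref{normal_ext} the normal space $N=T_xX_d/T_x(G_dx)$ is $G_{d_\xi}$-equivariantly isomorphic to $\Ext^1_{\AA_\KK}(E,E)=\bigoplus_{k,l}\Ext^1_{\AA_\KK}(E_k,E_l)^{\oplus m_km_l}$. Since $\dim_\KK\Ext^1_{\AA_\KK}(E_k,E_l)=\delta_{kl}-(\gamma^k,\gamma^l)$ is exactly the number of arrows $k\to l$ in $Q_\xi$, this is precisely the representation space $R_{d_\xi}(Q_\xi)$ with its natural $G_{d_\xi}$-action.

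First I would apply Theorem \ref{Luna} to the $G_d$-action on the smooth scheme $X_d$ at $x$. This yields a smooth $G_{d_\xi}$-invariant slice $S\ni x$ together with \'etale $G_{d_\xi}$-equivariant maps $S\to N=R_{d_\xi}(Q_\xi)$ sending $x\mapsto 0$, and cartesian squares with \'etale horizontal arrows that identify, near $E$, the quotient $\Msp_d$ with $N/\!\!/G_{d_\xi}=R_{d_\xi}(Q_\xi)/\!\!/G_{d_\xi}=\Msp_{d_\xi}^{0-ss}(Q_\xi)$, the image of $E$ being the class of the zero representation. Under this identification the Luna stratum $S_\xi$ corresponds \'etale-locally to the stratum of the origin, i.e.\ to semisimple $Q_\xi$-representations whose simple constituents are the vertex simples.

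Next I would lift this picture to the framed moduli. Recall that $X_{f,d}$ is the semistable locus in $X_d\times\Aff^{fd}$ for $G_{\hat d}=G_d\times\Gl(1)$ and $\pi_{f,d}$ is induced by the projection to $X_d$. The fibre of $\Aff^{fd}$ over $x$ is $\bigoplus_{i\in I}\Hom_\KK(\KK,\omega_\KK(E)_i)^{\oplus f_i}$, and the decomposition $\omega_\KK(E)=\bigoplus_k\omega_\KK(E_k)^{\oplus m_k}$ groups these vectors into a $G_{d_\xi}$-equivariant framing of the multiplicity spaces $\KK^{m_k}$ of total size $\sum_k(f\cdot d^k)m_k=f\cdot d=f_\xi\cdot d_\xi$, which is exactly the framing space of $(Q_\xi,f_\xi,d_\xi)$. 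Pulling the slice $S$ back through $X_{f,d}\to X_d$ and using that framed objects have trivial stabilizer (so the framed quotients are geometric), I would obtain a $G_{d_\xi}$-equivariant \'etale identification exhibiting $\pi_{f,d}$ over the chart as the local framed Hilbert--Chow map $\pi_\xi:\Msp_{f_\xi,d_\xi}^{0-ss}(Q_\xi)\to\Msp_{d_\xi}^{0-ss}(Q_\xi)$; restricting to $S_\xi$ gives the asserted \'etale local triviality, since over the origin stratum the fibres are constant and the slice trivializes the $PG_d$-bundle.

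For the inclusion of fibres, which is the only part used later, I would argue directly. A point of $\pi_{f,d}^{-1}(E)$ is a semistable framed object $(E',\KK,h)$ whose underlying $E'$ has semisimplification $\gr E'\cong E$; hence all Jordan--H\"older factors of $E'$ lie among the $E_k$ with multiplicities $m_k$. The full subcategory of such objects is equivalent, via the identification of arrows with bases of $\Ext^1_{\AA_\KK}(E_k,E_l)$ furnished by the slice, to the category of those $Q_\xi$-representations of dimension $d_\xi$ that are \emph{nilpotent} (associated graded equal to the zero representation of $\Msp_{d_\xi}^{0-ss}(Q_\xi)$). The framing $h$, being generating for $E'$, transports to a generating framing of the associated representation, so the resulting point lands in $\Msp^{nilp}_{f_\xi,d_\xi}(Q_\xi)$. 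The main obstacle is precisely this last identification: making the correspondence between objects with prescribed semisimplification $E$ and nilpotent $Q_\xi$-representations precise and $G_{d_\xi}$-equivariant, and checking that object-theoretic semistability (the framing generates $E'$) matches quiver semistability. I expect to read this off the normal-cone description of Proposition \ref{vector_bundle} together with Luna's slice, exactly as in the quiver case \cite{Reineke1}, Theorem 4.1, where the complementary (surjectivity) half of the full fibre isomorphism is established; only the inclusion sketched here is needed for Theorem \ref{virtsmall}.
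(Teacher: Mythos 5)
Your overall strategy coincides with the paper's: apply Luna's \'etale slice theorem (Theorem \ref{Luna}) at the point corresponding to $E$, identify the normal space with the representation space of the $\Ext^1$-quiver, and transport the framing. There is one organizational difference worth noting: the paper applies Luna not to $X_d$ but to the framed space $Y=X_d\times \Aff^{fd}_\kk$ at the point $y$ corresponding to $\hat{E}=E\oplus\KK_\infty$, and uses Lemma \ref{ext_and_hom_groups} to recognize the $\Ext^1$-quiver of $\hat{E}$ inside $\AA_{f,\KK}$ as the \emph{framed} quiver $Q_{\xi,f_\xi}$, so that the normal space is $R_{\hat{d}_\xi}(\hat{Q}_\xi)$ in a single stroke and the $G_{d_\xi}$-equivariance of the framing data is automatic. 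Your variant (slice inside $X_d$, then multiplying by $\Aff^{fd}$ and decomposing it by hand as $\bigoplus_k(\KK^{m_k})^{\oplus f\cdot d^k}$) can be made to work, but it forces you to re-establish separately that the \emph{framed} quotient diagram is cartesian with \'etale horizontal maps, which the paper gets for free from one application of Theorem \ref{Luna} to $\hat{q}:Y\to\Msp_d$.

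The genuine gap is in the fibre inclusion $\pi_{f,d}^{-1}(E)\subset \Msp^{nilp}_{f_\xi,d_\xi}(Q_\xi)$, which is the only part the paper proves in section 7 and the only part used in Theorem \ref{virtsmall}. You assert (i) that the full subcategory of objects with Jordan--H\"older factors among the $E_k$ is \emph{equivalent} to the category of nilpotent $Q_\xi$-representations, and (ii) that semistability of the framed object implies $0$-semistability of the corresponding framed quiver representation (``the framing transports to a generating framing''). Point (i) is stronger than what the slice provides -- the cartesian squares give an isomorphism of the fibres over $E$ and over $0$ as varieties, not an equivalence of categories -- and such an Ext-quiver presentation cannot simply be ``read off'' within the paper's axioms, which deliberately avoid higher Ext-groups and the attendant relations; fortunately it is also not needed. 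Point (ii) is precisely the mathematical content of the paper's proof, and it does not follow from Proposition \ref{vector_bundle}: given a proper subrepresentation $(V',\KK,h'_\xi)$ containing the framing, the paper constructs the associated 1-PS of $G_{d_\xi}\subset G_d$, invokes the equivalent conditions of assumption (4) to produce a proper subobject $(E'',W,h'')$ of $(E',\KK,h)$ with $W\in\{0,\KK\}$, and then proves $W=\KK$ by matching multiplicities of composition factors through a maximal torus of $G_{d_\xi}$, using the correspondence $E_k\leftrightarrow S_k(Q_\xi)$ and $\KK_\infty\leftrightarrow S_\infty$. Since your proposal explicitly defers exactly this step, the proof of the inclusion -- hence of the statement as it is used downstream -- is incomplete as written.
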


\begin{proof}
Consider the polarization $\Lin_d\boxtimes \OO^0=\pr_{X_d}^\ast\Lin_d$ on the smooth $\kk$-scheme $Y:=X_d\times \Aff^{fd}_\kk$ with respect to the $G_{\hat{d}}$-action with good quotient $Y/\!\!/PG_{\hat{d}}=Y/\!\!/G_d\cong X_d/\!\!/G_d=\Msp_d$ as the $G_{\hat{d}}$-invariant sections of $\pr_{X_d}^\ast \Lin_d^{\otimes k}$ are pull-backs of those on $X_d$. The $\KK$-points in the quotient $Y/\!\!/G_d$ correspond to semisimple objects in $\AA_{f,\KK}$ of dimension vector $\hat{d}=(d,1)$ which are direct sums of a semisimple object in $\AA_\KK$ and the vector space $\KK$ at the vertex $\infty$ both embedded into $\AA_{f,\KK}$. These objects are in one to one correspondence to semisimple objects in $\AA_\KK$ providing another justification of the isomorphism $Y/\!\!/G_d\cong \Msp_d$. We want to apply Luna's slice theorem to $\hat{q}:Y\to \Msp_d$.  Given a $\KK$-point $y\in Y$ with closed orbit and reductive stabilizer $G_y$, it corresponds to a semisimple object $\hat{E}=E\oplus\KK_\infty$  of dimension vector $\hat{d}=(d,1)$ with $E=\bigoplus_{k\in K} E_k^{m_k}$ for pairwise non-
isomorphic simple objects $E_k$ giving rise to a triple $\xi=(\gamma^\bullet,d^\bullet,m_\bullet)$ with $\gamma^k=\cl E_k$ and $d^k=\dim E_k$. Note that the $\Ext^1$-quiver of $\hat{E}$ is the framed quiver $Q_{\xi,f_\xi}$ associated to the $\Ext^1$-quiver $Q_\xi$ for $E$ and the framing vector $f_\xi=(f\cdot d^k)_{k\in K}$ by  Lemma \ref{ext_and_hom_groups}. Moreover, $\hat{d}_\xi=(d_\xi,1)$ with $d_\xi=(m_k)_{k\in K}$. We also want to study the quotient map $q_\xi:R_{\hat{d}_\xi}(\hat{Q}_\xi) \longrightarrow \Msp_{d_\xi}^{ssimp}(Q_\xi)$ which we obtain as above with the trivial linearization on $R_{\hat{d}_\xi}(\hat{Q}_\xi)$. By Proposition \ref{normal_ext} applied to $\AA_f$, $R_{\hat{d}_\xi}(\hat{Q}_\xi)$ can be identified with the normal space to the $G_d$-orbit\footnote{Note that the $G_d$-orbit and the $G_d\times \Gl(1)$-orbit coincide.} through $y$ corresponding to $\hat{E}$. By Luna's \'etale slice theorem, we get a smooth locally closed affine subscheme $S\subset Y$ with $y\in S$, stable under 
 \[ \Stab_{G_{d}}(y)\cong  G_{d_\xi}, \]
and a commutative diagram
 \[ \xymatrix {
 Y \ar[d]^{\hat{q}} & S \times_{G_{d_\xi}} G_{d}  \ar[l] \ar[d] \ar[r] & R_{d_\xi}(\hat{Q}_\xi) \times_{G_{d_\xi}}  G_{d} \ar[d]^{q_\xi \times_{G_{d_\xi}} G_{d}} \\ 
  \Msp_d & {S/\!\!/G_{d_\xi}} \ar[l] \ar[r] & \Msp_{d_\xi}^{ssimp}(Q_\xi) }
 \]
with cartesian squares and \'etale horizontal maps. In particular, the fibers 
\[ \hat{q}^{-1}(E)\qquad  \mbox{and} \quad  (q_\xi\times_{G_{d_\xi}}G_d )^{-1}(0)\cong q_\xi^{-1}(0)\times_{G_{d_\xi}}G_d \]
are isomorphic to each other. The fiber on the left hand side contains the open subvariety $\hat{q}^{-1}(E)\cap X_{f,d}$ of framed objects $(E',\KK,h')$ (along with a choice of a basis in $\omega(E)$) containing no proper subobjects $(E'',\KK,h')$ such that the associated graded of $E'$ is $E$. Under the isomorphism of fibers this corresponds to tuples $(V,\KK,h_\xi,g)$ with $g\in G_d$, $V$ a representation of $Q_\xi$ (with a choice of a basis) having Jordan--Hölder factors among the 1-dimensional simple representations $S_k(Q_\xi)$ located at vertex $k$, and $f\cdot d^k$ framing vectors in $V_k$ given by $h_\xi$, such that $(V,\KK,h_\xi)$ has no proper subrepresentation $(V',\KK,h'_\xi)$.  Indeed, any such subrepresentation $(V',\KK,h'_\xi)$ determines a one-parameter subgroup of $G_{d_\xi}$ giving rise to a one-parameter subgroup in $G_d$. The latter corresponds to a proper subobject $(E'',W,h'')$ of $(E',\KK,h)$ with $W$ being  $0$ or $\KK$. We claim $W=\KK$. Indeed, the multiplicities of the 
Jordan--Hölder factors of $(E'',W,h'')$ are determined by the map $\Gl(1) \to T$, where $T\subset G_{d_\xi}$ is a maximal torus, and the same is true for the multiplicities of the $\big(S_k(Q_{\xi,f_\xi})\big)_{k\in K\cup\{\infty\}}$ with respect to $(V',\KK,h'_\xi)$ under the correspondence $E_k\leftrightarrow S_k(Q_{\xi,f_\xi})=S_k(Q_\xi)$ for $k\in K$ and $\KK=S_\infty(Q_f)\leftrightarrow S_\infty(Q_{\xi,f_\xi})=\KK$. Thus,  
\[ \hat{q}^{-1}(E)\cap X_{f,d} \subset \big(q_\xi^{-1}(0)\cap R^{0-ss}_{f_\xi,d_\xi}(Q_\xi)\big)\times_{G_{d_\xi}} G_d \cong R^{0,nilp}_{f_\xi,d_\xi}(Q_\xi)\times_{G_{d_\xi}} G_{d}. \]
Taking the quotient by $G_d$, we finally get 
\[ \pi_{f,d}^{-1}(E) \cong \big(\hat{q}^{-1}(E)\cap X_{f,d}\big)/\!\!/G_d \subset R^{0,nilp}_{f_\xi,d_\xi}(Q_\xi)/\!\!/G_{d_\xi} = \Msp^{nilp}_{f_\xi,d_\xi}(Q_\xi).\] 
\end{proof}

\bibliographystyle{plain}
\bibliography{Literatur_neu}

\vfill
\textsc{\small S. Meinhardt: Fachbereich C, Bergische Universit\"at Wuppertal, Gau{\ss}stra{\ss}e 20, 42119 Wuppertal, Germany}\\
\textit{\small E-mail address:} \texttt{\small meinhardt@uni-wuppertal.de}\\
\\


\end{document}